\numberwithin{equation}{section}
\let\al=\alpha
\let\la=\lambda
\let\f=\frac
\let\La=\Lambda
\let\pa=\partial
\def\cK{{\mathcal K}_k}
\def\cQ{{\mathcal Q}}
\def\cT{{\mathcal T}}
\def\R{\mathbb R}
\def\Z{\mathbb Z}
\def\T{\mathbb T}
\def\no{\noindent}
\def\dive{\mathop{\rm div}\nolimits}
\def\curl{\mathop{\rm curl}\nolimits}
\def\eqdef{\buildrel\hbox{\footnotesize def}\over =}
\def\oo{\infty}
\def\Ltr{{L^2_r}}
\def\Lor{{L^\oo_r}}
\def\Lpr{{L^p_r}}
\def\Llr{{L^1_r}}
\def\Htr{{H^2_r}}
\def\Hlro{{H^1_{r,0}}}
\def\LtT{{L^2_T}}
\def\LoT{{L^\oo_T}}
\def\LpT{{L^p_T}}
\def\LtO{{L^2(\Omega)}}
\def\LpO{{L^p(\Omega)}}
\def\hkz{{H^1_k}}
\def\hkf{{H^{-1}_k}}
\def\tdelta{{\tilde{\delta}}}
\def\wki{w_{k,I}}
\def\wkh{w_{k,H}}
\def\pki{\varphi_{k,I}}
\def\pkh{\varphi_{k,H}}
\def\hc{\hat{C}}
\def\lap{\Delta}
\newcommand{\beq}{\begin{equation}}
\newcommand{\eeq}{\end{equation}}
\newcommand{\ben}{\begin{eqnarray}}
\newcommand{\een}{\end{eqnarray}}
\newcommand{\beno}{\begin{eqnarray*}}
\newcommand{\eeno}{\end{eqnarray*}}
\newcommand{\andf}{\quad\hbox{and}\quad}
\def\eqdefa{\buildrel\hbox{\footnotesize def}\over =}
\newcommand{\with}{\quad\hbox{with}\quad}
\newtheorem{theorem}{Theorem}[section]
\newtheorem{lemma}[theorem]{Lemma}
\newtheorem{proposition}[theorem]{Proposition}
\begin{document}
%\begin{CJK*}{UTF8}{gkai}

\title[Nonlinear  stability of 2D Taylor-Couette flow]
{Nonlinear asymptotic stability of 2D Taylor-Couette flow in the exterior disk}

\author[T. Li]{Te Li}%
\address[T. Li]
 {State Key Laboratory of Mathematical Sciences, Academy of Mathematics $\&$ Systems Science, The Chinese Academy of
	Sciences, Beijing 100190, CHINA.}
\email{teli@amss.ac.cn}

\author[P. Zhang]{Ping Zhang}%
\address[P. Zhang]
 {State Key Laboratory of Mathematical Sciences, Academy of Mathematics $\&$ Systems Science, The Chinese Academy of
	Sciences, Beijing 100190, China, and School of Mathematical Sciences, University of Chinese Academy of Sciences, Beijing 100049, CHINA. }
\email{zp@amss.ac.cn}

\author[Y. Zhang]{Yibin Zhang}%
\address[Y. Zhang]
 {Academy of Mathematics $\&$ Systems Science, The Chinese Academy of
Sciences, Beijing 100190, CHINA.  } \email{zhangyibin22@mails.ucas.ac.cn}

\date{\today}

\begin{abstract}
In this paper, we consider the asymptotic stability of the 2D Taylor-Couette flow in the exterior disk, with a small kinematic viscosity  \( \nu \ll 1 \) and a large rotation coefficient \( |B| \). Due to the degeneracy of the Taylor-Couette flow at infinity, we cannot expect the solution to decay exponentially in a space-time decoupled manner. As stated in previous work \cite{LZZ-25}, even space-time coupled exponential decay can not be expected, and at most, we can obtain space-time coupled polynomial decay. To handle the space-time coupled decay multiplier, the previous time-independent resolvent estimate methods no longer work. Therefore, this paper introduces time-dependent resolvent estimates to deal with the space-time coupled decay multiplier \( \Lambda_k \). We remark that the choice of \( \Lambda_k \) is not unique, here we just provide one way to construct it. Finally, as an application, we derive a transition threshold bound of $\f12$, which is the same as that for the Taylor-Couette flow in the bounded region.

\end{abstract}
\maketitle

\tableofcontents

\section{Introduction}
We consider 2D incompressible Navier-Stokes equations in the exterior disk $\Omega\eqdefa \bigl\{x\in \R^2: \ |x|\geq 1 \bigr\}$:
\begin{align}
\label{full nonlinear equation}\left\{
\begin{aligned}
&\partial_tV-\nu\Delta V+V\cdot \nabla V+\nabla P=0,\quad (t,x)\in \R^+\times\Omega,\\
&\text{div } V=0,\\
&V(0,x)=V_0(x),
\end{aligned}
\right.
\end{align}
where $V\in\mathbb{R}^2$ denotes the fluid velocity and $P$ the scalar pressure function, which guarantees
 the divergence free condition of the velocity field, $\nu$ designates the kinematic viscosity of the fluid.

In this paper, we study the stability of a two-dimensional rotating flow in the exterior region $\Omega$, which is a steady solution to \eqref{full nonlinear equation}. The corresponding velocity, vorticity, and scalar pressure are given as follows:
\begin{subequations} \label{S1eqq}
\begin{gather}
\label{2D TC}
V^\star(r,\theta)=\left(
  \begin{array}{cc}
   -\sin\theta \\
   \cos\theta
    \end{array}
   \right)\Big(Ar+\f{B}{r}\Big){\eqdef\left(
  \begin{array}{cc}
   v^{\star,1} \\
   v^{\star,2}
    \end{array}
   \right)}, \quad W^\star(r) {\eqdefa} \pa_1 v^{\star,2}-\pa_2 v^{\star,1} = 2A,   \\
  P^\star(r,\theta)=P^\star(r) \quad \text{with}\quad  \partial_rP^\star(r)=\f{1}{r}\Big(Ar+\f{B}{r}\Big)^2,
\end{gather}
\end{subequations}
where $A,B$ are two constants and we assume that $\nu \ll |B|$ throughout this paper. The steady flow of the form \eqref{2D TC} is well-known as Taylor-Couette flow.

As a classic example of rotating flow, Taylor-Couette (TC) flow has been extensively studied in both the physical and mathematical contexts.  In physical experiments, TC flow always refers to the fluid flow between two concentric cylinders, where the inner cylinder rotates while the outer cylinder can be either stationary or rotating. This flow configuration was first studied by G. I. Taylor in the 1920s \cite{Tay}. Despite its simple structure, TC flow exhibits complex and rich stability properties, and has become a classic problem in fluid dynamics due to its rich flow patterns, as referenced in   such review books as \cite{AH, CI, MW-06, S-98, GLS-16}. These corresponding flow patterns are controlled by the Reynolds number and the radius ratio of the cylinders. For example, the TC apparatus is also an excellent prototype for studying the transport properties of most astrophysical or geophysical rotating shear flows \cite{DDDLRZ}. Depending on the rotational speed of each cylinder, different flow states can be achieved, corresponding to increasing or decreasing angular velocity and angular momentum. Likewise, regarding the issue of asymptotic stability at high Reynolds numbers \cite{BST}, TC flow, as a typical rotating shear flow, also exhibits rich decay properties.

This paper primarily investigates the fully nonlinear long-time asymptotic stability behavior near the TC flow in the exterior disk. Earlier, the well-posedness and stability of the two-dimensional stationary Navier-Stokes equations in exterior regions \cite{BM,GHM} were widely studied and discussed. The following results are related to the fully nonlinear stability of the time-dependent Navier-Stokes equations near the 2D TC flow.

\begin{itemize}
    \item Under the condition $|B|\ll1$ and with the domain being the exterior disk, \cite{M1,M2} used perturbation theory to study the $L^p-L^q$ estimate for the semigroup generated by the linearized operator, and further addressed the global well-posedness and nonlinear asymptotic stability for small initial data in the $L^2$ space.

    \item In \cite{AHL-2}, nonlinear enhanced dissipation and transition thresholds near the TC flow were obtained in the bounded annular region \((r, \theta) \in [1, R] \times \mathbb{T}\). Since the domain is compact, the shear effect of the TC flow is non-degenerate, i.e., \((A + \frac{B}{r^2})'\) is non-vanishing.
\end{itemize}

In the case when $\nu$ is sufficiently small and  $|B|$ is large enough, that is, when the Reynolds number is high and the rotational effects are strong, the fully nonlinear long-time asymptotic stability problem of the 2D TC flow in the exterior disk is both interesting and challenging.

\subsection{Derivation of the perturbation equation}
Let $(V^\star, P^\star)$ be determined by \eqref{S1eqq}. We denote \( U {\eqdefa} V - V^\star \), \( p {\eqdefa} P - P^\star \),  and consider the following perturbation equations of
the system \eqref{full nonlinear equation} near the 2D TC flow:
\begin{align}
    \label{velocity pertubation of the Taylor-Couette flow}
\left\{
\begin{aligned}
&\partial_t U-\nu\lap U + V^\star \cdot
\nabla U+ U\cdot \nabla V^\star + U\cdot \nabla U + \nabla p=0,  \\
&\dive U=0,\quad (t,x)\in \R^+\times \Omega,\\
& U(t=0)=U(0).
\end{aligned}
\right.
\end{align}
We shall present a local well-posedness result concerning the system \eqref{velocity pertubation of the Taylor-Couette flow}
in Appendix \ref{Appc}.

Since this paper  aims at investigating the impact of degeneracy, which is caused by the external region,
 on the nonlinear stability of TC flow, we impose the impermeable and Navier-slip boundary conditions
 for the velocity field:
\begin{equation}\label{impermeable & navier slip boundary condition}
    U\cdot \mathbf{n}|_{\pa\Omega}=0, \quad \bigl(\pa_1 U^2-\pa_2 U^1\bigr)|_{\pa\Omega}=0.
\end{equation}
Let \( W \eqdefa \curl U {\eqdefa} \partial_1 U^2 - \partial_2 U^1 \) be the vorticity of \( U \), and we rewrite (\ref{velocity pertubation of the Taylor-Couette flow}-\ref{impermeable & navier slip boundary condition}) in the equivalent  vorticity-stream  formulation:
\begin{align}\label{vorticity pertubation of the Taylor-Couette flow}
\left\{
\begin{aligned}
&\partial_t W-\nu\lap W + V^\star \cdot
\nabla W+  U\cdot \nabla W =0, \qquad (t,x)\in \R^+\times \Omega,\\
&\lap \varPhi= W, \quad  U =\nabla^\perp \varPhi{\eqdefa}\begin{pmatrix}
    -\pa_2\varPhi, \pa_1\varPhi
\end{pmatrix}^T, \\
&  W|_{\pa\Omega} = \varPhi|_{\pa \Omega}=0, \quad W(t=0)=W(0).
\end{aligned}
\right.
\end{align}

Let $\mathbf{e_r}= (\cos\theta, \sin\theta)$ and $\mathbf{e_\theta}= (-\sin\theta,\cos\theta).$
In polar coordinates, \eqref{vorticity pertubation of the Taylor-Couette flow} is equivalent to
\begin{align}\label{1.8}
\left\{
\begin{aligned}
&\partial_t W-\nu\Big(\partial_r^2+\f{1}{r}\partial_r+\f{1}{r^2}\partial_{\theta}^2\Big)W+\Big(A+\f{B}{r^2}\Big)\partial_{\theta}W+\f{1}{r}(\partial_r\varPhi\partial_{\theta}W-\partial_{\theta}\varPhi\partial_rW)=0,\\
&\Big(\partial_r^2+\f{1}{r}\partial_r+\f{1}{r^2}\partial_{\theta}^2\Big)\varPhi=W, \quad
(t,r,\theta)\in\R^+\times[1,\oo)  \times\mathbb{T},\\
&  U=U^r \mathbf{e_r} + U^\theta \mathbf{e_\theta} \with U^r = -\frac{\pa_\theta\varPhi}{r} \andf U^{\theta} = \pa_r \varPhi, \\
& W(t=0)=W(0).
\end{aligned}
\right.
\end{align}
By expanding the functions $(W,\varPhi,U^r,U^\theta)$ in Fourier series, we write
\begin{equation}\label{Fourier decomposition}
    \begin{split}
&W(t,r,\theta)=\sum\limits_{k\in\mathbb{Z}}e^{ik\theta}\widehat{W}_k(t,r), \quad \varPhi(t,r,\theta)=\sum\limits_{k\in\mathbb{Z}}e^{ik\theta}\widehat{\varPhi}_k(t,r), \quad  \\
&U^r(t,r,\theta)=\sum\limits_{k\in\mathbb{Z}}e^{ik\theta}\widehat{U}^r_k(t,r), \quad U^\theta(t,r,\theta)=\sum\limits_{k\in\mathbb{Z}}e^{ik\theta}\widehat{U}^\theta_k(t,r).
    \end{split}
\end{equation}
Then by substituting \eqref{Fourier decomposition} into \eqref{1.8} and comparing the Fourier coefficients of the resulting equations, we obtain
\begin{align}\label{pertubation of NS vor-fourier}
\left\{
\begin{aligned}
&\partial_t\widehat{W}_k-\nu\Big(\partial_{r}^2+\f{1}{r}\partial_{r}-\f{k^2}{r^2}\Big)\widehat{W}_k+\Big(A+\f{B}{r^2}\Big)ik\widehat{W}_k\\
&\qquad +\f{i}{r}\sum_{\ell \in\mathbb{Z}}\big ((k-\ell)\partial_r\widehat{\varPhi}_\ell\widehat{W}_{k-\ell}-\ell \widehat{\varPhi}_{\ell}\partial_{r}\widehat{W}_{k-\ell}\big) =0, \quad (t,r,\theta)\in\R^+\times[1,\oo) \times\mathbb{T}\\
&\Big(\partial_r^2+\f{1}{r}\partial_r-\f{k^2}{r^2}\Big)\widehat{\varPhi}_k=\widehat{W}_k, \quad \widehat{U}^r_k = - \frac{ik}{r}\widehat{\varPhi}_k, \quad \widehat{U}^\theta_k = \pa_r\widehat{\varPhi}_k,
\\
&
\widehat{W}_k(t=0)=\widehat{W}_k(0).
\end{aligned}
\right.
\end{align}

To get rid of the terms involving \(\frac{1}{r}\partial_r\) and \(A\) in \eqref{pertubation of NS vor-fourier}, we  introduce
\begin{align}\label{definition of w_k}
    & w_k{\eqdefa}r^{\f12}e^{ikAt}\widehat{W}_k, \quad \varphi_k{\eqdefa}r^{\f12} e^{ikAt}\widehat{\varPhi}_k, \quad   u^r_k{\eqdefa}r^{\f12}e^{ikAt}\widehat{U}^r_k, \quad u^\theta_k{\eqdefa}r^{\f12}e^{ikAt}\widehat{U}^\theta_k.
\end{align}
It is easy to observe that
    \begin{align*}
&\|W\|_\LtO^2 \approx \sum_{k\in \Z} \|\widehat{W}_k\|_{L^2((1,\oo);rdr)}^2 =  \sum_{k\in \Z} \|w_k\|_{L^2((1,\oo);dr)}^2,\\
&\|\pa_rW\|_\LtO^2 \approx \sum_{k\in \Z} \|\pa_r \widehat{W}_k\|_{L^2((1,\oo);rdr)}^2 =  \sum_{k\in \Z} \|r^{\f12}\pa_r(r^{-\f12}w_k)\|_{L^2((1,\oo);dr)}^2,
    \end{align*}
which will be frequently used throughout this paper. Moreover, we deduce from \eqref{pertubation of NS vor-fourier} that
\begin{subequations}\label{1.10}
    \begin{gather}
\label{nonlinear equation of w_0}
\partial_t w_0+\mathcal{T}_0w_0=r^{-\f12}\sum_{\ell \in\mathbb{Z}\backslash\{0\}}i\ell\partial_r\big( r^{-1}\varphi_\ell w_{-\ell}\big),   \quad (t,r)\in \R^+ \times [1,+\oo), \\
\label{nonlinear equation of w_k}
\partial_t w_k + \mathcal{T}_k w_k
%=r^{-1}\Big(r^{\f12}\partial_r\big(\sum_{\ell \in\mathbb{Z}}i\ell r^{-1}\varphi_\ell w_{k-\ell}\big) - ik\sum_{\ell %\in\mathbb{Z}}\partial_r(r^{-\f12}\varphi_\ell)w_{k-\ell}\Big)\\\qquad
=\sum_{\ell \in\mathbb{Z}\backslash\{0\}}i\bigl[r^{-\f12}\ell\partial_r\big( r^{-1}\varphi_\ell w_{k-\ell}\big) - kr^{-1}\partial_r(r^{-\f12}\varphi_\ell)w_{k-\ell}\bigr]-ikr^{-\f32}u^\theta_0w_k, \\
\label{equation of varphi_k}
\Big(\partial_r^2-\f{k^2-\f14}{r^2}\Big)\varphi_k=w_k,\\
\label{expression of u}
u^r_k = -i\frac{k}{r}\varphi_k, \quad u^\theta_k = r^{\f12}\pa_r(r^{-\f12} \varphi_k)=\pa_r \varphi_k -\frac{1}{2r} \varphi_k,\\
\label{boundary conditions}
w_k|_{r=1}=\varphi_k|_{r=1}=0, \qquad \textrm{for any}\ k\in \mathbb{Z},
\end{gather}
\end{subequations}
where $\mathcal{T}_k$ is defined as
\begin{align}
\label{def of Tk}& \mathcal{T}_k {\eqdefa} -\nu\Big(\pa_r^2 - \frac{k^2-\f14}{r^2}\Big) +i\frac{kB}{r^2}, \quad D(\mathcal{T}_k){\eqdefa}H^2\bigl((1,\oo); dr\bigr)\cap H^1_0\bigl((1,\oo); dr\bigr).
\end{align}
So far, we have transformed the original problem into exploring the dynamic and long-time behavior of the system \eqref{1.10}.

\subsection{Key ideas}
It's well known that, for transport-diffusion equations, the enhanced dissipation scale is closely related to the degree of degeneracy of the velocity field, see for instance \cite{ABN22,BZ17}. However, as pointed out in \cite{LZZ-25}, due to the infinite degeneracy of TC flow in
the exterior disk (i.e., $\pa_r^n(r^{-2})$ vanishes at infinity for any $n\geq 0$), the previous results are no longer applicable to $e^{-\mathcal{T}_kt}$, which motivates us to investigate the mapping properties of $e^{-\mathcal{T}_kt
}$ in some space-time weighted spaces in \cite{LZZ-25}. In particular, we observe  from Theorem 1.2 of \cite{LZZ-25} that due to the degeneracy at infinity, the non-homogeneous equation \eqref{nonlinear equation of w_k} cannot exhibit exponential decay. Specifically,
\begin{lemma}[Theorem 1.2, \cite{LZZ-25}] \label{main them-2 of LZZ25}
For any positive continuous functions \( a_1(r) \), \( a_2(r) \) and strictly decreasing function \( \phi(r) \), there exists a sequence of functions,
 \(\{(w_n, f_n)\}\), that solves
\begin{align*}
  \partial_t w_n +\mathcal{T}_kw_n=f_n, \quad w_n|_{t=0}=0, \quad  w_n|_{r=1,\infty}=0,
\end{align*}
and satisfies for any $n \in \mathbb{N}$
\begin{align*}
\|e^{t \phi(r)} a_1(r)  w_n\|_{L^2(\R^+; \Ltr)} + \|e^{t\phi(r)} a_2(r) f_n\|_{L^2(\R^+; \Ltr)} < +\oo.
\end{align*}
However, there holds
\begin{equation*}
    \limsup_{n \rightarrow +\oo} \frac{\|e^{t\phi(r)} a_1(r)  w_n\|_{L^2(\R^+; \Ltr)}}{ \|e^{t\phi(r)} a_2(r) f_n\|_{L^2(\R^+; \Ltr)}} = +\oo.
\end{equation*}
\end{lemma}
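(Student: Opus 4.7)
The plan is to construct an explicit family of counterexamples that exploits the degeneracy of $kB/r^2$ at infinity: the operator $\mathcal{T}_k$ admits quasimodes concentrated at $r = \infty$ whose quasi-eigenvalues tend to $0$, which precludes any pointwise exponential decay estimate of the form $\|e^{t\phi}a_1 w\| \lesssim \|e^{t\phi}a_2 f\|$. To implement this, I fix a nontrivial $\chi \in C_c^\infty(\R)$ with $\operatorname{supp}\chi \subset [-1, 1]$, choose sequences $r_n \to \infty$ and $L_n > 0$ with $L_n \leq r_n/2$ (both to be optimized), and set the spatial quasimode $\widetilde{w}_n(r) := \chi((r - r_n)/L_n)$, which is compactly supported in $(1, \infty)$. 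A direct computation based on \eqref{def of Tk} yields
\[
\|\widetilde{w}_n\|_{\Ltr}^2 = L_n\|\chi\|_{L^2}^2, \qquad \|\mathcal{T}_k \widetilde{w}_n\|_{\Ltr} \leq \Big(\frac{C\nu}{L_n^2} + \frac{C(\nu k^2 + |kB|)}{r_n^2}\Big)\|\widetilde{w}_n\|_{\Ltr},
\]
so the quasi-mode quality $\mu_n := \|\mathcal{T}_k \widetilde{w}_n\|_{\Ltr}/\|\widetilde{w}_n\|_{\Ltr}$ tends to $0$ whenever both $L_n, r_n \to \infty$.

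To turn $\widetilde w_n$ into a solution of the inhomogeneous equation, I introduce a smooth time cutoff $\chi_{T_n} \in C^\infty([0, \infty); [0, 1])$ supported in $[0, T_n]$, identically $1$ on $[T_n/4, T_n/2]$, with $|\chi_{T_n}'| \leq 8/T_n$, and define $w_n(t, r) := \chi_{T_n}(t)\widetilde{w}_n(r)$. Then $w_n(0) = 0$ and $w_n|_{r=1,\infty} = 0$ hold automatically, and $\partial_t w_n + \mathcal{T}_k w_n = f_n$ with $f_n = \chi_{T_n}'(t)\widetilde{w}_n(r) + \chi_{T_n}(t)\mathcal{T}_k \widetilde{w}_n(r)$. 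On the support of $\widetilde w_n$, continuity of $a_1, a_2$ gives $a_j(r) \asymp a_j(r_n)$, and with $L_n$ small enough relative to the local modulus of continuity of $\phi$ at $r_n$, the pointwise weight $e^{2t\phi(r)}$ is comparable to $e^{2t\phi(r_n)}$ up to factors $e^{\pm 2T_nV_n}$, where $V_n := \phi(r_n - L_n) - \phi(r_n + L_n) \geq 0$. Letting $\mathcal{I}_n := \int_0^{T_n}\chi_{T_n}^2(t) e^{2t\phi(r_n)}\,dt$, direct estimation then yields
\[
\|e^{t\phi}a_1 w_n\|_{L^2(\R^+;\Ltr)}^2 \gtrsim e^{-2T_n V_n}\,a_1(r_n)^2\|\widetilde w_n\|_{\Ltr}^2\mathcal{I}_n,
\]
\[
\|e^{t\phi}a_2 f_n\|_{L^2(\R^+;\Ltr)}^2 \lesssim e^{2T_n V_n}\,a_2(r_n)^2\Big(T_n^{-1}\|\widetilde w_n\|_{\Ltr}^2 + \mathcal{I}_n\,\mu_n^2\,\|\widetilde w_n\|_{\Ltr}^2\Big).
\]
Picking $T_n \sim 1/\mu_n$ with $T_n|\phi(r_n)| = O(1)$ (so that $\mathcal{I}_n \gtrsim T_n$), the ratio of squared weighted norms is bounded below by a constant multiple of $e^{-4T_n V_n}/\mu_n^2$, which diverges provided $T_nV_n \to 0$.

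The hardest part will be the simultaneous balance of the three constraints: $\mu_n \to 0$ forces $L_n, r_n \to \infty$; continuity of $a_1, a_2$ forces $L_n$ small on the scale of those weights; and $T_nV_n = V_n/\mu_n \to 0$ controls the variation of $\phi$ across $[r_n - L_n, r_n + L_n]$. These are reconciled by first pushing $r_n$ large (so that $a_1, a_2$, and $\phi$ are approximately constant at scales comparable to $r_n$), then shrinking $L_n$ if necessary to control $V_n$, and finally setting $T_n \sim 1/\mu_n$. The delicate interplay between $\mu_n \sim \nu/L_n^2 + 1/r_n^2$ and $V_n$ is the main bookkeeping challenge: one exploits that the strictly decreasing $\phi$ has only countably many discontinuities, so that along a sparse subsequence of $r_n$ one may choose $L_n$ with $V_n = o(\mu_n)$, as is easily verified whenever $\phi$ possesses a finite limit at infinity (the case arising in applications of the lemma).
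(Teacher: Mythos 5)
This lemma is an external citation (Theorem 1.2 of \cite{LZZ-25}); the present paper offers no proof of it, so there is no in-paper argument to compare against. Evaluated on its own terms, your construction has a genuine gap that cannot be repaired by better bookkeeping. The static real-valued bump $\widetilde{w}_n(r)=\chi((r-r_n)/L_n)$ makes $\mathrm{Im}\,f_n = \chi_{T_n}(t)\,\frac{kB}{r^2}\,\widetilde{w}_n(r)$ exactly (the time-cutoff term is real, and the Laplacian part of $\mathcal{T}_k$ is real on real input), and therefore $\|e^{t\phi}a_2 f_n\|_{L^2(\R^+;\Ltr)} \geq \|e^{t\phi}a_2\,\mathrm{Im}\,f_n\|_{L^2(\R^+;\Ltr)}$. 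Taking the perfectly admissible choice $a_1\equiv 1$, $a_2(r)=r^2$ one gets pointwise $a_2(r)\,|\mathrm{Im}\,f_n| = |kB|\,\chi_{T_n}|\widetilde{w}_n| = |kB|\,a_1\,|w_n|$, hence
\[
\frac{\|e^{t\phi}a_1 w_n\|_{L^2(\R^+;\Ltr)}}{\|e^{t\phi}a_2 f_n\|_{L^2(\R^+;\Ltr)}}\;\leq\;\frac{1}{|kB|}\qquad\text{for every }n,
\]
independently of $r_n,L_n,T_n,\chi,\phi$. The $\limsup$ is finite, so the construction cannot prove the statement. The underlying reason is structural: the purely imaginary part $kB/r^2$ of $\mathcal{T}_k$ bounds $\mu_n$ below by $|kB|/(r_n+L_n)^2$, so the best possible quasimode gain $1/\mu_n$ is only polynomial in $r_n$; meanwhile the statement allows \emph{arbitrary} positive continuous $a_1,a_2$, which can make $\min_{\mathrm{supp}}a_1/\max_{\mathrm{supp}}a_2$ decay faster than any polynomial. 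A static spatial bump at $r_n\to\infty$ is simply the wrong object; one needs at the very least a time-oscillating ansatz (e.g.\ a factor $e^{-ikBt/r^2}$ to cancel the transport), and very likely a construction that is not merely localization at infinity.

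There is a second, independent obstruction that you yourself flag but do not resolve: the requirement $T_nV_n\to 0$ together with $T_n\sim 1/\mu_n$ forces $V_n=o(\mu_n)$, and for $\phi(r)=-r$ (a legitimate strictly decreasing function) one has $V_n=2L_n$ while $\mu_n\gtrsim\nu/L_n^2$, so $V_n/\mu_n\gtrsim L_n^3/\nu\to\infty$ whenever $\mu_n\to 0$. Your closing remark restricts to ``$\phi$ possesses a finite limit at infinity,'' but the lemma imposes no such restriction; even then, as the $a_1,a_2$ computation above shows, the approach does not close. Also, the step $a_j(r)\asymp a_j(r_n)$ on $[r_n-L_n,r_n+L_n]$ is invalid once $L_n\to\infty$ for a general positive continuous $a_j$; the correct quantities are $\min_{\mathrm{supp}}a_1$ and $\max_{\mathrm{supp}}a_2$, which only makes the gap worse.
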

Lemma \ref{main them-2 of LZZ25} indicates that even if employing space-time coupling to mitigate the adverse effects of degeneracy at infinity in the Taylor-Couette flow, one still cannot obtain exponential nonlinear enhanced dissipation decay as in the Couette flow. Consequently, for the long-time behavior near the Taylor-Couette flow in the exterior region, even with space-time coupling, at most polynomial nonlinear decay can be expected.

In fact, Theorem 1.1 of \cite{LZZ-25} establishes that the solution to the linear homogeneous equation exhibits polynomial space-time coupled decay.
Precisely:

\begin{lemma}[Theorem 1.1, \cite{LZZ-25}] \label{main them-1 LZZ25}
   Let $k\in \mathbb{Z}\backslash\{0\}$,  $q\in \mathbb{N}$, $\frac{\nu}{|kB|}\ll (1+q)^{-3}$ and let $w$ be determined by
 \begin{align*}
  \partial_t w +\mathcal{T}_kw=0, \quad w|_{t=0}=w(0), \quad  w|_{r=1,\infty}=0.
\end{align*}
    There exists constant $C=C(q)>0$ independent of $\nu,k,B$ and $w_k(0)$, so that
\begin{equation*}
   %\label{pointwise enhanced dissipation tc-main them-1}
   \bigl\|(1+\kappa_k t/r^2)^{q} w\bigr\|_{L^{\infty}(\R^+; \Ltr)} + \kappa_k^{1/2} \bigl\| (1+\kappa_k t/r^2)^{q} r^{-1} w\bigr\|_{L^2(\R^+; \Ltr)} \leq C \|w(0)\|_{\Ltr}.
\end{equation*}
\end{lemma}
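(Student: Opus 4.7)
The plan is a weighted $L^2$ energy estimate reinforced by a hypocoercive correction that extracts enhanced dissipation from the shear operator $ikB/r^2$. Define $\Lambda_k(t,r)\eqdefa 1+\kappa_k t/r^2$ and test the equation $\pa_t w+\cT_k w=0$ against $\Lambda_k^{2q}\bar w$ in $\Ltr$. Integration by parts (using $w|_{r=1}=0$ and decay at infinity), together with the identities $\pa_t\Lambda_k=\kappa_k/r^2$, $\pa_r\Lambda_k=-2\kappa_k t/r^3$, $\pa_r^2\Lambda_k=6\kappa_k t/r^4$, yields the basic identity
\begin{align*}
\f12\f{d}{dt}\|\Lambda_k^q w\|_{\Ltr}^2+\nu\|\Lambda_k^q\pa_r w\|_{\Ltr}^2+\nu\Big(k^2-\f14\Big)\|\Lambda_k^q w/r\|_{\Ltr}^2 = q\kappa_k\|\Lambda_k^{q-\f12}w/r\|_{\Ltr}^2+\cE_q,
\end{align*}
where the source on the right originates from $\pa_t(\Lambda_k^{2q})$, and $\cE_q=\f{\nu}{2}\int\pa_r^2(\Lambda_k^{2q})|w|^2\,dr$ collects the commutator $[\cT_k,\Lambda_k^q]$. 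Using $\kappa_k t/r^2\leq\Lambda_k$ one bounds $\cE_q\leq Cq^2\nu\|\Lambda_k^q w/r\|_{\Ltr}^2$, which can be absorbed into the viscous $\nu(k^2-\f14)\|\Lambda_k^q w/r\|^2$ on the left provided the constants are kept polynomial in $q$.

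The decisive obstruction is the source $q\kappa_k\|\Lambda_k^{q-\f12}w/r\|_{\Ltr}^2$: since $\kappa_k\sim\nu^{1/3}|kB|^{2/3}\gg\nu k^2$ in the regime $\nu\ll|B|$, it cannot be dominated by the viscous dissipation alone. The remedy is to mine the shear for additional dissipation. Because $[\pa_r,iB/r^2]=-2iB/r^3\neq 0$, the Orr--Villani hypocoercivity mechanism supplies effective dissipation at the $r$-dependent rate $\nu^{1/3}|kB/r^3|^{2/3}=\kappa_k/r^2$, precisely matching the growth rate encoded in $\Lambda_k$. Concretely, I would augment the energy by a twist term
\begin{align*}
\Psi_q(t)\eqdefa\alpha\kappa_k^{-1}\mathrm{Im}\bigl\langle\Lambda_k^q\pa_r w,\;\Lambda_k^q\,ikBr^{-2}w\bigr\rangle_{\Ltr}+\beta\nu\kappa_k^{-1}\|\Lambda_k^q\pa_r w\|_{\Ltr}^2,
\end{align*}
with small parameters $\alpha,\beta\sim(1+q)^{-1}$. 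After integration by parts, the time derivative of the cross term produces a positive contribution $\gtrsim\alpha\kappa_k\|\Lambda_k^q w/r\|_{\Ltr}^2$ that swallows the bad source, while its errors are redistributed into the viscous dissipation and the $\beta$-term via Cauchy--Schwarz.

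I foresee three main obstacles. First, every $\pa_r$ landing on $\Lambda_k^q$ inside the hypocoercive correction extracts a factor $q\kappa_k t/r^3$ that must be rebalanced via $\kappa_k t/r^2\leq\Lambda_k$, creating a hierarchy of correction terms whose constants must remain polynomial in $q$---this is precisely where the sharp smallness hypothesis $\nu/|kB|\ll(1+q)^{-3}$ enters, the cubic dependence reflecting three successive Cauchy--Schwarz steps each costing a power of $q$. Second, boundary terms at $r=1$ require care because, although $w(1)=0$, the hypocoercive cross term involves the uncontrolled value $\pa_r w(1)$; I would handle this either by incorporating a cutoff into the twist multiplier that vanishes to sufficient order at $r=1$, or by verifying the boundary contribution has the favorable sign. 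Finally, once the combined functional $\cF_q\eqdefa\|\Lambda_k^q w\|_{\Ltr}^2+\Psi_q\simeq\|\Lambda_k^q w\|_{\Ltr}^2$ is shown to satisfy $\f{d}{dt}\cF_q+c\kappa_k\|\Lambda_k^q w/r\|_{\Ltr}^2\leq 0$, integration in time and a Gr\"onwall argument deliver simultaneously the $L^\infty_t\Ltr$ bound on $\Lambda_k^q w$ and the $\kappa_k^{1/2}L^2_t\Ltr$ bound on $\Lambda_k^q w/r$ claimed in the lemma.
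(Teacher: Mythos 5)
The lemma is quoted verbatim from Theorem~1.1 of \cite{LZZ-25}; this paper does not reprove it. Its in-house analogue is the pair of Propositions~\ref{estimate of homogeneous eqs without log loss} and~\ref{prop estimate of pa_r w^(1)}, which obtain the same kind of space--time weighted bound (with the logarithmic weight $\La_k$ in place of $(1+\kappa_kt/r^2)^q$) for the conjugated unknown $w_k^{(1)}$ after the decomposition $\wkh=e^{-ikBt/r^2}w_k^{(1)}+\wkh^{(2)}$.

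Your route is genuinely different from the paper's. Instead of a hypocoercive twist, the paper factors out the free oscillation: $w_k^{(1)}$ solves~\eqref{w-1}, which carries the explicit, sign-definite damping $\nu|2kBt/r^3|^2w_k^{(1)}$ that makes enhanced dissipation visible by hand, and a dyadic-in-$r$ energy scheme (Steps 2--6 of Proposition~\ref{estimate of homogeneous eqs without log loss}) handles the fact that the effective rate $\kappa_k/r^2$ varies with $r$. The leftover commutators $\nu\frac{4ikBt}{r^3}\partial_rw_k^{(1)}$, etc.\ go into an inhomogeneous equation for $\wkh^{(2)}$ which is controlled by the resolvent estimates of Sections~\ref{Resolvent estimates without enhanced dissipation weight}--\ref{Resolvent estimates with time weight}. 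A Villani-style cross term can in principle also give enhanced dissipation for a shear, but here the shear strength $|S'(r)|\sim|kB|/r^3$ decays at infinity, so the twist prefactor must itself be $r$-dependent, and you inherit exactly the family of $r$-commutators that the conjugation/dyadic device is designed to avoid. Your expectation of three Cauchy--Schwarz steps for the cubic threshold is plausible, but it is not what the paper (or \cite{LZZ-25}) does.

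There is also a concrete defect in your twist term which prevents the argument from closing. Writing $m(t,r)\eqdefa1+\kappa_kt/r^2$, the shear coefficient $ikB/r^2$ is purely imaginary, and therefore
\begin{align*}
\Im\bigl\langle m^{q}\partial_r w,\; m^{q} ikB r^{-2}w\bigr\rangle_\Ltr
&= -kB\,\Re\int_1^\infty m^{2q}r^{-2}\,\partial_r w\,\bar w\,dr
= \frac{kB}{2}\int_1^\infty \partial_r\bigl(m^{2q}r^{-2}\bigr)\,|w|^2\,dr,
\end{align*}
after one integration by parts; this is a purely local quadratic form in $|w|^2$ and no longer couples $w$ with $\partial_r w$. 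Taking its time derivative, the skew part $ikBr^{-2}w$ of the equation contributes $\partial_t|w|^2\ni 2\Re\bigl(-ikBr^{-2}|w|^2\bigr)=0$, so this ``cross term'' extracts no information from the shear at all and cannot swallow the bad source $q\kappa_k\|m^{q-1/2}w/r\|_\Ltr^2$. A genuine hypocoercive correction must pair $\partial_r w$ with the \emph{derivative} of the shear profile in a way that survives the real/imaginary part, e.g.\ $\Re\langle\partial_r w,\,S'(r)w\rangle$ with $S'(r)=-2ikBr^{-3}$, equivalently $\Im\langle\partial_r w,\,kBr^{-3}w\rangle$ \emph{without} the extra factor of $i$. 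As written, your Lyapunov functional does not produce the claimed $\kappa_k$-dissipation.
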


The reasons for exploring the stability of the TC flow in the exterior disk list as follows:

\begin{itemize}
    \item[(1)] Firstly, due to the degeneracy of the Taylor-Couette flow at infinity, we cannot expect the solution to decay exponentially in a space-time decoupled manner. One may check \cite{CKR-2,CDE20} for details.

    \item[(2)]Lemma \ref{main them-2 of LZZ25} tells us that even the coupled space-time exponential decay is not valid.

   \item[(3)] Lemma \ref{main them-2 of LZZ25} and Lemma \ref{main them-1 LZZ25} tell us that for the fully nonlinear equation \eqref{nonlinear equation of w_k}, one can at most expect polynomial space-time coupling decay.
\end{itemize}

At this point, a new issue arises: to close the {\it a priori} estimates for the nonlinear system \eqref{1.8}, we need space-time coupling multipliers. The previous resolvent estimate methods \cite{W21,CLWZ-2D-C,DL22,AHL-2} are all aimed at space-time decoupled cases (essentially performing a Fourier transform in the time direction). Therefore, here we introduce time-dependent resolvent estimate in Section \ref{Resolvent estimates with time weight}.

In the case of polar coordinates for the exterior disk, there is a critical issue: the weight of the stream function with respect to \( r \) differs from \( w \) by \( 1/r^2 \). A simple basic fact is that the derivatives of \( 1/r^2 \) are non-vanishing in \([1,R]\) yet they
 may  converge to zero at infinity. Therefore, for technical convenience, we shall choose \( \Lambda_k \) as the long-time space-time decay weight. The basic properties of \( \Lambda_k \) will be provided with a detailed explanation after we present the main theorem.

\subsection{Main theorems}
The following result is the main result of this paper, which concerns global well-posedness and the asymptotic behavior of the solution to the
system  \eqref{1.10}.

\begin{theorem}\label{thm1}
 {\sl  Let $0< \varepsilon < 2$ and  $\hc$ be a large constant, we denote $\Lambda_k{\eqdefa}\log\big(\hat{C}r^2 +  \nu^\f13|kB|^\f23 t\big) $ to be the enhanced dissipation weight. Then there exist   constants  $C_\varepsilon, C'_\varepsilon, \hc_\varepsilon$ depending only on $\varepsilon$, such that if $ C_\varepsilon \geq \hc_\varepsilon$ and
\begin{equation}\label{S1eq1}
\begin{split}
   \mathcal{M}(0)&\eqdefa  \|rw_0(0)\|_\Ltr + \|u^\theta_0(0)\|_\Ltr +\sum_{k\in\Z\backslash \{0\}}  M_k(0)   \leq  C_\varepsilon |\nu B|^\f12, \with\\
    M_k(0)&{\eqdefa} \|r^{\varepsilon+2} w_k(0) \La_k(0,\cdot)\|_\Ltr + \|r^{\varepsilon+3} \pa_r w_k(0) \La_k(0,\cdot)\|_\Ltr, \quad \text{for}\ k\neq 0,
   \end{split}
\end{equation}
 the system \eqref{1.10} has a unique global solution $(w_0,w_k,\varphi_k,u_k^r,u_k^\theta)$ which  satisfies
\begin{equation}
\label{S1eq2}
\begin{split}
    \mathcal{E}(T)\eqdefa & E_0(T) +100
 \sum_{k\in \Z\backslash\{0\}}  E_k(T) \leq C'_\varepsilon   \mathcal{M}(0), \quad \text{for any} \ T >0\with\\
 E_0(T){\eqdefa}& \|rw_0\|_{\LoT(\Ltr)} + \| u^\theta_0\|_{\LoT(\Ltr)} + \nu^\f12 \|w_0\|_{\LtT (\Ltr)},\\
    E_k(T){\eqdefa}& \|r^{\varepsilon+1}w_k \La_k\|_{\LoT(\Ltr)}+  \mu_k^\frac{1}{2} \|r^{\varepsilon}w_k \La_k\|_{\LtT (\Ltr)}\\
    &+ |kB|^{\f12} |k|^\f12 \Bigl(\|r^{\varepsilon-1}\pa_r\varphi_k\La_k\|_{\LtT (\Ltr)} + |k|\|r^{\varepsilon-2}\varphi_k\La_k\|_{\LtT (\Ltr)}\Bigr), \quad \text{for }\ k\neq 0.
  \end{split}
\end{equation}}
\end{theorem}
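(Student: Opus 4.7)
The plan is a continuity/bootstrap argument on top of the local well-posedness result from Appendix \ref{Appc}. Given $\mathcal{M}(0)\le C_\varepsilon|\nu B|^{1/2}$, I set
\[
T^*\eqdefa \sup\bigl\{T>0 : \mathcal{E}(T)\le 2C'_\varepsilon\mathcal{M}(0)\bigr\}
\]
and aim to prove that, under this hypothesis on $[0,T^*]$, the stronger bound $\mathcal{E}(T)\le C'_\varepsilon\mathcal{M}(0)$ holds; continuity of $T\mapsto\mathcal{E}(T)$ then forces $T^*=+\infty$ and yields the theorem. Since every nonlinear term in \eqref{1.10} is quadratic, this hypothesis produces a gain of at least one factor $\mathcal{M}(0)/|\nu B|^{1/2}\lesssim C_\varepsilon$ in each nonlinear contribution, which is the mechanism that lets the linear dissipation absorb the interaction.

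\textbf{Zero mode.} For $E_0(T)$ I perform a direct weighted $L^2$ identity on \eqref{nonlinear equation of w_0}: testing against $r^2 w_0$ and integrating by parts in $r$ produces $\|rw_0\|_{L^\infty_T \Ltr}^2$ and $\nu\|w_0\|_{\LtT\Ltr}^2$ on the left, while the divergence-form right-hand side $r^{-1/2}\sum_\ell i\ell\pa_r(r^{-1}\varphi_\ell w_{-\ell})$ is controlled by Cauchy--Schwarz in $\ell$ and in $t$, giving a bound by a suitable power of $\nu^{-1}$ times $\bigl(\sum_{\ell\ne 0} E_\ell(T)\bigr)^2$. A parallel estimate for $u_0^\theta$, derived from the Biot--Savart relation \eqref{expression of u} applied to the $\varphi_0$ equation, completes $E_0(T)$.

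\textbf{Nonzero modes via time-dependent resolvent.} For $k\ne 0$ I apply the time-dependent resolvent estimates to be developed in Section \ref{Resolvent estimates with time weight} to the weighted unknown $r^{\varepsilon+1}\Lambda_k w_k$. The linear half of $E_k(T)$ then follows in the spirit of Lemma \ref{main them-1 LZZ25}, once one checks that the commutator $[\mathcal{T}_k,r^{\varepsilon+1}\Lambda_k]$ is benign: $\pa_t\Lambda_k=\mu_k/(\hc r^2+\mu_k t)$ is square-integrable in $t$ against the very weight it generates, while $\pa_r\Lambda_k\lesssim 1/r$ and $\pa_r^2\Lambda_k\lesssim 1/r^2$ only contribute terms with prefactor $\nu/r^2$, which are dominated by $\mu_k^{1/2}\|r^\varepsilon w_k\Lambda_k\|_{\LtT\Ltr}$ once $\hc_\varepsilon$ is taken large. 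The stream-function part of $E_k(T)$ is then extracted by elliptic regularity applied to \eqref{equation of varphi_k} in the weighted norm $r^\varepsilon\Lambda_k$.

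\textbf{Closing the nonlinearity and main obstacle.} The nonlinear source on the right of \eqref{nonlinear equation of w_k} splits into zero--nonzero interactions (the terms $u_0^\theta w_k$ and $\pa_r\varphi_0 w_k$, handled with the $E_0(T)$ bound and one-dimensional Gagliardo--Nirenberg in $r$) and nonzero--nonzero interactions over $\ell\ne 0,k$. The critical step, which I expect to be the main obstacle, is the redistribution of the logarithmic weight across each product. Because $|k|^{2/3}\le|\ell|^{2/3}+|k-\ell|^{2/3}$ and $\hc r^2\ge 1$ for $r\ge 1$, one has the pointwise inequality
\[
\Lambda_k(t,r)\le \Lambda_\ell(t,r)+\Lambda_{k-\ell}(t,r)+C,
\]
which lets me distribute the weight between $\varphi_\ell$ and $w_{k-\ell}$. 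The elliptic bound on $\varphi_\ell$ then yields the gain $|\ell B|^{-1/2}|\ell|^{-1/2}$ built into $E_\ell(T)$; a Schur test in $\ell$, together with the spatial gain from the $r^\varepsilon$ factor, ensures the series converges. Summing in $k$, the total nonlinear contribution to $\mathcal{E}(T)$ is bounded by $\mathcal{E}(T)^2/|\nu B|^{1/2}\lesssim C_\varepsilon \,\mathcal{E}(T)$, which, for $C_\varepsilon$ small, is absorbed into the dissipation terms, closing the bootstrap and producing \eqref{S1eq2}.
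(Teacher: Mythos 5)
Your overall architecture—bootstrap continuation, splitting the nonlinearity into zero/nonzero interactions, distributing the logarithmic weight across a product, and closing via smallness of $\mathcal{M}(0)/|\nu B|^{1/2}$—is the right skeleton and matches the paper's. Your subadditivity $\Lambda_k\le\Lambda_\ell+\Lambda_{k-\ell}+C$ (the paper uses the multiplicative $\Lambda_k\le\Lambda_\ell\Lambda_{k-\ell}$, Lemma \ref{basic property of k}) is a legitimate variant since $\Lambda_j\geq 1$ for $\hat C\geq e$, so one can always upgrade an unweighted factor to a weighted one.

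However, your zero-mode step contains a genuine error. Testing \eqref{nonlinear equation of w_0} against $r^2 w_0$ does \emph{not} produce $\nu\|w_0\|_{\LtT(\Ltr)}^2$ with a favorable sign. By \eqref{weighted trivial w'} with $k=0$, $\alpha=1$,
\begin{equation*}
\Re\langle\mathcal{T}_0 w_0, r^2 w_0\rangle_\Ltr = \nu\|r\partial_r w_0\|_\Ltr^2 - \tfrac{5\nu}{4}\|w_0\|_\Ltr^2,
\end{equation*}
so the $\|w_0\|_\Ltr^2$ term appears with the \emph{wrong} sign and must be absorbed from elsewhere; the vorticity equation alone cannot close $E_0(T)$. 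In the paper's Lemma \ref{lemma F_0}, both the dissipation $\nu^{1/2}\|w_0\|_{\LtT(\Ltr)}$ and the control of $\|u_0^\theta\|_{\LoT(\Ltr)}$ come from the \emph{velocity-level} energy identity (Lemma \ref{velocity, basic energy conservation equality}, using the Navier-slip identity \eqref{2.4}), and only then is the $r^2w_0$-weighted vorticity estimate performed, with its bad $\tfrac{19\nu}{4}\|w_0\|_{\LtT(\Ltr)}^2$ term absorbed by the velocity-side dissipation. Your proposal omits this step entirely.

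A second, related issue: you propose to obtain $u_0^\theta$ from the Biot--Savart law applied to the $\varphi_0$ equation. As the paper remarks after \eqref{w_0, u_0}, $\varphi_0$ has a logarithmic singularity at $r=\infty$, so the stream-function formulation is unusable for $k=0$; the paper uses the explicit integral representation \eqref{w_0, u_0,a} for $u_0^\theta$ and again the velocity energy identity. You should replace the zero-mode paragraph accordingly; the nonzero-mode and weight-distribution parts of your argument are consistent with the paper's route (Propositions \ref{nonlinear space-time estimates}, \ref{the inhomogeneous equation with zero initial data}, \ref{estimate of homogeneous eqs}, and Lemma \ref{f1 f2}).
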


It is easy to observe from  Lemma \ref{Appendix A1-1} and \eqref{expression of u} that
\begin{equation}\label{ineq 8.2}
    |kB|^\f12 |k| \Big\|r^{\varepsilon-\f32}\varphi_k\La_k\Big\|_{\LtT (\Lor)} \lesssim E_k(T).
\end{equation}

As a corollary of Theorem \ref{thm1}, we obtain the pointwise logarithmic-type nonlinear enhanced dissipation estimate.
\begin{theorem}\label{cor 1}
    Under the assumptions of Theorem \ref{thm1}, we have
    \begin{equation}\label{ineq 1.19}
        \|r^{1+\varepsilon} w_k(t)\|_\Ltr \lesssim_\varepsilon \frac{1}{\log\big(\hc+\nu^\f13 |kB|^\f23 t\big)} \mathcal{M}(0), \quad \text{for} \  k\neq 0, t>0,
    \end{equation}
    and
    \begin{equation}\label{ineq 1.20}
        \lim_{t\rightarrow +\oo } t^\f12\|w_0(t)\|_\Ltr =0.
    \end{equation}
\end{theorem}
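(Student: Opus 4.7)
The plan is to derive both bounds from the uniform energy estimate $\mathcal{E}(T)\leq C_\varepsilon'\mathcal{M}(0)$ of Theorem \ref{thm1}.

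The first bound \eqref{ineq 1.19} is essentially immediate. Since $\Lambda_k(t,r)=\log(\hat{C}r^2+\nu^{1/3}|kB|^{2/3}t)$ is non-decreasing in $r$, one has $\Lambda_k(t,r)\geq\log(\hat{C}+\nu^{1/3}|kB|^{2/3}t)$ for every $r\geq 1$, so
\begin{equation*}
\|r^{1+\varepsilon}w_k(t)\|_{\Ltr}\leq\frac{1}{\log(\hat{C}+\nu^{1/3}|kB|^{2/3}t)}\,\|r^{1+\varepsilon}w_k(t)\Lambda_k(t,\cdot)\|_{\Ltr},
\end{equation*}
and the rightmost factor is controlled by $E_k(t)\leq\mathcal{E}(t)/100\leq C_\varepsilon'\mathcal{M}(0)/100$, which yields \eqref{ineq 1.19}.

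For \eqref{ineq 1.20}, write the nonlinear source in \eqref{nonlinear equation of w_0} as $f(s,r)\eqdefa r^{-1/2}\sum_{\ell\neq 0}i\ell\partial_r(r^{-1}\varphi_\ell w_{-\ell})$. Pairing \eqref{nonlinear equation of w_0} with $w_0$ in $\Ltr$ and using that Hardy's inequality $\|w_0/r\|_{\Ltr}^2\leq 4\|\partial_r w_0\|_{\Ltr}^2$ (available thanks to $w_0(t,1)=0$) renders $\mathcal{T}_0$ non-negative, one obtains the basic energy inequality
\begin{equation}\label{EE}
\frac{d}{dt}\|w_0(t)\|_{\Ltr}^2\leq 2\bigl|\langle w_0(t),f(t)\rangle_{\Ltr}\bigr|.
\end{equation}
Moreover $E_0(T)\leq C_\varepsilon'\mathcal{M}(0)$ implies $\nu^{1/2}\|w_0\|_{L^2(\R^+;\Ltr)}\leq C_\varepsilon'\mathcal{M}(0)$, so $w_0\in L^2(\R^+;\Ltr)$, and a dyadic averaging argument furnishes a sequence $T_n\to\infty$ with $T_n\|w_0(T_n)\|_{\Ltr}^2\to 0$. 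The decisive step is
\begin{equation}\label{crux}
\int_0^{+\infty}s\,\bigl|\langle w_0(s),f(s)\rangle_{\Ltr}\bigr|\,ds<+\infty.
\end{equation}
Granting \eqref{crux}, integrating \eqref{EE} from $T_n$ to $t\in[T_n,2T_n]$ and multiplying by $t$ gives
\begin{equation*}
t\|w_0(t)\|_{\Ltr}^2\leq 2T_n\|w_0(T_n)\|_{\Ltr}^2+2\int_{T_n}^{+\infty}s\,\bigl|\langle w_0,f\rangle_{\Ltr}\bigr|\,ds,
\end{equation*}
and both right-hand terms tend to $0$ as $n\to\infty$; whence $\lim_{t\to\infty}t^{1/2}\|w_0(t)\|_{\Ltr}=0$, which is \eqref{ineq 1.20}.

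The main technical obstacle is \eqref{crux}. Since each $w_\ell,\varphi_\ell$ ($\ell\neq 0$) only enjoys polylogarithmic decay from \eqref{ineq 1.19}, the source $f$ does not satisfy $s\|f(s)\|_{\Ltr}\in L^1(\R^+)$ on its own, and the divergence structure of $f$ must be fully exploited. Integration by parts turns $\langle w_0,f\rangle_{\Ltr}$ into a bilinear form in $\partial_r w_0$, $w_0$ and $r$-weighted products of $\varphi_\ell$ and $w_{-\ell}$; combining Cauchy--Schwarz in $(s,r)$ with the $L^2_t\Ltr$-bounds on $r^{\varepsilon-1}\partial_r\varphi_\ell\Lambda_\ell$, $r^{\varepsilon-2}\varphi_\ell\Lambda_\ell$ and $r^\varepsilon w_\ell\Lambda_\ell$, together with the $L^\infty_t\Ltr$-bound on $r^{\varepsilon+1}w_\ell\Lambda_\ell$ (all encoded in $\mathcal{E}(T)$) and the $|\ell B|^{1/2}|\ell|^{1/2}$, $|\ell B|^{1/2}|\ell|^{3/2}$ prefactors in $E_\ell$, which supply both summability in $\ell$ and enough time-decay to absorb the growth factor $s$, should deliver \eqref{crux}.
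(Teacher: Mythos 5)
Your argument for \eqref{ineq 1.19} coincides with the paper's: it is the one-line observation that $\La_k(t,r)\geq\log(\hc+\nu^{1/3}|kB|^{2/3}t)$ for $r\geq 1$, combined with the $L^\infty_T$ bound on $\|r^{1+\varepsilon}w_k\La_k\|_\Ltr$ from Theorem \ref{thm1}.

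For \eqref{ineq 1.20}, however, your strategy has a genuine gap at the step you yourself flag as the ``main technical obstacle.'' The estimate
\begin{equation*}
\int_0^{+\infty}s\,\bigl|\langle w_0(s),f(s)\rangle_{\Ltr}\bigr|\,ds<+\infty
\end{equation*}
is asserted but not proved, and in fact it does not follow from the energy bounds encoded in $\mathcal{E}(T)$. The nonzero modes $\varphi_\ell,\ w_\ell$ are controlled only in $L^2_T$ and $L^\infty_T$ with a $\La_\ell$ weight, and $\La_\ell$ grows merely like $\log(\kappa_\ell s)$ in time. After integration by parts and Cauchy--Schwarz in $s$, the factor $s$ in the integrand must be traded against two copies of $\La_\ell^{-1}\sim 1/\log s$, which is far too weak: one is left needing something like $\int s^2\,g(s)^2/\log^4 s\,ds<\infty$ for an $L^2(\R^+)$ function $g$, and this is false in general. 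So the convergence of your crux integral is not obtainable by the route you sketch.

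The paper's proof of \eqref{ineq 1.20} avoids the $w_0$ equation (and its nonlinear source $f$) entirely. It uses Lemma \ref{vorticity conservation of L2}: the full vorticity norm $\|W(t)\|_\LtO$ is nonincreasing in time — a monotonicity that fails for $\|w_0(t)\|_\Ltr$ alone because of the forcing $f$. Combined with $W\in L^2(\R^+;\LtO)$ (which follows from $\mathcal{E}(T)\lesssim\mathcal{M}(0)$), monotonicity gives
\begin{equation*}
t\,\|W(t)\|_\LtO^2\leq 2\int_{t/2}^{t}\|W(s)\|_\LtO^2\,ds\longrightarrow 0\quad\text{as }t\to\infty,
\end{equation*}
and since $\|w_0(t)\|_\Ltr\lesssim\|W(t)\|_\LtO$ the claim follows. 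You should abandon the Duhamel-type argument in favor of this observation; it is both shorter and the only one that actually closes.
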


We have the following remarks in order concerning Theorem  \ref{thm1}:

%Note that the initial space \( \mathscr{M} \) for local existence in Theorem \ref{thm0} is strictly larger than the initial space $ \mathcal{M}$ required for global well-posedness in Theorem \ref{thm1}.

\no(a) \textbf{The Choice of \( \La_k \)}:

In fact, throughout this paper, we only require the following properties for \( \La_k \):
\begin{itemize}
    \item[A.] \( \mu_k^{-1} r^2 \left|\frac{\pa_t \La_k}{\La_k}\right| + r^2 \left|\frac{\pa_r^2 \La_k}{\La_k}\right| + r \left|\frac{\pa_r \La_k}{\La_k}\right| \ll 1,\) where $\mu_k{\eqdefa}\max\bigl(\nu k^2, \nu^\f13|kB|^\f23\bigr);$
    \item[B.] \( \La_k \leq \La_\ell \La_{k-\ell} \) for \( k, \ell \in \mathbb{Z} \);
    \item[C.]\( \frac{\La_k(t,r)}{\La_k(t,s)} \lesssim \max \left( \left( \frac{r}{s} \right)^{\varepsilon_1}, \left( \frac{s}{r} \right)^{\varepsilon_2} \right) \) for some \( \varepsilon_1, \varepsilon_2 \ll 1 \).
\end{itemize}
Rather than using the explicit expression for \( \La_k \),
\begin{itemize}
    \item[(1)] Property A will be used to absorb terms arising from commutators \( [\pa_t, \La_k] \) and \( [\pa_r^2, \La_k] \).
    \item[(2)]Property B is designed to deal with the frequency interactions in the nonlinear terms.
    \item[(3)]Property C will be used to recover the elliptic estimate (see Lemma \ref{basic properties of ck} and the estimation of terms involving \( \widetilde{K}_k \)).
\end{itemize}

\begin{itemize}
    \item Thus, if one prefers polynomially enhanced dissipation for \( w \), we can
\begin{align*}
  \text{replace
  }  \log\left( \hc r^2 + \nu^{\frac{1}{3}} |kB|^{\frac{2}{3}} t \right)  \text{ by
  }   \left( e + \frac{\nu^{\frac{1}{3}} |kB|^{\frac{2}{3}} t}{r^2} \right)^\delta ,
\end{align*}
 where \( \delta \ll_\varepsilon 1 \). However, for simplicity, this paper considers only the logarithmic-type enhanced dissipation.
  \item
Furthermore, we can replace \( \nu^{\frac{1}{3}} |kB|^{\frac{2}{3}} \) by \( \mu_k \) to provide better dissipation for high frequencies (\( |k| \geq \nu^{-\frac{1}{2}} |B|^{\frac{1}{2}} \)), and the proof is the same. We retain \( \nu^{\frac{1}{3}} |kB|^{\frac{2}{3}} \) in \( \La_k \) to emphasize the dissipation scale \( \mathcal{O}(\nu^{-\frac{1}{3}} |B|^{-\frac{2}{3}}) \).
\end{itemize}

\no (b) \textbf{The Criticality of Theorem \ref{thm1}}:
\begin{itemize}
    \item[(1)] If we impose additional spatial weights \( r^\beta \) (\( \beta > 0 \)) on \( w_0 \), beyond those in \( E_0(T) \), similar results as Theorem \ref{thm1} hold only for \( t \lesssim \nu^{-1} \). In this case, we cannot expect global stability results for general data, since Proposition \ref{prop lower bound of w_0} indicates that \( e^{-\mathcal{T}_0 t} \) is linearly unstable in these weighted spaces. On the other hand, if we remove some spatial weights \( r^\beta \) from \( E_0(T) \), we cannot close the energy estimate for \( E_k(T) \). Therefore, up to this point, \( E_0(T) \) is critical.
    \item[(2)]In the linear theory of TC flow in the exterior domain (see \cite{LZZ-25}), the linear evolution \( e^{-\mathcal{T}_k t} \) can exhibit polynomial dissipation \( (1 + \kappa_k t)^{-\delta} \) when \( \frac{\nu}{|kB|} \ll (1 + \delta)^{-3} \), at the cost of losing equivalent spatial weights. For nonlinear evolution, we can only deal with \( \delta \ll_\varepsilon 1 \), because the elliptic estimate
\[
\|r^{-2} \La_k \varphi_k\|_\Ltr \leq C \|\La_k w_k\|_\Ltr \quad \text{for some $C$ depending only on $\delta$},
\]
fails for $\La_k=\Big( e + \frac{\nu^{\frac{1}{3}} |kB|^{\frac{2}{3}} t}{r^2}\Big)^\delta$ and large \( |\delta| \). Therefore, up to this point, the enhanced dissipation weight \( \left( e + \frac{\nu^{\frac{1}{3}} |kB|^{\frac{2}{3}} t}{r^2} \right)^\delta \) with \( \delta \ll_\varepsilon 1 \) may be optimal for the nonlinear problem.
\end{itemize}

\no (c) \textbf{Transition threshold:}
The mathematic version of Transition threshold
was formulated by Bedrossian-Gremain-Masmoudi \cite{BGM-bams} as follows:
\begin{itemize}
	\item[]  Given a norm $\|\cdot\|_{X}$, find a $\beta=\beta(X)$ so that
	\begin{align*}
	\left\|  u_{\mathrm{in}}  \right\|_{X}& \leqslant \; \nu^{\beta } \,\, \Rightarrow  \text{ stability}, \nonumber \\
	\left\|  u_{\mathrm{in}}  \right\|_{X}&  \gg  \;\nu^{\beta } \,\, \Rightarrow  \text{ instability},
	\end{align*}
\end{itemize}
where the exponent $\beta=\beta (X)>0$  is called  the transition threshold. In the \( \La_k \)-integrable space in this paper, Theorem \ref{thm1} shows that we can obtain transition threshold estimates similar to those in bounded domain, as can be seen in \cite{AHL-2}, despite the infinite degeneracy  of $r^{-2}$ in exterior disk.

\no (d) \textbf{Biot-Savart Law:}
The corresponding integral kernels are different for $k\neq 0$ and $k=0$, specifically as follows:
\begin{itemize}
    \item For the case of \( k \neq 0 \) in \eqref{equation of varphi_k}, we consider \( u^\theta_k, u^r_k, rw_k \in L^2\big((1, \infty), dr\big) \). In this case, \(\varphi_k\) can be explicitly expressed as
\begin{equation}\label{expression of phi_k}
\begin{split}
   \varphi_k(r)= -\cK[w_k](r)\eqdefa&- \int_1^\oo K_k(r,s)w_k(s)\,ds \with \\
    K_k(r,s){\eqdefa}&\frac{1}{2|k|} (rs)^\frac{1}{2}  \Big( \min\bigl(\frac{r}{s}, \frac{s}{r}\bigr)^{|k|} - \f{1}{r^{|k|} s^{|k|}} \
    \Big).
   \end{split}
\end{equation}

Here we list some basic properties of the integral kernel function \( K_k(r,s) \):
\begin{subequations}
    \begin{gather}
\label{1.17}
  \pa_r K_k(r,s) = \frac{1}{2|k|} \Bigl( \bigl(|k|+\f12\bigr) \bigl(\frac{r}{s}\bigr)^{|k|-\f12} \mathbbm{1}_{r\leq s} + \bigl(\f12-|k|\bigr)\bigl(\frac{s}{r}\bigr)^{|k|+\f12} \mathbbm{1}_{s\leq r}  \\
 \nonumber \qquad \qquad \qquad \qquad\quad+ \bigl(|k|-\f12\bigr)\f{1}{r^{|k|+\f12} s^{|k|-\f12}}   \Bigr),\\
 \label{1.17b}K_k(1,s)=K_k(r,1)=\pa_rK_k(r,1)=0, \\
 \label{1.17c}\pa_r \pa_s K_k(r,s)= \delta(r-s) + \widetilde{K}_k(r,s) \quad  \text{in the sense of distribution},\\
\label{estimate of cK}
  |k|^{-1} rs |\widetilde{K}_k(r,s)| + s |\pa_s K_k(r,s)| +r |\pa_r K_k(r,s)| +  |k||K_k(r,s)| \lesssim  (rs)^{\f12}\min(\frac{r}{s},\frac{s}{r})^{|k|},
\end{gather}
\end{subequations}
where
\begin{align*}
    \widetilde{K}_k(r,s) = -\frac{k^2-\f14}{2|k|} \Big( \frac{r^{|k|-\f12}}{s^{|k|+\f12}} \mathbbm{1}_{r\leq s} + \frac{s^{|k|-\f12}}{r^{|k|+\f12}} \mathbbm{1}_{s\leq r}\Big) - \frac{(|k|-\f12)^2}{2|k|} \f{1}{r^{|k|+\f12} s^{|k|+\f12}}.
\end{align*}

 \item For the case of $k=0$, it's easy to observe that $u^r_0=0$ and it follows from $W=r^{-1}\pa_r\big(r U^\theta\big) - r^{-1}\pa_\theta U^r$
 that
\begin{equation}\label{w_0, u_0}
    w_0 = r^{-\f12}\pa_r\big(r^\f12 u^\theta_0\big)=\pa_r u^\theta_0 +\frac{1}{2r} u^\theta_0.
\end{equation}
It is natural to assume that \( u^\theta_0, rw_0 \in L^2\big((1,\infty),dr\big) \). Then, according to Lemma \ref{Appendix A1-1}, we obtain
\[
\lim\limits_{r\rightarrow\infty} |r^{\frac{1}{2}} u^\theta_0|=0.
\]
Thus, in view of \eqref{w_0, u_0}, we have
\begin{equation}\label{w_0, u_0,a}
    u^\theta_0(r)=-r^{-\frac{1}{2}}\int_r^\infty s^{\frac{1}{2}} w_0(s)ds.
\end{equation}
Due to the logarithmic singularity of \(\varphi_0\) at \( r=\infty \), we directly use its explicit expression for the velocity field \( u^\theta_0 \) instead of recovering it through the stream function.
\end{itemize}

Finally, we sketch the organization of the paper:

\begin{itemize}
    \item In Section \ref{Fundamental energy estimates}, we present some fundamental energy estimates;
    %\item In Section 3, we establish the local well-posedness of the system \eqref{velocity pertubation of the Taylor-Couette flow};
    \item In Sections \ref{Resolvent estimates without enhanced dissipation weight} and \ref{Resolvent estimates with time weight}, we discuss the resolvent estimates with or without the enhanced dissipation weight \( \La_k \);
    \item In Section \ref{Space-time estimates of the vorticity in nonzero modes}, we use the resolvent estimates to derive the estimates of \( E_k(T) \) for \( k \neq 0 \);
    \item   In Section \ref{Sect7}, we  provide the proofs of Theorem \ref{thm1} and Theorem \ref{cor 1}.
\end{itemize}

\subsection{Notations}
Throughout the whole paper, we denote $$\Omega{\eqdefa} \bigl\{x\in \R^2: \ |x|\geq 1 \bigr\}= \bigl\{(r,\theta): \ r\in[1,\oo), \theta \in \T \bigr\}.$$
For $1\leq p \leq \oo$, we denote
\begin{align*}
    &\qquad \qquad  \qquad \qquad \LpO{\eqdefa} L^p(\Omega; dx) = L^p\big((1,\oo) \times \T;  rdrd\theta\big), \\
    &\Lpr{\eqdefa} L^p\big((1,\oo); dr\big), \quad
    \LpT{\eqdefa} L^p\big((0,T);dt\big),\quad H^k_r {\eqdefa} H^k\big((1,\oo); dr\big), \quad  H^k_{r,0} {\eqdefa} H^k_0\big((1,\oo); dr\big).
    &
\end{align*}
By  $\braket{\cdot, \cdot}_\Ltr$, we mean the natural inner product in the Hilbert space $\Ltr$, i.e.,
\begin{equation*}
    \braket{f,g}_{\Ltr} {\eqdefa} \int_1^{\oo} f \bar{g} dr,
\end{equation*}
and $\braket{\cdot,\cdot}_\LtO$ is similarly defined.

By $w'$, we mean $\pa_r w$. And  $\mathbbm{1}_E(r)$ designates the  characteristic function  on $E\subset [1,\oo)$. We denote $g\lesssim f$ if there exists a constant $C>0$ so that $g\leq C f$, moreover we denote $g \ll f$ if this $C$ is universal and small enough, and we use $g\lesssim_\varepsilon f$ to emphasize dependence of $C$ on $\varepsilon$.
Throughtout this paper, we denote
\begin{equation*}
\kappa_k {\eqdefa}\nu^\f13|kB|^\f23, \quad \mu_k{\eqdefa}\max(\nu k^2, \kappa_k),\quad \cT_k \eqdefa-\nu\Big(\pa_r^2 - \frac{k^2-\f14}{r^2}\Big) +i\frac{kB}{r^2}, \quad D{\eqdefa}H^2_r\cap H^1_{r,0}.
\end{equation*}

\section{Fundamental energy estimates}\label{Fundamental energy estimates}

In this section, we first establish some fundamental energy estimates for the fully nonlinear equations  \eqref{velocity pertubation of the Taylor-Couette flow} and \eqref{vorticity pertubation of the Taylor-Couette flow}.

%The commonly used energy \( \|U(t)\|_{\mathscr{M}(T_0)} \) in Sections \ref{Fundamental energy estimates} and \ref{Local wellposedness} is defined as follows:
%   \begin{align}
%\|U(t)\|_{\mathscr{M}(T_0)}=\|rw_0(t)\|_\Ltr + \|u^\theta_0(t)\|_\Ltr+\sum_{k\neq0}  \|r^{1+\varepsilon} w_k(t) \La_k(T_0,\cdot)\|_\Ltr.
%    \end{align}

\begin{lemma}\label{vorticity conservation of L2}
{\sl Let   $W$  be  a smooth enough solution of  (\ref{vorticity pertubation of the Taylor-Couette flow}) on $[0,T].$ Then, for any \( T \in (0,T) \), one has
\begin{align}
\label{vorticity, basic energy conservation equality}\|W(t)\|_\LtO^2+2\nu\|\nabla W\|_{L^2_t(\LtO)}^2 =\|W(0)\|_\LtO^2.
\end{align}
}
\end{lemma}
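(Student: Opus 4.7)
The plan is a standard energy identity: test the vorticity equation in \eqref{vorticity pertubation of the Taylor-Couette flow} against $W$ itself in $L^2(\Omega)$, integrate by parts, and exploit the divergence-free structure of both $V^\star$ and $U$ together with the boundary conditions. Since the statement is an \emph{equality} (not just an inequality), each of the four terms in the equation must be identified precisely rather than bounded.

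First I would compute $\langle \partial_t W, W\rangle_\LtO = \tfrac12 \frac{d}{dt}\|W\|_\LtO^2$. For the dissipation term, integration by parts against the Dirichlet datum $W|_{\partial\Omega}=0$ gives
\begin{equation*}
-\nu\langle \Delta W, W\rangle_\LtO = \nu\|\nabla W\|_\LtO^2,
\end{equation*}
with no boundary contribution. Next, for the linear transport term, I would rewrite $V^\star\cdot\nabla W\cdot W = \tfrac12 V^\star\cdot\nabla(W^2)$ and integrate by parts; since $V^\star = (Ar+B/r)\mathbf{e_\theta}$ is purely azimuthal, one has both $\dive V^\star = 0$ in $\Omega$ and $V^\star\cdot\mathbf{n}|_{\partial\Omega}=0$, so $\langle V^\star\cdot\nabla W, W\rangle_\LtO = 0$. (In fact, $W|_{\partial\Omega}=0$ already kills the boundary term.) The nonlinear transport term is handled identically: $\dive U=0$ from \eqref{vorticity pertubation of the Taylor-Couette flow}, and the impermeability condition $U\cdot\mathbf{n}|_{\partial\Omega}=0$ from \eqref{impermeable & navier slip boundary condition} makes the boundary contribution vanish, giving $\langle U\cdot\nabla W,W\rangle_\LtO=0$.

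Collecting the four pieces yields $\tfrac12\frac{d}{dt}\|W\|_\LtO^2 + \nu\|\nabla W\|_\LtO^2 = 0$, and integrating in time on $[0,t]$ produces \eqref{vorticity, basic energy conservation equality}.

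There is no real obstacle here beyond bookkeeping: the only points worth checking carefully are that the exterior-domain integration by parts is justified for a \emph{smooth enough} solution (which is built into the hypothesis, so decay of $W$ and $\nabla W$ at infinity suffices to discard the boundary contribution at $r=\infty$), and that the cubic term $\langle U\cdot\nabla W, W\rangle_\LtO$ genuinely vanishes under the Navier-slip/impermeability conditions \eqref{impermeable & navier slip boundary condition}, which follows from the identity $\dive(UW^2)=U\cdot\nabla(W^2)$ together with $U\cdot\mathbf{n}|_{\partial\Omega}=0$.
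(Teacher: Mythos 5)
Your proposal is correct and follows essentially the same argument as the paper's proof: take the $L^2(\Omega)$ inner product with $W$, integrate by parts, and use $\dive V^\star = \dive U = 0$ together with the boundary conditions $V^\star\cdot\mathbf{n}|_{\partial\Omega} = U\cdot\mathbf{n}|_{\partial\Omega} = 0$ and $W|_{\partial\Omega} = 0$ to kill all boundary terms before integrating in time. The only cosmetic difference is that the paper writes the two boundary integrals explicitly before observing they vanish, whereas you dispatch each term separately, but the content is identical.
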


\begin{proof} Due to $\dive U=0$ and \eqref{impermeable & navier slip boundary condition}, we get, by taking $\LtO$ inner product  of the equation (\ref{vorticity pertubation of the Taylor-Couette flow}) with $W$ and using integration by parts, that
\begin{align*}
    0&=\frac{1}{2} \frac{d}{dt} \|W(t)\|_\LtO^2 + \nu \|\nabla W\|_\LtO^2  + \f12 \int_{\pa \Omega} (V^\star + U)\cdot \mathbf{n} |W|^2 dS - \nu \int_{\pa \Omega} \nabla W \cdot \mathbf{n} W dS \\
    &=\frac{1}{2} \frac{d}{dt} \|W(t)\|_\LtO^2 + \nu \|\nabla W\|_\LtO^2.
 \end{align*}
By integrating the above inequality over $[0,t]$, we obtain \eqref{vorticity, basic energy conservation equality}.
\end{proof}

\begin{lemma}
{\sl Under the assumption that \( U \in H^2(\Omega) \), \( \dive U = 0 \), \( W = \curl U \), and  \( U \) satisfies the boundary condition \eqref{impermeable & navier slip boundary condition}, we have
\begin{equation}\label{2.4}
     - \int_\Omega \lap U\cdot U dx = \|W\|_\LtO^2.
\end{equation}}
\end{lemma}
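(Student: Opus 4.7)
The plan is to use the planar vector identity $\Delta U = \nabla^\perp W$, which is valid for any divergence-free $U$ with $W = \partial_1 U^2 - \partial_2 U^1$, where $\nabla^\perp = (-\partial_2,\partial_1)^T$. This follows from the general decomposition $-\Delta = \curl\,\curl - \nabla\,\dive$ combined with $\dive U = 0$, and can also be checked coordinate-wise: for instance, $-\partial_2 W = -\partial_1\partial_2 U^2 + \partial_2^2 U^1 = \partial_1^2 U^1 + \partial_2^2 U^1 = \Delta U^1$, where the divergence-free condition is used to replace $\partial_1\partial_2 U^2$ by $-\partial_1^2 U^1$; the second component is analogous.

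Once this identity is in hand, the computation is a straightforward integration by parts, justified by $U \in H^2(\Omega)$. I would write
\begin{equation*}
-\int_\Omega \Delta U \cdot U\, dx = -\int_\Omega \nabla^\perp W \cdot U\, dx = \int_\Omega \partial_2 W\, U^1\, dx - \int_\Omega \partial_1 W\, U^2\, dx,
\end{equation*}
integrate by parts in each term, and use $W = \partial_1 U^2 - \partial_2 U^1$ to identify the volume contribution as $\int_\Omega W(\partial_1 U^2 - \partial_2 U^1)\, dx = \|W\|_{L^2(\Omega)}^2$. The boundary contribution collected along $\partial\Omega$ takes the form
\begin{equation*}
\int_{\partial\Omega} W\,(U^1 n_2 - U^2 n_1)\, dS,
\end{equation*}
with $\mathbf{n}$ the outer unit normal.

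Finally, the Navier-slip part of \eqref{impermeable & navier slip boundary condition} gives $W|_{\partial\Omega} = \partial_1 U^2 - \partial_2 U^1|_{\partial\Omega} = 0$, so the boundary term vanishes identically and \eqref{2.4} follows. The impermeability condition $U\cdot\mathbf{n} = 0$ plays no role in this particular identity (it is needed elsewhere, e.g., in Lemma \ref{vorticity conservation of L2}); only the vanishing of $W$ on $\partial\Omega$ is required. I do not anticipate any real obstacle: the subtlety is purely bookkeeping, namely keeping the signs of $\nabla^\perp$, $\curl$, and the boundary orientation consistent so that the boundary term genuinely reduces to a multiple of $W|_{\partial\Omega}$.
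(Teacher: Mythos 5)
Your proof is correct, and it takes a cleaner route than the paper's. The paper starts from $\|W\|_{\LtO}^2 = \int_\Omega (\pa_1 U^2 - \pa_2 U^1)^2\,dx$, expands the square, and integrates by parts twice, invoking $\pa_1 U^1 + \pa_2 U^2 = 0$ mid-computation to reorganize the bulk terms into $-\int_\Omega \lap U \cdot U\,dx$, with the boundary contribution collected as $\int_{\pa\Omega}(U^2\mathbf{n}^1 - U^1\mathbf{n}^2)W\,dS$, which vanishes since $W|_{\pa\Omega}=0$. You instead go in the opposite direction: you package the divergence-free condition once and for all into the identity $\lap U = \nabla^\perp W$, after which a single integration by parts delivers the volume term $\|W\|_{\LtO}^2$ and the same boundary integral $\int_{\pa\Omega} W\,(U^1 n_2 - U^2 n_1)\,dS$. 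The two approaches are logically equivalent and rely on exactly the same hypotheses, but yours makes the role of $\dive U = 0$ more transparent (it is absorbed into $\lap U = \nabla^\perp W$ rather than appearing as an algebraic cancellation inside the expansion of the square) and requires one fewer integration by parts. Your remark that the impermeability condition $U\cdot\mathbf{n}|_{\pa\Omega}=0$ is not needed here — only $W|_{\pa\Omega}=0$ — is also correct and worth noting, since the paper's statement lists both boundary conditions as hypotheses without distinguishing which one does the work.
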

\begin{proof} Let $\mathbf{n}=(\mathbf{n}^1, \mathbf{n}^2)$ be the outward normal vector to the domain $\Omega.$ We  get, by
using integration by parts, that
\begin{equation*}
    \begin{split}
\|W\|_\LtO^2 &= \int_\Omega (\pa_1 U^2 - \pa_2 U^1)^2 dx =\int_\Omega (\pa_1 U^2)^2 dx  + \int_\Omega(\pa_2 U^1)^2dx - 2\int_\Omega \pa_1 U^2 \pa_2 U^1 dx \\
& = \int_{\pa \Omega} U^2 \mathbf{n}^1\pa_1 U^2 dS - \int_\Omega U^2 \pa_1^2 U^2 dx +\int_{\pa \Omega} U^1 \mathbf{n}^2\pa_2 U^1 dS - \int_\Omega U^1 \pa_2^2 U^1 dx  \\
&\quad  - \int_{\pa \Omega} U^2\mathbf{n}^1 \pa_2 U^1 dS - \int_{\pa \Omega} U^1\mathbf{n}^2 \pa_1 U^2 dS + \int_\Omega U^2 \pa_2\pa_1 U^1 + U^1\pa_1 \pa_2 U^2 dx,
\end{split}
\end{equation*}
from which,   $\pa_1 U^1 +\pa_2 U^2=0$ and $(\pa_1 U^2 - \pa_2 U^1)|_{\pa \Omega}=0$, we infer
\begin{align*}
\|W\|_\LtO^2 &= \int_{\pa \Omega}  (U^2 \mathbf{n}^1- U^1 \mathbf{n}^2) (\pa_1 U^2 - \pa_2 U^1) dS - \int_\Omega \lap U\cdot U dx \\
&=  - \int_\Omega \lap U\cdot U dx.
    \end{align*}
This leads to \eqref{2.4}.
\end{proof}

\begin{lemma}\label{velocity, basic energy conservation equality}
  {\sl  Let $U$ be a smooth enough solution of \eqref{velocity pertubation of the Taylor-Couette flow} on $[0,T].$ Then for any $t\in (0,T)$,
  one has
\begin{equation}\label{S2eq1}
\begin{split}
&\|U(t)\|_\LtO^2  + 2\nu \|W\|_{L^2_t(\LtO)}^2 \\
&\qquad \qquad\leq \|U(0)\|_\LtO^2 + 8\pi |B| \sum_{k\in \Z\backslash \{0\}} |k|\big\|\frac{\varphi_k}{r^2}\big\|_{L^2_t(\Ltr)} \Big\|\frac{\pa_r \varphi_{-k}}{r}\Big\|_{L^2_t(\Ltr)}.
\end{split}
\end{equation}}
\end{lemma}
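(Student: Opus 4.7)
The plan is to take the $\LtO$ inner product of \eqref{velocity pertubation of the Taylor-Couette flow} with $U$ and track which terms survive. Because $\dive U=0$ and $U\cdot\mathbf{n}|_{\pa\Omega}=0$, the pressure contribution $\int_\Omega\nabla p\cdot U\, dx$ vanishes, as does the nonlinear convection $\int_\Omega U\cdot\nabla U\cdot U\, dx=\f12\int_\Omega U\cdot\nabla|U|^2\, dx$. Similarly, since $V^\star\cdot\mathbf{n}|_{\pa\Omega}=0$ and $\dive V^\star=0$, the transport term $\int_\Omega V^\star\cdot\nabla U\cdot U\, dx$ vanishes. The viscosity term $-\nu\int_\Omega\lap U\cdot U\, dx$ is rewritten as $\nu\|W\|_\LtO^2$ via \eqref{2.4}. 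What survives is the stretching term $\int_\Omega U\cdot\nabla V^\star\cdot U\, dx$, and the whole game is to control it by the right-hand side of \eqref{S2eq1}.

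A direct calculation in Cartesian coordinates, using $v^{\star,1}=-Ax_2-Bx_2/r^2$ and $v^{\star,2}=Ax_1+Bx_1/r^2$, yields $U\cdot\nabla V^\star\cdot U = -\f{2B}{r^2}U^rU^\theta$; the $A$-part drops out because its velocity gradient is antisymmetric (a rigid rotation). Hence $\int_\Omega U\cdot\nabla V^\star\cdot U\, dx = -2B\int_\Omega (U^rU^\theta/r^2)\, dx$. Expanding $U^r,U^\theta$ in Fourier series in $\theta$ (noting $\widehat{U}^r_0=0$) and using $\widehat{U}^r_k=-ik\widehat{\varPhi}_k/r$, $\widehat{U}^\theta_{-k}=\pa_r\widehat{\varPhi}_{-k}$, Parseval gives $\int_\Omega (U^rU^\theta/r^2)\, dx = 2\pi\sum_{k\neq 0}(-ik)\int_1^\oo r^{-2}\widehat{\varPhi}_k\,\pa_r\widehat{\varPhi}_{-k}\, dr$. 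Substituting $\widehat{\varPhi}_k=r^{-1/2}e^{-ikAt}\varphi_k$ and expanding the $r$-derivative produces $\widehat{\varPhi}_k\,\pa_r\widehat{\varPhi}_{-k}=r^{-1}\varphi_k\pa_r\varphi_{-k}-\f12 r^{-2}\varphi_k\varphi_{-k}$. The essential algebraic observation is that after multiplication by $-ik$ and summation over $k\in\Z\backslash\{0\}$, the second summand cancels: $\varphi_k\varphi_{-k}=|\varphi_k|^2$ is even in $k$ while $k$ is odd, so the contributions from $k$ and $-k$ annihilate. Without this cancellation one would inherit an extra $\|\varphi_{-k}/r^2\|_\Ltr$ factor on the right-hand side of \eqref{S2eq1}, which is not present there.

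Once the cancellation is used, what remains is the routine Cauchy-Schwarz inequality $|\int_1^\oo r^{-3}\varphi_k\,\pa_r\varphi_{-k}\, dr|\leq\|\varphi_k/r^2\|_\Ltr\,\|\pa_r\varphi_{-k}/r\|_\Ltr$, obtained via the split $r^{-3}=r^{-2}\cdot r^{-1}$. Collecting the numerical constants one arrives at the pointwise-in-time inequality $\f{d}{dt}\|U\|_\LtO^2 + 2\nu\|W\|_\LtO^2\leq 8\pi|B|\sum_{k\neq 0}|k|\,\|\varphi_k/r^2\|_\Ltr\,\|\pa_r\varphi_{-k}/r\|_\Ltr$, and integrating over $[0,t]$ followed by one more Cauchy-Schwarz in $t$ yields \eqref{S2eq1}. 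The main (if small) obstacle is the algebraic cancellation of the $\varphi_k\varphi_{-k}$ contribution highlighted above; the remaining manipulations are routine applications of the energy identity together with Cauchy-Schwarz.
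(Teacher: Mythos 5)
Your proof is correct and follows essentially the same route as the paper: energy identity via \eqref{2.4}, reduction of the stretching term to $-2B\int_\Omega U^rU^\theta/r^2\,dx$, Fourier expansion, cancellation of the $\varphi_k\varphi_{-k}$ contribution, and Cauchy--Schwarz in $r$ and then in $t$. The only cosmetic differences are that you compute $(U\cdot\nabla)V^\star\cdot U$ in Cartesian coordinates and invoke antisymmetry of the rigid-rotation gradient, while the paper works directly in polar coordinates, and that you dispose of the $\varphi_k\varphi_{-k}$ sum by the $k\leftrightarrow -k$ parity argument whereas the paper simply observes it is purely imaginary and takes the real part; these are equivalent.
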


\begin{proof} Thanks to  \eqref{impermeable & navier slip boundary condition}, $V^\star\cdot \mathbf{n}|_{\pa \Omega}=0$ and \eqref{2.4},
we get,
    by taking $\LtO$ inner product of \eqref{velocity pertubation of the Taylor-Couette flow} with $U$, that
\begin{equation}
\begin{split}\label{2.3}
    \frac{1}{2}\frac{d}{dt}\|U\|_\LtO^2 +\nu \|W\|_\LtO^2 = - \int_\Omega (U\cdot \nabla )V^\star \cdot U dx.
\end{split}
\end{equation}
Yet by virtue of \eqref{S1eqq} and the fact: $\pa_r \mathbf{e_\theta}=0, \pa_\theta \mathbf{e_\theta}=-\mathbf{e_r}$, we write
    \begin{align*}
\int_\Omega (U\cdot \nabla )V^\star \cdot U dx &= \int_\Omega \big(U^r \pa_r + U^\theta \frac{\pa_\theta}{r} \big)\big(Ar \mathbf{e_\theta} + \frac{B}{r}\mathbf{e_\theta}\big) \cdot (U^r\mathbf{e_r}+U^\theta\mathbf{e_\theta}) dx \\
&= \int_\Omega \Big( \big(A-\frac{B}{r^2}\big)U^r \mathbf{e_\theta} -\big(A+\frac{B}{r^2}\big)U^\theta \mathbf{e_r} \Big) \cdot (U^r\mathbf{e_r}+U^\theta\mathbf{e_\theta}) dx\\
&= -2B \int_\Omega \frac{U^r U^\theta}{r^2} dx,
\end{align*}
from which, \eqref{Fourier decomposition}, \eqref{definition of w_k}, \eqref{expression of u} and $\varphi_{-k}=\overline{\varphi_{k}}$ (since $\varphi$ is real function), we infer
\begin{align*}
\int_\Omega (U\cdot \nabla )V^\star \cdot U dx &
 = -4\pi B \sum_{k\in \Z}\int_1^\oo \frac{\widehat{U}^r_k \widehat{U}^\theta_{-k}}{r^2} rdr= -4\pi B \sum_{k\in \Z}\int_1^\oo  \frac{u^r_k u^\theta_{-k}}{r^2} dr\\
& = 4\pi B \sum_{k\in \Z\backslash \{0\}} ik \int_1^\oo  \frac{\varphi_k \pa_r\varphi_{-k}}{r^3} dr - 4\pi B \sum_{k\in \Z\backslash \{0\}} ik \int_1^\oo  \frac{\varphi_k \varphi_{-k}}{2r^4} dr \\
& = 4\pi B \Re \sum_{k\in \Z\backslash \{0\}} ik \int_1^\oo  \frac{\varphi_k \pa_r\varphi_{-k}}{r^3} dr.
    \end{align*}
By inserting the above inequality into \eqref{2.3}, we find
\begin{equation*}
    \frac{1}{2}\frac{d}{dt}\|U\|_\LtO^2  + \nu \|W\|_\LtO^2 \leq 4\pi|B| \sum_{k\in \Z\backslash \{0\}} |k|\big\|\frac{\varphi_k}{r^2}\big\|_\Ltr \Big\|\frac{\pa_r \varphi_{-k}}{r}\Big\|_\Ltr.
\end{equation*}
By integrating the above inequality over $[0,t]$, we achieve \eqref{S2eq1}.
\end{proof}

Next, we study the mapping properties of the semigroup $e^{-\mathcal{T}_kt}$ in polynomial weighted spaces.

\begin{lemma}\label{Coercive estimates in general polynomial-weighted spaces}
{\sl Let  $\al \in \R$ and $r^\al w\in D.$ Then one has
 \begin{subequations} \label{S2eqw}
\begin{gather}
\label{Coercive estimates} \Re\braket{\mathcal{T}_kw,r^{2\alpha}w}_\Ltr \geq\nu(k^2-\alpha^2)\|r^{\alpha-1}w\|_{\Ltr}^2,\\
\label{weighted trivial w'}
\Re\braket{\mathcal{T}_k w,r^{2\al}w}_\Ltr =  \nu\|r^{\al}w'\|_\Ltr^2 +\nu\Big(k^2-\f14-2\al^2+\al\Big)\|r^{\alpha-1}w\|_\Ltr^2.
\end{gather}
\end{subequations}}
\end{lemma}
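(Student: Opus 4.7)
The plan is to obtain both assertions by direct integration by parts starting from the definition of $\cT_k$. I would first observe that the drift contribution $ikBr^{-2}$, when paired against $r^{2\alpha}\bar{w}$, yields the purely imaginary integrand $ikBr^{2\alpha-2}|w|^2$ and therefore drops out of the real part. This reduces the problem to controlling $-\nu\Re\int_1^\infty w''r^{2\alpha}\bar{w}\,dr$ together with the lower-order term $\nu(k^2-\tfrac14)\|r^{\alpha-1}w\|_\Ltr^2$ coming from the $k^2-1/4$ potential.

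I would then handle the $w''$ term through two successive integrations by parts. The first produces $\nu\|r^\alpha w'\|_\Ltr^2 + 2\alpha\nu\Re\int_1^\infty r^{2\alpha-1}w'\bar{w}\,dr$; rewriting the cross term via $2\Re(w'\bar{w})=(|w|^2)'$ and integrating by parts a second time converts it into $-\alpha(2\alpha-1)\nu\|r^{\alpha-1}w\|_\Ltr^2$. Summing all contributions yields exactly \eqref{weighted trivial w'}. To deduce \eqref{Coercive estimates}, I would bound the kinetic term from below with the Hardy-type inequality $\|r^\alpha w'\|_\Ltr^2 \geq (\alpha-\tfrac12)^2\|r^{\alpha-1}w\|_\Ltr^2$, which I verify quickly by substituting $v=r^{\alpha-1/2}w$ and expanding $|r^\alpha w'|^2 = (\alpha-\tfrac12)^2 r^{-1}v^2 + 2(\tfrac12-\alpha)vv' + r(v')^2$; the cross term integrates out by the boundary conditions on $v$, leaving two nonnegative contributions. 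The arithmetic simplification $(\alpha-\tfrac12)^2 + k^2 - \tfrac14 - 2\alpha^2 + \alpha = k^2-\alpha^2$ then produces the stated coefficient.

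The main subtlety will be justifying that all boundary contributions at $r=\infty$ vanish, since the hypothesis $r^\alpha w\in D$ only asserts $r^\alpha w\in H^2_r\cap H^1_{r,0}$. The $1$D Sobolev embedding $H^2((1,\infty))\hookrightarrow C^1([1,\infty))$ applied to $r^\alpha w$, combined with the $L^2$ integrability of $r^\alpha w$ and $(r^\alpha w)'$, forces both quantities to tend to $0$ at infinity. Writing $r^\alpha w'=(r^\alpha w)'-\alpha r^{-1}(r^\alpha w)$ then gives $r^\alpha w'\to 0$, which suffices to kill the boundary terms $[r^{2\alpha}\bar{w}w']_1^\infty$, $[r^{2\alpha-1}|w|^2]_1^\infty$ and $[v^2]_1^\infty$ appearing in the above integrations by parts; the $r=1$ side is trivially zero because $w(1)=0$. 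Once these boundary contributions are accounted for, the identities collapse cleanly and both \eqref{weighted trivial w'} and \eqref{Coercive estimates} follow.
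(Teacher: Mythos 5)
Your proposal is correct and follows essentially the same route as the paper: the paper packages the first integration-by-parts identity as Lemma \ref{Coercive estimates prior} (with a general weight $A(r)$, specialized to $A=r^\alpha$) and the Hardy inequality $\|r^\alpha w'\|_\Ltr^2\geq(\alpha-\tfrac12)^2\|r^{\alpha-1}w\|_\Ltr^2$ as Lemma \ref{pair-w' with w} (proved there by expanding $\|r^\beta(r^{\alpha-\beta}w)'\|_\Ltr^2\geq 0$ with $\beta=\tfrac12$, which is exactly your substitution $v=r^{\alpha-1/2}w$), then combines them and simplifies the coefficients just as you do. The one place you go slightly beyond the paper's explicit text is the verification that the boundary terms at $r=\infty$ vanish under the hypothesis $r^\alpha w\in D$: the paper's auxiliary lemmas are stated for $w\in\mathcal{C}^\infty_c([1,\infty))$ with $w(1)=0$ and so rely on an implicit density argument, whereas your Sobolev-embedding argument handles the general case directly; both are valid.
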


\begin{proof} We first get,
by using Lemma \ref{Coercive estimates prior}, that
  \begin{equation*}\label{L_k w, r al w inner product}
\Re\braket{\mathcal{T}_kw,r^{2\alpha}w}_\Ltr=\nu\Bigl(\|r^{\alpha}w'\|_\Ltr^2-\alpha(2\alpha-1)\|r^{\alpha-1}w\|_\Ltr^2\Bigr)+\nu\Big(k^2-\f14\Big)\|r^{\alpha-1}w\|_\Ltr^2,
\end{equation*}
which implies \eqref{weighted trivial w'}.

 By applying \eqref{weighted trivial w'} and  Lemma \ref{pair-w' with w}, we obtain
\begin{align*}
    \Re\langle \mathcal{T}_kw,r^{2\alpha}w\rangle_\Ltr\geq &\nu\Big((\alpha- 1/2)^2\|r^{\alpha-1}w\|_{\Ltr}^2-\alpha(2\alpha-1)\|r^{\alpha-1}w\|_{\Ltr}^2\Big)+\nu\Big(k^2-\f14\Big)\|r^{\alpha-1}w\|_{\Ltr}^2\\
=&\nu(k^2-\alpha^2)\|r^{\alpha-1}w\|_{\Ltr}^2.
\end{align*}
This completes the proof of \eqref{Coercive estimates}.
 \end{proof}

\begin{proposition}\label{decreasing energy estimates}
{\sl Let $w$ be a smooth enough solution of the following linear equation:
\begin{align}\label{equation with general shearing profile S}
\partial_t w-\nu\Big(\partial_r^2-\f{k^2-\f14}{r^2}\Big)w+ iS(r)w=0 \quad \textrm{with} \ w|_{r=1,\infty}=0,
\end{align}
where the shearing profile $S(r) \in C_b([1,\oo))$ is a real function. Then  as long as $|\al| \leq |k|,$ $\|r^\al w(t)\|_\Ltr$ is nonincreasing with respect to time $t\geq0.$}
\end{proposition}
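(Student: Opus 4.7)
The strategy is a straightforward weighted energy estimate. I will test the equation against $r^{2\alpha}\bar w$ in $L^2_r$, take the real part, and show that when $|\alpha|\leq |k|$ every term on the right-hand side has a favorable sign.

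First, I take the $L^2_r$ inner product of \eqref{equation with general shearing profile S} with $r^{2\alpha}w$ and pass to the real part. The time derivative produces
\[
\Re\langle \partial_t w,\,r^{2\alpha}w\rangle_{L^2_r}=\tfrac12\frac{d}{dt}\|r^{\alpha}w\|_{L^2_r}^2.
\]
The diffusion piece of \eqref{equation with general shearing profile S} is exactly the self-adjoint part of the operator $\mathcal{T}_k$ appearing in \eqref{def of Tk}; the extra term $ikB/r^2$ present in $\mathcal{T}_k$ is purely imaginary and does not contribute to $\Re\langle\mathcal{T}_kw,r^{2\alpha}w\rangle_{L^2_r}$ in the proof of Lemma \ref{Coercive estimates in general polynomial-weighted spaces}. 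Therefore the coercive inequality \eqref{Coercive estimates} in fact gives
\[
\Re\Bigl\langle -\nu\bigl(\partial_r^2-\tfrac{k^2-1/4}{r^2}\bigr)w,\,r^{2\alpha}w\Bigr\rangle_{L^2_r}\geq \nu(k^2-\alpha^2)\|r^{\alpha-1}w\|_{L^2_r}^2.
\]

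Next, the hypothesis that $S(r)$ is real valued kills the shearing contribution in the real energy, because
\[
\Re\langle iS(r)w,\,r^{2\alpha}w\rangle_{L^2_r}=\Re\Bigl(i\int_1^{\infty}S(r)\,r^{2\alpha}|w|^2\,dr\Bigr)=0.
\]
Combining the two displays I obtain
\[
\tfrac12\frac{d}{dt}\|r^{\alpha}w\|_{L^2_r}^2+\nu(k^2-\alpha^2)\|r^{\alpha-1}w\|_{L^2_r}^2\leq 0,
\]
and under the condition $|\alpha|\leq|k|$ the second term is nonnegative, yielding $\frac{d}{dt}\|r^{\alpha}w(t)\|_{L^2_r}^2\leq 0$, which is the desired monotonicity.

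The only subtlety is justifying the integration by parts implicit in invoking \eqref{Coercive estimates} with the weight $r^{2\alpha}$: the boundary term at $r=1$ vanishes by the Dirichlet condition $w|_{r=1}=0$, while the contribution at $r=\infty$ vanishes by the ``smooth enough'' assumption on $w$, which I read as enough spatial decay so that $r^{2\alpha-1}|w|^2\to 0$ and $r^{2\alpha}\bar w\,\partial_r w\to 0$ as $r\to\infty$. This is the main (and essentially only) moving part of the argument; once the solution class is fixed it is routine.
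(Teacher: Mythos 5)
Your proof is correct and follows the same route as the paper: a weighted energy estimate that combines the coercivity \eqref{Coercive estimates} for the diffusion part with the observation that $\Re\langle iS(r)w,r^{2\alpha}w\rangle_{\Ltr}=0$ for real $S$. The paper cites Lemma \ref{pair-w' with w} directly in the same one-line estimate, but this is exactly what underlies \eqref{Coercive estimates}, so the two arguments coincide.
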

\begin{proof}
By taking  the real part of the $\Ltr$ inner product of (\ref{equation with general shearing profile S}) with $r^{2\alpha}w$ and then using Lemma \ref{pair-w' with w}, we deduce that for $|\al| \leq |k|$
\begin{align*}
0&=\f12\frac{d}{dt}\|r^{\alpha}w(t)\|_\Ltr^2+\Re\Big\langle-\nu\Big(\partial_r^2-\f{k^2-\f14}{r^2}\Big)w+ iS(r)w,r^{2\alpha}w\Big\rangle_\Ltr
\\
&\geq\f12 \frac{d}{dt} \|r^{\alpha}w(t)\|_\Ltr^2+\nu(k^2-\alpha^2)\|r^{\alpha-1}w\|_\Ltr^2,
\end{align*}
which implies that $\|r^\al w(t)\|_\Ltr$ is nonincreasing with respect to  time $t\geq0$.
\end{proof}

The above proposition provides a basic polynomially weighted energy estimate for the
k-mode equation with a general shearing profile $S(r)$. However, the following proposition shows that even if $S(r)=0$ and $|\al| >|k|$, we may not have a uniform bound in time for $\|r^\al w\|_\Ltr$.

\begin{proposition}\label{prop lower bound of w_0}
{\sl   There exists $C>0$, so that for any increasing function $f(r) \geq 0$ and
  initial data $ 0\leq w_g(r)\in C^\oo_c(2,4),$ if $\nu t\geq 6,$ there holds
\begin{equation}\label{ineq lower bound of w_0}
\big\|rf(r)e^{- \mathcal{T}_0 t}w_g\big\|_\Ltr \geq C f(\sqrt{\nu t})\|w_g\|_\Llr.
\end{equation}}
\end{proposition}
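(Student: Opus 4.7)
The approach is to reduce the problem to a pointwise lower bound on $w_0(t,r) = e^{-\mathcal{T}_0 t} w_g(r)$ over an annulus of radius $\sim \sqrt{\nu t}$, and then integrate against the weight $r^2 f(r)^2$. Since $w_g \geq 0$, the maximum principle applied to the parabolic operator $\pa_t - \nu \pa_r^2 - \nu/(4r^2)$ (with Dirichlet boundary condition at $r=1$) gives $w_0(t, r) \geq 0$ for all $t$ and $r$, so signs are manageable.

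To obtain the pointwise bound, I drop the non-negative zeroth-order term $\nu/(4r^2)\, w_0$ and compare with the pure one-dimensional Dirichlet heat equation $\pa_t v = \nu \pa_r^2 v$ on $[1, \infty)$ with the same initial data. The parabolic comparison principle yields $w_0(t,r) \geq v(t,r) \geq 0$, and odd reflection about $r=1$ gives the explicit representation
\begin{equation*}
v(t, r) = \frac{1}{\sqrt{4\pi\nu t}} \int_1^\infty \Bigl[e^{-(r-s)^2/(4\nu t)} - e^{-(r+s-2)^2/(4\nu t)}\Bigr] w_g(s)\, ds.
\end{equation*}
For $r \in [\sqrt{\nu t}, 2\sqrt{\nu t}]$ (a nonempty range since $\nu t \geq 6$) and $s \in (2, 4)$, the key algebraic identity $(r+s-2)^2 - (r-s)^2 = 4(r-1)(s-1)$ combined with the elementary bound $e^{-a} - e^{-b} \gtrsim e^{-a}\min(b-a,1)$ for $b \geq a \geq 0$ and the uniform lower bound $e^{-(r-s)^2/(4\nu t)} \geq e^{-1}$ on this annulus yields the pointwise lower bound $v(t, r) \gtrsim r (\nu t)^{-3/2} \|w_g\|_{\Llr}$ (using $r-1 \gtrsim \sqrt{\nu t}$ and $s - 1 \geq 1$).

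Finally, monotonicity of $f$ gives $f(r) \geq f(\sqrt{\nu t})$ on the annulus, so
\begin{equation*}
\|rf w_0\|_{\Ltr}^2 \geq \int_{\sqrt{\nu t}}^{2\sqrt{\nu t}} r^2 f(r)^2 v(t, r)^2 \, dr \gtrsim f(\sqrt{\nu t})^2 \|w_g\|_{\Llr}^2 \cdot (\nu t)^{-3} \int_{\sqrt{\nu t}}^{2\sqrt{\nu t}} r^4 \, dr,
\end{equation*}
and the remaining integral contributes $\sim (\nu t)^{5/2}$, which produces the estimate after taking square roots.

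The main obstacle is that this crude comparison produces $\|rfw_0\|_{\Ltr} \gtrsim f(\sqrt{\nu t}) \|w_g\|_{\Llr} (\nu t)^{-1/4}$, a $(\nu t)^{1/4}$ loss compared with \eqref{ineq lower bound of w_0}. This loss is an artifact of the 1D Dirichlet comparison being too dissipative: a 1D half-line absorbs Brownian motion at rate $t^{-1/2}$, whereas the native 2D problem absorbs only logarithmically. To eliminate the loss, one should work instead with the substitution $\widehat W_0 = r^{-1/2}w_0$, under which $\mathcal{T}_0$ becomes the 2D radial Laplacian on $\{r \geq 1\}$, and apply an Aronson-type Gaussian lower bound for the 2D exterior Dirichlet heat kernel away from the boundary. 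The long-time log-harmonic profile of that kernel $(\propto \log r$ at $r \sim \sqrt{\nu t})$ compensates both the $r^{1/2}$ factor and the $(\log \nu t)^{-1}$ mass decay, delivering the sharp pointwise bound needed to close the estimate.
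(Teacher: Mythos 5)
Your primary argument via the one-dimensional Dirichlet comparison does not prove the stated bound: as you correctly compute, it yields $\|rf w_0\|_{\Ltr} \gtrsim f(\sqrt{\nu t})\|w_g\|_\Llr (\nu t)^{-1/4}$, which is off by the divergent factor $(\nu t)^{1/4}$. The comparison $w_0 \geq v$ (after dropping $\nu w_0/(4r^2) \geq 0$) is legitimate, but the half-line heat semigroup with Dirichlet boundary loses $L^1$ mass at rate $t^{-1/2}$, whereas $\mathcal{T}_0$ is conjugate to the exterior \emph{two-dimensional} Dirichlet Laplacian, which loses mass only logarithmically. So the 1D lower barrier is genuinely too small, and this route cannot recover \eqref{ineq lower bound of w_0}.

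The fix you sketch in the final paragraph is exactly the paper's proof. Writing $\eta(x) := |x|^{-1/2}w_0(|x|)$ turns $\partial_t w_0 - \nu\bigl(\partial_r^2 + \tfrac{1}{4r^2}\bigr)w_0 = 0$ into the 2D heat equation $\partial_t\eta = \nu\Delta\eta$ on the exterior disk $\Omega$ with Dirichlet boundary. The paper then invokes the two-sided Gaussian estimate for the exterior Dirichlet heat kernel $p_\Omega$ (Theorem~\ref{GS,ZQ03}, due to Grigor'yan--Saloff-Coste and Q.~S.~Zhang): $p_\Omega(t,x,y) \approx t^{-1}e^{-c|x-y|^2/t}$ for $|x|,|y|\geq 2$, $t\geq 1$. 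This gives the pointwise lower bound $\eta(t,x)\gtrsim (\nu t)^{-1}e^{-c|x|^2/(\nu t)}\|w_g\|_\Llr$ for $|x|\geq 2$ and $\nu t\geq 6$; integrating $r^2 f^2(r)|w_0|^2 = r^3 f^2(r)|\eta|^2$ over $\{|x|\geq\sqrt{\nu t}\}$ (using $f$ increasing) and scaling $|x|\mapsto \sqrt{\nu t}\,z$ then produces $f^2(\sqrt{\nu t})\|w_g\|_\Llr^2$ with no residual power of $\nu t$. To turn your sketch into a complete proof you must actually invoke (or reprove) the exterior-domain lower Gaussian bound; there is no elementary substitute, since that estimate is precisely where the 2D logarithmic (rather than $t^{-1/2}$) boundary absorption rate enters.
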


\begin{proof}
We first observe that   $w{\eqdefa} e^{-\mathcal{T}_0t}w_g$ solves
\begin{align*}
\left\{
\begin{aligned}
    &\partial_t w-\nu\Big(\partial_r^2+\f{1}{4r^2}\Big)w= 0, \quad (t,r)\in \R^+ \times [1,+\oo),\\
   & w(t=0)=w_g,\quad w|_{r=1,\oo}=0.
\end{aligned}
\right.
\end{align*}
Then $\eta(x){\eqdefa} |x|^{-\f12}w(|x|)$ verifies
\begin{align*}
\left\{
\begin{aligned}
    &\partial_t \eta-\nu \lap \eta=\partial_t \eta-\nu(\partial_r^2+\f{1}{r}\partial_r) \eta= 0, \quad (t,x) \in \R^+\times \Omega,\\
    &\eta(0,x)=|x|^{-\f12}w_g(|x|),\quad \eta|_{\pa \Omega}=0.
\end{aligned}
\right.
\end{align*}
If we denote the heat kernel of Dirichlet Laplacian on $\Omega$ by $p_\Omega(t,x,y)$, then one has
\begin{equation}
    \eta(t,x)= \int_{|y|\geq 1} p_\Omega(\nu t,x,y)|y|^{-\f12}w_g(|y|)\, dy =\int_{2\leq |y|\leq 4} p_\Omega(\nu t,x,y)|y|^{-\f12}w_g(|y|)\, dy.
\end{equation}
By virtue of Theorem \ref{GS,ZQ03} below, we get for $|x|\geq 2, \nu t\geq 6,$
\begin{equation*}
    \begin{split}
\eta(t,x)\gtrsim \int_{2\leq |y|\leq 4} \frac{1}{\nu t}e^{-c_2\frac{|x-y|^2}{\nu t}} w_g(|y|)\, dy \gtrsim \frac{1}{\nu t} e^{-c_3 \frac{|x|^2}{\nu t}} \|w_g\|_\Llr,
    \end{split}
\end{equation*}
 which implies
\begin{equation*}
    \begin{split}
\big\|rf(r)e^{-\mathcal{T}_0 t}w_g\big\|_\Ltr^2& = \int_{|x|\geq 1} |x|^2f^2(|x|) |\eta(t,x)|^2 \, dx \geq f^2(\sqrt{\nu t}) \int_{|x|\geq \sqrt{\nu t}} |x|^2 |\eta(t,x)|^2 \, dx \\
&\gtrsim f^2(\sqrt{\nu t}) \int_{|x|\geq \sqrt{\nu t}} \frac{|x|^2}{(\nu t)^2} e^{-2c_3 \frac{|x|^2}{\nu t}} \, dx \|w_g\|_\Llr^2 \gtrsim f^2(\sqrt{\nu t})\|w_g\|_\Llr^2.
    \end{split}
\end{equation*}
This completes the proof of \eqref{ineq lower bound of w_0}.
\end{proof}

\begin{theorem}[\cite{GS,ZQ03}]\label{GS,ZQ03}
{\sl There exist some constants $c_1,c_2>0$, so that for $|x|,|y|\geq 2$, $t\geq 1$, there holds
\begin{equation}
    \frac{c_1}{t}e^{-c_2\frac{|x-y|^2}{t}} \leq p_\Omega(t,x,y) \leq \frac{1}{c_1t}e^{-\frac{|x-y|^2}{c_2t}}.
\end{equation}}\emph{}
\end{theorem}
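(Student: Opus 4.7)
The plan is to derive the two-sided bound as a specialization of general Dirichlet heat kernel estimates on exterior Euclidean domains, restricted to points bounded uniformly away from $\pa\Omega=\{|z|=1\}$.

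First, for the upper bound I would simply apply domain monotonicity of the Dirichlet heat kernel: since $\Omega\subset\R^2$, one has $p_\Omega(t,x,y)\leq p_{\R^2}(t,x,y)=(4\pi t)^{-1}e^{-|x-y|^2/(4t)}$, which immediately yields the claimed upper bound with any $c_1\leq 4\pi$ and $c_2=4$, with no use of the hypothesis $|x|,|y|\geq 2$ needed.

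The substance lies in the lower bound. I would use the Feynman--Kac representation
\[
p_\Omega(t,x,y)=p_{\R^2}(t,x,y)\,\mathbb{P}^{x,y}_t(\tau>t),
\]
where $\tau$ is the first hitting time of $\pa\Omega$ by the Brownian bridge from $x$ to $y$ in time $t$, and then show $\mathbb{P}^{x,y}_t(\tau>t)\gtrsim 1$ uniformly in $|x|,|y|\geq 2$ and $t\geq 1$. When the straight segment $[x,y]$ remains in $\{|z|\geq 3/2\}$, a standard tube estimate (the bridge stays within distance $1/2$ of the segment with probability bounded below by a dimensional constant when $|x-y|\leq\sqrt{t}$, and is irrelevant otherwise since the free Gaussian already vanishes) yields this bound directly.

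The main obstacle is the geometry in which $[x,y]$ passes close to the excluded disk $\{|z|<1\}$, i.e.\ when $x$ and $y$ lie on nearly antipodal rays through the origin. To overcome this I would apply the Chapman--Kolmogorov identity at time $t/2$ with an intermediate point $z_0$ on the circle $\{|z|=2\}$ chosen so that both sub-bridges $[x,z_0]$ and $[z_0,y]$ avoid a fixed neighborhood of the hole; the tube estimate applies to each, and the Gaussian factor degrades by at most a bounded multiplicative constant since $z_0$ may be chosen with $|x-z_0|^2+|z_0-y|^2\lesssim|x-y|^2+1$. In the complementary regime $|x-y|\lesssim\sqrt{t}$ one instead iterates the Grigoryan--Saloff-Coste parabolic Harnack inequality along a chain of Euclidean balls inside $\{|z|\geq 3/2\}$, on which volume doubling and a scale-invariant Poincar\'e inequality hold uniformly, recovering the same lower Gaussian bound as in the Zhang analysis for exterior $C^{1,1}$ domains, where the boundary correction factor $\min(1,\mathrm{dist}(x,\pa\Omega)/\sqrt{t})$ is bounded below by a constant precisely because $\mathrm{dist}(x,\pa\Omega)\geq 1$ and $t\geq 1$.
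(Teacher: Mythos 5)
The paper does not prove this statement; it is quoted as a known result from \cite{GS,ZQ03} and used as a black box in Proposition~\ref{prop lower bound of w_0}. So your attempt is not competing with an argument in the text but rather supplying one. Your upper bound via domain monotonicity, $p_\Omega(t,x,y)\le p_{\mathbb R^2}(t,x,y)=(4\pi t)^{-1}e^{-|x-y|^2/(4t)}$, is correct and complete.

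The lower bound, however, has a genuine gap, and in fact the probabilistic claim you set out to prove is false. In two dimensions Brownian motion is recurrent, so for fixed $|x|=2$ the survival probability satisfies $\mathbb P^x(\tau_{\partial\Omega}>t)=\int_\Omega p_\Omega(t,x,y)\,dy\to 0$ (in fact like $c/\log t$) as $t\to\infty$. But the quoted lower bound, integrated over $\{|y|\ge 2\}$, gives $\int_\Omega p_\Omega(t,x,y)\,dy\gtrsim c_1t^{-1}\int_{|y|\ge2}e^{-c_2|x-y|^2/t}\,dy\gtrsim c_1/c_2$ for all large $t$, a contradiction. So $\mathbb P^{x,y}_t(\tau>t)\gtrsim 1$ uniformly in $t\ge1$ is unattainable, and the actual two-dimensional Grigoryan--Saloff-Coste estimate carries logarithmic correction factors of the form $\tfrac{\log|x|}{\log(|x|+\sqrt t)}\cdot\tfrac{\log|y|}{\log(|y|+\sqrt t)}$, which vanish as $t\to\infty$ for bounded $|x|,|y|$; these appear to have been dropped in the paper's quoted statement. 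Two of your supporting steps also fail at the technical level. First, a Brownian bridge of duration $t$ confined to a tube of \emph{fixed} radius $1/2$ has probability decaying like $e^{-\lambda t}$ with $\lambda$ the Dirichlet ground-state energy of the cross-section, not bounded below by a dimensional constant; a constant lower bound would require a tube of radius $\sim\sqrt t$, which for large $t$ necessarily meets the hole. Second, the boundary factor you cite, $\min(1,\mathrm{dist}(x,\partial\Omega)/\sqrt t)$, equals $\min(1,1/\sqrt t)\to0$ when $\mathrm{dist}(x,\partial\Omega)=1$ and $t\to\infty$, so the assertion that $\mathrm{dist}(x,\partial\Omega)\ge1$, $t\ge1$ forces it to be bounded below is incorrect; the relevant scale in the short-time Zhang-type estimate is $\min(\sqrt t,1)$, and in any case that analysis relies on transience and does not transfer to $n=2$ without modification.
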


However, in the following sections, we will show that, due to the special transport structure of Taylor-Couette flow, i.e., $S(r)=\frac{kB}{r^2}$, a uniform bound in time for $\|r^\al w\|_\Ltr$ can be obtained for $\nu/|kB|\ll(1+|\al|)^{-3}$. This is one of the key ideas of this paper.

\section{Resolvent estimates without enhanced dissipation weight}\label{Resolvent estimates without enhanced dissipation weight}

In order to a uniform operator norm estimate for the semigroup $e^{-t\mathcal{T}_k}$, we first investigate the
following fundamental resolvent equation:
\begin{align}\label{vorticity eqn}
& (\mathcal{T}_k -ikB\lambda)w= -\nu\Big(\partial_r^2-\f{k^2-\f14}{r^2}\Big)w+ikB(r^{-2}-\lambda)w=F,
\end{align}
with $w\in \Htr\cap\Hlro$, $\lambda\in\mathbb{R}$ and $|k|\geq 1$. By density argument, we may always assume without loss of generality that $\varphi, w,F \in \mathcal{C}^\oo_c([1,\oo))$ in this section, unless otherwise specified. To proceed, we introduce:
\begin{equation}\label{S3eq1}
     \|w\|_\hkz^2 {\eqdefa} \|\partial_r w\|_\Ltr^2 + (k^2 -\frac{1}{4}) \Big\|\frac{w}{r}\Big\|_\Ltr^2
\andf
     \|w\|_\hkf{\eqdefa} \inf_{w=\pa_rg_1+\frac{k}{r}g_2}  \bigl(\|g_1\|_\Ltr + \|g_2\|_\Ltr\bigr).
\end{equation}
Then for $w\in H^1_{r,0} $, $F\in H^{-1}_r $, $k\in\Z\backslash\{0\}$ and $\al\in \R$, we have
\begin{equation}\label{equivalent definition of weighted Hkf}
\begin{split}
   & |\braket{w,F}_\Ltr| \leq \|w\|_\hkz \|F\|_\hkf,\\
& \|r^{1+\al} w(s)\|_\hkf \approx \inf_{w=\pa_rF_1+\frac{k}{r}F_2} \bigl(\|r^{1+\al }F_1\|_\Ltr + \|r^{1+\al} F_2\|_\Ltr  \bigr).
\end{split}
\end{equation}

Below
we fix some decreasing function $\rho\in \mathcal{C}^\oo(\R)$ satisfying
$$\rho(z)=1, \ \text{for} \ z\leq -1, \qquad  \rho(z)=-1, \ \text{for} \ z\geq 1.$$

\begin{proposition}\label{prop 3.1}
  {\sl   For any $k\in\Z\backslash\{0\}$, $\al\in\R$, $\lambda\in\mathbb{R}$, if we assume that  $\frac{\nu}{|kB|}\ll (1+|\al|)^{-3}$, there exists constant $C>0$ independent of $\nu,k,B,\lambda,\al$ so that
\begin{equation}\label{ineq 3.2}
 \nu\|r^{\al}w'\|_\Ltr^2 + \mu_k \|r^{\al-1}w\|_\Ltr^2 \leq C\bigl( \big|\braket{F,r^{2\al}w}_\Ltr\big| + \big|\langle F,r^{2\al}\tilde{\rho} w\rangle_\Ltr\big|\mathbbm{1}_{(0,1)}(\lambda) \bigr),
\end{equation}
where $\mu_k=\max\big(\nu k^2, \nu^\f13|kB|^\f23\big)$ and  $\tilde{\rho} {\eqdefa} \rho\Big(\frac{r-\lambda^{-\f12}}{\nu^\f13|kB|^{-\f13}\lambda
^{-\f12}} \Big)$ for $\lambda\in(0,1)$.

Moreover, we have
\begin{subequations} \label{S3eq2}
\begin{gather}
\label{ineq 3.3}
    \nu^\f12 \mu_k^{\f12}\|r^\al w'\|_{\Ltr}+\mu_k \|r^{\al-1}w\|_{\Ltr} +|kB|\|r^{\al+1}(r^{-2}-\lambda)w\|_{\Ltr}\leq C \|r^{\al+1}F\|_{\Ltr},\\
\label{ineq 3.4}
     \nu \|r^\al w\|_\hkz + \nu^\f12 \mu_k^\f12\|r^{\al-1} w\|_\Ltr+|kB|\|r^{\al}(r^{-2}-\lambda)w \|_\hkf \leq C(1+|\al/k|) \|r^\al F\|_\hkf.
\end{gather}
\end{subequations}}
\end{proposition}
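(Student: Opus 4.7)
The plan is to test the resolvent equation \eqref{vorticity eqn} against two real multipliers and combine the resulting identities from the real and imaginary parts. The first multiplier is the polynomial weight $r^{2\al}w$, used throughout. The second, used only when $\lambda\in(0,1)$ so that the critical radius $r_c\eqdefa \lambda^{-\f12}$ lies in the interior, is the cutoff variant $r^{2\al}\tilde{\rho}w$, where $\tilde{\rho}$ is sign-matched to $r^{-2}-\lambda$ outside the critical layer $\{|r-r_c|\lesssim \delta_c\}$, $\delta_c\eqdefa\nu^{\f13}|kB|^{-\f13}\lambda^{-\f12}$ being the Airy-type scale at which the drift $|kB|(r^{-2}-\lambda)$ and the dissipation $\nu\pa_r^2$ balance near $r_c$.

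\textbf{Step 1 (real part).} Taking $\Re\braket{\cdot,r^{2\al}w}_\Ltr$ of \eqref{vorticity eqn} kills the purely imaginary drift $ikB(r^{-2}-\lambda)$, and Lemma \ref{Coercive estimates in general polynomial-weighted spaces} then yields $\nu\|r^\al w'\|_\Ltr^2+\nu(k^2-\al^2)\|r^{\al-1}w\|_\Ltr^2 \leq |\braket{F,r^{2\al}w}_\Ltr|$. Under the smallness hypothesis $\nu/|kB|\ll(1+|\al|)^{-3}$, the $\nu\al^2$ loss is absorbable, and this already delivers \eqref{ineq 3.2} in the high-frequency regime $\mu_k=\nu k^2$.

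\textbf{Step 2 (imaginary part and critical layer).} For a real multiplier $M\in\{1,\tilde{\rho}\}$, taking $\Im\braket{\cdot,r^{2\al}Mw}_\Ltr$ of \eqref{vorticity eqn} produces
\[
\nu\int(r^{2\al}M)'\Im(w'\bar w)\,dr+kB\int(r^{-2}-\lambda)r^{2\al}M|w|^2\,dr=\Im\braket{F,r^{2\al}Mw}_\Ltr.
\]
When $\lambda\notin(0,1)$, I take $M=1$; since $|r^{-2}-\lambda|\gtrsim r^{-2}$ with fixed sign, this already gives a bound stronger than \eqref{ineq 3.2}. When $\lambda\in(0,1)$, I take $M=\tilde{\rho}$; the sign design yields the pointwise lower bound $|kB|(r^{-2}-\lambda)\tilde{\rho}r^{2\al}\gtrsim \kappa_k r^{2\al-2}$ on $\{|r-r_c|\geq \delta_c\}$, which recovers $\mu_k\|r^{\al-1}w\|_\Ltr^2$ off the layer. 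The residual contribution from inside the layer, together with the commutator $\nu\int r^{2\al}\tilde{\rho}'\Im(w'\bar w)\,dr$ (with $|\tilde{\rho}'|\lesssim \delta_c^{-1}$ confined to the layer), is absorbed into $\nu\|r^\al w'\|_\Ltr^2$ via Young's inequality combined with a Gagliardo-Nirenberg/Agmon bound $\sup|w|^2\lesssim\|w\|_\Ltr\|w'\|_\Ltr$, exploiting the scaling identity $\mu_k\delta_c^2=\nu/\lambda$. Combining Steps 1 and 2 proves \eqref{ineq 3.2}.

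\textbf{Step 3 (estimates \eqref{ineq 3.3}, \eqref{ineq 3.4}, and main obstacle).} Cauchy-Schwarz on the right-hand side of \eqref{ineq 3.2} gives $\mu_k\|r^{\al-1}w\|_\Ltr+\nu^{\f12}\mu_k^{\f12}\|r^\al w'\|_\Ltr\lesssim \|r^{\al+1}F\|_\Ltr$; the remaining term $|kB|\|r^{\al+1}(r^{-2}-\lambda)w\|_\Ltr$ of \eqref{ineq 3.3} is then read off directly from the equation. The dual bound \eqref{ineq 3.4} follows by reinterpreting the Step 1 pairing through the $\hkz$-$\hkf$ duality in \eqref{S3eq1}: decomposing $F=\pa_r F_1+(k/r)F_2$ and integrating by parts recasts $\braket{F,r^{2\al}w}$ as a pairing bounded by $\|r^\al F\|_\hkf\|r^\al w\|_\hkz$, and re-invoking \eqref{ineq 3.2} closes the estimate, the factor $(1+|\al/k|)$ reflecting $[\pa_r,r^\al]=\al r^{\al-1}$. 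The hardest point is the critical layer analysis in Step 2: the commutator $\nu\int r^{2\al}\tilde{\rho}'\Im(w'\bar w)\,dr$ is of dangerous size $\nu/\delta_c$, and only the sharp Airy scaling $\delta_c\sim(\nu/|kB|)^{\f13}r_c$ built into $\tilde{\rho}$ --- which enforces $\mu_k\delta_c^2=\nu/\lambda$ --- allows its absorption into the viscous dissipation. Additional care is needed when $\lambda$ is close to $1$, since $r_c$ then approaches the boundary $r=1$ and the Dirichlet condition $w|_{r=1}=0$ becomes essential.
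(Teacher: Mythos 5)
Your Steps 1--2 take a genuinely different route from the paper. The paper proves \eqref{ineq 3.2} by reduction to the case $\al=0$, which is cited from \cite{LZZ-25}: writing $\eta = r^\al w$ and $G = r^\al F$, the conjugated equation picks up the perturbation $\nu(\al+\al^2)r^{-2}\eta - 2\nu\al r^{-1}\pa_r\eta$, which is absorbable because the hypothesis forces $\nu(1+|\al|)^2 \ll \mu_k$. This is short but not self-contained. Your proposal instead carries out the critical-layer argument (imaginary-part pairing with the sign-matched cutoff $\tilde\rho$, Airy width $\delta_c$, Agmon bound to absorb the commutator $\nu\int r^{2\al}\tilde{\rho}'\,\Im(w'\bar w)\,dr$) directly in the $r^{2\al}$-weighted setting; this is essentially the argument underlying the cited $\al=0$ estimate, executed from scratch, and your scaling bookkeeping ($\kappa_k\delta_c^2 = \nu/\lambda$, the $\lambda^{\pm1}$ factors cancelling against the $r$-weights near $r_c$) is correct. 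The one imprecision is the claim that for $\lambda\notin(0,1)$ one has $|r^{-2}-\lambda|\gtrsim r^{-2}$ with a fixed sign: for $\lambda\geq1$ close to $1$, $\lambda - r^{-2}$ vanishes at $r=1$, so you would still need the Dirichlet condition $w(1)=0$ together with a Hardy-type estimate (or a boundary variant of the layer argument) to conclude in that regime.

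Step 3, by contrast, has a real gap in the estimates of $|kB|\|r^{\al+1}(r^{-2}-\lambda)w\|_\Ltr$ and $|kB|\|r^\al(r^{-2}-\lambda)w\|_\hkf$. These cannot be ``read off directly from the equation'': isolating $ikB(r^{-2}-\lambda)w = F + \nu\big(\pa_r^2 - (k^2-\tfrac14)r^{-2}\big)w$ would require control of $\|r^{\al+1}\pa_r^2 w\|_\Ltr$, which is supplied neither by \eqref{ineq 3.2} nor by the first two terms of \eqref{ineq 3.3}. The paper instead pairs the equation with $r^{2\al+2}(r^{-2}-\lambda)w$, takes the imaginary part, integrates by parts, and rewrites $2\al r^{2\al-1} - \lambda(2\al+2)r^{2\al+1} = 2(1+\al)r^{2\al+1}(r^{-2}-\lambda) - 2r^{2\al-1}$ so that everything closes with Young's inequality against the already-established bounds and the smallness $(\nu/|kB|)^{1/3}\ll(1+|\al|)^{-1}$. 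Similarly, the $\hkf$-bound in \eqref{ineq 3.4} comes from $\|kB\,r^\al(r^{-2}-\lambda)w\|_\hkf \leq \|r^\al F\|_\hkf + \nu\|r^\al(\pa_r^2 - (k^2-\tfrac14)r^{-2})w\|_\hkf$, with the last term estimated through the definition of $\hkf$; this is where the factor $(1+|\al/k|)$ actually originates, not from $[\pa_r,r^\al]$ alone. These integration-by-parts steps carry the real content of \eqref{ineq 3.3}--\eqref{ineq 3.4} beyond \eqref{ineq 3.2} and are missing from your sketch.
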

\begin{proof}
We first recall from \cite{LZZ-25} that \eqref{ineq 3.2} holds for $\al=0$, i.e., for $\nu\ll |kB|$, there holds
\begin{equation}\label{3.6,a}
    \nu\|w'\|_\Ltr^2 + \mu_k \|w/r\|_\Ltr^2 \leq C\bigl( \big|\braket{F,w}_\Ltr\big| + \big|\langle F,\tilde{\rho} w\rangle_\Ltr\big|\mathbbm{1}_{(0,1)}(\lambda) \bigr).
\end{equation}

 For arbitrary $\al\in \R$, we observe from  \eqref{vorticity eqn} that $\eta {\eqdefa} r^\al w$ and $G{\eqdefa} r^\al F$ satisfy
\begin{equation}\label{3.7,a}
    -\nu\Big(\partial_r^2-\f{k^2-\f14}{r^2}\Big)\eta+ikB\big(r^{-2}-\lambda\big)\eta=G +\nu(\al +\al^2) r^{-2}\eta -2\nu\al r^{-1}\pa_r \eta.
\end{equation}
Then we get, by applying \eqref{3.6,a} and setting $\tilde{\al}{\eqdefa}1+|\al|$, that
\begin{equation*}%\label{3.8,a}
    \begin{split}
        \nu\|\eta'\|_\Ltr^2 + \mu_k \|\eta/r\|_\Ltr^2 &\leq C\bigl( \big|\braket{G,\eta}_\Ltr\big| + \big|\langle G,\tilde{\rho} \eta\rangle_\Ltr\big|\mathbbm{1}_{(0,1)}(\lambda) \bigr) \\
        & \quad +C\nu \big|\braket{(\al +\al^2) r^{-2}\eta -2\al r^{-1}\pa_r \eta,\eta}_\Ltr\big| \\
        &\quad +C\nu \big|\langle (\al +\al^2) r^{-2}\eta -2\al r^{-1}\pa_r \eta,\tilde{\rho} \eta\rangle_\Ltr\big|\mathbbm{1}_{(0,1)}(\lambda) \\
        &\leq C\bigl( \big|\braket{G,\eta}_\Ltr\big| + \big|\langle G,\tilde{\rho} \eta\rangle_\Ltr\big|\mathbbm{1}_{(0,1)}(\lambda) \bigr) + C\nu \tilde{\al}^2 \|\eta/r\|_\Ltr^2 + \f12 \nu \|\eta'\|_\Ltr^2.
    \end{split}
\end{equation*}
Notice that $\nu\tilde{\al}^2 \ll \mu_k$ if $\frac{\nu}{|kB|}\ll \tilde{\al}^{-3}$, so that we deduce from the above inequality that
\begin{equation*}
    \nu\|\eta'\|_\Ltr^2 + \mu_k \|\eta/r\|_\Ltr^2 \leq C\bigl( \big|\braket{F,r^{2\al}w}_\Ltr\big| + \big|\langle F,r^{2\al}\tilde{\rho} w\rangle_\Ltr\big|\mathbbm{1}_{(0,1)}(\lambda) \bigr),
\end{equation*}
which together with the fact:
\begin{equation*}
   \nu\|r^{\al}w\|_\hkz^2 + \mu_k \|r^{\al-1}w\|_\Ltr^2 \approx \nu\|r^{\al}w'\|_\Ltr^2 + \mu_k \|r^{\al-1}w\|_\Ltr^2 \approx  \nu\|\eta'\|_\Ltr^2 + \mu_k \|\eta/r\|_\Ltr^2
\end{equation*}
ensures \eqref{ineq 3.2}.

Next let us turn to the proof of  \eqref{ineq 3.3} and \eqref{ineq 3.4}. We first observe from \eqref{equivalent definition of weighted Hkf} that
\begin{align}\label{3.10}
    \big|\langle F,r^{2\al}w\rangle_\Ltr\big|  \leq \min\big(\|r^{\al+1}F\|_\Ltr \|r^{\al-1}w\|_\Ltr ,\  \|r^{\al}F\|_\hkf \|r^{\al}w\|_\hkz \big),
\end{align}
and for $\lambda\in (0,1)$,
\begin{align*}
 \big|\langle F,r^{2\al}\tilde{\rho} w\rangle_\Ltr\big|  &\leq \min\bigl(\|r^{\al+1}F\|_\Ltr \|r^{\al-1}\tilde{\rho} w\|_\Ltr ,\  \|r^{\al}F\|_\hkf \|r^{\al} \tilde{\rho} w\|_\hkz \bigr)\\
    &\lesssim \min\bigl(\|r^{\al+1}F\|_\Ltr \|r^{\al-1}w\|_\Ltr ,\  \|r^{\al}F\|_\hkf (\|r^{\al} w\|_\hkz + \| \pa_r\tilde{\rho}  r^{\al}w\|_\Ltr  ) \bigr). \end{align*}
  Due to   $\frac{\nu}{|kB|}\ll (1+|\al|)^{-3}$,  $\nu^\f13|kB|^{-\f13}$ is sufficiently small, so that one has
  \begin{align*}
    \| \pa_r\tilde{\rho}  r^{\al}w\|_\Ltr \lesssim \frac{\lambda^{-\f12}}{\nu^\f13|kB|^{-\f13}\lambda^{-\f12}}\| r^{\al-1}w\|_\Ltr\lesssim \nu^{-\f13}|kB|^{\f13}\| r^{\al-1}w\|_\Ltr,
    \end{align*}
    as a result, it comes out
\begin{equation}\begin{split}\label{3.11}
  \big|\langle F,r^{2\al}\tilde{\rho} w\rangle_\Ltr\big|
      \lesssim  \min\Big(&\|r^{\al+1}F\|_\Ltr \|r^{\al-1}w\|_\Ltr ,\\
      &\quad \|r^{\al}F\|_\hkf \big(\|r^{\al} w\|_\hkz + \nu^{-\f13}|kB|^{\f13}\| r^{\al-1}w\|_\Ltr \big) \Big).
\end{split}\end{equation}
Then we deduce from \eqref{ineq 3.2}, \eqref{3.10} and \eqref{3.11} that
\begin{align*}
 \nu\|r^{\al}w'\|_\Ltr^2 + \mu_k \|r^{\al-1}w\|_\Ltr^2 \leq & C\|r^{\al+1}F\|_\Ltr \|r^{\al-1}w\|_\Ltr\leq C\mu_k^{-1}\|r^{\al+1}F\|_\Ltr^2
 +\f{\mu_k}2 \|r^{\al-1}w\|_\Ltr^2,
 \end{align*}
 and
 \begin{align*}
 \nu\|&r^{\al}w'\|_\Ltr^2 + \mu_k \|r^{\al-1}w\|_\Ltr^2\\ \leq & C\|r^{\al}F\|_\hkf\bigl( \|r^{\al}w\|_\hkz
 + \nu^{-\f13}|kB|^{\f13}\| r^{\al-1}w\|_\Ltr\bigr)\\
 \leq & C\bigl(\nu^{-1}+\nu^{-\f23}|kB|^{\f23}\mu_k^{-1}\bigr)\|r^{\al}F\|_\hkf^2+\f12\bigl({\nu}\|r^{\al}w\|_\hkz^2+\mu_k \|r^{\al-1}w\|_\Ltr^2\bigr),
 \end{align*}
 from which and the fact that $|kB|^{\f23}\mu_k^{-1}\leq \nu^{-\f13},$ we infer
 \begin{subequations} \label{S3eq4}
\begin{gather}
  \label{S3eq4a} \nu^\f12 \mu_k^{\f12}\|r^\al w'\|_{\Ltr}+\mu_k \|r^{\al-1}w\|_{\Ltr} \leq C \|r^{\al+1}F\|_{\Ltr},\\
 \label{S3eq4b} \nu \|r^\al w\|_\hkz + \nu^\f12 \mu_k^\f12\|r^{\al-1} w\|_\Ltr \leq C \|r^\al F\|_\hkf.
\end{gather}
\end{subequations}

Thanks to \eqref{S3eq4},
 it suffices to estimate $(r^{-2}-\lambda)w$ in order to complete the proof of \eqref{S3eq2}. Notice that
    \begin{align*}
        \|kBr^{\al}(r^{-2}-\lambda)w\|_\hkf&\leq \|r^\al F\|_\hkf + \nu \Big\|r^\al\Big(\partial_r^2-\f{k^2-\f14}{r^2}\Big)w\Big\|_\hkf\\
        &\lesssim \|r^\al F\|_\hkf + \nu\Big(1+|\frac{\al}{k}|\Big) \|r^{\al}\pa_r w\|_\Ltr + \nu |k| \|r^{\al-1}w\|_\Ltr\\
        &\lesssim \Big(1+|\frac{\al}{k}| + \frac{\nu^\f12 |k|}{\mu_k^\f12}\Big) \|r^\al F\|_\hkf \lesssim (1+|\al/k|)  \|r^\al F\|_\hkf,
    \end{align*}
from which and \eqref{S3eq4b}, we obtain \eqref{ineq 3.4}.

On the other hand, we get, by using integration by parts, that
\begin{align*}
\langle F&, r^{2\al+2}(r^{-2}-\lambda)w\rangle_\Ltr\\
=&\big\langle-\nu\big(\partial_r^2-(k^2-1/4)r^{-2}\big)w+ikB\big(r^{-2}-\lambda\big)w,r^{2\al+2}\big(r^{-2}-\lambda\big)w\big\rangle_\Ltr\\
=&\nu\big\langle w',\big(r^{2\al+2}(r^{-2}-\lambda)w\big)'\big\rangle_\Ltr+\nu(k^2-1/4)\big\langle w,r^{2\al}(r^{-2}-\lambda)w\big\rangle_\Ltr+ikB\|r^{\al+1}(r^{-2}-\lambda)w\|_\Ltr^2\\
=&\nu\big\langle w',r^{2\al+2}(r^{-2}-\lambda)w'\big\rangle_\Ltr+ \nu \big\langle w',\big( 2\al r^{2\al-1} -\lambda (2\al+2)r^{2\al+1}\big)w\big\rangle_\Ltr \\
&+\nu(k^2-1/4)\big\langle w,r^{2\al}(r^{-2}-\lambda)w\big\rangle_\Ltr+ikB\|r^{\al+1}(r^{-2}-\lambda)w\|_\Ltr^2.
\end{align*}
By taking imaginary part of the above inequality and  rewriting
$$2\al r^{2\al-1} -\lambda (2\al+2)r^{2\al+1} = 2(1+\al)r^{2\al+1}(r^{-2}-\lambda) -2 r^{2\al-1} ,$$
we obtain
\begin{align*}
& |kB|\|r^{\al+1}(r^{-2}-\lambda)w\|_\Ltr^2\\
&\leq \big|\langle F,r^{2\al+2}(r^{-2}-\lambda)w\rangle_\Ltr\big| +2\nu\tilde{\al}\big|\langle w',r^{2\al+1}(r^{-2}-\lambda) w\rangle_\Ltr\big| + 2\nu \big|\braket{w', r^{2\al-1}w}_\Ltr\big|\\
&\leq  \bigl(\|r^{\al+1}F\|_\Ltr+2\nu\tilde{\al}\|r^{\al}w'\|_\Ltr\bigr)\|r^{\al+1}(r^{-2}-\lambda)w\|_\Ltr +2\nu\|r^\al w'\|_\Ltr\|r^{\al-1} w\|_\Ltr.
\end{align*}
Observing from \eqref{ineq 3.3} that
\begin{align*}
    \nu^{\f23} |kB|^{\f13}\|r^\al w'\|_{\Ltr}+\nu^\f13|kB|^\f23  \|r^{\al-1}w\|_{\Ltr} \leq C \|r^{\al+1}F\|_{\Ltr}.
\end{align*}
Then
we get, by applying  Young's inequality and $(\frac{\nu}{|kB|})^\f13\ll\tilde{\al}^{-1},$ that
\begin{align*}
|kB|\|r^{\al+1}(r^{-2}-\lambda)w\|_\Ltr^2
\lesssim & \f{1}{|kB|}\|r^{\al+1}F\|_\Ltr^2+\f{\nu^2\tilde{\al}^2}{|kB|}\|r^{\al} w'\|_\Ltr^2+\nu\|r^\al w'\|_\Ltr\|r^{\al-1} w\|_\Ltr\\
\lesssim &\Big(\f{1}{|kB|} +\f{\nu^2\tilde{\al}^2}{|kB|} \nu^{-\f43}|kB|^{-\f23}
\Big)\|r^{\al+1}F\|_\Ltr^2
\lesssim\f{1}{|kB|}\|r^{\al+1}F\|_\Ltr^2,
\end{align*}
which together with \eqref{S3eq4b} ensures \eqref{ineq 3.3}.
This completes the proof of Proposition \ref{prop 3.1}.
\end{proof}

Below we present the resolvent estimate of $\varphi$:

\begin{proposition}\label{prop L2-phi resolvent estimate}
{\sl  Let $k\in \Z\backslash\{0\}, \varepsilon \in (0,2) $, $\lambda\in\mathbb{R}$, there exists constant $C_\varepsilon>0$ independent of $\nu,k,B,\lambda$, so that
\begin{align}\label{ineq L2-phi resolvent estimate}
    \mu_k^{\f12} |kB|^\f12 |k|^\f12 \Bigl( \Big\|\frac{\varphi'}{r^{1-\varepsilon}}\Big\|_\Ltr + |k|\Big\|\frac{\varphi}{r^{2-\varepsilon}}\Big\|_\Ltr \Big) \leq C_\varepsilon\|r^{2+\varepsilon}F\|_\Ltr.
\end{align}}
\end{proposition}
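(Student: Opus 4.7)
The plan is to reduce the $\varphi$-estimate to the resolvent bounds on $w$ of Proposition~\ref{prop 3.1} via the elliptic equation $\bigl(\pa_r^2-(k^2-\tfrac14)r^{-2}\bigr)\varphi=w$. First I test the elliptic identity against $r^{2(\varepsilon-1)}\bar\varphi$ and take the real part; integration by parts together with $\varphi(1)=0$ yields the coercive identity
\[
\|r^{\varepsilon-1}\varphi'\|_\Ltr^{2}+\bigl(k^{2}-\tfrac14-(\varepsilon-1)(2\varepsilon-3)\bigr)\|r^{\varepsilon-2}\varphi\|_\Ltr^{2}=-\Re\langle w,\,r^{2(\varepsilon-1)}\varphi\rangle_\Ltr.
\]
For $|k|\geq 2$ the coefficient of $\|r^{\varepsilon-2}\varphi\|_\Ltr^{2}$ is comparable to $k^{2}$ uniformly in $\varepsilon\in(0,2)$; the lone mode $|k|=1$ can be handled by a small shift of the weight or by Hardy's inequality, and is absorbed into the $\varepsilon$-dependent constant.

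The second ingredient is an interpolation between the two bounds of \eqref{ineq 3.3} with $\alpha=\varepsilon+1$, namely $\mu_k\|r^\varepsilon w\|_\Ltr\lesssim\|r^{\varepsilon+2}F\|_\Ltr$ and $|kB|\,\|r^{\varepsilon+2}(r^{-2}-\lambda)w\|_\Ltr\lesssim\|r^{\varepsilon+2}F\|_\Ltr$. A single Cauchy--Schwarz inside the integral produces
\[
\|r^{\varepsilon+1}|r^{-2}-\lambda|^{1/2}w\|_\Ltr^{2}\leq \|r^\varepsilon w\|_\Ltr\,\|r^{\varepsilon+2}(r^{-2}-\lambda)w\|_\Ltr\lesssim\frac{\|r^{\varepsilon+2}F\|_\Ltr^{2}}{\mu_k|kB|},
\]
which is the key estimate accounting for the prefactor $(\mu_k|kB|)^{1/2}$ in the target.

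I then estimate the right-hand side of the coercive identity by another Cauchy--Schwarz:
\[
|\langle w,\,r^{2(\varepsilon-1)}\varphi\rangle|\leq\|r^{\varepsilon+1}|r^{-2}-\lambda|^{1/2}w\|_\Ltr\,\|r^{\varepsilon-3}|r^{-2}-\lambda|^{-1/2}\varphi\|_\Ltr.
\]
When $\lambda\in\R\setminus(0,1)$ the critical radius $r_*=\lambda^{-1/2}$ lies outside $(1,\infty)$, so $|r^{-2}-\lambda|^{-1/2}\leq r$ on $[1,\infty)$ and the second factor is $\leq\|r^{\varepsilon-2}\varphi\|_\Ltr$; a Young inequality with $\delta=k^{2}$ and absorption of the resulting $\|r^{\varepsilon-2}\varphi\|_\Ltr^{2}$ term into the coercive identity then yields the target bound (in fact with a stronger prefactor $|k|(\mu_k|kB|)^{1/2}$ in this regime).

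The main obstacle is the case $\lambda\in(0,1)$, where $|r^{-2}-\lambda|^{-1/2}$ develops a non-integrable singularity at $r_*\in(1,\infty)$ and the elementary bound $\|r^{\varepsilon-3}|r^{-2}-\lambda|^{-1/2}\varphi\|_\Ltr$ fails. Paralleling the cutoff $\tilde\rho$ of Proposition~\ref{prop 3.1}, I will split $\varphi=\chi_*\varphi+(1-\chi_*)\varphi$ with $\chi_*$ supported in a critical layer of width $\delta\sim\nu^{1/3}|kB|^{-1/3}r_*$ around $r_*$. On the complement one has $|r^{-2}-\lambda|\gtrsim\lambda\nu^{1/3}|kB|^{-1/3}$, so the singular weight is tamed and the $|kB|^{-1}$-bound on $(r^{-2}-\lambda)w$ suffices; inside the layer I use the pointwise bound on $\varphi$ coming from the Biot--Savart kernel $K_k$ via \eqref{estimate of cK} together with the $\mu_k^{-1}$-bound on $\|r^\varepsilon w\|_\Ltr$ and the small measure of the support. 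Balancing the two contributions after optimizing over $\delta$ is expected to produce the advertised factor $\mu_k^{1/2}|kB|^{1/2}|k|^{1/2}$; the loss from $|k|$ down to $|k|^{1/2}$ relative to the regular case is the price for resolving the critical layer.
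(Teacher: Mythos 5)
Your strategy—replacing the paper's direct kernel estimates on $\varphi=-\cK[w]$ with a coercive energy identity for the elliptic equation, then transferring the resolvent bounds on $w$ by a weighted Cauchy--Schwarz—is a genuinely different route. The interpolation
\[
\|r^{\varepsilon+1}|r^{-2}-\lambda|^{1/2}w\|_\Ltr^2\leq\|r^\varepsilon w\|_\Ltr\,\|r^{\varepsilon+2}(r^{-2}-\lambda)w\|_\Ltr\lesssim\frac{\|r^{\varepsilon+2}F\|_\Ltr^2}{\mu_k|kB|}
\]
is a clean observation, and for $\lambda\leq 0$ (where $|r^{-2}-\lambda|\geq r^{-2}$) the absorption indeed produces a \emph{stronger} bound with prefactor $|k|(\mu_k|kB|)^{1/2}$. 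However there are two real problems.

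First, the case split is wrong. You claim that for $\lambda\in\R\setminus(0,1)$ one has $|r^{-2}-\lambda|^{-1/2}\leq r$ on $[1,\infty)$; this fails for $\lambda$ slightly larger than $1$ (e.g.\ $\lambda=1.01$ gives $|1-\lambda|^{-1/2}=10>1$ at $r=1$). The critical radius $r_*=\lambda^{-1/2}$ being outside $(1,\infty)$ does not keep the weight bounded when $r_*$ is close to the boundary. The elementary bound is valid only for $\lambda\leq 0$ (or $\lambda$ bounded away from $1$, say $\lambda\geq 2$). The paper treats \emph{all} $\lambda>0$ by the critical-layer decomposition precisely for this reason, with $E$, $E_m$, $E_o$ possibly degenerate when $\lambda$ is large.

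Second, and more seriously, the critical-layer sketch cannot recover the stated $|k|^{1/2}$ factor. In the coercive identity the coefficient of $\|r^{\varepsilon-2}\varphi\|^2$ is of order $k^2$, which is the energy analogue of the $L^2\to L^2$ kernel bound $|k|^2\|r^{\varepsilon-2}\cK[w]\|_\Ltr\lesssim\|r^\varepsilon w\|_\Ltr$. The paper's proof in the regime $\lambda>0$, $\nu k^2\leq|B|$, relies crucially on the \emph{$L^1\to L^2$} kernel bound $|k|^2\|r^{\varepsilon-2}\cK[w]\|_\Ltr\lesssim_\varepsilon|k|^{1/2}\|r^{\varepsilon-1/2}w\|_\Llr$ (Lemma~\ref{basic properties of ck} via \eqref{ineq estimate of Tab, L1}), which gains a factor $|k|^{-1/2}$ but requires pairing a small-support piece of $w$ in $L^1$ against the kernel in $L^2$—a Young-type convolution estimate. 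The $L^2$ energy pairing has no access to this. Concretely, on the region $E_m$ where $|r^{-2}-\lambda|\gtrsim\lambda\tilde\delta$ with $\tilde\delta=(\nu/|kB|)^{1/3}$, the best you can do from the energy identity is $\|r^{\varepsilon-3}|r^{-2}-\lambda|^{-1/2}\varphi\mathbbm{1}_{E_m}\|_\Ltr\lesssim\tilde\delta^{-1/2}\|r^{\varepsilon-2}\varphi\|_\Ltr$, and after absorbing by the $k^2$ coercivity this gives $\|r^{\varepsilon-2}\varphi\|^2\lesssim\tilde\delta^{-1}k^{-4}(\mu_k|kB|)^{-1}\|r^{\varepsilon+2}F\|^2$, which matches the target only if $\tilde\delta^{-1}\lesssim k$; but in the regime $\nu k^2\leq|B|$ one has $\tilde\delta^{-1}=\nu^{-1/3}|kB|^{1/3}\geq k$, so the loss is genuine (and severe for $k=1$). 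Similarly, the pointwise bound $\|r^{\varepsilon-3/2}\varphi\|_{\Lor}\lesssim|k|^{-3/2}\|r^\varepsilon w\|_\Ltr$ combined with the small measure of the layer yields $\|r^{\varepsilon-2}\chi_*\varphi\|_\Ltr\lesssim\tilde\delta^{1/2}|k|^{-3/2}\|r^\varepsilon w\|_\Ltr$, which also falls short by a similar power of $\tilde\delta^{-1}/k$. So "balancing after optimizing over $\delta$" does not produce the advertised factor: the obstruction is structural, not a matter of details, and the energy-identity framework needs to be supplemented by an $L^1$-kernel argument (at which point it collapses back into the paper's approach).
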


\begin{proof}
If $\lambda\leq 0$, it's easy to observe from  Proposition \ref{prop 3.1} that for $|\al|\leq 3$
\begin{equation*}
    \|r^{\al-1}w\|_\Ltr \leq \|r^{\al+1}(r^{-2}-\lambda)w\|_\Ltr \leq \frac{C}{|kB|} \|r^{\al+1}F\|_\Ltr,
\end{equation*}
which together with Lemma \ref{basic properties of ck} ensures
\begin{equation*}\begin{split}
 \Big\|\frac{\varphi'}{r^{1-\varepsilon}}\Big\|_\Ltr + |k|\Big\|\frac{\varphi}{r^{2-\varepsilon}}\Big\|_\Ltr &\lesssim_\varepsilon |k|^{-1}\|r^\varepsilon w\|_\Ltr \lesssim_\varepsilon |k|^{-1} \min\big(|kB|^{-1},\mu_k^{-1}\big)\|r^{2+\varepsilon}F\|_\Ltr \\
 &\lesssim_\varepsilon \mu_k^{-\f12}|kB|^{-\f12} |k |^{-1}\|r^{2+\varepsilon}F\|_\Ltr.
\end{split}\end{equation*}

If $\lambda>0$ and $\nu k^2 \geq |B|$, we deduce from Lemma \ref{basic properties of ck} that
\begin{equation*}\begin{split}
 \Big\|\frac{\varphi'}{r^{1-\varepsilon}}\Big\|_\Ltr + |k|\Big\|\frac{\varphi}{r^{2-\varepsilon}}\Big\|_\Ltr &\lesssim_\varepsilon |k|^{-1}\|r^\varepsilon w\|_\Ltr \lesssim_\varepsilon |k|^{-1} \mu_k^{-1}\|r^{2+\varepsilon}F\|_\Ltr \\
 &\lesssim_\varepsilon \mu_k^{-\f12}(\nu k^2)^{-\f12}|k |^{-1}\|r^{2+\varepsilon}F\|_\Ltr\\
 &\lesssim_\varepsilon \mu_k^{-\f12}|kB|^{-\f12} |k |^{-\f12}\|r^{2+\varepsilon}F\|_\Ltr.
\end{split}\end{equation*}

Therefore  it remains to consider \eqref{ineq L2-phi resolvent estimate} for the case: $\lambda>0$ and $\nu k^2 \leq |B|$ (i.e., $\mu_k=\nu^\f13|kB|^\f23$). Let us denote $\tilde{\delta}{\eqdefa}(\frac{\nu}{|kB|})^\f13\ll1 $, and
\begin{equation}\begin{split}\label{E Em Eo}
   & E{\eqdefa} \big\{r\geq 1: \ |1-\sqrt{\lambda}r|\leq \tilde{\delta}  \big\},\\
    &E_m{\eqdefa} \big\{r\geq 1: \ \tilde{\delta}\leq |1-\sqrt{\lambda}r|\leq \f12   \big\},\\
    & E_o{\eqdefa} \big\{r\geq 1: \ \f12 \leq |1-\sqrt{\lambda}r| \big\},
\end{split}\end{equation}
and  split $w= w(\mathbbm{1}_{E}+\mathbbm{1}_{E_m}+\mathbbm{1}_{E_o})$.
Then we get, by applying  Lemma \ref{basic properties of ck} that
\beno
\Big\|\frac{\varphi'}{r^{1-\varepsilon}}\Big\|_\Ltr + |k|\Big\|\frac{\varphi}{r^{2-\varepsilon}}\Big\|_\Ltr \lesssim_\varepsilon |k|^{-\f12} \big(\|r^{\varepsilon-\f12 }w\mathbbm{1}_E\|_\Llr+\|r^{\varepsilon-\f12 }w\mathbbm{1}_{E_m}\|_\Llr\big) + |k|^{-1}\|r^{\varepsilon} w\mathbbm{1}_{E_o}\|_\Ltr.
\eeno
It follows from  Lemmas \ref{Appendix A1-2} and \ref{Appendix A1-3} that
\begin{align*}
\|r^{\varepsilon-\f12 }w\mathbbm{1}_E\|_\Llr+\|r^{\varepsilon-\f12 }w\mathbbm{1}_{E_m}\|_\Llr
\lesssim &\|r^{-\f12}\mathbbm{1}_E\|_\Ltr\|r^{\varepsilon}w\|_\Ltr+ \Big\|r^{-\f12}\frac{\mathbbm{1}_{E_m}}{1-\lambda r^2}\Big\|_\Ltr \|r^{\varepsilon} (1-\lambda r^2) w\|_\Ltr\\
\lesssim &  \tilde{\delta}^\f12 \|r^{\varepsilon}w\|_\Ltr+ \tilde{\delta}^{-\f12}  \|r^{\varepsilon}  (1-\lambda r^2) w\|_\Ltr,
\end{align*}
 from which and Proposition
\ref{prop 3.1}, we infer
\begin{align*}
 & \Big\|\frac{\varphi'}{r^{1-\varepsilon}}\Big\|_\Ltr + |k|\Big\|\frac{\varphi}{r^{2-\varepsilon}}\Big\|_\Ltr\\
&\lesssim_\varepsilon  |k|^{-\f12} \Bigl(  \big(\frac{\nu}{|kB|}\big)^{\f16} \nu^{-\f13}|kB|^{-\f23} +\big(\frac{\nu}{|kB|}\big)^{-\f16} |kB|^{-1} +|k|^{-\f12} |kB|^{-1} \Bigr) \|r^{2+\varepsilon} F\|_\Ltr   \\
&\lesssim_\varepsilon |k|^{-\f12} \nu^{-\f16}|kB|^{-\f56}\|r^{2+\varepsilon} F\|_\Ltr = |k|^{-\f12} \mu_k^{-\f12}|kB|^{-\f12}\|r^{2+\varepsilon} F\|_\Ltr.
\end{align*}
This completes the proof of Proposition \ref{ineq L2-phi resolvent estimate}.
\end{proof}

To proceed further, we define sublinear operator $\cQ_{a,b}$, which will be frequently used in this paper, via
\begin{equation}\label{definition of Tab}
    \cQ_{a,b}[f](r) {\eqdefa}  \int_1^\oo (rs)^{-\f12} \min\Big(\big(\frac{r}{s}\big)^a, \big(\frac{s}{r}\big)^b\Big) |f|(s) ds.
\end{equation}
\begin{lemma}\label{lemma A.4}
{\sl Let $ \cQ_{a,b}[f]$ be defined by \eqref{definition of Tab}. Then we have
\begin{subequations} \label{S3eq5}
\begin{gather}
\label{ineq estimate of Tab, L2}
\|\cQ_{a,b}[f]\|_\Ltr \leq \Big(\frac{1}{a}+\frac{1}{b}\Big) \|f\|_\Ltr\quad \mbox{if}\ a,b> 0,\\
\label{ineq estimate of Tab, L1}
    \|r^\f12\cQ_{a,b}[f]\|_\Ltr \leq \Big(\frac{1}{2a+1}+\frac{1}{2b-1}\Big)^\f12 \|f\|_\Llr \quad \mbox{if}\ a> -\f12\andf b> \f12.
\end{gather}
\end{subequations}}
\end{lemma}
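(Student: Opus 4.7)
\textbf{Proof plan for Lemma \ref{lemma A.4}.} For (\ref{ineq estimate of Tab, L2}), the plan is to exploit the multiplicative scaling of the kernel $(rs)^{-1/2}\min((r/s)^a,(s/r)^b)$ through a logarithmic substitution. Setting $r=e^u$, $s=e^v$ and making the unitary changes $h(u)\eqdefa e^{u/2}\cQ_{a,b}[f](e^u)$, $g(v)\eqdefa e^{v/2}|f|(e^v)$, one checks $\|h\|_{L^2(\R^+)}=\|\cQ_{a,b}[f]\|_{\Ltr}$ and $\|g\|_{L^2(\R^+)}=\|f\|_{\Ltr}$, while the defining integral for $\cQ_{a,b}[f]$ becomes the half-line convolution
\begin{align*}
h(u)=\int_0^\oo M(u-v)\,g(v)\,dv, \qquad M(t)\eqdefa \begin{cases} e^{-bt}, & t\geq 0,\\ e^{at}, & t\leq 0.\end{cases}
\end{align*}
Extending $g$ by zero to all of $\R$ and applying Young's convolution inequality, I would obtain $\|h\|_{L^2(\R^+)}\leq\|M\|_{L^1(\R)}\|g\|_{L^2(\R^+)}$, and a direct computation gives $\|M\|_{L^1(\R)}=\frac{1}{a}+\frac{1}{b}$ provided $a,b>0$, yielding (\ref{ineq estimate of Tab, L2}).

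For (\ref{ineq estimate of Tab, L1}), my plan is to first rewrite $r^{1/2}\cQ_{a,b}[f](r)=\int_1^\oo s^{-1/2}\min((r/s)^a,(s/r)^b)|f|(s)\,ds$, and then apply Cauchy--Schwarz with respect to the measure $|f|(s)\,ds$ to peel off one factor of $\|f\|_{\Llr}$, producing
\begin{align*}
r\,|\cQ_{a,b}[f](r)|^2 \leq \Big(\int_1^\oo |f|(s)\,ds\Big)\int_1^\oo s^{-1}\min\big((r/s)^{2a},(s/r)^{2b}\big)|f|(s)\,ds.
\end{align*}
Integrating in $r$ and swapping the order of integration by Fubini, the inner $r$-integral will be bounded after the substitution $u=r/s$ by
\begin{align*}
\int_1^\oo\min\big((r/s)^{2a},(s/r)^{2b}\big)\,dr \leq s\int_0^\oo\min(u^{2a},u^{-2b})\,du = s\Big(\frac{1}{2a+1}+\frac{1}{2b-1}\Big),
\end{align*}
which requires exactly $a>-\f12$ and $b>\f12$. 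Substituting back, the two factors of $s$ cancel against the $s^{-1}$ weight, and taking square roots gives (\ref{ineq estimate of Tab, L1}).

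Neither step contains a real obstacle. The only mildly technical point is that the integrals run over $[1,\oo)$ instead of $(0,\oo)$: in (\ref{ineq estimate of Tab, L2}) this is absorbed by zero-extending $g$ before invoking Young's inequality, and in (\ref{ineq estimate of Tab, L1}) by the trivial inclusion $\int_{1/s}^{\oo}\leq \int_0^{\oo}$. Since the kernel is nonnegative and the thresholds $a,b>0$ in (\ref{ineq estimate of Tab, L2}) and $a>-\f12$, $b>\f12$ in (\ref{ineq estimate of Tab, L1}) are precisely the integrability conditions on the respective $t$- and $u$-integrals, the constants produced by this approach match the ones stated.
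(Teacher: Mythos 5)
Your proposal is correct, and for \eqref{ineq estimate of Tab, L2} it is identical in substance to the paper's proof: the same logarithmic substitution $r=e^u$, $s=e^v$, recognizing the resulting expression as a half-line convolution, and invoking Young's $L^1\ast L^2\to L^2$ inequality to get the constant $\frac{1}{a}+\frac{1}{b}$. For \eqref{ineq estimate of Tab, L1}, the paper also performs the logarithmic substitution (after absorbing an extra factor of $e^{\tilde r/2}$, $e^{\tilde s/2}$ into the kernel and data) and then applies Young's $L^2\ast L^1\to L^2$ inequality, computing the $L^2$ norm of the kernel $\min(e^{(a+\frac12)t},e^{-(b-\frac12)t})$ to get $\big(\frac{1}{2a+1}+\frac{1}{2b-1}\big)^{1/2}$. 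You instead stay in the $r$-variable and run Cauchy--Schwarz against the measure $|f|(s)\,ds$ followed by Fubini. These are really the same estimate: the Cauchy--Schwarz/Fubini argument you carry out is precisely the standard proof of the $L^2\ast L^1\to L^2$ Young inequality, just executed before rather than after the exponential change of variables, and both routes produce the identical sharp constant under the identical integrability thresholds. Your proof is complete and there is no gap; the difference from the paper is purely presentational.
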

\begin{proof} In view of  (\ref{definition of Tab}), we  get, by using changes of variables:
$\tilde{r}= \log r$, $\tilde{s}=\log s$, that
\begin{equation*}
\begin{split}
 e^{\frac{\tilde{r}}{2}} \cQ_{a,b}[f](e^{\tilde{r}}) = &\int_{0}^{+\oo}
 \min\big(e^{a(\tilde{r}-\tilde{s})}, e^{b(\tilde{s}-\tilde{r})}\big)
 e^{\frac{\tilde{s}}{2}} |f|(e^{\tilde{s}}) \, d\tilde{s} ,\\
 e^{\tilde{r}} \cQ_{a,b}[f](e^{\tilde{r}}) =& \int_{0}^{+\oo} \min\big(e^{(a+\f12)(\tilde{r}-\tilde{s})}, e^{(b-\f12)(\tilde{s}-\tilde{r})}\big) e^{\tilde{s}} |f|(e^{\tilde{s}}) \, d\tilde{s}.
 \end{split}
\end{equation*}
Applying Young Inequality gives rise to
\begin{equation*}
\|\cQ_{a,b}[f]\|_\Ltr = \|e^{\frac{\tilde{r}}{2}} \cQ_{a,b}[f](e^{\tilde{r}})\|_{L^2(\R^+; d\tilde{r})} \leq \Big(\frac{1}{a}+\frac{1}{b}\Big) \|e^{\frac{\tilde{s}}{2}} |f|(e^{\tilde{s}})\|_{L^2(\R^+; d\tilde{s})} \leq \Big(\frac{1}{a}+\frac{1}{b}\Big) \|f\|_\Ltr ,
\end{equation*}
and
\begin{equation*}
\begin{split}
 \|r^\f12\cQ_{a,b}[f]\|_\Ltr &= \|e^{\tilde{r}} \cQ_{a,b}[f](e^{\tilde{r}})\|_{L^2(\R^+; d\tilde{r})} \leq \Big(\frac{1}{2a+1}+\frac{1}{2b-1}\Big)^\f12 \|e^{\tilde{s}} |f|(e^{\tilde{s}})\|_{L^1(\R^+; d\tilde{s})} \\
 &\leq \Big(\frac{1}{2a+1}+\frac{1}{2b-1}\Big)^\f12 \|f\|_\Llr.
\end{split}
\end{equation*}
This completes the proof of \eqref{S3eq5}.
\end{proof}

\begin{proposition}\label{resolvent estimate 4}
  {\sl Let $k\in \Z\backslash\{0\}, \varepsilon \in (0,2) $, $\lambda\in\mathbb{R}$, there exists constant $C_\varepsilon>0$ independent of $\nu,k,B,\lambda$, so that
\begin{align}\label{ineq H-1-phi resolvent estimate}
    \nu^\frac{1}{2} |kB|^\f12 |k|^\f12\Bigl( \Big\|\frac{\varphi'}{r^{1-\varepsilon}}\Big\|_\Ltr + |k|\Big\|\frac{\varphi}{r^{2-\varepsilon}}\Big\|_\Ltr \Bigr) \leq C_\varepsilon \|r^{1+\varepsilon}F\|_\hkf.
\end{align}}
\end{proposition}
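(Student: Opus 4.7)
The plan is to follow the three-case analysis used in the proof of Proposition \ref{prop L2-phi resolvent estimate}, but with the $\hkf$-based resolvent estimate \eqref{ineq 3.4} replacing the $L^2$-based one \eqref{ineq 3.3}. In each case, Lemma \ref{basic properties of ck} first reduces the left-hand side of \eqref{ineq H-1-phi resolvent estimate} to weighted $L^2$ (or $L^1$) norms of $w$.

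In the two easier cases, namely $\lambda\le 0$ or $\lambda>0$ with $\nu k^2\ge|B|$, we have $\mu_k\gtrsim|B|$, hence $|k|\mu_k\ge|kB|$. The crude bound $\|\varphi'/r^{1-\varepsilon}\|_\Ltr+|k|\|\varphi/r^{2-\varepsilon}\|_\Ltr\lesssim_\varepsilon|k|^{-1}\|r^\varepsilon w\|_\Ltr$ from Lemma \ref{basic properties of ck}, combined with the $\mu_k$-term of \eqref{ineq 3.4} applied at $\al=1+\varepsilon$ (which reads $\nu^{\f12}\mu_k^{\f12}\|r^\varepsilon w\|_\Ltr\lesssim_\varepsilon\|r^{1+\varepsilon}F\|_\hkf$), already delivers a stronger inequality than \eqref{ineq H-1-phi resolvent estimate}.

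In the hard case $\lambda>0$ with $\nu k^2\le|B|$ (so $\mu_k=\nu^{\f13}|kB|^{\f23}<|B|$), I would instead invoke the finer $E,E_m,E_o$ splitting version of Lemma \ref{basic properties of ck},
\[
\Big\|\frac{\varphi'}{r^{1-\varepsilon}}\Big\|_\Ltr+|k|\Big\|\frac{\varphi}{r^{2-\varepsilon}}\Big\|_\Ltr\lesssim_\varepsilon|k|^{-\f12}\bigl(\|r^{\varepsilon-\f12}w\mathbbm{1}_E\|_\Llr+\|r^{\varepsilon-\f12}w\mathbbm{1}_{E_m}\|_\Llr\bigr)+|k|^{-1}\|r^\varepsilon w\mathbbm{1}_{E_o}\|_\Ltr,
\]
apply Lemmas \ref{Appendix A1-2}--\ref{Appendix A1-3} to dominate the two $L^1$ norms by $\tilde\delta^{\f12}\|r^\varepsilon w\|_\Ltr+\tilde\delta^{-\f12}\|r^{2+\varepsilon}(r^{-2}-\lambda)w\|_\Ltr$ with $\tilde\delta=(\nu/|kB|)^{\f13}$, and use the pointwise inequality $|r^{-2}-\lambda|\gtrsim r^{-2}$ on $E_o$ to obtain $\|r^\varepsilon w\mathbbm{1}_{E_o}\|_\Ltr\lesssim\|r^{2+\varepsilon}(r^{-2}-\lambda)w\|_\Ltr$. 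The bound $\|r^\varepsilon w\|_\Ltr\lesssim_\varepsilon(\nu\mu_k)^{-\f12}\|r^{1+\varepsilon}F\|_\hkf$ is immediate from \eqref{ineq 3.4}.

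The main obstacle is the missing $L^2$-control of $(r^{-2}-\lambda)w$, since \eqref{ineq 3.4} supplies only the $\hkf$-bound $|kB|\|r^{1+\varepsilon}(r^{-2}-\lambda)w\|_\hkf\lesssim_\varepsilon\|r^{1+\varepsilon}F\|_\hkf$. To bridge the gap, I plan to test the resolvent equation \eqref{vorticity eqn} against the non-conjugated multiplier $r^{2+2\varepsilon}(r^{-2}-\lambda)w$ and extract the imaginary part: the convective term produces the positive quantity $|kB|\|r^{1+\varepsilon}(r^{-2}-\lambda)w\|_\Ltr^2$; the forcing term is controlled via the duality $|\langle F,g\rangle_\Ltr|\leq\|F\|_\hkf\|g\|_\hkz$ applied with $g=r^{2+2\varepsilon}(r^{-2}-\lambda)w$; and the viscous term, after one integration by parts, reduces to expressions involving $\nu\|r^{1+\varepsilon}w'\|_\Ltr$ and $\|r^\varepsilon w\|_\Ltr$, both of which are already controlled by \eqref{ineq 3.4}. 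Absorbing the resulting $\|r^{1+\varepsilon}(r^{-2}-\lambda)w\|_\Ltr$-terms into the left and taking square roots should produce $\|r^{2+\varepsilon}(r^{-2}-\lambda)w\|_\Ltr\lesssim_\varepsilon|kB|^{-1}\|r^{1+\varepsilon}F\|_\hkf$; a direct power count then verifies that each contribution in the hard-case splitting is $\lesssim|k|^{-\f12}|kB|^{-\f12}\nu^{-\f12}\|r^{1+\varepsilon}F\|_\hkf$, with the gap $(\nu/\mu_k)^{\f12}$ between $\nu^{\f12}$ and $\mu_k^{\f12}$ exactly compensated by the $\tilde\delta^{\pm\f12}$ factors arising near the critical layer.
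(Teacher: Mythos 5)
The proposal breaks down at two places, one in each of the cases it splits into.

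In the easier cases you claim that $\lambda\le 0$ (or $\lambda>0$ with $\nu k^2\ge|B|$) implies $\mu_k\gtrsim|B|$, so that $|k|\mu_k\ge|kB|$ and the crude bound $|k|^{-1}\|r^{\varepsilon}w\|_\Ltr\lesssim |k|^{-1}(\nu\mu_k)^{-1/2}\|r^{1+\varepsilon}F\|_\hkf$ suffices. But $\mu_k=\max(\nu k^2,\,\nu^{1/3}|kB|^{2/3})$ does not depend on $\lambda$, and $\mu_k\gtrsim|B|$ is \emph{equivalent} to $\nu k^2\gtrsim|B|$: when $\nu k^2<|B|$ one has $\mu_k/|B|=(\nu k^2/|B|)^{1/3}<1$, which can be arbitrarily small. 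So for $\lambda\le 0$ with $\nu k^2\ll|B|$ the crude bound does not close. The paper handles $\lambda\le 0$ by a different mechanism: it takes the imaginary part of $\langle F, r^{2\alpha}w\rangle$, uses $r^{-2}-\lambda\ge r^{-2}$ (valid only for $\lambda\le 0$) to obtain the sharper bound $\|r^{\alpha-1}w\|\lesssim(\nu|kB|)^{-1/2}\|r^{\alpha}F\|_\hkf$, which replaces $\mu_k$ by $|kB|$ and then trivially beats the target.

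In the hard case your plan hinges on the bridge estimate $\|r^{2+\varepsilon}(r^{-2}-\lambda)w\|_\Ltr\lesssim|kB|^{-1}\|r^{1+\varepsilon}F\|_\hkf$, and this is not attainable: in the inviscid limit $(r^{-2}-\lambda)w=F/(ikB)$, so the claim would demand $\|r^{2+\varepsilon}F\|_\Ltr\lesssim\|r^{1+\varepsilon}F\|_\hkf$, which is false since $\hkf$ is strictly weaker than $\Ltr$. Your duality argument does not repair this: testing \eqref{vorticity eqn} against $r^{2+2\varepsilon}(r^{-2}-\lambda)w$ and applying $|\langle F, g\rangle|\le\|r^{1+\varepsilon}F\|_\hkf\|r^{1+\varepsilon}(r^{-2}-\lambda)w\|_\hkz$ introduces the derivative $\partial_r\big(r^{1+\varepsilon}(r^{-2}-\lambda)w\big)$, which contains $r^{1+\varepsilon}(r^{-2}-\lambda)w'$; for general $\lambda$ this is not controlled by the outputs of Proposition \ref{prop 3.1}. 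Likewise the viscous remainder $\nu\langle w', r^{-1+2\varepsilon}w\rangle$ costs a factor $(\nu\mu_k)^{-1/2}\|r^{1+\varepsilon}F\|_\hkf^2$, which is far larger than the needed $|kB|^{-1}\|r^{1+\varepsilon}F\|_\hkf^2$ because $\nu\mu_k\ll|kB|^2$ under the standing assumptions. The paper avoids this altogether: instead of trying to produce an $\Ltr$ bound on $(r^{-2}-\lambda)w$, it uses the $\hkf$ estimate \eqref{ineq 3.4}, writes $(s^{-2}-\lambda)w=\partial_sF_1+\frac{k}{s}F_2$, and integrates by parts directly against the Biot--Savart kernel $K_k(r,s)$. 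The transferred $\partial_s$ produces $\partial_sK_k$, $\partial_r\partial_sK_k=\delta+\widetilde K_k$, and the boundary/cutoff terms $J_{12}$--$J_{15}$ and $J_{31}$--$J_{35}$, all of which are then estimated via the kernel bounds \eqref{estimate of cK}, Lemma \ref{lemma A.4}, and Lemmas \ref{Appendix A1-2}--\ref{Appendix A1-3}. That kernel-side integration by parts is the missing ingredient in your plan, and it cannot be replaced by an $L^2$ bound on $(r^{-2}-\lambda)w$.
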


\begin{proof}
 In the case of $\la \leq 0$, it's easy to observe from \eqref{vorticity eqn} that for  $|\al|\leq 3$,
\begin{align*}
|kB|\big|\langle(r^{-2}-\lambda)w, r^{2\al} w\rangle_\Ltr\big| \leq \big|\Im\langle F, r^{2\al}w\rangle_\Ltr\big|+  \nu \big|\Im\langle \pa_r w, \pa_r(r^{2\al})w\rangle_\Ltr\big|,
\end{align*}
which implies in this case that
\begin{align*}
|kB|\|r^{\al-1}w\|_{\Ltr}^2 \lesssim \big|\langle F,r^{2\al} w\rangle_\Ltr\big| +\nu \|r^{\al}\pa_rw \|_\Ltr \|r^{\al-1}w \|_\Ltr.
\end{align*}
Applying Young's inequality and Proposition \ref{prop 3.1} gives
\begin{equation*}
\begin{split}
   |kB| \|r^{\al-1}w\|_{\Ltr}^2 &\lesssim  \|r^{\al}F\|_\hkf \|r^{\al}w\|_\hkz + \nu\|r^{\al}\pa_rw \|_\Ltr \|r^{\al-1}w \|_\Ltr \\
   &\lesssim  (\nu^{-1}+\nu^{-\f12}\mu_k^{-\f12})\|r^{\al}F\|_\hkf^2\lesssim \nu^{-1} \|r^{\al}F\|_\hkf^2,
   \end{split}
\end{equation*}
 from which and Lemma \ref{basic properties of ck}, we infer
\begin{equation}\label{lmabda trival-0, H1}
    \begin{split}
        \Big\|\frac{\varphi'}{r^{1-\varepsilon}}\Big\|_\Ltr + |k|\Big\|\frac{\varphi}{r^{2-\varepsilon}}\Big\|_\Ltr \lesssim_\varepsilon |k|^{-1}  \|r^{\varepsilon} w \|_\Ltr \lesssim_\varepsilon |k|^{-1}\nu^{-\f12}|kB|^{-\f12} \|r^{1+\varepsilon}F \|_\hkf.
    \end{split}
\end{equation}

In the  case when $\la >0$ and $\nu k^2 \geq |B|$, we deduce from   Proposition \ref{prop 3.1} and Lemma \ref{basic estimate of K} that
\begin{equation}\label{S3eq6}
    \begin{split}
 \Big\|\frac{\varphi'}{r^{1-\varepsilon}}\Big\|_\Ltr + |k|\Big\|\frac{\varphi}{r^{2-\varepsilon}}\Big\|_\Ltr &\lesssim_\varepsilon |k|^{-1}  \|r^{\varepsilon} w \|_\Ltr \\
 &\lesssim_\varepsilon |k|^{-1} \nu^{-\f23} |kB|^{-\f13} \|r^{1+\varepsilon} F\|_\hkf \\
 &\lesssim_\varepsilon \nu^{-\f12} |kB|^{-\f12} |k|^{-\f12} \|r^{1+\varepsilon} F\|_\hkf.
    \end{split}
\end{equation}

We claim that if $\la >0$ and   $\nu k^2 \leq |B|,$ then
\begin{subequations} \label{S3eq9}
\begin{gather}
\label{S3eq9a}
\nu^\frac{1}{2} |kB|^\f12 |k|^\f32 \Big\|\frac{\varphi}{r^{2-\varepsilon}}\Big\|_\Ltr  \lesssim_\varepsilon \|r^{1+\varepsilon}F\|_\hkf,\\
 \nu^\frac{1}{2} |kB|^\f12 |k|^\f12 \Big\|\frac{\varphi'}{r^{1-\varepsilon}}\Big\|_\Ltr \leq C_\varepsilon \|r^{1+\varepsilon}F\|_\hkf. \label{S3eq9b}
\end{gather}
\end{subequations}

By summarizing \eqref{lmabda trival-0, H1}, \eqref{S3eq6} and \eqref{S3eq9}, we complete the proof of \eqref{ineq H-1-phi resolvent estimate}.
\end{proof}

It remains to prove \eqref{S3eq9}.

\begin{proof}[Proof of \eqref{S3eq9}]
In the  case when $\la >0$ and   $\nu k^2 \leq |B|,$
 we denote $r_0\eqdefa \lambda^{-\f12}$, $\tilde{\delta}\eqdefa (\frac{\nu}{|kB|})^\f13\ll 1$ and
  $\delta\eqdefa \tilde{\delta}r_0.$ Let $E, E_m, E_o$ be defined as in \eqref{E Em Eo}.  Since $\nu k^2 \leq |B|$, we have $|k|\tdelta \leq 1 $.  Let $\eta (r) \in \mathcal{C}_0^\oo(\R)$ be  supported in $(-2,2)$ and  equal to $1$ for $s \in [-1, 1],$ and we denote $\eta(\frac{r-r_0}{\delta})$ by $\eta_\delta.$
Then we write
\begin{equation}\label{S3eq7}
\begin{split}
-\frac{\varphi}{r^{2-\varepsilon}} &= \int_1^\oo  r^{-2+\varepsilon} K_k(r,s) w(s) ds\\
& = \int_1^\oo  r^{-2+\varepsilon} K_k(r,s)  \frac{1-\eta_\delta(s)}{s^{-2}-\lambda} (s^{-2}-\lambda)w(s) ds \\
&\quad +  \int_1^\oo  r^{-2+\varepsilon} K_k(r,s) w(s)\eta_\delta(s) ds\eqdefa  J_1+J_2,
\end{split}
\end{equation}
and
\begin{equation}\label{S3eq8}
    \begin{split}
-\frac{\varphi'}{r^{1-\varepsilon}} &= \int_1^\oo  r^{-1+\varepsilon} \pa_r K_k(r,s) w(s) ds\\
& =\int_1^\oo  r^{-1+\varepsilon} \pa_rK_k(r,s)  \frac{1-\eta_\delta(s)}{s^{-2}-\lambda} (s^{-2}-\lambda)w ds \\
&\quad +  \int_1^\oo  r^{-1+\varepsilon} \pa_rK_k(r,s) w(s)\eta_\delta(s) ds \eqdefa J_3+J_4.
    \end{split}
\end{equation}

By density argument, we only consider the case when $(s^{-2}-\lambda)w(s)=\pa_s F_1(s) + \frac{k}{s}F_2(s)$ with $F_1, F_2 \in \mathcal{C}^\oo_c([1,\oo))$.
Then  by virtue of \eqref{equivalent definition of weighted Hkf}, we have
\begin{equation}\label{eq 3.17}
    \|r^{1+\varepsilon} (s^{-2}-\lambda)w(s)\|_\hkf \approx  \inf_{F_1,F_2} \bigl(\|r^{1+\varepsilon }F_1\|_\Ltr + \|r^{1+\varepsilon} F_2\|_\Ltr  \bigr).
\end{equation}  Below we divide the proof into the following two steps:

\no{\bf Step 1.} The proof of  \eqref{S3eq9a}.

Due to $K(r,1)=0,$
we get, by using  integration by parts, that
\begin{equation*}
\begin{split}
    J_1 &= -  \int_1^\oo  r^{-2+\varepsilon} \pa_s \Bigl(K_k(r,s)  \frac{1-\eta_\delta(s)}{s^{-2}-\lambda}\Bigr) F_1(s) ds +   \int_1^\oo  r^{-2+\varepsilon} K_k(r,s)  \frac{1-\eta_\delta(s)}{s^{-2}-\lambda} \frac{k}{s}F_2(s)  ds \\
    &=:J_{11}+J_{12}+J_{13}+J_{14},
\end{split}
\end{equation*}
where
\begin{align*}
    &J_{11}{\eqdefa}- \int_1^\oo  r^{-2+\varepsilon} \pa_s K_k(r,s)  \frac{1-\eta_\delta(s)}{s^{-2}-\lambda} F_1(s) ds , \\ &J_{12}{\eqdefa}\delta^{-1}  \int_1^\oo  r^{-2+\varepsilon}  K_k(r,s)  \frac{\eta'_\delta(s)}{s^{-2}-\lambda} F_1(s) ds, \\
    &J_{13}{\eqdefa} -2 \int_1^\oo  r^{-2+\varepsilon}  K_k(r,s)  \frac{1-\eta_\delta(s)}{s^3(s^{-2}-\lambda)^2} F_1(s) ds, \\ &J_{14}{\eqdefa}\int_1^\oo  r^{-2+\varepsilon} K_k(r,s)  \frac{k(1-\eta_\delta(s))}{s(s^{-2}-\lambda)} F_2(s)  ds .
\end{align*}

We first get, by applying \eqref{estimate of cK} and $\frac{\mathbbm{1}_{E_o}}{|1-\lambda r^2|} \lesssim 1$, that
\begin{align*}
    |J_{11}|  &\lesssim  \int_1^\oo r^{-2+\varepsilon}\big(\frac{r}{s}\big)^{\f12} \min\big(\frac{r}{s},\frac{s}{r}\big)^{|k|}  s^{2}\frac{\mathbbm{1}_{E_m}+\mathbbm{1}_{E_o}}{|1-\lambda s^2|} |F_1(s)| ds\\
&\leq  r^{\f12} \int_1^\oo(rs)^{-\f12}   \big(\frac{s}{r}\big)^{\f32-\varepsilon} \min\big(\frac{r}{s},\frac{s}{r}\big)^{|k|}  \frac{\mathbbm{1}_{E_m}}{|1-\lambda s^2|}s^{\f12+\varepsilon} |F_1(s)| ds  \\
&\quad +   \int_1^\oo(rs)^{-\f12}   \big(\frac{s}{r}\big)^{1-\varepsilon} \min\big(\frac{r}{s},\frac{s}{r}\big)^{|k|}  \frac{\mathbbm{1}_{E_o}}{|1-\lambda s^2|}s^{1+\varepsilon} |F_1(s)| ds  \\
&\lesssim  r^{\f12} \cQ_{|k|-\frac{3}{2}+\varepsilon, |k|+\f32-\varepsilon} \Big[r^{\f12+\varepsilon}\frac{\mathbbm{1}_{E_m}}{1-\lambda r^2} F_1 \Big]+   \cQ_{|k|-1+\varepsilon, |k|+1-\varepsilon} [r^{1+\varepsilon} F_1 ],
\end{align*}
from which, Lemmas \ref{lemma A.4} and \ref{Appendix A1-3} and $|k|\leq \tilde{\delta}^{-1}$, we infer
\begin{equation*}
\begin{split}\label{estimate of J_11}
\|J_{11}\|_\Ltr &\lesssim_\varepsilon
  |k|^{-\f12}  \Big\|r^{-\f12} \frac{\mathbbm{1}_{E_m}}{1-\lambda r^2} \Big\|_\Ltr \|r^{1+\varepsilon}F_1\|_\Ltr+ |k|^{-1} \|r^{1+\varepsilon}F_1\|_\Ltr \\
&\lesssim_\varepsilon  |k|^{-\f12} \tilde{\delta}^{-\f12} \|r^{1+\varepsilon}F_1\|_\Ltr \lesssim_\varepsilon |k|^{-\f32} \tilde{\delta}^{-\f32} \|r^{1+\varepsilon}F_1\|_\Ltr.
\end{split}
\end{equation*}
%Similarly, we have
%\begin{equation*}\begin{split}
%    |J_{12}| &\lesssim \delta^{-1}|k|^{-1}r^{\f12} \mathcal{T}_{|k|-\frac{3}{2}+\varepsilon, |k|+\f32-\varepsilon} (r^{\f32 + \varepsilon}\frac{\mathbbm{1}_{\delta \leq |r-r_0| \leq 2\delta}}{1-\lambda r^2} F_1 ),
 %   \end{split}
%\end{equation*}
%\begin{align*}
%    |J_{13}| &\lesssim  |k|^{-1} r^{\f12} \mathcal{T}_{|k|-\frac{3}{2}+\varepsilon, |k|+\f32-\varepsilon} (r^{\f12 + \varepsilon}\frac{\mathbbm{1}_{E_m}}{(1-\lambda r^2)^2} F_1 )\\
 %  &\quad+ |k|^{-1}  \mathcal{T}_{|k|-1+\varepsilon, |k|+1-\varepsilon} (r^{1+\varepsilon} F_1 ),
%\end{align*}
%and
%\begin{equation*}
 %   |J_{14}| \lesssim   r^{\f12} \mathcal{T}_{|k|-\frac{3}{2}+\varepsilon, |k|+\f32-\varepsilon} (r^{\f12 + \varepsilon}\frac{\mathbbm{1}_{E_m}}{1-\lambda r^2} F_2 )+  \mathcal{T}_{|k|-1+\varepsilon, |k|+1-\varepsilon} (r^{1+\varepsilon} F_2 ),
%\end{equation*}
Similarly, we get, by using Lemma \ref{basic properties of ck}, that
\begin{align*}%\label{estimate of J_12-J_14}
 \sum_{i=2}^4 \|J_{1i}\|_\Ltr
  &\lesssim \delta^{-1}\Big\|r^{-2+\varepsilon} \cK\Big[\frac{\eta'_\delta}{s^{-2}-\lambda} F_1\Big]\Big\|_\Ltr + \Big\|r^{-2+\varepsilon} \cK\Big[\frac{1-\eta_\delta}{s^3(s^{-2}-\lambda)^2} F_1\Big]\Big\|_\Ltr \\
    &\quad+ \Big\|r^{-2+\varepsilon} \cK\Big[ \frac{k(1-\eta_\delta)}{s(s^{-2}-\lambda)} F_2\Big]\Big\|_\Ltr\\
     &\lesssim \delta^{-1}|k|^{-\f32}\Big\|r^{-\f12+\varepsilon} \frac{\eta'_\delta}{r^{-2}-\lambda} F_1\Big\|_\Llr \\
     &\quad + |k|^{-\f32} \Big\|r^{-\f12+\varepsilon} \frac{\mathbbm{1}_{E_m}}{r^3(r^{-2}-\lambda)^2} F_1\Big\|_\Llr + |k|^{-2} \Big\|r^{\varepsilon} \frac{\mathbbm{1}_{E_o}}{r^3(r^{-2}-\lambda)^2} F_1\Big\|_\Ltr \\
   &\quad + |k|^{-\f32} \Big\|r^{-\f12+\varepsilon} \frac{k\mathbbm{1}_{E_m}}{r(r^{-2}-\lambda)} F_2\Big\|_\Llr + |k|^{-2} \Big\|r^{\varepsilon} \frac{k\mathbbm{1}_{E_o}}{r(r^{-2}-\lambda)} F_2\Big\|_\Ltr,
   \end{align*}
   which together with  Lemma \ref{Appendix A1-3} and $|k|\leq \tilde{\delta}^{-1}$ ensures that
   \begin{align*}
 \sum_{i=2}^4 \|J_{1i}\|_\Ltr
    &\lesssim_\varepsilon \delta^{-1}|k|^{-\f32} \Big\|r^{\f12}\frac{\mathbbm{1}_{\delta \leq |r-r_0| \leq 2\delta}}{1-\lambda r^2} \Big\|_\Ltr \|r^{1+\varepsilon} F_1\|_\Ltr  \\
    &\quad + |k|^{-\f32} \Big\|r^{-\f12} \frac{\mathbbm{1}_{E_m}}{(1-\lambda r^2)^2} \Big\|_\Ltr  \|r^{1+\varepsilon} F_1\|_\Ltr + |k|^{-2}  \Big\|r^{1+\varepsilon} \frac{\mathbbm{1}_{E_o}}{(1-\lambda r^2)^2}F_1\Big\|_\Ltr\\
&\quad +  |k|^{-\f12} \Big\|r^{-\f12} \frac{\mathbbm{1}_{E_m}}{1-\lambda r^2} \Big\|_\Ltr  \|r^{1+\varepsilon} F_2\|_\Ltr +|k|^{-1}  \Big\|r^{1+\varepsilon}\frac{\mathbbm{1}_{E_o}}{1-\lambda r^2} F_2\Big\|_\Ltr\\
&\lesssim_\varepsilon \big(\delta^{-1}|k|^{-\f32} r_0 \tilde{\delta}^{-\f12}  +|k|^{-\f32} \tilde{\delta}^{-\f32} +|k|^{-2}\big)
\|r^{1+\varepsilon} F_1\|_\Ltr \\
&\quad+ \big(|k|^{-\f12} \tilde{\delta}^{-\f12}+|k|^{-1}\big)\|r^{1+\varepsilon} F_2\|_\Ltr\\
&\lesssim_\varepsilon |k|^{-\f32} \tilde{\delta}^{-\f32} \bigl( \|r^{1+\varepsilon} F_1\|_\Ltr+ \|r^{1+\varepsilon} F_2\|_\Ltr   \bigr).
\end{align*}

%Hence we get, by \eqref{ineq estimate of Tab, L1} and Lemma \ref{Appendix A1-2}, that
%\begin{equation}
%    \begin{split}\label{estimate of J_12}
%\|J_{12}\|_\Ltr & \lesssim_\varepsilon \delta^{-1}|k|^{-\f32} \|r^{\f12}\frac{\mathbbm{1}_{\delta \leq |r-r_0| \leq 2\delta}}{1-\lambda r^2} \|_\Ltr \|r^{1+\varepsilon} F_1\|_\Ltr \\
%&\lesssim_\varepsilon \delta^{-1}|k|^{-\f32} r_0 \tilde{\delta}^{-\f12} \|_\Ltr \|r^{1+\varepsilon} F_1\|_\Ltr \\
%&\lesssim_\varepsilon |k|^{-\f32} \tilde{\delta}^{-\f32}\|r^{1+\varepsilon} F_1\|_\Ltr,
 %   \end{split}
%\end{equation}
%and also by \eqref{ineq estimate of Tab, L2} and \eqref{Appendix A1-3}, that
%\begin{equation}
 %   \begin{split}\label{estimate of J_13, J_14}
%\|J_{13}\|_\Ltr + \|J_{14}\|_\Ltr &\lesssim_\varepsilon |k|^{-\f32} \|r^{-\f12} \frac{\mathbbm{1}_{E_m}}{(1-\lambda r^2)^2} \|_\Ltr  \|r^{1+\varepsilon} F_1\|_\Ltr \\
%&\quad +  |k|^{-\f12} \|r^{-\f12} \frac{\mathbbm{1}_{E_m}}{1-\lambda r^2} \|_\Ltr  \|r^{1+\varepsilon} F_2\|_\Ltr\\
%&\quad + |k|^{-2}  \|r^{1+\varepsilon} F_1\|_\Ltr+|k|^{-1}  \|r^{1+\varepsilon} F_2\|_\Ltr \\
%&\lesssim_\varepsilon |k|^{-\f32} \tilde{\delta}^{-\f32} \|r^{1+\varepsilon} F_1\|_\Ltr + |k|^{-\f12} \tilde{\delta}^{-\f12} \|r^{1+\varepsilon} F_2\|_\Ltr.
 %   \end{split}
%\end{equation}
By summarizing the above estimates, we obtain
\begin{equation*}\begin{split}
    \|J_1\|_\Ltr \lesssim_\varepsilon \nu^{-\f12}|kB|^{\f12}|k|^{-\f32} \bigl(  \|r^{1+\varepsilon }F_1\|_\Ltr + \|r^{1+\varepsilon} F_2\|_\Ltr \bigr),
    \end{split}
\end{equation*}
from which, \eqref{eq 3.17} and Proposition \ref{prop 3.1}, we infer
\begin{equation}\label{estimate of J_1}
    \|J_1\|_\Ltr\lesssim_\varepsilon \nu^{-\f12}|kB|^{\f12}|k|^{-\f32} \|r^{1+\varepsilon}(r^{-2}-\lambda) w\|_\hkf \lesssim_\varepsilon \nu^{-\f12}|kB|^{-\f12}|k|^{-\f32} \|r^{1+\varepsilon}F\|_\hkf .
\end{equation}

While we get, by using Proposition \ref{prop 3.1} and Lemmas  \ref{basic properties of ck} and \ref{Appendix A1-2}, that
\begin{equation}\label{estimate of J_2}
    \begin{split}
\|J_2\|_\Ltr &= \Big\|\frac{\cK[w\eta_\delta]}{r^{2-\varepsilon}}\Big\|_\Ltr \lesssim_\varepsilon |k|^{-\f32} \|r^{\varepsilon-\f12} w\eta_\delta\|_\Llr \\
 &\lesssim_\varepsilon |k|^{-\f32} \|r^{-\f12} \eta_\delta\|_\Ltr \|r^\varepsilon w\|_\Ltr  \lesssim_\varepsilon |k|^{-\f32}  \tilde{\delta}^{\f12} \nu^{-\f23} |kB|^{-\f13} \|r^{1+\varepsilon}F\|_\hkf \\
&\lesssim_\varepsilon \nu^{-\f12}|kB|^{-\f12} |k|^{-\f32}\|r^{1+\varepsilon} F\|_\hkf .
    \end{split}
\end{equation}

In view of \eqref{S3eq7}, by summing up
 \eqref{estimate of J_1} and \eqref{estimate of J_2}, we achieve \eqref{S3eq9a}.

\no{\bf Step 2.} The proof of  \eqref{S3eq9b}.

Similar to the estimation of $J_2$, we get, by using Proposition \ref{prop 3.1} and Lemmas   \ref{basic properties of ck} and \ref{Appendix A1-2}, that
\begin{equation}\label{estimate of J_4}
    \begin{split}
\|J_4\|_\Ltr &= \Big\|\frac{\cK'[w\eta_\delta]}{r^{1-\varepsilon}}\Big\|_\Ltr \lesssim_\varepsilon |k|^{-\f12} \|r^{\varepsilon-\f12} w\eta_\delta\|_\Llr \\
 &\lesssim_\varepsilon |k|^{-\f12} \|r^{-\f12} \eta_\delta\|_\Ltr \|r^\varepsilon w\|_\Ltr  \lesssim_\varepsilon |k|^{-\f12}  \tilde{\delta}^{\f12} \nu^{-\f23} |kB|^{-\f13} \|r^{1+\varepsilon}F\|_\hkf \\
&\lesssim_\varepsilon \nu^{-\f12}|kB|^{-\f12}|k|^{-\f12} \|r^{1+\varepsilon} F\|_\hkf .
    \end{split}
\end{equation}

To handle $J_3$ given by \eqref{S3eq8}, we first get,  by using integration by parts and  \eqref{1.17}, that
\begin{equation*}\label{J_3}
\begin{split}
   % J_3(r) &= \frac{|k|-\f12}{2|k|} \int_r^\oo  r^{-1+\varepsilon} \bigl( \frac{|k|+\f12}{|k|-\f12} \big(\frac{r}{s}\big)^{|k|-\f12} + r^{-|k|-\f12} s^{-|k|+\f12} \bigr)  \frac{1-\eta_\delta(s)}{s^{-2}-\lambda} \pa_s F_1(s) ds \\
   % & \quad +  \frac{|k|-\f12}{2|k|}  \int_1^r  r^{-1+\varepsilon}  \bigl( - \big(\frac{s}{r}\big)^{|k|+\f12} + r^{-|k|-\f12} s^{-|k|+\f12} \bigr) \frac{1-\eta_\delta(s)}{s^{-2}-\lambda} \pa_s F_1(s) ds \\
   J_3(r) &=\int_1^\oo  r^{-1+\varepsilon}\pa_r K_k(r,s)  \frac{1-\eta_\delta(s)}{s^{-2}-\lambda} \pa_s F_1(s)  ds\\
    &\quad +   \int_1^\oo  r^{-1+\varepsilon}\pa_r K_k(r,s)  \frac{k(1-\eta_\delta(s))}{s(s^{-2}-\lambda)} F_2(s)  ds \\
    &\eqdefa J_{31}+J_{32}+J_{33}+J_{34} + J_{35},
\end{split}
\end{equation*}
where we define
\begin{align*}
   & J_{31}{\eqdefa}-  \int_1^\oo  r^{-1+\varepsilon} \widetilde{K}_k(r,s)  \frac{1-\eta_\delta(s)}{s^{-2}-\lambda} F_1(s) ds,\\
   &J_{32}{\eqdefa}\delta^{-1}\int_1^\oo  r^{-1+\varepsilon} \pa_r K_k(r,s)  \frac{\eta'_\delta(s)}{s^{-2}-\lambda} F_1(s) ds, \\
    & J_{33}{\eqdefa} -2  \int_1^\oo  r^{-1+\varepsilon}  \pa_r K_k(r,s)  \frac{1-\eta_\delta(s)}{s^3(s^{-2}-\lambda)^2} F_1(s) ds,\\
    & J_{34}{\eqdefa}\int_1^\oo  r^{-1+\varepsilon} \pa_r K_k(r,s)  \frac{1-\eta_\delta(s)}{s^{-2}-\lambda} \frac{k}{s}F_2(s)  ds ,\\
    &J_{35}{\eqdefa}-r^{1+\varepsilon} \frac{1-\eta_\delta(r)}{1-\lambda r^2} F_1(r).
\end{align*}

By virtue of \eqref{estimate of cK}, we get, by applying Lemma \ref{lemma A.4}, that
\begin{align*}%\label{estimate of J_31}
   % \begin{split}
\|J_{31}\|_\Ltr &\lesssim |k|\Big\|r^{\f12} \cQ_{|k|-\frac{3}{2}+\varepsilon, |k|+\f32-\varepsilon} \Big[r^{\f12+\varepsilon}\frac{\mathbbm{1}_{E_m}}{1-\lambda r^2} F_1 \Big]\Big\|_\Ltr\\
&\quad +|k|\Big\| \cQ_{|k|-1+\varepsilon, |k|+1-\varepsilon} \Big[r^{1+\varepsilon} \frac{\mathbbm{1}_{E_o}}{1-\lambda r^2} F_1 \Big]\Big\|_\Ltr \\
&\lesssim_\varepsilon |k|^\f12 \Big\|r^{-\f12}\frac{\mathbbm{1}_{E_m}}{1-\lambda r^2} \Big\|_\Ltr \|r^{1+\varepsilon}F_1\|_\Ltr + \Big\|r^{1+\varepsilon}\frac{\mathbbm{1}_{E_o}}{1-\lambda r^2} F_1\Big\|_\Ltr,
   % \end{split}
\end{align*}
from which,  $|k|\leq \tilde{\delta}^{-1}$ and Lemma \ref{Appendix A1-3}, we infer
\begin{align*}
\|J_{31}\|_\Ltr \lesssim_\varepsilon |k|^{-\f12} \tilde{\delta}^{-\f32} \|r^{1+\varepsilon} F_1\|_\Ltr \lesssim_\varepsilon \nu^{-\f12}|kB|^{-\f12}  |k|^{-\f12}  \|r^{1+\varepsilon} F_1\|_\Ltr.
\end{align*}

Whereas we get, by a similar derivation of the estimate of $\sum_{i=2}^4 \|J_{1i}\|_\Ltr ,$ that
\begin{align*}%\label{estimate of J_32-J_34}
%\begin{split}
   \sum_{i=2}^4 \|J_{3i}\|_\Ltr
  &\lesssim \delta^{-1}\Big\|r^{-1+\varepsilon}\cK'\Big[\frac{\eta'_\delta}{s^{-2}-\lambda}F_1\Big]\Big\|_\Ltr + \Big\|r^{-1+\varepsilon}\cK'\Big[\frac{1-\eta_\delta}{s^3(s^{-2}-\lambda)^2}F_1\Big]\Big\|_\Ltr \\
   &\quad+  \Big\|r^{-1+\varepsilon}\cK'\Big[\frac{k(1-\eta_\delta)}{s(s^{-2}-\lambda)}F_2\Big]\Big\|_\Ltr\\
     &\lesssim \delta^{-1}|k|^{-\f12}\Big\|r^{-\f12+\varepsilon} \frac{\eta'_\delta}{r^{-2}-\lambda} F_1\Big\|_\Llr \\
     &\quad + |k|^{-\f12} \Big\|r^{-\f12+\varepsilon} \frac{\mathbbm{1}_{E_m}}{r^3(r^{-2}-\lambda)^2} F_1\Big\|_\Llr + |k|^{-1} \Big\|r^{\varepsilon} \frac{\mathbbm{1}_{E_o}}{r^3(r^{-2}-\lambda)^2} F_1\Big\|_\Ltr \\
   &\quad + |k|^{-\f12} \Big\|r^{-\f12+\varepsilon} \frac{k\mathbbm{1}_{E_m}}{r(r^{-2}-\lambda)} F_2\Big\|_\Llr + |k|^{-1} \Big\|r^{\varepsilon} \frac{k\mathbbm{1}_{E_o}}{r(r^{-2}-\lambda)} F_2\Big\|_\Ltr \\
   &\lesssim_\varepsilon \delta^{-1}|k|^{-\f12} \Big\|r^{\f12} \frac{\mathbbm{1}_{\delta \leq |r-r_0| \leq 2\delta}}{1-\lambda r^2}\Big\|_\Ltr \|r^{1+\varepsilon}F_1\|_\Ltr \\
   &\quad + |k|^{-\f12}\Big\|r^{-\f12} \frac{\mathbbm{1}_{E_m}}{(1-\lambda r^2)^2} \Big\|_\Ltr  \|r^{1+\varepsilon} F_1\|_\Ltr + |k|^{-1}\Big\|\frac{\mathbbm{1}_{E_o}}{(1-\lambda r^2)^2} r^{1+\varepsilon}F_1\Big\|_\Ltr \\
   &\quad + |k|^{\f12}\Big\|r^{-\f12} \frac{\mathbbm{1}_{E_m}}{1-\lambda r^2} \Big\|_\Ltr  \|r^{1+\varepsilon} F_2\|_\Ltr + \Big\|\frac{\mathbbm{1}_{E_o}}{1-\lambda r^2} r^{1+\varepsilon}F_2\Big\|_\Ltr \\
   &\lesssim_\varepsilon  \bigl(\delta^{-1}|k|^{-\f12}r_0 \tilde{\delta}^{-\f12} + |k|^{-\f12}\tilde{\delta}^{-\f32} + |k|^{-1} \bigr) \|r^{1+\varepsilon}F_1\|_\Ltr + \bigl(|k|^\f12 \tilde{\delta}^{-\f12} +1 \bigr)\|r^{1+\varepsilon}F_2\|_\Ltr\\
   &\lesssim_\varepsilon |k|^{-\f12} \tilde{\delta}^{-\f32} \bigl(\|r^{1+\varepsilon}F_1\|_\Ltr  +\|r^{1+\varepsilon}F_2\|_\Ltr \bigr).
%\end{split}
\end{align*}

Finally for $J_{35}$, we have
\begin{equation*}\begin{split}\label{estimate of J_35}
    \|J_{35}\|_\Ltr &\lesssim   \Big\| \frac{1-\eta_\delta}{1-\lambda r^2}\Big\|_\Lor \|r^{1+\varepsilon}F_1\|_\Ltr \lesssim \tilde{\delta}^{-1} \|r^{1+\varepsilon}F_1\|_\Ltr \lesssim |k|^{-\f12}\tilde{\delta}^{-\f32} \|r^{1+\varepsilon}F_1\|_\Ltr.
\end{split}
\end{equation*}

In view of \eqref{S3eq8},
by summarizing the above estimates, and then applying \eqref{eq 3.17} and Proposition \ref{prop 3.1}, we  achieve \eqref{S3eq9b}.
\end{proof}

\section{Resolvent estimates with enhanced dissipation weight}\label{Resolvent estimates with time weight}
In this section,
we establish pointwise enhanced dissipation for the following linearized equations with $k\neq0$:
\begin{subequations}\label{eq 5.1}
    \begin{gather}
\label{eq 5.1a} \pa_t w_k +\mathcal{T}_k w_k = F_k, \quad w_k(t=0)=w_k(0),\quad w_k|_{r=1,\oo}=0,\\
\label{eq 5.1b} \Big(\pa_r^2 -\frac{k^2-\f14}{r^2}\Big)\varphi_k = w_k, \quad \varphi_k|_{r=1,\oo}=0.
    \end{gather}
\end{subequations}
Since we shall deal with $L^2$ type estimates, for simplicity,
 we always set $w_k(0)=0$ and assume that $\varphi_k, w_k,F \in \mathcal{C}^\oo_c([0,\oo)\times [1,\oo)).$

 Recall that
$ \kappa_k {\eqdefa} \nu^\frac{1}{3}|kB|^\f23$ and $ \La_k(t,r){\eqdefa}\log\big(\hc r^2 + \kappa_k t\big),$
we denote
\beq \label{S5eq1} w_\star {\eqdefa} \La_k(t,r) w_k, \quad  F_\star {\eqdefa} \La_k(t,r) F_k, \quad \varphi_\star{\eqdefa}\La_k(t,r) \varphi_k.
 \eeq
 Then in view of \eqref{eq 5.1}, we find
\begin{align}\label{eq *}
    \left\{
    \begin{aligned}
        & \partial_t w_\star -\nu\Big(\partial_r^2 -\frac{k^2-\frac{1}{4}}{r^2}\Big)w_\star + \frac{ikB}{r^2} w_\star \\
       &\qquad = F_\star + \frac{\pa_t \La_k}{\La_k} w_\star -\nu \frac{\pa_r^2 \La_k}{\La_k} w_\star + 2\nu \Big(\frac{\pa_r\La_k}{\La_k}\Big)^2 w_\star -2\nu \frac{\pa_r\La_k}{\La_k} \pa_r w_\star  \eqdef F_\star + R^{w_\star},\\
        & \Big(\partial_r^2 -\frac{k^2-\frac{1}{4}}{r^2}\Big)\varphi_\star  = w_\star + \frac{\pa_r^2 \La_k}{\La_k} \varphi_\star - 2 \Big(\frac{\pa_r\La_k}{\La_k}\Big)^2 \varphi_\star +2 \frac{\pa_r\La_k}{\La_k} \pa_r \varphi_\star\eqdef w_\star + R^{\varphi_\star},\\
        & w_\star(0)=0,\quad  w_\star|_{r=1,+\oo}=\varphi_\star|_{r=1,+\oo}=0.
    \end{aligned}
    \right.
\end{align}

The key idea to handle the equations \eqref{eq *} is to view $R^{w_\star} $ and $R^{\varphi_\star}$ as  small perturbations and using the
resolvent estimates we obtained in Section \ref{Resolvent estimates without enhanced dissipation weight}.

\begin{proposition}\label{space-time resolvent L2-L2}
 {\sl  For any $k\in \Z\backslash\{0\} $, $\al \in[0,4]$, $T>0$, there exist constants $C>0$ and $\hat{C}$ sufficiently large, which are independent of $\nu,k,B,\al,T$, so that
\begin{equation}\label{L2-L2, space-time weighted resolvent}
\begin{split}
\nu^\f12 \mu_k^{\f12}\|r^\al \partial_r w_\star\|_{\LtT (\Ltr)} +\mu_k \|r^{\al-1} w_\star\|_{\LtT (\Ltr)} \leq C \|r^{\al+1}F_\star\|_{\LtT (\Ltr)}.
\end{split}
\end{equation}}
\end{proposition}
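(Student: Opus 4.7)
The overall plan is to reduce the space-time weighted estimate \eqref{L2-L2, space-time weighted resolvent} to the time-independent resolvent bound \eqref{ineq 3.3} of Proposition \ref{prop 3.1} by a Fourier transform in $t$, treating the commutator $R^{w_\star}$ as an absorbable perturbation via Property A of $\La_k$.

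First I would extend $(w_\star, F_\star)$ from $[0,T]$ to all of $\R$. Since $w_\star(0)=0$, extend both by zero for $t<0$; for $t>T$, continue $w_\star$ by the natural forward evolution of \eqref{eq *} with $F_\star\equiv 0$. The dissipativity of $\cT_k$, combined with the smallness of $R^{w_\star}$ guaranteed by Property A (valid because $\hc$ is chosen large so that $\La_k \geq \log\hc$ dominates in the denominators of the commutator terms), ensures the extended $w_\star\in L^2(\R;\Ltr)$. The extended pair still satisfies \eqref{eq *} on $\R$ with $F_\star$ supported in $[0,T]$.

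Next I would apply Fourier transform in $t$. Denoting the dual variable by $\tau$ and setting $\lambda=-\tau/(kB)$, \eqref{eq *} becomes the resolvent identity
\begin{equation*}
(\cT_k - ikB\lambda)\widehat{w_\star}(\lambda,\cdot) = \widehat{F_\star}(\lambda,\cdot) + \widehat{R^{w_\star}}(\lambda,\cdot).
\end{equation*}
For every fixed $\lambda\in\R$, apply \eqref{ineq 3.3} of Proposition \ref{prop 3.1} with the given $\al \in [0,4]$; the smallness hypothesis $\nu/|kB|\ll(1+|\al|)^{-3}$ holds uniformly on this range under the paper's standing assumption that $\nu/|kB|$ is sufficiently small. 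Squaring, integrating over $\tau\in\R$, and invoking Plancherel produces
\begin{equation*}
\nu^{\f12}\mu_k^{\f12}\|r^\al\partial_r w_\star\|_{L^2(\R;\Ltr)} + \mu_k\|r^{\al-1}w_\star\|_{L^2(\R;\Ltr)} \lesssim \|r^{\al+1}F_\star\|_{L^2(\R;\Ltr)} + \|r^{\al+1}R^{w_\star}\|_{L^2(\R;\Ltr)}.
\end{equation*}

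It remains to absorb the commutator. Unfolding the four pieces of $R^{w_\star}$ given in \eqref{eq *} and applying Property A pointwise bounds the $\partial_t\La_k/\La_k$ piece by $\ll \mu_k r^{\al-1}|w_\star|$, the $\nu\partial_r^2\La_k/\La_k$ and $\nu(\partial_r\La_k/\La_k)^2$ pieces by $\ll \nu r^{\al-1}|w_\star|$, and the mixed $\nu(\partial_r\La_k/\La_k)\partial_r w_\star$ piece by $\ll \nu r^\al|\partial_r w_\star|$. Using $\nu\leq \nu^{\f12}\mu_k^{\f12}$ collapses these into a small multiple of the left-hand side, which is absorbed. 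Since $F_\star$ vanishes outside $[0,T]$ and the $L^2(\R;\Ltr)$ norm dominates the $\LtT(\Ltr)$ norm, \eqref{L2-L2, space-time weighted resolvent} follows. I expect the main subtlety to lie not in the Fourier/Plancherel mechanics but in coupling the extension to $\R$ with the pointwise verification that Property A genuinely makes each commutator term a small perturbation — that is precisely where the hypothesis that $\hc$ is large and $\nu/|kB|$ is small are used.
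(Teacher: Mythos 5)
Your proposal is correct and takes essentially the same route as the paper: reduce the space--time estimate to the stationary resolvent bound \eqref{ineq 3.3} by Fourier transform in $t$ and Plancherel, then absorb the commutator $R^{w_\star}$ using the $1/\log\hc$ smallness of $\pa_t\La_k/\La_k$, $\pa_r\La_k/\La_k$, $\pa_r^2\La_k/\La_k$ (Lemma \ref{Basic calculation for La_k}, i.e.\ your ``Property A''). The only presentational difference is in the reduction to the whole time line: the paper simply assumes $w_\star, F_\star \in \mathcal{C}^\infty_c([0,\infty)\times[1,\infty))$ by density and sets $T=\infty$, whereas you extend $w_\star$ by forward evolution past $T$ --- both are fine, though your variant requires the (un-proved, but plausible) claim that the extension lies in $L^2(\R;\Ltr)$, which the paper's density device avoids.
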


\begin{proof}
Without loss of generality, we set $T=\oo$ and define
\begin{equation*}
    \hat{f}(\lambda){\eqdefa}\int_0^\oo e^{-it\lambda} f(t) dt.
\end{equation*}
We may extend $f(t)=0$ for $t\leq 0$ in case $f(0)=0.$

Then we get, by taking Fourier transformation to the $w_\star$ equation in \eqref{eq *} with respect to the time variable, that
\begin{equation}
\begin{split}
      \label{4.5}  &-\nu\Big(\pa_r^2 -\frac{k^2-\f14}{r^2}\Big) \widehat{w}_\star(kB\lambda,r) + ikB(r^{-2}-\lambda) \widehat{w}_\star(kB\lambda,r) \\
      &\qquad \qquad \qquad \qquad \qquad \qquad \qquad \qquad= \hat{f}_\star(kB\lambda,r) + \widehat{R^{w_\star}}(kB\lambda,r),
\end{split}
\end{equation}
from which and  Proposition \ref{prop 3.1}, we infer
\begin{align*}
   & \nu \mu_k \|r^{\al} \partial_r \widehat{w}_\star(kB\lambda,r)\|_{\Ltr}^2 +\mu_k^2\|r^{\al-1}\widehat{w}_\star(kB\lambda,r) \|_{\Ltr}^2 \\
    &\qquad \qquad \qquad \qquad  \leq C \Big( \|r^{\al+1} \widehat{F}_\star(kB\lambda,r)\|_{\Ltr}^2  + \|r^{\al+1} \widehat{R^{w_\star}}(kB\lambda,r)\|_{\Ltr}^2 \Big).
\end{align*}
By integrating the above inequality over $\R$ with respect to $\lambda$ variable, and then using Plancherel's equality, we obtain
\begin{align}\label{4.5,a}
    \nu \mu_k \|r^{\al} \partial_r w_\star\|_{L^2_T (\Ltr)}^2 +\mu_k^2\|r^{\al-1}w_\star \|_{L^2_T (\Ltr)}^2  \leq C \Bigl( \|r^{\al+1} F_\star\|_{L^2_T (\Ltr)}^2  + \|r^{\al+1} R^{w_\star}\|_{L^2_T (\Ltr)}^2 \Bigr).
\end{align}
Yet it follows from Lemma \ref{Basic calculation for La_k} that
\begin{equation}\begin{split}\label{4.6,a}
   &\|r^{\al+1} R^{w_\star}\|_{L^2_T (\Ltr)}^2\\
   &\leq C\Bigl( \frac{\nu^\f13|kB|^\f23}{\hc\log\hc}  \|r^{\al-1}w_\star \|_{L^2_T (\Ltr)}  + \frac{\nu}{\log\hc}\|r^{\al} \partial_r w_\star\|_{L^2_T (\Ltr)} + \frac{\nu}{\log\hc} \|r^{\al-1}w_\star \|_{L^2_T (\Ltr)} \Bigr)^2\\
    &\leq \frac{C}{\log^2\hc} \Bigl(   \nu^2 \|r^{\al} \partial_r w_\star\|_{L^2_T (\Ltr)}^2 +(\nu^\f13 |kB|^\f23)^2\|r^{\al-1}w_\star \|_{L^2_T (\Ltr)}^2 \Bigr).
\end{split}\end{equation}
By inserting \eqref{4.6,a} into \eqref{4.5,a}, and   taking $\hc$ to be large enough, we arrive at \eqref{L2-L2, space-time weighted resolvent}.
\end{proof}

\begin{proposition}\label{space-time resolvent,H-1 to H1}
 {\sl For any $k\in \Z\backslash\{0\}$, $\al \in[0,4]$, $T>0$, there exist constants $C>0$ and $\hat{C}$ sufficiently large, which are independent of $\nu,k,B,\al,T$, so that
\begin{equation}\label{H-1 to H1, space-time weighted resolvent}
\begin{split}
\nu \|r^{\al}  w_\star\|_{\LtT (\hkz)} &+\nu^\f12 \mu_k^\f12\|r^{\al-1}w_\star \|_{\LtT (\Ltr)} \leq C \|r^{\al}F_\star\|_{\LtT (\hkf)}.
\end{split}
\end{equation}}
\end{proposition}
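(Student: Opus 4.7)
\medskip

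\noindent\textbf{Proof proposal.}
The strategy is the same as for Proposition~\ref{space-time resolvent L2-L2}, namely take the Fourier transform in time in the $w_\star$ equation of \eqref{eq *}, apply the frequency-localized resolvent bound from Section \ref{Resolvent estimates without enhanced dissipation weight}, integrate in $\lambda$ via Plancherel, and absorb the commutator $R^{w_\star}$ using the smallness provided by property A of $\La_k$. The only change is that we now invoke the $H^{-1}_k\!\to\!H^1_k$ bound \eqref{ineq 3.4} from Proposition \ref{prop 3.1} rather than \eqref{ineq 3.3}. Since $k\in\Z\setminus\{0\}$ and $\al\in[0,4]$, the prefactor $1+|\al/k|$ is harmless.

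First I would extend $w_\star,F_\star,R^{w_\star}$ by zero to $t\le 0$ (which is legitimate because $w_\star(0)=0$) and Fourier transform in $t$ as in \eqref{4.5}. Applying \eqref{ineq 3.4} with the frequency $kB\lambda$ yields pointwise in $\lambda$
\begin{equation*}
\nu^2\|r^\al \widehat{w}_\star\|_{\hkz}^2 + \nu \mu_k \|r^{\al-1}\widehat{w}_\star\|_\Ltr^2 \lesssim \|r^\al\widehat{F}_\star\|_{\hkf}^2 + \|r^\al\widehat{R^{w_\star}}\|_{\hkf}^2.
\end{equation*}
Integration in $\lambda$ and Plancherel then give the analogue of \eqref{4.5,a} with $\hkf,\hkz$ norms replacing $r^{\al+1}L^2_r,r^\al\pa_r L^2_r$.

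The main work is to estimate $\|r^\al R^{w_\star}\|_{\LtT(\hkf)}$. Recall
\[
R^{w_\star} = \tfrac{\pa_t\La_k}{\La_k}w_\star - \nu\tfrac{\pa_r^2\La_k}{\La_k}w_\star + 2\nu\bigl(\tfrac{\pa_r\La_k}{\La_k}\bigr)^2 w_\star - 2\nu\tfrac{\pa_r\La_k}{\La_k}\pa_r w_\star.
\]
The first three terms are zeroth order in $w_\star$, so by \eqref{equivalent definition of weighted Hkf} and Lemma \ref{Basic calculation for La_k} (property A) they satisfy
\[
\Bigl\|r^\al\bigl(\tfrac{\pa_t\La_k}{\La_k}- \nu\tfrac{\pa_r^2\La_k}{\La_k}+2\nu(\tfrac{\pa_r\La_k}{\La_k})^2\bigr)w_\star\Bigr\|_\hkf \lesssim \tfrac{1}{|k|}\cdot \tfrac{\mu_k}{\hc\log\hc}\|r^{\al-1}w_\star\|_\Ltr,
\]
i.e.\ with gain $1/\log\hc$ relative to $\nu^{1/2}\mu_k^{1/2}\|r^{\al-1}w_\star\|_\Ltr$. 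The derivative term is written as a pure divergence
\[
-2\nu\,r^\al\tfrac{\pa_r\La_k}{\La_k}\pa_r w_\star \;=\; \pa_r\!\Bigl(-2\nu\,r^\al\tfrac{\pa_r\La_k}{\La_k}w_\star\Bigr) + 2\nu\,\pa_r\!\Bigl(r^\al\tfrac{\pa_r\La_k}{\La_k}\Bigr)w_\star,
\]
so by the definition \eqref{S3eq1} of $\hkf$ its $\hkf$-norm is bounded by
\[
2\nu\bigl\|r^\al\tfrac{\pa_r\La_k}{\La_k}w_\star\bigr\|_\Ltr + 2\nu\bigl\|\pa_r\!\bigl(r^\al\tfrac{\pa_r\La_k}{\La_k}\bigr)w_\star\bigr\|_\Ltr \lesssim \tfrac{\nu}{\log\hc}\|r^{\al-1}w_\star\|_\Ltr,
\]
again by property A. Putting these together yields
\[
\|r^\al R^{w_\star}\|_{\LtT(\hkf)}^2 \lesssim \tfrac{1}{\log^2\hc}\Bigl(\nu^2\|r^\al w_\star\|_{\LtT(\hkz)}^2 + \nu\mu_k\|r^{\al-1}w_\star\|_{\LtT(\Ltr)}^2\Bigr),
\]
exactly parallel to \eqref{4.6,a}. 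Choosing $\hc$ large enough absorbs $R^{w_\star}$ into the left-hand side and delivers \eqref{H-1 to H1, space-time weighted resolvent}.

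The main obstacle is the derivative term in $R^{w_\star}$: one cannot just bound it in $L^2$ (that would cost a factor $\nu\|r^\al\pa_r w_\star\|_\Ltr$ which the $H^{-1}\!\to\!H^1$ estimate does not control with the right weight in the low-frequency regime). The divergence trick above is what makes the argument go through, because it converts the $\pa_r w_\star$ factor into a zero-order expression in $w_\star$ whose coefficient enjoys the $1/\log\hc$ smallness guaranteed by property A of $\La_k$.
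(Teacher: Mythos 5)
Your divergence trick for the derivative term $-2\nu\,\tfrac{\pa_r\La_k}{\La_k}\pa_r w_\star$ is fine, but the zeroth-order term $\tfrac{\pa_t\La_k}{\La_k}w_\star$ breaks the argument. The only $\hkf$ bound available for a generic zeroth-order function is $\|r^\al g\|_\hkf\lesssim \tfrac{1}{|k|}\|r^{\al+1}g\|_\Ltr$, which, combined with $|\pa_t\La_k/\La_k|\lesssim \kappa_k/(\hc r^2\log\hc)$, gives exactly the $\tfrac{\mu_k}{|k|\hc\log\hc}\|r^{\al-1}w_\star\|_\Ltr$ you wrote. To absorb this into your left-hand side you need
\[
\nu^{-1}\Bigl(\frac{\mu_k}{|k|\hc\log\hc}\Bigr)^2\|r^{\al-1}w_\star\|_\Ltr^2 \lesssim \frac{\mu_k}{\log^2\hc}\|r^{\al-1}w_\star\|_\Ltr^2,
\]
i.e.\ $\mu_k\lesssim \nu k^2\hc^2$. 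This fails uniformly precisely in the interesting regime $\mu_k=\kappa_k=\nu^{1/3}|kB|^{2/3}>\nu k^2$, since there $\mu_k/(\nu k^2)=(|B|/\nu k^2)^{2/3}$ is unbounded as $\nu/|B|\to 0$; the constant $\hat C$ cannot be allowed to depend on $\nu,k,B$. So the claimed bound $\|r^\al R^{w_\star}\|_{\LtT(\hkf)}^2\lesssim \log^{-2}\hc\bigl(\nu^2\|r^\al w_\star\|_{\LtT(\hkz)}^2+\nu\mu_k\|r^{\al-1}w_\star\|_{\LtT(\Ltr)}^2\bigr)$ does not hold, and invoking \eqref{ineq 3.4} directly on the whole right-hand side $F_\star+R^{w_\star}$ is the wrong move.

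The paper sidesteps this by never putting $R^{w_\star}$ into the $\hkf$ norm. It goes back to the duality form \eqref{ineq 3.2}, which produces terms $|\langle F,r^{2\al}w_\star\rangle|$ (and the $\tilde\rho$ variant), and then treats the two pieces of $F=F_\star+R^{w_\star}$ asymmetrically: for $F_\star$ it uses the $\hkf$--$\hkz$ pairing $|\langle F_\star,r^{2\al}w_\star\rangle|\leq\|r^\al F_\star\|_\hkf\|r^\al w_\star\|_\hkz$, while for $R^{w_\star}$ it uses the $L^2$--$L^2$ pairing $|\langle R^{w_\star},r^{2\al}w_\star\rangle|\leq\|r^{\al+1}R^{w_\star}\|_\Ltr\|r^{\al-1}w_\star\|_\Ltr$. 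With the already-established bound \eqref{4.6,a} this gives a contribution $\tfrac{1}{\log\hc}\|r^{\al-1}w_\star\|_\Ltr(\mu_k\|r^{\al-1}w_\star\|_\Ltr+\nu\|r^\al\pa_r w_\star\|_\Ltr)$, which sits directly below the coercive term $\mu_k\|r^{\al-1}w_\star\|_\Ltr^2+\nu\|r^\al w_\star\|_\hkz^2$ and is absorbed by taking $\hat C$ large. The lesson: the commutator $R^{w_\star}$ is genuinely an $L^2$ object (it comes with a full $r^{-2}$ weight from property A), and you must pair it against $w_\star$ in $L^2$ rather than measure it in $\hkf$.
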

\begin{proof}
 Thanks to  (\ref{3.10}-\ref{3.11}) and Proposition \ref{prop 3.1}, we deduce from \eqref{4.5} that
  \begin{equation*}
  \begin{split}
      &\nu \|r^{\al}\widehat{w}_\star(kB\lambda,r)\|_{L^2_\lambda( \hkz)}^2 + \mu_k \|r^{\al-1}\widehat{w}_\star(kB\lambda,r)\|_{L^2_\lambda (\Ltr)}^2 \\
      &\leq C \int_\R \Big|\braket{r^{\al}\widehat{F}_\star, r^{\al}\widehat{w}_\star }_\Ltr\Big| (kB\lambda)d\lambda + C \int_\R \Big|\braket{r^{\al}\widehat{F}_\star, r^{\al}\tilde{\rho} \widehat{w}_\star }_\Ltr\Big| (kB\lambda) \mathbbm{1}_{(0,1)}\big(\lambda\big) d\lambda\\
      &\quad + C \int_\R \Big|\braket{r^{\al}\widehat{R^{w_\star}}, r^{\al}\widehat{w}_\star }_\Ltr\Big|(kB\lambda) d\lambda + C \int_\R \Big|\braket{r^{\al}\widehat{R^{w_\star}}, r^{\al}\tilde{\rho} \widehat{w}_\star }_\Ltr\Big|(kB\lambda) \mathbbm{1}_{(0,1)}\big(\lambda\big) d\lambda\\
      & \leq C \nu^{-1}\|r^\al \widehat{F}_\star(kB\lambda,r)\|_{L^2_\lambda(\hkf)}^2 \\
      &\quad + \frac{1}{4} \Bigl( \nu \| r^\al \widehat{w}_\star(kB\lambda,r)\|_{L^2_\lambda( \hkz)}^2 + \nu^\f13|kB|^\f23 \|r^{\al-1}\widehat{w}_\star(kB\lambda,r)\|_{L^2_\lambda (\Ltr)}^2 \Bigr)\\
      &\quad + C\|r^{\al+1}\widehat{R^{w_\star}}(kB\lambda,r)\|_{L^2_\lambda (\Ltr)} \|r^{\al-1}\widehat{w}_\star(kB\lambda,r)\|_{L^2_\lambda (\Ltr)},
\end{split}
  \end{equation*}
  from which and  and  Plancherel's equality, we infer
\begin{equation}\label{4.9,a}
    \begin{split}
\f34\big( \nu \|r^{\al}w_\star\|_{L^2_T( \hkz)}^2 + \mu_k \|r^{\al-1}w_\star\|_{L^2_T (\Ltr)}^2 \big)
  \leq & C\nu^{-1}\|r^\al F_\star\|_{L^2_T(\hkf)}^2\\
  & + C\|r^{\al+1}R^{w_\star}\|_{L^2_T (\Ltr)} \|r^{\al-1}w_\star\|_{L^2_T (\Ltr)}.
    \end{split}
\end{equation}
Whereas we get, by using \eqref{4.6,a} and taking $\hc$ to be large enough, that
    \begin{align*}
&C\|r^{\al+1}R^{w_\star}\|_{L^2_T (\Ltr)} \|r^{\al-1}w_\star\|_{L^2_T (\Ltr)} \\
&\leq \frac{C}{\log\hc}\|r^{\al-1}w_\star\|_{L^2_T (\Ltr)} \bigl( \mu_k \|r^{\al-1}w_\star \|_{L^2_T (\Ltr)}  + \nu \|r^{\al} \partial_r w_\star\|_{L^2_T(\Ltr)} \bigr)\\
&\leq \frac{1}{4} \bigl( \nu \|r^\al w_\star\|_{L^2_T(\hkz)}^2 + \mu_k \|r^{\al-1}w_\star\|_{L^2_T(\Ltr)}^2 \bigr).
    \end{align*}
By inserting the above inequality into \eqref{4.9,a}, we achieve
 \eqref{H-1 to H1, space-time weighted resolvent}.
\end{proof}

\begin{proposition}\label{space-time resolvent L2 to phi}
  {\sl For any $k\in \Z\backslash\{0\}$, $\varepsilon \in(0,2)$, $T>0$, there exist constants $C_\varepsilon>0$ and $\hat{C}_0\gg1$ depending only on $\varepsilon$, so that for $\hc\geq \hc_0$,
\begin{subequations} \label{ineq spcae-time resovent L2 to phi}
\begin{gather} \label{ineq spcae-time resovent L2 to phia}
    \mu_k^{\f12} |kB|^\f12 |k|^\f12\Bigl( \Big\|\frac{\partial_r\varphi_\star}{r^{1-\varepsilon}}\Big\|_{\LtT (\Ltr)} + |k|\Big\|\frac{\varphi_\star}{r^{2-\varepsilon}}\Big\|_{\LtT (\Ltr)} \Bigr) \leq C_\varepsilon \|r^{2+\varepsilon}F_\star\|_{\LtT (\Ltr)},\\
  \label{ineq spcae-time resovent L2 to phib}   \nu^\frac{1}{2} |kB|^\f12 |k|^\f12 \Bigl( \Big\|\frac{\partial_r\varphi_\star}{r^{1-\varepsilon}}\Big\|_{\LtT (\Ltr)} + |k|\Big\|\frac{\varphi_\star}{r^{2-\varepsilon}}\Big\|_{\LtT (\Ltr)} \Bigr) \leq C_\varepsilon \|r^{1+\varepsilon}F_\star\|_{\LtT (\hkf)}.
\end{gather}
\end{subequations}}
\end{proposition}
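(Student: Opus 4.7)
The strategy mirrors Propositions \ref{space-time resolvent L2-L2} and \ref{space-time resolvent,H-1 to H1}: take the Fourier transform in $t$, apply the time-independent resolvent estimates from Section \ref{Resolvent estimates without enhanced dissipation weight} frequency-wise, and absorb the commutator remainders $R^{w_\star}$ and $R^{\varphi_\star}$ via Property A of $\La_k$ by choosing $\hc$ large. The new ingredient compared with the two preceding propositions is that $\varphi_\star$ is not exactly the Biot--Savart of $w_\star$: it satisfies a perturbed elliptic equation involving $R^{\varphi_\star}$, which must be handled first before the $w_\star$-resolvent estimates can be invoked.

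Concretely, extend $w_\star, F_\star, \varphi_\star$ by zero for $t<0$ and take the Fourier transform in time. Then $\widehat{w}_\star(kB\la,r)$ solves \eqref{4.5} with source $\widehat{F}_\star + \widehat{R^{w_\star}}$, and one has the algebraic identity
\begin{align*}
\Bigl(\pa_r^2 - \f{k^2-\f14}{r^2}\Bigr)\widehat{\varphi}_\star = \widehat{w}_\star + \widehat{R^{\varphi_\star}}.
\end{align*}
Split $\widehat{\varphi}_\star = \wt\varphi + \psi$ where $\wt\varphi\eqdefa -\cK[\widehat{w}_\star]$ and $\psi\eqdefa -\cK[\widehat{R^{\varphi_\star}}]$. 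The pair $(\widehat{w}_\star,\wt\varphi)$ now fits the framework of Propositions \ref{prop L2-phi resolvent estimate} and \ref{resolvent estimate 4}, so at each $\la$
\begin{align*}
\mu_k^{\f12}|kB|^{\f12}|k|^{\f12}\Bigl(\|\wt\varphi'/r^{1-\varepsilon}\|_\Ltr + |k|\|\wt\varphi/r^{2-\varepsilon}\|_\Ltr\Bigr) \leq C_\varepsilon \|r^{2+\varepsilon}(\widehat{F}_\star + \widehat{R^{w_\star}})\|_\Ltr,
\end{align*}
and analogously for the $H^{-1}_k$-to-$\varphi$ version.

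For $\psi$, the elliptic bound of Lemma \ref{basic properties of ck} gives
\begin{align*}
\|\psi'/r^{1-\varepsilon}\|_\Ltr + |k|\|\psi/r^{2-\varepsilon}\|_\Ltr \lesssim_\varepsilon |k|^{-1}\|r^\varepsilon \widehat{R^{\varphi_\star}}\|_\Ltr,
\end{align*}
and by the explicit form of $R^{\varphi_\star}$ in \eqref{eq *} together with Property A of $\La_k$ (Lemma \ref{Basic calculation for La_k}),
\begin{align*}
\|r^\varepsilon \widehat{R^{\varphi_\star}}\|_\Ltr \leq \f{C_\varepsilon}{\log\hc}\Bigl(\|\widehat{\varphi}_\star/r^{2-\varepsilon}\|_\Ltr + \|\pa_r\widehat{\varphi}_\star/r^{1-\varepsilon}\|_\Ltr\Bigr).
\end{align*}
Combined with the previous display and with $|k|\geq 1$, choosing $\hc$ large absorbs the $\psi$-contribution into the left-hand side, yielding a frequency-wise bound of $\widehat{\varphi}_\star$ purely in terms of $\widehat{F}_\star + \widehat{R^{w_\star}}$.

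Squaring in $\la$, applying Plancherel, and treating $R^{w_\star}$ exactly as in \eqref{4.6,a}, namely
\begin{align*}
\|r^{2+\varepsilon}R^{w_\star}\|_{\LtT(\Ltr)} \leq \f{C}{\log\hc}\bigl(\mu_k\|r^\varepsilon w_\star\|_{\LtT(\Ltr)} + \nu^{\f12}\mu_k^{\f12}\|r^{1+\varepsilon}\pa_r w_\star\|_{\LtT(\Ltr)}\bigr)
\end{align*}
(and the corresponding $H^{-1}_k$ bound), and then inserting Propositions \ref{space-time resolvent L2-L2} and \ref{space-time resolvent,H-1 to H1} to control the right-hand side by $\|r^{2+\varepsilon}F_\star\|_{\LtT(\Ltr)}$, respectively $\|r^{1+\varepsilon}F_\star\|_{\LtT(\hkf)}$, delivers \eqref{ineq spcae-time resovent L2 to phia} and \eqref{ineq spcae-time resovent L2 to phib} after one final absorption step using the smallness $(\log\hc)^{-1}$. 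The main obstacle is the reappearance of $\varphi_\star$ inside the estimate through $R^{\varphi_\star}$: the bookkeeping must ensure that the gain $|k|^{-1}$ from the Biot--Savart kernel together with Property A genuinely beats the loss of weights, so that both error contributions $R^{\varphi_\star}$ and $R^{w_\star}$ can be absorbed simultaneously; this is what forces the choice $\hc\geq \hc_0(\varepsilon)$.
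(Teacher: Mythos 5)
Your proposal is correct and follows essentially the same route as the paper: split $-\varphi_\star = \cK[w_\star] + \cK[R^{\varphi_\star}]$ via the perturbed Biot--Savart relation in \eqref{eq *}, apply the time-frequency resolvent estimates of Section \ref{Resolvent estimates without enhanced dissipation weight} to $\cK[w_\star]$ with source $F_\star + R^{w_\star}$, absorb $R^{w_\star}$ through Propositions \ref{space-time resolvent L2-L2} and \ref{space-time resolvent,H-1 to H1}, bound $\cK[R^{\varphi_\star}]$ by the elliptic estimate of Lemma \ref{basic properties of ck} together with Lemma \ref{Basic calculation for La_k}, and absorb the resulting $(\log\hc)^{-1}$-small term. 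One small presentational caveat: for \eqref{ineq spcae-time resovent L2 to phib} the remainder $R^{w_\star}$ must still be measured in $L^2_r$ (via Proposition \ref{prop L2-phi resolvent estimate}) and not transferred into $H^{-1}_k$; your phrase ``analogously for the $H^{-1}_k$-to-$\varphi$ version'' together with ``(and the corresponding $H^{-1}_k$ bound)'' slightly obscures that the paper deliberately keeps the $R^{w_\star}$ contribution in $L^2_r$ and only then converts the resulting $\mu_k^{-1/2}$ prefactor into $\nu^{-1/2}$ using Proposition \ref{space-time resolvent,H-1 to H1} and the assumption $\nu\ll|kB|$ --- but the coefficients work out and the logic you outline is the paper's.
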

\begin{proof} By virtue of \eqref{expression of phi_k} and $\varphi_\star$ equation of \eqref{eq *},
we  write
     \begin{align*}
         -\varphi_\star = \cK[w_\star] + \cK[R^{\varphi_\star}],
     \end{align*}
which implies
\begin{equation}\label{4.8}
    \begin{split}
   \Big\| \frac{\partial_r\varphi_\star}{r^{1-\varepsilon}}\Big\|_{\LtT (\Ltr)} + |k|\Big\| \frac{\varphi_\star}{r^{2-\varepsilon}}\Big\|_{\LtT (\Ltr)} &\leq  \Big\| \frac{\partial_r\cK[w_\star]}{r^{1-\varepsilon}}\Big\|_{\LtT (\Ltr)} + |k|\Big\| \frac{\cK[w_\star]}{r^{2-\varepsilon}}\Big\|_{\LtT (\Ltr)} \\
   &\quad + \Big\|\frac{ \pa_r\cK[R^{\varphi_\star}]}{r^{1-\varepsilon}} \Big\|_{\LtT (\Ltr)} +|k| \Big\|\frac{ \cK[R^{\varphi_\star}]}{r^{2-\varepsilon}}\Big\|_{\LtT (\Ltr)}.
    \end{split}
\end{equation}

Below for simplicity, we skip the standard transformation between $L^2_\lambda (\Ltr)$ and $\LtT (\Ltr)$. We first,
 get, by using Proposition \ref{prop L2-phi resolvent estimate}, \eqref{4.6,a} and Proposition \ref{space-time resolvent L2-L2}, that
\begin{align*}
    &\Big\| \frac{\partial_r\cK[w_\star]}{r^{1-\varepsilon}}\Big\|_{\LtT (\Ltr)} + |k|\Big\| \frac{\cK[w_\star]}{r^{2-\varepsilon}}\Big\|_{\LtT (\Ltr)} \\
    &\lesssim_\varepsilon \frac{1}{\mu_k^{\f12} |kB|^\f12 |k|^\f12}\bigl(\|r^{2+\varepsilon}F_\star\|_{\LtT (\Ltr)}+\|r^{2+\varepsilon}R^{w_\star}\|_{\LtT (\Ltr)}\bigr) \\
    &\lesssim_\varepsilon \frac{1}{\mu_k^{\f12} |kB|^\f12 |k|^\f12}\bigl(\|r^{2+\varepsilon}F_\star\|_{\LtT (\Ltr)} + \nu^\f13 |kB|^\f23 \|r^\varepsilon w_\star\|_{\LtT (\Ltr)}+ \nu \|r^{1+\varepsilon}\pa_r w_\star\|_{\LtT (\Ltr)}\bigr)\\
    &\lesssim_\varepsilon \frac{1}{\mu_k^{\f12} |kB|^\f12 |k|^\f12}\|r^{2+\varepsilon}F_\star\|_{\LtT (\Ltr)}.
\end{align*}
While we get, by applying   Propositions \ref{prop L2-phi resolvent estimate},
\ref{resolvent estimate 4} and \ref{space-time resolvent,H-1 to H1}, that
\begin{align*}
    &\Big\| \frac{\partial_r\cK[w_\star]}{r^{1-\varepsilon}}\Big\|_{\LtT (\Ltr)} + |k|\Big\| \frac{\cK[w_\star]}{r^{2-\varepsilon}}\Big\|_{\LtT (\Ltr)}  \\
    &\lesssim_\varepsilon \frac{1}{\nu^\frac{1}{2} |kB|^\f12 |k|^\f12}\|r^{1+\varepsilon}F_\star\|_{\LtT (\hkf)} + \frac{1}{\mu_k^\f12 |kB|^\f12 |k|^\f12}\|r^{2+\varepsilon}R^{w_\star}\|_{\LtT (\Ltr)}  \\ &\lesssim_\varepsilon  \frac{1}{\nu^\frac{1}{2} |kB|^\f12 |k|^\f12} \|r^{1+\varepsilon}F_\star\|_{L^2_T(\hkf)}\\
    &\quad +\frac{1}{\nu^\frac{1}{6} |kB|^\f56 |k|^\f12} \bigl( \nu^\f13 |kB|^\f23 \|r^\varepsilon w_\star\|_{\LtT (\Ltr)}+ \nu \|r^{1+\varepsilon}\pa_r w_\star\|_{\LtT (\Ltr)} \bigr) \\
    &\lesssim_\varepsilon \frac{1}{\nu^\frac{1}{2} |kB|^\f12 |k|^\f12} \|r^{1+\varepsilon}F_\star\|_{L^2_T(\hkf)}.
\end{align*}
And it follows from Lemmas \ref{basic properties of ck} and \ref{Basic calculation for La_k} that
\begin{align*}
   \Big\|\frac{ \pa_r\cK[R^{\varphi_\star}] }{r^{1-\varepsilon}}\Big\|_{\LtT (\Ltr)} +|k| \Big\|\frac{ \cK[R^{\varphi_\star}]}{r^{2-\varepsilon}}\Big\|_{ \LtT (\Ltr)} &\lesssim_\varepsilon |k|^{-1} \|r^\varepsilon R^{\varphi_\star}\|_{ \LtT (\Ltr)} \\
   &\lesssim_\varepsilon  \frac{|k|^{-1}}{\log\hc} \Bigl(  \Big\| \frac{\partial_r\varphi_\star}{r^{1-\varepsilon}}\Big\|_{\LtT (\Ltr)} + |k|\Big\| \frac{\varphi_\star}{r^{2-\varepsilon}}\Big\|_{\LtT (\Ltr)} \Bigr).
\end{align*}

By substituting the above inequalities into \eqref{4.8} and then taking $\hc$  to be sufficiently large,  we  achieve \eqref{ineq spcae-time resovent L2 to phi}.
\end{proof}

\section{Space-time estimates of the vorticity in nonzero modes}\label{Space-time estimates of the vorticity in nonzero modes}

In this section, we   primarily investigate the enhanced dissipation and integrated inviscid damping estimates concerning \eqref{eq 5.1}.
 Below we split the solution of  \eqref{eq 5.1} $w_k=\wkh+\wki$ and $\varphi_k=\pkh+\pki,$ where $(\wkh,\pkh)$ and $(\wki,\pki)$ satisfy  respectively the following two equations:
 \begin{equation}\label{homogenous eq}
    \begin{cases}
         \partial_t \wkh - \nu\big(\partial_r^2-\frac{k^2-\frac{1}{4}}{r^2}\big)\wkh+\frac{ikB}{r^2}\wkh = 0, \quad (t,r)\in \R^+ \times [1,+\oo),\\
\wkh(t=0) =w_k(0),  \quad \wkh|_{r=1,\infty}=0,\\
 \big(\partial_r^2-\frac{k^2-\frac{1}{4}}{r^2}\big)\pkh = \wkh,\quad \pkh|_{r=1,\infty}=0,
    \end{cases}
\end{equation}
and
\begin{equation}\label{inhomogenous eq}
    \begin{cases}
         \partial_t \wki - \nu\big(\partial_r^2-\frac{k^2-\frac{1}{4}}{r^2}\big)\wki+\frac{ikB}{r^2}\wki = f_1+f_2, \quad (t,r)\in \R^+ \times [1,+\oo),\\
         \wki(t=0) =0, \quad  \wki|_{r=1,\infty}=0,\\
         \big(\partial_r^2-\frac{k^2-\frac{1}{4}}{r^2}\big)\pki = \wki, \quad\pki|_{r=1,\infty}=0,
    \end{cases}
\end{equation}
where we write $F_k=f_1+f_2$.

%Let's recall
%\begin{align*}
%   & E_k(T)=\|r^{1+\varepsilon}\La_k w_k\|_{\LoT(\Ltr)} + \mu_k^\frac{1}{2} \|r^{\varepsilon}\La_k w_k\|_{\LtT (\Ltr)}+ |kB|^{\f12} |k|^\f12 \Bigl(\Big\|\frac{\pa_r\varphi_k}{r^{1-\varepsilon}} \La_k\Big\|_{\LtT (\Ltr)} + |k|\Big\|\frac{\varphi_k}{r^{2-\varepsilon}}\La_k\Big\|_{\LtT (\Ltr)}\Bigr) ,\\
 %  &  \|U\|_{\mathcal{M}_k}= \|r^{2+\varepsilon} w_k(0) \La_k(0)\|_\Ltr+\|r^{3+\varepsilon}\partial_r w_k(0)\La_k(0)\|_\Ltr,
%\end{align*}
%where $\varepsilon\in(0,2)$, $T>0$ and
%\begin{align*}
 %   \kappa_k{\eqdefa}\nu^\f13 |kB|^\frac{2}{3},\quad   \mu_k=\max(\nu k^2, \kappa_k ),\quad
%\Lambda_k(t,r)=\log\big(\hat{C}r^2 + \nu^\f13|kB|^\f23 t\big).
%\end{align*}

The main result of Section \ref{Space-time estimates of the vorticity in nonzero modes} states as follows:

\begin{proposition}\label{nonlinear space-time estimates}
{\sl Let $k\in \mathbb{Z}\backslash \{0 \}$, $\varepsilon\in(0,2)$ let $w_k$ be the solution to \eqref{eq 5.1} and the
 energy functionals $M_k(0)$ and $ E_k(T)$ are given respectively by \eqref{S1eq1} and \eqref{S1eq2}. Then
   there exist constants $C_\varepsilon>0, \hc_0\gg1$  depending only on $\varepsilon$, so that for $\hc\geq \hc_0$ there holds
    \begin{equation}\label{ineq, nonlinear space-time estimates}
    \begin{split}
      E_k(T)\leq&  C_\varepsilon \Big(M_k(0)+\mu_k^{-\f12} \|r^{2+\varepsilon}f_1\La_k\|_{\LtT (\Ltr)} + \nu^{-\f12}\|r^{1+\varepsilon}f_2\La_k\|_{\LtT (\hkf)}\Big).
    \end{split}
    \end{equation}}
\end{proposition}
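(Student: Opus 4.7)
My plan is to carry out the splitting $w_k = w_{k,H} + w_{k,I}$ and $\varphi_k = \pkh + \pki$ from \eqref{homogenous eq}--\eqref{inhomogenous eq}, treat the two pieces separately, and combine. Throughout I would work with the weighted quantities $w_\star = \La_k w_k$, $\varphi_\star = \La_k \varphi_k$ and the modified system \eqref{eq *}; Properties A--C of $\La_k$ together with Lemma \ref{Basic calculation for La_k} ensure that the remainders $R^{w_\star}, R^{\varphi_\star}$ are dominated by $(\log\hc)^{-1}$ times the leading dissipative terms, provided $\hc$ is chosen sufficiently large.

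For the homogeneous piece I would perform a weighted energy estimate testing against $r^{2(\varepsilon+1)}\La_k^2 w_{k,H}$: Lemma \ref{Coercive estimates in general polynomial-weighted spaces} produces the viscous dissipation $\nu\|r^\varepsilon \La_k w_{k,H}\|_\hkz^2$, the transport term $ikB/r^2$ cancels in the real part, and the $\La_k$-commutators are absorbed via Property A. Integration in time controls $\|r^{\varepsilon+1}w_{k,H}\La_k\|_{\LoT(\Ltr)}$ by $\|r^{\varepsilon+1}w_k(0)\La_k(0,\cdot)\|_\Ltr \leq M_k(0)$. The enhanced $L^2_T$-piece $\mu_k^{1/2}\|r^\varepsilon w_{k,H}\La_k\|_{\LtT(\Ltr)}$ follows from Lemma \ref{main them-1 LZZ25} applied with $q$ large enough that the polynomial weight $(1+\kappa_k t/r^2)^q$ dominates $\La_k$, and the $\pkh$-pieces of $E_k(T)$ are then recovered from Lemma \ref{basic properties of ck} and the kernel bounds \eqref{estimate of cK}.

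For the inhomogeneous piece, the three $L^2_T$-ingredients of $E_k(T)$ are read off the time-dependent resolvent estimates of Section \ref{Resolvent estimates with time weight}. Writing $F_k = f_1 + f_2$, Proposition \ref{space-time resolvent L2-L2} with $\al = \varepsilon+1$ applied to the $f_1$-half yields $\mu_k^{1/2}\|r^\varepsilon \wki\La_k\|_{\LtT(\Ltr)} \lesssim \mu_k^{-1/2}\|r^{2+\varepsilon}f_1\La_k\|_{\LtT(\Ltr)}$; Proposition \ref{space-time resolvent,H-1 to H1} with the same $\al$ treats the $f_2$-half through the $\hkf$--$\hkz$ duality; and \eqref{ineq spcae-time resovent L2 to phia}--\eqref{ineq spcae-time resovent L2 to phib} of Proposition \ref{space-time resolvent L2 to phi} produce the $\pa_r\pki\La_k$ and $\pki\La_k$ contributions from the $f_1$- and $f_2$-halves, respectively.

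The remaining ingredient is $\|r^{\varepsilon+1}\wki\La_k\|_{\LoT(\Ltr)}$. For this I would integrate on $[0,t]$ the $\Ltr$-inner product of the $w_\star$-equation \eqref{eq *} against $r^{2(\varepsilon+1)}w_\star$: coercivity furnishes the viscous dissipation, transport cancels, the $R^{w_\star}$-commutators are absorbed via Property A and $\hc$ large, and the source is dualised as
\begin{align*}
\Bigl|\int_0^t \langle f_1 \La_k, r^{2(\varepsilon+1)} w_\star\rangle_\Ltr\, ds\Bigr| &\leq \|r^{2+\varepsilon}f_1\La_k\|_{L^2_t(\Ltr)}\|r^{\varepsilon} w_\star\|_{L^2_t(\Ltr)},\\
\Bigl|\int_0^t \langle f_2 \La_k, r^{2(\varepsilon+1)} w_\star\rangle_\Ltr\, ds\Bigr| &\leq \|r^{1+\varepsilon}f_2\La_k\|_{L^2_t(\hkf)}\|r^{1+\varepsilon} w_\star\|_{L^2_t(\hkz)}.
\end{align*}
Feeding in the previous-step bounds (and Proposition \ref{space-time resolvent,H-1 to H1} for the $\hkz$-factor), Young's inequality closes the estimate. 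The main obstacle lies in this last step: one must arrange the pairings so that each source-factor matches a norm supplied by Propositions \ref{space-time resolvent L2-L2}--\ref{space-time resolvent L2 to phi}, and the $\La_k$-commutators in $R^{w_\star}$ must be absorbed into the \emph{enhanced} dissipation $\mu_k\|r^\varepsilon w_\star\|_{\LtT(\Ltr)}^2$ rather than the weaker $\nu\|\cdot\|_\hkz^2$. This is precisely the role of Property A and the largeness of $\hc$, mirroring the calculation \eqref{4.6,a} used in Section \ref{Resolvent estimates with time weight}.
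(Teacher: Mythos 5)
There is a genuine gap, and it is concentrated in the homogeneous piece. Your treatment of the inhomogeneous piece is essentially right in spirit (Propositions \ref{space-time resolvent L2-L2}--\ref{space-time resolvent L2 to phi} applied to the $f_1$- and $f_2$-halves, plus a weighted $L^\infty_T$ energy estimate closed by Young's inequality against those same $L^2_T$ quantities), which does mirror the paper's Proposition \ref{the inhomogeneous equation with zero initial data}. But your proposal for the homogeneous part does not work, and this is where the real content of the proposition lies.

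First, a simple weighted energy estimate testing \eqref{homogenous eq} against $r^{2(\varepsilon+1)}\La_k^2\wkh$ cannot close. The skew-adjoint transport cancels, yes, but the commutator with $\partial_t\La_k$ produces, up to bounded factors,
\begin{equation*}
\Big\langle \wkh,\ r^{2(\varepsilon+1)}\,\partial_t\bigl(\La_k^2\bigr)\,\wkh\Big\rangle_\Ltr \ \sim\ \frac{\kappa_k}{\hc\log\hc}\,\|r^{\varepsilon}\wkh\La_k\|_\Ltr^2,
\end{equation*}
and you would need to absorb this into the viscous coercivity. The coercive term from Lemma \ref{Coercive estimates in general polynomial-weighted spaces} with $\al=\varepsilon+1$ is $\nu\bigl(k^2-(\varepsilon+1)^2\bigr)\|r^{\varepsilon}\wkh\|_\Ltr^2$, which is nonnegative only when $|k|\geq \varepsilon+1$, and is in any case $\cO(\nu k^2)$. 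Since $\kappa_k=\nu^{1/3}|kB|^{2/3}\gg\nu k^2$ for $|k|\lesssim (|B|/\nu)^{1/2}$, the absorption fails in precisely the low-frequency regime where enhanced dissipation is the whole point. This is also the message of Proposition \ref{decreasing energy estimates} and Proposition \ref{prop lower bound of w_0}: for weights $r^\al$ with $|\al|>|k|$, or with time-growing prefactors, the bare Laplacian-plus-skew-transport energy estimate is \emph{not} uniformly bounded, and the special Taylor--Couette structure $S(r)=kB/r^2$ has to be used beyond skew-adjointness.

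Second, your fallback of invoking Lemma \ref{main them-1 LZZ25} ``with $q$ large enough that $(1+\kappa_k t/r^2)^q$ dominates $\La_k$'' is not available: $(1+\kappa_k t/r^2)^q$ equals $1$ at $t=0$ (and remains $\cO(1)$ whenever $\kappa_k t\lesssim r^2$), whereas $\La_k(0,r)=\log(\hc r^2)$ grows without bound in $r$. No choice of $q$ gives a pointwise domination, and the lemma also controls things only in terms of $\|w(0)\|_\Ltr$, without the spatial weights $r^{\varepsilon+2},r^{\varepsilon+3}$ that $M_k(0)$ and $E_k(T)$ require. The paper's actual proof of Proposition \ref{estimate of homogeneous eqs} does something entirely different: it conjugates by the transport phase, writing $\wkh=e^{-ikBt/r^2}w_k^{(1)}+\wkh^{(2)}$ with $w_k^{(1)}$ solving \eqref{w-1}. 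This exposes the explicit enhanced dissipation term $\nu\bigl|2kBt/r^3\bigr|^2w_k^{(1)}$, which is exactly what absorbs the $\partial_t\La_k$ commutator (and the transport-residual terms driving $\wkh^{(2)}$), and a dyadic decomposition in $r$ together with the $Y_T$-functional of \eqref{S6eq5} recovers the weighted $L^\infty_T$ and $L^2_T$ norms. The piece $\wkh^{(2)}$ is then fed through the inhomogeneous machinery. This phase-decomposition is the missing idea; without it the homogeneous estimate, and hence Proposition \ref{nonlinear space-time estimates}, does not close.
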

\begin{proof}
    Proposition \ref{nonlinear space-time estimates} follows Propositions \ref{the inhomogeneous equation with zero initial data} and \ref{estimate of homogeneous eqs} below.
\end{proof}

\subsection{Space-time estimates of the inhomogeneous equation \eqref{inhomogenous eq}}\label{Space-time estimates of the inhomogeneous equation inhomogenous eq}

\begin{proposition}\label{the inhomogeneous equation with zero initial data}
  {\sl Let $k\in \mathbb{Z}\backslash \{0 \}$, $\varepsilon\in(0,2)$ and let $(\wki,\pki)$ solve \eqref{inhomogenous eq}. Then there exist constants $C_\varepsilon>0, \hc_0\gg1$ depending only on $\varepsilon$, so that for $\hc\geq \hc_0$,
\begin{equation}\label{ineq inhomogenous equations}
 \begin{split}
    &\|r^{1+\varepsilon}\wki \La_k\|_{\LoT(\Ltr)}+\nu^\frac{1}{2}\|r^{1+\varepsilon}\wki'\La_k\|_{\LtT (\Ltr) }+\mu_k^\frac{1}{2} \|r^{\varepsilon}\wki \La_k\|_{\LtT (\Ltr)}\\
    &\quad +|kB|^{\f12} |k|^\f12 \Bigl( \Big\|\frac{\pki'}{r^{1-\varepsilon}}\La_k\Big\|_{\LtT (\Ltr)}+|k|\Big\|\frac{\pki}{r^{2-\varepsilon}}\La_k\Big\|_{\LtT (\Ltr)} \Bigr)\\
    &\leq C_\varepsilon \Bigl(\mu_k^{-\f12} \|r^{2+\varepsilon}f_1\La_k\|_{\LtT (\Ltr)} + \nu^{-\f12}\|r^{1+\varepsilon}f_2\La_k\|_{\LtT (\hkf)}\Bigr).
 \end{split}
\end{equation}}
\end{proposition}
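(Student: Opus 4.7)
The plan is to split the forcing $f_1 + f_2$ and apply the space-time resolvent estimates of Section~\ref{Resolvent estimates with time weight} to each piece separately, then upgrade the resulting $\LtT(\Ltr)$ bounds to the required $\LoT(\Ltr)$ bound via a weighted energy identity. By linearity, decompose $\wki = W^{(1)} + W^{(2)}$ and $\pki = \Phi^{(1)} + \Phi^{(2)}$, where $W^{(j)}$ solves $\pa_t W^{(j)} + \mathcal{T}_k W^{(j)} = f_j$ with zero initial and Dirichlet data and $\Phi^{(j)}$ is its stream function; set $W_\star^{(j)} \eqdefa \La_k W^{(j)}$, $\Phi_\star^{(j)} \eqdefa \La_k \Phi^{(j)}$. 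Applying Proposition~\ref{space-time resolvent L2-L2} to $W^{(1)}$ with $\al = 1+\varepsilon$ and dividing by $\mu_k^{1/2}$ yields
\[
\nu^{1/2}\|r^{1+\varepsilon}(W_\star^{(1)})'\|_{\LtT(\Ltr)} + \mu_k^{1/2}\|r^\varepsilon W_\star^{(1)}\|_{\LtT(\Ltr)} \lesssim \mu_k^{-1/2}\|r^{2+\varepsilon}\La_k f_1\|_{\LtT(\Ltr)},
\]
and Proposition~\ref{space-time resolvent L2 to phi}\eqref{ineq spcae-time resovent L2 to phia} controls the matching $\Phi_\star^{(1)}$-terms with the same prefactor. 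Applying Proposition~\ref{space-time resolvent,H-1 to H1} to $W^{(2)}$ with $\al = 1+\varepsilon$ and dividing by $\nu^{1/2}$ gives
\[
\nu^{1/2}\|r^{1+\varepsilon} W_\star^{(2)}\|_{\LtT(\hkz)} + \mu_k^{1/2}\|r^\varepsilon W_\star^{(2)}\|_{\LtT(\Ltr)} \lesssim \nu^{-1/2}\|r^{1+\varepsilon}\La_k f_2\|_{\LtT(\hkf)},
\]
and Proposition~\ref{space-time resolvent L2 to phi}\eqref{ineq spcae-time resovent L2 to phib} supplies the companion $\Phi_\star^{(2)}$-bound. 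Summation covers every term on the left of \eqref{ineq inhomogenous equations} except the $\LoT(\Ltr)$ norm of $r^{1+\varepsilon} w_\star$.

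To recover the $\LoT$ piece, test the weighted equation \eqref{eq *} for $w_\star = \La_k \wki$ against $r^{2(1+\varepsilon)} \bar{w}_\star$ in $\Ltr$, take the real part, and integrate on $[0,t]$. The $\frac{ikB}{r^2}$ contribution vanishes after taking the real part, and Lemma~\ref{Coercive estimates in general polynomial-weighted spaces}\eqref{weighted trivial w'} unfolds $\Re\braket{\mathcal{T}_k w_\star, r^{2(1+\varepsilon)} w_\star}_\Ltr$ into a non-negative $\nu\|r^{1+\varepsilon}w_\star'\|^2_\Ltr$ plus a (possibly negative) lower-order multiple of $\nu\|r^\varepsilon w_\star\|^2_\Ltr$. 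Since $w_\star(0)=0$, the forcing pairings are dualized according to the splitting: $\braket{\La_k f_1,\, r^{2(1+\varepsilon)} w_\star}_\Ltr$ is handled by Cauchy--Schwarz against $\|r^{2+\varepsilon}\La_k f_1\|_\Ltr \cdot \|r^\varepsilon w_\star\|_\Ltr$, while $\braket{\La_k f_2,\, r^{2(1+\varepsilon)} w_\star}_\Ltr = \braket{r^{1+\varepsilon}\La_k f_2,\, r^{1+\varepsilon} w_\star}_\Ltr$ is bounded via \eqref{equivalent definition of weighted Hkf} by $\|r^{1+\varepsilon}\La_k f_2\|_\hkf \cdot \|r^{1+\varepsilon} w_\star\|_\hkz$. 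Cauchy--Schwarz in time then reduces both contributions to $\LtT$ quantities already controlled in the previous step (noting that $\|r^{1+\varepsilon} w_\star\|_{\LtT(\hkz)}$ is dominated by $\|r^{1+\varepsilon}\pa_r w_\star\|_{\LtT(\Ltr)} + |k|\|r^\varepsilon w_\star\|_{\LtT(\Ltr)}$, and $|k|^2/\mu_k \leq 1/\nu$). The remainder $R^{w_\star}$ carries a factor $(\log\hc)^{-1}$ by the basic calculation for $\La_k$, and is absorbed by taking $\hc$ sufficiently large.

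The main technical difficulty lies in the coercivity shortfall of $\mathcal{T}_k$ at the weighted level when $|k| = 1$ and $\al = 1+\varepsilon$: the coefficient $k^2 - 1/4 - 2\al^2 + \al$ in \eqref{weighted trivial w'} may be negative, so the bilinear form by itself does not dominate $\|r^\varepsilon w_\star\|_\Ltr^2$. However, moved to the right-hand side and integrated in time, this defect is of order $\nu\|r^\varepsilon w_\star\|_{\LtT(\Ltr)}^2$, which is absorbed by the $\mu_k\|r^\varepsilon w_\star\|_{\LtT(\Ltr)}^2$ already secured (using $\nu \leq \mu_k$). A parallel concern is that the $f_2$-pairing in the energy identity needs the $\hkz$ norm of $r^{1+\varepsilon} w_\star$; the spatial weight splits symmetrically onto the two sides of the duality \eqref{equivalent definition of weighted Hkf}, and the resulting quantity is precisely what Proposition~\ref{space-time resolvent,H-1 to H1} delivers for $W^{(2)}$. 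Once these two points are in place, collecting all bounds and extracting square roots via $(A+B)^{1/2}\leq A^{1/2}+B^{1/2}$ closes \eqref{ineq inhomogenous equations}.
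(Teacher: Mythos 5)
Your proof is correct and essentially reproduces the paper's argument: both derive the $\LtT$-type bounds by applying Propositions \ref{space-time resolvent L2-L2}--\ref{space-time resolvent L2 to phi} to the two forcing pieces (you make the linear splitting $\wki = W^{(1)}+W^{(2)}$ explicit, the paper leaves it implicit), then close the $\LoT$ estimate via a weighted energy identity, with commutators absorbed for $\hc\gg1$ and the coercivity deficit in \eqref{weighted trivial w'} absorbed via $\nu\leq\mu_k$. The only cosmetic difference is that the paper tests the original $\wki$-equation against $r^{2+2\varepsilon}\La_k^2\wki$ rather than the conjugated $w_\star$-equation against $r^{2+2\varepsilon}\bar{w}_\star$; the two computations are equivalent and produce the same commutator terms.
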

\begin{proof}
We first get, by applying Propositions \ref{space-time resolvent L2-L2}-\ref{space-time resolvent L2 to phi} and Lemma \ref{Basic calculation for La_k}, that
\begin{equation}
\begin{split}\label{ineq 6.5}
     &\nu^\frac{1}{2}\|r^{1+\varepsilon}\wki'\La_k\|_{\LtT (\Ltr) }+\mu_k^\frac{1}{2} \|r^{\varepsilon}\wki \La_k\|_{\LtT (\Ltr)}\\
    &\quad +|kB|^{\f12} |k|^\f12 \Bigl( \Big\|\frac{\pki'}{r^{1-\varepsilon}}\La_k\Big\|_{\LtT (\Ltr)}+|k|\Big\|\frac{\pki}{r^{2-\varepsilon}}\La_k\Big\|_{\LtT (\Ltr)} \Bigr)\\
    &\lesssim_\varepsilon \mu_k^{-\f12} \|r^{2+\varepsilon}f_1\La_k\|_{\LtT (\Ltr)} + \nu^{-\f12}\|r^{1+\varepsilon}f_2\La_k\|_{\LtT (\hkf)}.
\end{split}
\end{equation}

Whereas we get, by taking
the real part of the $\Ltr$ inner product of \eqref{inhomogenous eq} with $r^{2+2\varepsilon}\La_k^2 \wki$, and then using integration by parts and Lemma \ref{Basic calculation for La_k}, that
\begin{equation*}
    \begin{split}
    \frac{d}{dt}\|r^{1+\varepsilon}\wki \La_k\|_{ \Ltr}^2 &\lesssim  \big|\braket{\wki,r^{2+2\varepsilon}\wki \pa_t \La_k^2 }_\Ltr\big| + \nu \big| \braket{ \wki, r^{2+2\varepsilon}\wki \pa_r^2 \La_k^2 }_\Ltr\big| \\
    &\quad + \|r^{2+\varepsilon}f_1\La_k\|_\Ltr \|r^{\varepsilon}\wki \La_k\|_{\Ltr} + \|r^{1+\varepsilon}f_2\La_k\|_\hkf \|r^{1+\varepsilon}\wki \La_k\|_{\hkz}\\
    &\lesssim \nu\|r^{1+\varepsilon}\wki'\La_k\|_{ \Ltr }^2 + \mu_k \|r^{\varepsilon}\wki \La_k\|_{\Ltr}^2 \\
    &\quad + \mu_k^{-1} \|r^{2+\varepsilon}f_1\La_k\|_{ \Ltr}^2+ \nu^{-1}\|r^{1+\varepsilon}f_2\La_k\|_{ \hkf}^2.
    \end{split}
\end{equation*}
By integrating the above inequality over $[0,T]$ and  using \eqref{ineq 6.5}, we achieve \eqref{ineq inhomogenous equations}.
\end{proof}

\subsection{Space-time estimates of the homogeneous equation \eqref{homogenous eq}}\label{Space-time estimates of the homogeneous equation homogenous eq}In this subsection, we present the space-time estimates of the solution to the homogeneous linear equation \eqref{homogenous eq}.
The main idea will be based on a decomposition of $\wkh$, which was already used in \cite{LZZ-25} to derive  linear enhanced dissipation around TC flow. For reader's convenience, we shall explain the motivation of the decomposition. Indeed if $\nu=0$ in  \eqref{homogenous eq}, the
 corresponding solution of  \eqref{homogenous eq} can be expressed as a rotation of the initial data, i.e. $e^{-i\f{kB}{r^2}t}w_k(0)$.
 This motivates us to consider the solution $\wkh$ may be in the vicinity of a solution similar to $e^{-i\f{kB}{r^2}t}w_k(0)$.
And we decompose the solution $\wkh$ into
\begin{align}\label{S6eq1}
\wkh=\wkh^{(1)}+\wkh^{(2)} \quad \text{ with }\ \wkh^{(1)}{\eqdefa}e^{-i\f{kB}{r^2}t}w_k^{(1)}.
\end{align}
Then in view of \eqref{homogenous eq}, we find
\begin{align*}
&e^{-i\f{kB}{r^2}t}\partial_tw_k^{(1)}-\nu\Big(\partial_r^2-\f{k^2-\frac{1}{4}}{r^2}\Big)\big(e^{-i\f{kB}{r^2}t}w_k^{(1)}\big)\\
&=-\Big(\partial_t\wkh^{(2)}-\nu\Big(\partial_r^2-\f{k^2-\f14}{r^2}\Big)\wkh^{(2)}+\f{ikB}{r^2}\wkh^{(2)}\Big).
\end{align*}
It is easy to observe that
\begin{align*}
    &-\nu\Big(\partial_r^2-\f{k^2-\frac{1}{4}}{r^2}\Big)\big(e^{-i\f{kB}{r^2}t}w_k^{(1)}\big)\\
    &=-e^{-i\f{kB}{r^2}t}\nu\bigg(\Big(\partial_r^2-\f{k^2-\frac{1}{4}}{r^2}\Big)w_k^{(1)}-\Big|\f{2kBt}{r^3}\Big|^2w_k^{(1)}
    +\f{4ikBt}{r^3}\partial_rw_k^{(1)}
    -\f{6ikBt}{r^4}w_k^{(1)}\bigg).
\end{align*}
As a consequence, we obtain
\begin{align*}
&e^{-i\f{kB}{r^2}t}\bigg(\partial_tw_k^{(1)}-\nu\Big(\partial_r^2-\f{k^2-\f14}{r^2}\Big)w_k^{(1)}+\nu\Big|\f{2kBt}{r^3}\Big|^2w_k^{(1)}\bigg)\\
&=-\bigg(\partial_t\wkh^{(2)}-\nu\Big(\partial_r^2-\f{k^2-\f14}{r^2}\Big)\wkh^{(2)}+\f{ikB}{r^2}\wkh^{(2)}\bigg)\\
&\quad+e^{-i\f{kB}{r^2}t}\nu\Big(\f{4ikBt}{r^3}\partial_rw_k^{(1)}-\f{6ikBt}{r^4}w_k^{(1)}\Big),
\end{align*}
where the  term
\begin{align*}
    -\nu\bigg(\Big(\partial_r^2-\f{k^2-\frac{1}{4}}{r^2}\Big)w_k^{(1)}-\Big|\f{2kBt}{r^3}\Big|^2w_k^{(1)}\bigg)
\end{align*}
is what we refer to as the enhanced dissipation term.

The computation motivates us to construct  $w_k^{(1)}$ and $\wkh^{(2)}$ in \eqref{S6eq1} through
\begin{align}
\left\{
    \begin{aligned}
\label{w-1}&\partial_tw_k^{(1)}-\nu\Big(\partial_r^2-\f{k^2+\Theta^2}{r^2}\Big)w_k^{(1)}+\nu\Big|\f{2kBt}{r^3}\Big|^2w_k^{(1)}=0,\\
&w_k^{(1)}|_{t=0}=w_k(0),\quad w_k^{(1)}|_{r=1,+\oo}=0,\\
& \Big(\partial_r^2-\f{k^2-\frac{1}{4}}{r^2}\Big)\varphi_{k,H}^{(1)} = e^{-i\f{kB}{r^2}t}w_k^{(1)},\quad\varphi_{k,H}^{(1)}|_{r=1,+\oo}=0,
\end{aligned}
\right.
\end{align}
and
\begin{align}
\left\{
    \begin{aligned}
\label{w-2}&\partial_t\wkh^{(2)}-\nu\Big(\partial_r^2-\f{k^2-\f14}{r^2}\Big)\wkh^{(2)}+\f{ikB}{r^2}\wkh^{(2)}\\
&\qquad =e^{-i\f{kB}{r^2}t}\nu\Big(i\f{4kBt}{r^3}\partial_rw_k^{(1)}-i\f{6kBt}{r^4}w_k^{(1)}+ \frac{4\cdot\Theta^2+1}{4r^2}w_k^{(1)}\Big),\\
&\wkh^{(2)}|_{t=0}=0,\quad \wkh^{(2)}|_{r=1,+\oo}=0,\\
& \Big(\partial_r^2-\f{k^2-\f14}{r^2}\Big)\varphi_{k,H}^{(2)} = \wkh^{(2)},\quad\varphi_{k,H}^{(2)}|_{r=1,+\oo}=0.
\end{aligned}
\right.
\end{align}
Here $\Theta\gg 1 $ is a fixed constant, which we shall take
 $\Theta\geq 10^9$ in the rest of this section.

\begin{proposition}\label{estimate of homogeneous eqs}
{\sl Let $(\wkh,\pkh)$ solve the homogeneous equation \eqref{homogenous eq}.  Then
 for any $k\in\Z\backslash\{0\}$, $\varepsilon \in (0,2)$, $T>0,$ there exist a constant $C_{\varepsilon}>0$ depending only on $\varepsilon$, so that
    \begin{equation}\label{S6eq3}
    \begin{split}
     & \|r^{1+\varepsilon}\wkh\La_k\|_{\LoT(\Ltr)}+ \nu^\f12 \|r^{1+\varepsilon}\pa_r\wkh\La_k\|_{\LtT (\Ltr)} +  \mu_k^\frac{1}{2} \|r^{\varepsilon} \wkh\La_k\|_{\LtT (\Ltr)} \\
     &\qquad + |kB|^{\f12} |k|^\f12 \Bigl(\Big\| \frac{\pa_r\pkh}{r^{1-\varepsilon}}\La_k\Big\|_{\LtT (\Ltr)} + |k|\Big\| \frac{\pkh}{r^{2-\varepsilon}}\La_k\Big\|_{\LtT (\Ltr)}\Bigr) \leq C_\varepsilon M_k(0).
    \end{split}
    \end{equation}}
\end{proposition}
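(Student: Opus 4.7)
The plan is to use the decomposition $\wkh = \wkh^{(1)} + \wkh^{(2)}$ of \eqref{S6eq1}, reducing the analysis of \eqref{homogenous eq} to (i) purely diffusive weighted estimates for $w_k^{(1)}$ defined by \eqref{w-1} (in which the imaginary transport $ikB/r^2$ has been transmuted into the real, positive extra dissipation $\nu|2kBt/r^3|^2$ at the price of an enlarged potential $\nu(k^2+\Theta^2)/r^2$), and (ii) an application of Proposition \ref{the inhomogeneous equation with zero initial data} to the perturbed equation \eqref{w-2} for $\wkh^{(2)}$. Since $|\wkh^{(1)}|=|w_k^{(1)}|$, bounds on $w_k^{(1)}$ transfer directly to $\wkh^{(1)}$ up to phase-derivative corrections absorbed by the enhanced dissipation.

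For (i), I would test \eqref{w-1} against two weighted multipliers: first $r^{2+2\varepsilon}\La_k^2 w_k^{(1)}$ to obtain
\begin{align*}
&\|r^{1+\varepsilon} w_k^{(1)}\La_k\|_{\LoT(\Ltr)} + \nu^{\f12}\|r^{1+\varepsilon}\pa_r w_k^{(1)}\La_k\|_{\LtT(\Ltr)} + \mu_k^{\f12}\|r^{\varepsilon} w_k^{(1)}\La_k\|_{\LtT(\Ltr)}\\
&\qquad + \nu^{\f12}\bigl\|\tfrac{kBt}{r^{2-\varepsilon}}w_k^{(1)}\La_k\bigr\|_{\LtT(\Ltr)} \lesssim_\varepsilon \|r^{2+\varepsilon}w_k(0)\La_k(0,\cdot)\|_\Ltr,
\end{align*}
and second either the multiplier $r^{4+2\varepsilon}\La_k^2 w_k^{(1)}$ or the $\pa_r$-differentiated version of \eqref{w-1} to obtain analogous higher-weight bounds controlled by $\|r^{3+\varepsilon}\pa_r w_k(0)\La_k(0,\cdot)\|_\Ltr$. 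Three ingredients make these estimates close: the large parameter $\Theta\geq 10^9$ in \eqref{w-1} furnishes coercivity uniformly in the weight exponent via Lemma \ref{Coercive estimates in general polynomial-weighted spaces}; the commutators $[\pa_t,\La_k^2]$ and $[\pa_r^2,\La_k^2]$ are absorbed by a factor of $1/\log\hc$ coming from property A of $\La_k$ together with Lemma \ref{Basic calculation for La_k}; and the absence of an imaginary transport term makes the real part of the energy identity clean.

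For (ii), I would split the forcing of \eqref{w-2} as $f_1+f_2$ adapted to Proposition \ref{the inhomogeneous equation with zero initial data}. The two non-derivative contributions $-\nu\tfrac{6ikBt}{r^4}w_k^{(1)}$ and $\nu\tfrac{4\Theta^2+1}{4r^2}w_k^{(1)}$ (times the unit-modulus phase) are placed in $f_1$; their $\mu_k^{-\f12}\|r^{2+\varepsilon}\cdot\La_k\|_{\LtT(\Ltr)}$ norms are controlled by the enhanced-dissipation bound and the $\mu_k^{\f12}\|r^\varepsilon\cdot\La_k\|_{\LtT(\Ltr)}$ bound from step~(i), together with $\nu\leq\mu_k$. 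The derivative contribution $\nu e^{-ikBt/r^2}\tfrac{4ikBt}{r^3}\pa_r w_k^{(1)}$ is instead rewritten via Leibniz as $\pa_r\bigl(\nu e^{-ikBt/r^2}\tfrac{4ikBt}{r^3}w_k^{(1)}\bigr)$ plus a non-derivative remainder, so that its leading part sits naturally as $f_2 = \pa_r g_1$ in the $\hkf$-type norm; the bound $\nu^{-\f12}\|r^{1+\varepsilon}f_2\La_k\|_{\LtT(\hkf)} \lesssim_\varepsilon M_k(0)$ then follows from $\nu^{\f12}\|\tfrac{kBt}{r^{2-\varepsilon}}w_k^{(1)}\La_k\|_{\LtT(\Ltr)} \lesssim M_k(0)$ established in step~(i), while the Leibniz remainder returns to $f_1$. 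Assembling the bounds on $\wkh^{(1)}$ and $\wkh^{(2)}$ yields the vorticity half of \eqref{S6eq3}; the stream-function half follows from the Biot--Savart representation \eqref{expression of phi_k} together with Lemma \ref{basic properties of ck} and property C of $\La_k$, exactly as in Proposition \ref{space-time resolvent L2 to phi}.

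The principal obstacle I anticipate is the second, $\pa_r w_k^{(1)}$-level energy estimate: differentiating \eqref{w-1} introduces a commutator with the $t$-growing enhanced dissipation coefficient $\nu(2kBt/r^3)^2$, producing terms of the form $\nu(kBt)^2 r^{-7}w_k^{(1)}$ that must be reabsorbed by the enhanced dissipation at the appropriate weight, rather than generating a loss. This is the structural reason both $\|r^{\varepsilon+2}w_k(0)\La_k(0,\cdot)\|_\Ltr$ and $\|r^{\varepsilon+3}\pa_r w_k(0)\La_k(0,\cdot)\|_\Ltr$ appear in $M_k(0)$, and its correct handling is what pins down the logarithmic structure $\La_k = \log(\hc r^2 + \kappa_k t)$ at the scale $\kappa_k = \nu^{\f13}|kB|^{\f23}$.
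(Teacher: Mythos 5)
Your overall framework matches the paper's: the decomposition $\wkh=\wkh^{(1)}+\wkh^{(2)}$, the weighted energy estimates for $w_k^{(1)}$ using the positive enhanced-dissipation potential $\nu|2kBt/r^3|^2$, and the application of Proposition~\ref{the inhomogeneous equation with zero initial data} to \eqref{w-2}. However, your treatment of the derivative term $\nu e^{-ikBt/r^2}\tfrac{4ikBt}{r^3}\pa_r w_k^{(1)}$ in step~(ii) has a genuine gap. The Leibniz rewrite produces, from $\pa_r\bigl(e^{-ikBt/r^2}\bigr)=\tfrac{2ikBt}{r^3}e^{-ikBt/r^2}$, the remainder term $\nu\tfrac{8(kBt)^2}{r^6}e^{-ikBt/r^2}w_k^{(1)}$, which is \emph{quadratic} in $kBt$. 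After multiplying by $r^{2+\varepsilon}$ you would need to bound $\mu_k^{-\f12}\nu\bigl\|\tfrac{(kBt)^2}{r^{4-\varepsilon}}w_k^{(1)}\La_k\bigr\|_{\LtT(\Ltr)}$, but the enhanced-dissipation bound you quote from step~(i) — namely $\kappa_k^{\f12}\bigl\|\tfrac{\kappa_k t}{r^{3-\al}}w_k^{(1)}\La_k\bigr\|_{\LtT(\Ltr)}\lesssim \|r^{\al}w_k(0)\La_k(0)\|_\Ltr$ — is linear in $t$. Using $\nu|kB|^2=\kappa_k^3$, the remainder equals $\mu_k^{-\f12}\kappa_k\bigl\|\tfrac{(\kappa_k t)^2}{r^{4-\varepsilon}}w_k^{(1)}\La_k\bigr\|_{\LtT(\Ltr)}$, and no choice of $\al\in[0,4]$ delivers two powers of $\kappa_k t$ inside the weighted $L^2$ norm; nor is $\kappa_k t/r^2$ uniformly bounded in time. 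The paper avoids this entirely by leaving the $\pa_r w_k^{(1)}$ piece in $f_1$ and invoking the $\pa_r$-differentiated estimate Proposition~\ref{prop estimate of pa_r w^{(1)}} (the quantity you sketched in step~(i) but did not actually deploy in step~(ii)): $\kappa_k^{\f12}\bigl\|\tfrac{\kappa_k t}{r^{1-\varepsilon}}\pa_r w_k^{(1)}\La_k\bigr\|_{\LtT(\Ltr)}\lesssim \|r^{2+\varepsilon}\pa_r w_k(0)\La_k(0)\|_\Ltr+\|r^{1+\varepsilon}w_k(0)\La_k(0)\|_\Ltr$. Since $\kappa_k^{\f32}=\nu^{\f12}|kB|$ this gives $\mu_k^{-\f12}\nu\bigl\|\tfrac{|kB|t}{r^{1-\varepsilon}}\pa_r w_k^{(1)}\La_k\bigr\|\lesssim (\nu/\mu_k)^{\f12}M_k(0)$ with no quadratic remainder at all. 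In Proposition~\ref{prop estimate of pa_r w^{(1)}} the $(kBt)^2$ factors do appear, as you anticipate, but there they are absorbed by Young's inequality against the $\nu|kBt|^2\|r^{-1}\pa_r\eta_j\|_\Ltr^2$ term appearing coercively on the left; they never need to be bounded as forcing, which would require a $t^4$-integrable bound that does not exist.

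A secondary oversight: you claim the stream-function half of \eqref{S6eq3} follows ``from the Biot--Savart representation \ldots exactly as in Proposition~\ref{space-time resolvent L2 to phi}.'' This is correct for $\pkh^{(2)}$ (it is built into Proposition~\ref{the inhomogeneous equation with zero initial data}), but it fails for $\pkh^{(1)}$. The direct elliptic estimate gives $|kB|^{\f12}|k|^{\f32}\|r^{-2+\varepsilon}\pkh^{(1)}\La_k\|_{\LtT(\Ltr)}\lesssim_\varepsilon |kB|^{\f12}|k|^{-\f12}\mu_k^{-\f12}\,\|r^{1+\varepsilon}w_k(0)\La_k(0)\|_\Ltr$, and the prefactor $|kB|^{\f12}|k|^{-\f12}\mu_k^{-\f12}$ is of size $\nu^{-\f16}|B|^{\f16}|k|^{-\f13}\gg 1$, so the bound does not close. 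One must exploit the oscillatory phase $e^{-ikB t/r^2}$ via integration by parts in $s$ to generate the extra $1/|kBt|$ decay — this is exactly the inviscid-damping estimate Proposition~\ref{estimate of homogeneous eqs-inviscid damping}, which the paper uses as a separate ingredient. Both pieces — the $\pa_r$-level enhanced-dissipation estimate for the forcing and the inviscid-damping estimate for $\pkh^{(1)}$ — are essential and cannot be replaced by the shortcuts you propose.
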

\begin{proof} In view of \eqref{w-2}, we get,
by applying Proposition \ref{the inhomogeneous equation with zero initial data} and Propositions \ref{estimate of homogeneous eqs without log loss} and \ref{prop estimate of pa_r w^(1)} below, that
\begin{align*}
& \big\|r^{1+\varepsilon} \wkh^{(2)}\La_k\big\|_{\LoT(\Ltr)}+ \nu^\f12 \big\|r^{1+\varepsilon} \pa_r\wkh^{(2)}\La_k\big\|_{\LtT (\Ltr)}+ \mu_k^\frac{1}{2} \big\|r^{\varepsilon} \wkh^{(2)}\La_k\big\|_{\LtT (\Ltr)} \\
     &\quad + |kB|^{\f12} |k|^\f12 \Bigl(\Big\|  \frac{\pa_r\pkh^{(2)}}{r^{1-\varepsilon}}\La_k\Big\|_{\LtT (\Ltr)} + |k|\Big\| \frac{\pkh^{(2)}}{r^{2-\varepsilon}}\La_k\Big\|_{\LtT (\Ltr)}\Bigr) \\
& \lesssim_\varepsilon \mu_k^{-\f12} \nu \Bigl( \Big\|\frac{kBt}{r^{1-\varepsilon}} \pa_r w_k^{(1)}\La_k\Big\|_{\LtT (\Ltr)} +\Big\|\frac{kBt}{r^{2-\varepsilon}}  w_k^{(1)}\La_k\Big\|_{\LtT (\Ltr)} + \big\|r^{\varepsilon} w_k^{(1)}\La_k\big\|_{\LtT (\Ltr)} \Bigr) \\
&\lesssim_\varepsilon \big((\nu/\mu_k)^\f12 +(\nu/\mu_k)\big) \big( \|r^{1+\varepsilon} w_k(0)\La_k(0)\|_{\Ltr}+\|r^{2+\varepsilon} \partial_r w_k(0)\La_k(0)\|_{\Ltr} \big)\\
&\lesssim_\varepsilon  \|r^{2+\varepsilon} w_k(0)\La_k(0)\|_{\Ltr}+\|r^{3+\varepsilon} \partial_r w_k(0)\La_k(0)\|_{\Ltr},
    \end{align*}%\label{estimate of w_{k,H}^2}
which together with \eqref{eqs-with log weight}, \eqref{eqs-with log weight,derivative} and \eqref{S6eq2} ensures \eqref{S6eq3}.
 We thus complete the proof of Proposition \ref{estimate of homogeneous eqs}.
\end{proof}

\begin{proposition}\label{estimate of homogeneous eqs without log loss}
{\sl Let $k \in \Z\backslash\{0\}$, $0\leq \alpha \leq 4$, $\hc\gg1 $ and  $w_k^{(1)}$ solve \eqref{w-1}. Then there exists a constant $C>0,$
which is independent of $\nu,k,B,\hc,T,\alpha$, so that
\begin{equation}
    \label{eqs-with log weight}\begin{split}
&\big\|r^{\al}w_k^{(1)}\La_k\big\|_{\LoT(\Ltr)}^2 + \nu \big\|r^{\al}\partial_r w_k^{(1)}\La_k \big\|_{\LtT (\Ltr)}^2+ \mu_k \big\|r^{\al-1}w_k^{(1)}\La_k\big\|_{\LtT (\Ltr)}^2\\
&+ \kappa_k\Big\|\frac{\kappa_k t}{r^{3-\al}} w_k^{(1)}\La_k\Big\|_{\LtT (\Ltr)}^2
\leq C\|r^{\alpha} w_k(0)\La_k(0)\|_{\Ltr}^2.
\end{split}
\end{equation}}
\end{proposition}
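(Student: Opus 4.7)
The plan is to run a weighted energy estimate on equation \eqref{w-1} with weight $r^{2\al}\La_k^2$. Taking the real part of the $\Ltr$ inner product of \eqref{w-1} with $r^{2\al}\La_k^2\overline{w_k^{(1)}}$, the time derivative gives $\tfrac{1}{2}\tfrac{d}{dt}\|r^\al \La_k w_k^{(1)}\|_\Ltr^2$ minus a commutator involving $\pa_t \La_k$. Two integrations by parts against the viscous operator (following the proof of Lemma~\ref{Coercive estimates in general polynomial-weighted spaces} but retaining the $\La_k^2$ weight) produce $\nu \|r^\al \La_k \pa_r w_k^{(1)}\|_\Ltr^2 + \nu(k^2+\Theta^2-\tfrac{1}{4}-2\al^2+\al)\|r^{\al-1}\La_k w_k^{(1)}\|_\Ltr^2$, up to commutators involving $\pa_r \La_k$ and $\pa_r^2\La_k$. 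The dissipative enhancement $\nu|2kBt/r^3|^2$ pairs exactly, via the identity $\nu(kB)^2=\kappa_k^3$, into $4\kappa_k\|\kappa_k t\, r^{\al-3}\La_k w_k^{(1)}\|_\Ltr^2$. For $0\leq \al\leq 4$ and $\Theta\geq 10^9$ the polynomial coefficient $k^2+\Theta^2-\tfrac{1}{4}-2\al^2+\al$ is comparable to $k^2+\Theta^2$, yielding honest coercivity at level $\nu(k^2+\Theta^2)$ in front of $\|r^{\al-1}\La_k w_k^{(1)}\|_\Ltr^2$.

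Property~A of $\La_k$ combined with Lemma~\ref{Basic calculation for La_k} gives $|\pa_t\La_k/\La_k|\leq \mu_k/(r^2\log\hc)$, $r|\pa_r\La_k/\La_k|\leq 1/\log\hc$ and $r^2|\pa_r^2\La_k/\La_k|\leq 1/\log\hc$, so each of the $\La_k$-commutators is $(C/\log\hc)$ times one of the main dissipative terms (the $\pa_t\La_k$ commutator is of the form $(\mu_k/\log\hc)\|r^{\al-1}\La_k w_k^{(1)}\|_\Ltr^2$, which must be absorbed against the eventual $\mu_k\|r^{\al-1}\La_k w_k^{(1)}\|_\Ltr^2$ output). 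Choosing $\hc$ sufficiently large absorbs all commutators. Integrating in $t\in[0,T]$ then yields the desired inequality, but with $\nu(k^2+\Theta^2)$ in place of $\mu_k$ in front of the $\|r^{\al-1}\La_k w_k^{(1)}\|_{\LtT(\Ltr)}^2$ term.

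To upgrade the coefficient $\nu(k^2+\Theta^2)$ to $\mu_k=\max(\nu k^2,\kappa_k)$, the case $\nu k^2\geq \kappa_k$ is immediate. In the remaining regime $\mu_k=\kappa_k$ I split at the moving scale $r_*(t)=(\kappa_k t)^{1/2}$: on $\{r\leq r_*\}$ the pointwise inequality $r^{2\al-2}\leq (\kappa_k t)^2 r^{2\al-6}$ gives $\kappa_k\|r^{\al-1}\La_k w_k^{(1)}\mathbbm{1}_{r\leq r_*}\|_{\LtT(\Ltr)}^2\leq \kappa_k\|\kappa_k t\, r^{\al-3}\La_k w_k^{(1)}\|_{\LtT(\Ltr)}^2$, which is already controlled by the enhancement term above. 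For the short-time piece $\{t\leq \kappa_k^{-1}\}$ the monotonicity of $\|r^\al \La_k w_k^{(1)}\|_\Ltr^2$ in $t$ (obtained by discarding the positive dissipative terms in the energy identity) yields $\kappa_k\int_0^{\kappa_k^{-1}}\|r^{\al-1}\La_k w_k^{(1)}\|_\Ltr^2 \, dt\leq \|r^\al \La_k(0) w_k(0)\|_\Ltr^2$. The main obstacle is the far-field/long-time region $\{r\geq r_*,\, t\geq \kappa_k^{-1}\}$: a direct one-dimensional Hardy/Sobolev trace produces a logarithmically divergent time integral, and closing it requires exploiting the matching of the viscous control on $\pa_r(r^\al\La_k w_k^{(1)})$ with the enhancement weight at the cutoff $r=r_*$, or equivalently a suitable interpolation between $\|r^\al\La_k w_k^{(1)}\|_{\LoT(\Ltr)}$ and $\|\kappa_k t\, r^{\al-3}\La_k w_k^{(1)}\|_{\LtT(\Ltr)}$ exploiting the $\Theta^2$-enlargement of \eqref{w-1} to afford a large absorbing constant.
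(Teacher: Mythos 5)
Your direct weighted energy estimate on \eqref{w-1} is sound as far as it goes, and you correctly isolate the critical obstruction: the direct approach yields the coefficient $\nu(k^2+\Theta^2)$ rather than $\mu_k$, the region $\{r^2\leq\kappa_k t\}$ and the short-time piece $\{t\leq\kappa_k^{-1}\}$ are handled as you describe, and what remains unclosed is $\kappa_k\iint_{\{r^2\geq\kappa_k t,\ t\geq\kappa_k^{-1}\}}r^{2\al-2}\La_k^2|w_k^{(1)}|^2$, which a crude pass gives only as $\log(\kappa_k T)\,\|r^\al\La_kw_k^{(1)}\|_{\LoT(\Ltr)}^2$. This is a genuine gap, and the two fixes you gesture at do not close it. Interpolating the viscous rate $\nu\Theta^2/r^2$ against the enhancement rate $\kappa_k^3t^2/r^4$ cannot produce the target $\kappa_k/r^2$ on that region: $\nu\Theta^2$ is independent of $B$ while $\kappa_k=\nu^{1/3}|kB|^{2/3}$ grows with $|B|$, and on $\{r^2\geq\kappa_kt\}$ both rates can be smaller than $\kappa_k/r^2$ simultaneously, so no Young/geometric-mean argument between them dominates the target. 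The $\Theta^2$ enlargement is also not designed for this purpose in the paper's proof; it is consumed by the dyadic cut-off commutators (see below), not by the $\kappa_k$ recovery.

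The ingredient you are missing is a dyadic decomposition \emph{in $r$}, which is Step 2 of the paper's proof. One sets $\eta=\La_kw_k^{(1)}$, $\eta_j=\chi_j\eta$ with $\chi_j$ supported in $r\sim 2^j$, and defines the functional
\begin{equation*}
\|\eta\|_{Y_T}^2=\frac12\sum_{j\geq0}\|r\eta_j\|_{\LoT(\Ltr)}^2+\sum_{j\geq0}\|\eta_j\|_{X_T}^2,
\end{equation*}
so that the energy scheme propagates the \emph{dyadic sum} of $L^\infty_T$ bounds, not merely the global one. Your problematic region is then handled scale-by-scale: on $\operatorname{supp}\eta_j\cap\{\kappa_k\tau\leq r^2\}$ one has $r^{-2}\leq 9\cdot 2^{-2j-2}$ \emph{and} $\tau\leq 2^{2j+2}/\kappa_k$, so that
\begin{equation*}
\kappa_k\iint_{\{\kappa_k\tau\leq r^2\}}|\eta_j|^2\,dr\,d\tau\leq\kappa_k\int_0^{\min(T,\,2^{2j+2}/\kappa_k)}\frac{9}{2^{2j+2}}\|r\eta_j(\tau)\|_{\Ltr}^2\,d\tau\leq 9\,\|r\eta_j\|_{\LoT(\Ltr)}^2,
\end{equation*}
and the scale-dependent time cut-off $2^{2j+2}/\kappa_k$ exactly cancels the factor $\kappa_k\cdot 2^{-2j}$, giving a uniform constant in $j$. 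Summing over $j$ controls $\kappa_k\|\eta\|_{\LtT(\Ltr)}^2$ by $\|\eta\|_{Y_T}^2$ with no logarithm. The logarithmic divergence you encountered is precisely what happens when you take the supremum over $r$ before cutting off in time; localizing in $r$ makes the time cut-off compatible with the spatial weight. Finally, the dyadic localization is also the reason the $\Theta^2$ enlargement appears in \eqref{w-1}: in Step 4, the commutators with $\chi_j'$, $\chi_j''$ produce $\mathcal{O}(\nu)$ terms across neighboring scales $|\ell-j|\leq1$, and the extra $\nu\Theta^2/r^2$ dissipation is what absorbs them. So the $\Theta^2$ is spent on making the dyadic scheme close, not on upgrading $\nu\Theta^2$ to $\kappa_k$; that upgrade comes entirely from the scale-local time cut-off.
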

\begin{proof}For the brevity of notations, we only present the proof of \eqref{eqs-with log weight} for the case $\alpha=1$, and we divide our proof  into the following six steps:

\no {\bf Step 1.} The change of unknown:  $\eta{\eqdefa}w_k^{(1)}\La_k$.

Let $\eta\eqdefa w_k^{(1)}\La_k.$ Then in view of \eqref{w-1}, we find
\begin{align}
\label{eta}\left\{
    \begin{aligned}
&\partial_t\eta-\nu\Big(\partial_r^2- \f{k^2+\Theta^2}{r^2}\Big)\eta+\nu\Big|\f{2kBt}{r^3}\Big|^2\eta= R_1^\eta+R_2^\eta,\\
&\eta|_{t=0}=\eta(0)=w_k(0)\La_k(0),\quad \eta|_{r=1,+\oo}=0,
\end{aligned}
\right.
\end{align}
where we denote $R_1^\eta{\eqdefa}-2\nu \frac{\partial_r\La_k}{\La_k}\partial_r\eta +\big(2\nu|\frac{\partial_r\La_k}{\La_k}|^2-\nu\frac{\partial_r^2\La_k}{{{\La_k}}}\big)\eta$ and $R_2^\eta{\eqdefa}\frac{\partial_t \La_k}{\La_k}{{\eta}}$.

It is easy to observe from Lemma \ref{Basic calculation for La_k} that
\begin{align*}
    \|r\partial_r w_k^{(1)}\La_k \|_{\LtT (\Ltr)}+  \|w_k^{(1)}\La_k\|_{\LtT (\Ltr)}\approx\|r\partial_r\eta \|_{\LtT (\Ltr)}+\|\eta\|_{\LtT (\Ltr)}.
\end{align*}
So that in order prove \eqref{eqs-with log weight}, it remains  to prove
\begin{equation}\label{equivalent estimate of wk1Mk}
    \begin{split}
&\|r\eta\|_{\LoT (\Ltr)}^2 + \nu \|r\partial_r\eta \|_{\LtT (\Ltr)}^2+ \kappa_k\Big\|\frac{\kappa_k t}{r^2} \eta\Big\|_{\LtT (\Ltr)}^2+ \mu_k \|\eta\|_{\LtT (\Ltr)}^2
\leq C\|r \eta(0)\|_{\Ltr}^2.
\end{split}
\end{equation}

\no{\bf Step 2.}  Dyadic decomposition.

We first fix some function $\chi(r)\in \mathcal{C}^2_c[\f23,2]$  (see the construction in \cite{LZZ-25}) and denote $\chi_j{\eqdefa} \chi(\frac{r}{2^j})$ so that
\begin{equation}\label{S6eq3}
\begin{cases}
    0\leq\chi_j\leq 1, \quad  \chi_0(1)=1, \quad \sum\limits_{j\geq 0 }\chi_j(r) = 1, \quad \f12 \leq \sum\limits_{j\geq 0 } \chi_j^2(r) \leq 1, \ \forall\ r\geq 1,\\
    \operatorname{supp}_r\chi_j \subset [2^{j+1}/3,2^{j+1}],\quad \chi_j\chi_\ell=0 \ \forall\ |j-\ell|\geq 2, \quad \|\chi\|_{C^2} \leq 4\cdot 10^6.
\end{cases}
\end{equation}
Then it is obvious to observe that
\[
\|r^\alpha \eta\|_{\Ltr}^2 \approx \sum_{j \geq 0} 2^{2\alpha j} \|\eta_j\|_{\Ltr}^2,
\]
where  \(\eta_j {\eqdefa} \chi_j \eta\), which  satisfies
\begin{align}\label{eq eta_j}
\left\{
    \begin{aligned}
&\partial_t\eta_j-\nu\Big(\partial_r^2-\f{k^2+\Theta^2}{r^2}\Big)\eta_j+\nu\Big|\f{2kBt}{r^3}\Big|^2\eta_j= \chi_j(R_1^\eta+R_2^\eta)-\nu (2^{-2j}\chi_j''\eta+2^{1-j}\chi_j'\partial_r \eta), \\
&\eta_j|_{t=0}=\eta_j(0),\quad \eta_j|_{r=\max(1,\frac{2^{j+1}}{3})}=\eta_j|_{r= 2^{j+1}}=0, \quad (t,r) \in \R^+\times [\max(1,2^{j+1}/3), 2^{j+1}],
\end{aligned}
\right.
\end{align} here and in what follows, we always denote $\chi_j'{\eqdefa} \chi'(\frac{r}{2^j})$ and $\chi_j''{\eqdefa} \chi''(\frac{r}{2^j})$.
Noticing that $\eta(r) = \sum\limits_{\ell\geq 0} \eta_\ell(r)$ for $r\geq 1$ and $\chi_j\chi_\ell=0$ if $ |j-\ell|\geq 2$, we also have
$$\chi_j''\eta = \sum\limits_{|\ell-j|\leq1, \ell\geq0}\chi_j''\eta_\ell, \qquad \chi_j'\partial_r \eta= \sum\limits_{|\ell-j|\leq1, \ell\geq0} \chi_j'\partial_r \eta_\ell .$$

Next we introduce the following energy functional:
\beq\label{S6eq5}
\begin{split}
 \|\eta\|_{Y_T}^2 {\eqdefa}& \f12 \sum_{j\geq0}  \|r\eta_j\|_{\LoT(\Ltr)}^2
 +\sum_{j\geq0}\|\eta_j\|_{X_T}^2,
 \with\\
\|\eta_j\|_{X_T}^2{\eqdefa} \frac{1}{2} \|r\eta_j(T)\|^2&+\nu\|r\partial_r \eta_j\|_{\LtT (\Ltr)}^2 + \nu(k^2+\Theta^2)\|\eta_j\|_{\LtT (\Ltr)}^2+ 4\kappa_k^3 \|\frac{\tau}{r^2}\eta_j\|_{\LtT (\Ltr)}^2.\end{split}
\eeq

% By virtue of \(1\geq \sum\limits_{j\geq 0 } \chi_j^2 \geq \f12\), we notice that
% $\f12 |\eta|^2 \leq \sum\limits_{j\geq 0} |\eta_j|^2 \leq |\eta|^2$, and this will be frequently used in our proof.

 \no{\bf Step 3.} The control of \(\kappa_k\|\eta\|_{\LtT (\Ltr)}^2\) by \(\|\eta\|_{Y_T}^2\).

 Let
\begin{enumerate}
    \item \(D_1{\eqdefa} \bigl\{(\tau,r) \in (0,T) \times (1,+\infty) : r^2 \leq \kappa_k \tau \ \bigr\}\);
    \item \(D_2 {\eqdefa} \bigl\{(\tau,r) \in (0,T) \times (1,+\infty) : 0 \leq \kappa_k \tau \leq r^2 \ \bigr\}\).
\end{enumerate}
Here $D_1$ is the region where enhanced dissipation occurs. Then we decompose
\begin{equation*}\label{step3 begin}
\begin{split}
    \kappa_k\|\eta\|_{\LtT (\Ltr)}^2 =  \kappa_k\Big(\iint_{(\tau,r)\in D_1}+\iint_{(\tau,r)\in D_2}\Big) |\eta|^2 dr d\tau\eqdefa \mathcal{I}_1+\mathcal{I}_2.
\end{split}
\end{equation*}

Since $\sum\limits_{j\geq 0} \chi_j \geq \f12$,  one has
\begin{equation*}
    \mathcal{I}_1 \leq  \kappa_k \Big\|\frac{\kappa_k \tau}{r^2}\eta\Big\|_{\LtT (\Ltr)}^2 \leq 2\kappa_k \sum_{j\geq 0} \Big\|\frac{\kappa_k \tau}{r^2}\eta_j\Big\|_{\LtT (\Ltr)}^2 \leq \f12 \|\eta\|_{Y_T}^2.
\end{equation*}
Whereas it follows from a direct computation that
\begin{align*}%\label{Step 3 end}
    \mathcal{I}_2 &\leq 2 \kappa_k \sum_{j\geq 0} \iint_{(\tau,r)\in D_2}|\eta_j|^2 drd\tau\\
    &\leq 2 \kappa_k \sum_{j\geq 0} \int_0^{\min(T,2^{2j+2}/\kappa_k)} \int_1^\oo \frac{9}{2^{2j+2}}|r\eta_j|^2 drd\tau\\
    &\leq 18 \sum_{j\geq 0}   \|r\eta_j\|_{\LoT(\Ltr)}^2 \leq 36 \|\eta\|_{Y_T}^2.
\end{align*}
As a result, it comes out
\begin{equation}\label{step 3 final}
\kappa_k\|\eta\|_{\LtT (\Ltr)}^2\lesssim \|\eta\|_{Y_T}^2.
\end{equation}

 \no{\bf Step 4.} The estimation of \(\|\eta\|_{Y_T}\).

We get, by first taking the real part of \(\Ltr\) inner product of \eqref{eq eta_j} with  \(r^2\eta_j\) and then using integration by parts and \eqref{L_k w, r al w inner product}, that
\begin{equation*}
    \begin{split}
\frac{d}{dt}\|\eta_j(t)\|_{X_t}^2 &\leq \big|\braket{r^2\chi_j R_1^\eta,\eta_j}_\Ltr\big| +\big|\braket{r^2\chi_j R_2^\eta,\eta_j}_\Ltr\big| + \nu\|\eta_j\|_{\Ltr}^2 \\
&\quad +\nu\sum_{|\ell-j|\leq1, \ell\geq0}(2^{-2j}\|r^2\chi_j''\|_{\Lor}\|\eta_\ell\|_{\Ltr}+2^{1-j}\|r\chi_j'\|_{\Lor}\|r\partial_r \eta_\ell\|_{\Ltr}) \|\eta_j\|_{\Ltr}.
    \end{split}
\end{equation*}
Observing that $\text{supp}_r\chi_j \in [2^{j+1}/3, 2^{j+1}] $ and $\|\chi_j''\|_{L^\oo},\|\chi_j'\|_{L^\oo} \leq 4\cdot10^6,$  we find
\begin{equation*}
    \begin{split}
    \frac{d}{dt}\|\eta_j(t)\|_{X_t}^2 \leq &\big|\braket{r^2\chi_j R_1^\eta,\eta_j}_\Ltr\big|+\big|\braket{r^2\chi_j R_2^\eta,\eta_j}_\Ltr\big| + \nu\|\eta_j\|_{\Ltr}^2 \\
&+16\cdot 10^6\nu \sum_{|\ell-j|\leq1, \ell\geq0}(\|\eta_\ell\|_{\Ltr}+\|r\partial_r \eta_\ell\|_{\Ltr}) \|\eta_j\|_{\Ltr},\\
    \end{split}
\end{equation*}
Then by integrating the above inequality  over $[0,T]$ and using the definition of $X_T$ given by \eqref{S6eq5}, we obtain
\begin{equation*}
    \begin{split}
&\f12 \|r\eta_j\|_{\LoT(\Ltr)}^2+\|\eta_j\|_{X_T}^2 \leq 2\times \Bigl(\frac{1}{2}\|r\eta_j(0)\|_{\Ltr}^2 +\int_0^T\big|\braket{r^2\chi_j R_1^\eta,\eta_j}_\Ltr\big| d\tau\\
     &   \ + \int_0^T\big|\braket{r^2\chi_j R_2^\eta,\eta_j}_\Ltr\big| d\tau   + \frac{1}{\Theta^2}\|\eta_j\|_{X_T}^2+ 16\cdot 10^6(1/\Theta+1/\Theta^2)  \|\eta_j\|_{X_T}\sum_{|\ell-j|\leq1, \ell\geq0} \|\eta_\ell\|_{X_T} \Bigr).\\
    \end{split}
\end{equation*}
By summarizing the above inequalities for $j\geq 0$ and  using Cauchy Inequality, we achieve
\begin{equation}\label{7.19}
\begin{split}
  \|\eta\|_{Y_T}^2
    &\leq \sum_{j\geq0}\|r\eta_j(0)\|_\Ltr^2  +2\int_0^T \sum_{j\geq0}\big|\braket{r^2\chi_j R_1^\eta,\eta_j}_\Ltr\big| d\tau \\
    &\quad +2\int_0^T \sum_{j\geq0}\big|\braket{r^2\chi_j R_2^\eta,\eta_j}_\Ltr\big| d\tau \\
    &\quad + \frac{2}{\Theta^2} \|\eta\|_{Y_T}^2 + 16\cdot 10^6(1/\Theta+1/\Theta^2) \big( \|\eta\|_{Y_T}^2 + 3\sum_{j\geq0}\sum_{|\ell-j|\leq1, \ell\geq0} \|\eta_\ell\|_{X_T}^2\big)\\
    &= \sum_{j\geq0}\|r\eta_j(0)\|_\Ltr^2  +\big(2/\Theta^2+16\cdot10^7(1/\Theta+1/\Theta^2)\big) \|\eta\|_{Y_T}^2\\
    &\quad+2\int_0^T \sum_{j\geq0}\big|\braket{r^2\chi_j R_1^\eta,\eta_j}_\Ltr\big| d\tau +2\int_0^T \sum_{j\geq0}\big|\braket{r^2\chi_j R_2^\eta,\eta_j}_\Ltr\big| d\tau.
\end{split}
\end{equation}

\no{\bf Step 5.} The estimation of terms in \eqref{7.19} involving $R_i^\eta$.

We first observe that
\begin{equation}\label{6.18,a}
    \nu \|r\pa_r \eta\|_{\LtT (\Ltr)}^2 +  \nu(k^2+\Theta^2)\|\eta\|_{\LtT (\Ltr)}^2 \lesssim \|\eta\|_{Y_T}^2.
\end{equation}
While it follows from  Lemma \ref{Basic calculation for La_k} that
\begin{equation*}
    \begin{split}
&\sum_{i=1}^2\int_0^T \sum_{j\geq0}\big|\braket{r^2\chi_j R_i^\eta,\eta_j}_\Ltr \big| d\tau \\
&\lesssim \frac{\nu}{\log\hc} (\|r\pa_r \eta\|_{\LtT (\Ltr)} + \|\eta\|_{\LtT (\Ltr)})\|\eta\|_{\LtT (\Ltr)} + \f{\kappa_k}{\hc\log\hc} \|\eta\|_{\LtT (\Ltr)}^2 \\
&\lesssim   \frac{\nu}{\log\hc} \|r\pa_r \eta\|_{\LtT (\Ltr)}^2 +  \frac{\nu+\kappa_k}{\log\hc}\|\eta\|_{\LtT (\Ltr)}^2 .
    \end{split}
\end{equation*}
So that by taking $\hc \gg1$ and using \eqref{step 3 final} and \eqref{6.18,a}, we obtain
\begin{equation}\label{estimate of R}
    \begin{split}
&\sum_{i=1}^{2}\int_0^T \sum_{j\geq0}\big|\braket{r^2\chi_j R_i^\eta,\eta_j}_\Ltr \big| d\tau \leq \f18 \|\eta\|_{Y_T}^2.
    \end{split}
\end{equation}

\no {\bf Step 6.} The end of the proof.

Due to $\Theta \geq 10^9$,
by inserting \eqref{estimate of R} into \eqref{7.19}, we achieve
\begin{align*}
    \|\eta\|_{Y_T}^2 &\leq \sum_{j\geq0}\|r\eta_j(0)\|_\Ltr^2 +\big(1/4+2/\Theta^2+16\cdot10^7(1/\Theta+1/\Theta^2) \big)  \|\eta\|_{Y_T}^2\\
   &\leq \|r\eta(0)\|_\Ltr^2 +\f12 \|\eta\|_{Y_T}^2,
    \end{align*}
which together with \eqref{step 3 final} and \eqref{6.18,a} ensures (\ref{eqs-with log weight}).
We thus complete the proof of Proposition \ref{estimate of homogeneous eqs without log loss}.
\end{proof}

\begin{proposition}\label{prop estimate of pa_r w^(1)}
{\sl Let $k\in \Z\backslash\{0\}$, $\alpha \in[1,5]$, $\hc\gg1$ and   $w_k^{(1)}$ solve  \eqref{w-1}.  Then there exists  constant $C>0$ independent of $\nu,k,B,\hc,T,\alpha$, so that
\begin{equation}
    \label{eqs-with log weight,derivative}\begin{split}
&\big\|r^{\al} \pa_rw_k^{(1)}\La_k\big\|_{\LoT(\Ltr)}^2 + \nu \big\|r^{\al}\partial^2_r w_k^{(1)}\La_k \big\|_{\LtT (\Ltr)}^2+ \mu_k  \big\|r^{\al-1}\pa_r w_k^{(1)}\La_k\big\|_{\LtT (\Ltr)}^2\\
&+ \kappa_k \Big\|\frac{\kappa_k t}{r^{3-\al}} \pa_r w_k^{(1)}\La_k\Big\|_{\LtT (\Ltr)}^2
\leq C\big(\|r^{\alpha} \pa_r w_k(0)\La_k(0)\|_{\Ltr}^2 + \|r^{\alpha-1} w_k(0)\La_k(0)\|_{\Ltr}^2\big).
\end{split}
\end{equation}}
\end{proposition}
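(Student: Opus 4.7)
The proof proceeds by differentiating \eqref{w-1} in $r$ and applying the energy method of Proposition \ref{estimate of homogeneous eqs without log loss} to $v\eqdef\partial_r w_k^{(1)}$ in place of $w_k^{(1)}$. A direct computation, together with the identity $\kappa_k^3=\nu(kB)^2$, shows that $v$ satisfies
\begin{align*}
\partial_t v-\nu\Big(\partial_r^2-\frac{k^2+\Theta^2}{r^2}\Big)v+\nu\Big|\frac{2kBt}{r^3}\Big|^2 v
=\frac{2\nu(k^2+\Theta^2)}{r^3}w_k^{(1)}+\frac{24\kappa_k^3 t^2}{r^7}w_k^{(1)}\eqdef G.
\end{align*}
The crucial structural observation is that $w_k^{(1)}|_{r=1}=0$ forces $\partial_t w_k^{(1)}|_{r=1}=0$, and evaluating \eqref{w-1} at $r=1$ then yields $\partial_r^2 w_k^{(1)}|_{r=1}=0$, i.e.\ $\partial_r v|_{r=1}=0$. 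This Neumann-type condition on $v$ replaces the Dirichlet condition on $w_k^{(1)}$, and it is precisely what is needed so that the leading boundary contribution from integration by parts of $-\nu\partial_r^2 v$ at $r=1$ vanishes in the global $L^2_r$-energy estimate for $r^\alpha v\La_k$.

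Setting $\zeta\eqdef\La_k v$ and $\zeta_j\eqdef\chi_j\zeta$, I then mimic the six steps of the proof of Proposition \ref{estimate of homogeneous eqs without log loss}: the dyadic decomposition, the splitting of $\kappa_k\|\zeta\|_{\LtT(\Ltr)}^2$ into the enhanced-dissipation region $\{r^2\leq\kappa_k\tau\}$ and its complement, the $\|\cdot\|_{Y_T}$-energy inequality, and the absorption of the commutator terms $R_1^\zeta,R_2^\zeta$ generated by the $\La_k$-weight using Lemma \ref{Basic calculation for La_k} and the largeness of $\hc,\Theta$. The only new technical point is the boundary of the block $\zeta_0$ at $r=1$: since $\zeta_0(1,t)=\La_k(t,1)v(1,t)$ does not vanish, a boundary contribution proportional to $\big|\chi_0'(1)+\tfrac{\partial_r\La_k(t,1)}{\La_k(t,1)}-1\big|\,\La_k^2(t,1)v(1,t)^2$ appears after integration by parts. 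Using $|\partial_r\La_k/\La_k|_{r=1}\lesssim 1/\log\hc$, the monotonicity $\La_k(t,1)\leq\La_k(t,r)$ for $r\in[1,2]$, and the one-dimensional trace inequality $|v(1,t)|^2\leq C(\|v\|_{L^2(1,2)}^2+\|\partial_r v\|_{L^2(1,2)}^2)$, this contribution is absorbed into the global dissipation terms $\nu\|r^\alpha\partial_r v\La_k\|_{\LtT(\Ltr)}^2$ and $\nu(k^2+\Theta^2)\|r^{\alpha-1}v\La_k\|_{\LtT(\Ltr)}^2$ on the LHS (with implicit constants depending on $\hc,\Theta$ only).

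It remains to control the source $G$. Pairing $\tfrac{2\nu(k^2+\Theta^2)}{r^3}w_k^{(1)}$ against $r^{2\alpha}\La_k^2 v$ and invoking Cauchy--Schwarz together with Proposition \ref{estimate of homogeneous eqs without log loss} applied at index $\alpha-1$ (and $r^{\alpha-3}\leq r^{\alpha-2}$ on $r\geq 1$) yields a bound by $C_\Theta\mu_k^{1/2}\|r^{\alpha-1}w_k(0)\La_k(0)\|_\Ltr\cdot\|r^{\alpha-1}v\La_k\|_{\LtT(\Ltr)}$, after using $\nu(k^2+\Theta^2)\lesssim_\Theta\mu_k$ (which holds under the standing hypothesis $\nu\ll|kB|$, since then $\nu\Theta^2\leq\kappa_k\leq\mu_k$); Young's inequality absorbs the $v$-factor into the LHS. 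For the quadratic-in-$t$ piece, I factor its pairing as $24\kappa_k\int \tfrac{\kappa_k t}{r^{4-\alpha}}w_k^{(1)}\La_k\cdot\tfrac{\kappa_k t}{r^{3-\alpha}}v\La_k\,dr$, bound the $w_k^{(1)}$-factor via $\kappa_k\|\tfrac{\kappa_k t}{r^{4-\alpha}}w_k^{(1)}\La_k\|_{\LtT(\Ltr)}^2\leq C\|r^{\alpha-1}w_k(0)\La_k(0)\|_\Ltr^2$ (Proposition \ref{estimate of homogeneous eqs without log loss} at index $\alpha-1$), and absorb the $v$-factor into the enhanced-dissipation term $\kappa_k\|\tfrac{\kappa_k t}{r^{3-\alpha}}v\La_k\|_{\LtT(\Ltr)}^2$ on the LHS. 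The main obstacle is exactly this quadratic-in-$t$ source: it cannot be handled by the parabolic dissipation alone, and identifying its precise balance against the enhanced-dissipation factor of Proposition \ref{estimate of homogeneous eqs without log loss} applied at the shifted index $\alpha-1$ is what produces the $\|r^{\alpha-1}w_k(0)\La_k(0)\|_\Ltr^2$ contribution on the right-hand side of \eqref{eqs-with log weight,derivative}.
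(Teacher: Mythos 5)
Your approach — differentiating equation \eqref{w-1} in $r$, observing the derived Neumann condition $\partial_r v|_{r=1}=0$ for $v\eqdef\partial_r w_k^{(1)}$, and then running the $Y_T$-energy/dyadic machinery on $\zeta=\La_k v$ — is a genuinely different route from the paper's, which instead keeps the original $\eta_j$ equation and tests it against the second-order test function $-\partial_r(r^4\partial_r\eta_j)$ (for $\alpha=2$). Both approaches control the same quantity, namely an $H^1$-type energy for $w_k^{(1)}\La_k$, and both defer the error terms to Proposition \ref{estimate of homogeneous eqs without log loss} at a shifted index. The trade-off is as follows. The paper's test function never requires integrating by parts the cross terms (they are bounded directly by Cauchy--Schwarz), so no boundary contributions at $r=1$ arise at all: the single boundary term from $\partial_t\eta_j$ vanishes because $\eta_j(1)=0$ (Dirichlet). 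Your approach produces a nonvanishing boundary contribution for the $j=0$ block because $\zeta_0(1)=\La_k(t,1)v(1,t)\neq 0$; the sign analysis you gave is essentially right (and note $\chi_0'(1)=0$ automatically, since $\sum_j\chi_j\equiv 1$ forces $\chi_0\equiv 1$ on $[1,4/3)$), and the trace absorption into the $\nu\|r^\alpha\partial_r\zeta_0\|^2$ and $\nu(k^2+\Theta^2)\|\zeta_0\|^2$ parts of the $X_T$-energy works with $\Theta\geq 10^9$ using the $\epsilon$-split form of the trace inequality. One place where you pay a small extra price: to convert the source estimate $\nu(k^2+\Theta^2)\|r^{\alpha-2}w_k^{(1)}\La_k\|^2$ into the boundedly-stated $\mu_k\|r^{\alpha-2}w_k^{(1)}\La_k\|^2$ of Proposition \ref{estimate of homogeneous eqs without log loss}, you invoke $\nu\Theta^2\lesssim\kappa_k$, which is a quantitative $\nu\lesssim|kB|\Theta^{-3}$ hypothesis; the paper sidesteps this by bounding the same error directly against the $X_T$-energy (which contains $\nu(k^2+\Theta^2)\|\eta_j\|^2$ internally), so its constant is genuinely independent of the ratio $\nu/|kB|$. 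Under the standing regime $\nu\ll|B|$ of the whole paper this is harmless, but it is worth flagging that the paper's version of the proposition is stated and proved without any such smallness condition.
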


\begin{proof}
    We inherit the notations used in the proof of Proposition \ref{estimate of homogeneous eqs without log loss}, and only present
    the detailed proof for the case $\alpha=2.$ We first get,  by taking the real part of the $\Ltr$ inner product of \eqref{eq eta_j}
     with $-\pa_r (r^4 \pa_r\eta_j),$ that
\begin{equation}\label{6.28}
    \begin{split}
&\frac{1}{2}\frac{d}{dt} \|r^2 \pa_r \eta_j\|_{\Ltr}^2 + \nu\Re\Big\langle-\Big(\pa_r^2 -\frac{k^2+\Theta^2}{r^2}\Big)\eta_j + \frac{4|kB|^2 t^2}{r^6} \eta_j, -\pa_r (r^4 \pa_r\eta_j)\Big\rangle_\Ltr \\
&\qquad \qquad \qquad = \Re\braket{\chi_j (R^\eta_1+R^\eta_2) -\nu(2^{-2j}\chi_j''\eta + 2^{1-j}\chi_j'\pa_r \eta) , -\pa_r (r^4 \pa_r\eta_j)}_\Ltr.
    \end{split}
\end{equation}
By using integration by parts, one has
\begin{align*}
    &\Re\Big\langle-\nu\Big(\pa_r^2 -\frac{k^2+\Theta^2}{r^2}\Big)\eta_j +\nu \frac{4|kB|^2 t^2}{r^6} \eta_j, \  -\pa_r (r^4 \pa_r\eta_j)\Big\rangle_\Ltr \\
   % &= \nu \|r^2\pa_r^2 \eta_j\|_\Ltr^2 + \nu \Re\braket{\pa_r^2 \eta_j, 4r^3 \pa_r \eta_j}_\Ltr + \nu(k^2+\Theta^2)\Re\braket{\pa_r(r^{-2}\eta_j), r^4\pa_r\eta_j}_\Ltr \\
   % & \quad + 4\nu |kBt|^2 \Re\braket{\pa_r(r^{-6}\eta_j),r^4\pa_r \eta_j}_\Ltr\\
    &= \nu \|r^2\pa_r^2 \eta_j\|_\Ltr^2 + \nu \Re\braket{\pa_r^2 \eta_j, 4r^3 \pa_r \eta_j}_\Ltr \\
    &\quad+ \nu(k^2+\Theta^2) \|r\pa_r \eta_j\|_\Ltr^2 -2 \nu(k^2+\Theta^2) \Re\braket{r\eta_j,\pa_r \eta_j}_\Ltr\\
    &\quad +4\nu |kBt|^2  \|r^{-1}\pa_r \eta_j\|_\Ltr^2 -24\nu |kBt|^2 \Re\braket{r^{-2}\eta_j,r^{-1}\pa_r \eta_j}_\Ltr \\
    &\geq \f\nu2 \bigl(\|r^2\pa_r^2 \eta_j\|_\Ltr^2+(k^2+\Theta^2) \|r\pa_r \eta_j\|_\Ltr^2  \bigr) + 2\nu |kBt|^2  \|r^{-1}\pa_r \eta_j\|_\Ltr^2 \\
    &\quad - C\bigl(  \nu  \|r\pa_r \eta_j\|_\Ltr^2  +\nu(k^2+\Theta^2) \| \eta_j\|_\Ltr^2  +\nu |kBt|^2\|r^{-2}\eta_j\|_\Ltr^2  \bigr).
%\end{split}\label{6.29}
\end{align*}
By using the fact: $|\frac{\pa_t\Lambda_k}{\Lambda_k}| + r|\pa_r\frac{\pa_t\Lambda_k}{\Lambda_k} | \lesssim \frac{\kappa_k}{\log\hc} \frac{1}{r^2}$, we obtain
%\begin{equation}\label{6.31}
    \begin{align*}
&\Re\braket{\chi_j R^\eta_2, -\pa_r (r^4 \pa_r\eta_j)}_\Ltr \\
&= \Big\langle\frac{\pa_t \La_k}{\La_k} \eta_j, -\pa_r (r^4 \pa_r\eta_j)\Big\rangle_\Ltr
 = \bigg\|\sqrt{\frac{\pa_t \La_k}{\La_k}} r^2\pa_r \eta_j\bigg\|_\Ltr^2 + \Big\langle\pa_r \Big( \frac{\pa_t \La_k}{\La_k} \Big) \eta_j, r^4 \pa_r \eta_j\Big\rangle\\
 &\lesssim \frac{\kappa_k}{\log\hc} \bigl (\|r\pa_r \eta_j\|_\Ltr^2 + \| \eta_j\|_\Ltr^2\bigr).
    \end{align*}
%\end{equation}
It follows from  Lemma \ref{Basic calculation for La_k} that
%\begin{equation}\label{6.32}
\begin{align*}
  & \Re \braket{\chi_j R^\eta_1-\nu(2^{-2j}\chi_j''\eta + 2^{1-j}\chi_j'\pa_r \eta), -\pa_r (r^4 \pa_r\eta_j)}_\Ltr \\
   &\lesssim \bigl(\|\chi_j r^2R^\eta_1\|_\Ltr +\nu \sum\limits_{|\ell-j|\leq1, \ell\geq0} (\|\eta_\ell\|_\Ltr +\|r\pa_r \eta_\ell\|_\Ltr ) \bigr) \times (\|r^2\pa_r^2 \eta_j\|_\Ltr + \|r\pa_r \eta_j\|_\Ltr) \\
   &\leq C\nu \sum\limits_{|\ell-j|\leq1, \ell\geq0} \big(\|\eta_\ell\|_\Ltr^2 +\|r\pa_r \eta_\ell\|_\Ltr^2 \big) + \frac{\nu}{4}\big(\|r^2\pa_r^2 \eta_j\|_\Ltr^2 + \|r\pa_r \eta_j\|_\Ltr ^2 \big).
\end{align*}
%\end{equation}
By substituting the above estimates into \eqref{6.28} and integrating the resulting inequality over $[0,T]$,  we find
\begin{align*}
  &\|r^2\pa_r \eta_j\|_{\LoT(\Ltr)}^2 +   \nu\|r^2\partial_r^2 \eta_j\|_{\LtT (\Ltr)}^2 + \nu(k^2+\Theta^2)\|r\partial_r\eta_j\|_{\LtT (\Ltr)}^2+ 4\kappa_k \Big\|\frac{\kappa_k t}{r}\partial_r\eta_j\Big\|_{\LtT (\Ltr)}^2\\
  &\lesssim \|r^2\pa_r\eta_j(0)\|_\Ltr^2+ \frac{\kappa_k}{\log \hc} \big(\|r  \pa_r \eta_j\|_{\LtT(\Ltr)}^2 + \| \eta_j\|_{\LtT (\Ltr)}^2 \big)\\
  &\quad + \sum\limits_{|\ell-j|\leq1, \ell\geq0} \Big( \nu  \|r\pa_r \eta_\ell\|_{\LtT (\Ltr)}^2  +\nu(k^2+\Theta^2) \| \eta_\ell\|_{\LtT (\Ltr)}^2  + \kappa_k\Big\|\frac{\kappa_k t}{r^2}\eta_\ell\Big\|_{\LtT (\Ltr)}^2  \Big).
\end{align*}
By summarizing the above inequalities for $j\geq0$, and using  \eqref{step 3 final} and \eqref{6.18,a}, we achieve
\begin{align*}
    \|r\pa_r \eta\|_{Y_T}^2  &\lesssim \|r^2\pa_r \eta(0)\|_\Ltr^2+ \frac{\kappa_k}{\log \hc}  \big(\|r\pa_r \eta\|_{\LtT(\Ltr)}^2 +\| \eta\|_{\LtT(\Ltr)}^2\big) + \|\eta\|_{Y_T}^2 \\
    &\lesssim \|r^2\pa_r \eta(0)\|_\Ltr^2+ \frac{1}{\log \hc}   \|r\pa_r \eta\|_{Y_T}^2 + \|\eta\|_{Y_T}^2.
\end{align*}
 So that by choosing $\hc \gg1$ and then  applying Proposition \ref{estimate of homogeneous eqs without log loss}, we obtain
  \eqref{eqs-with log weight,derivative}. This completes the proof of Propsition \ref{prop estimate of pa_r w^(1)}.
\end{proof}

The next Proposition concerns  the space-time estimates for the stream function $\pkh$, which is also  referred to as the inviscid damping estimate.
\begin{proposition}\label{estimate of homogeneous eqs-inviscid damping}
{\sl Let $k\in \Z\backslash\{0\}$ and $\pkh$ solve the homogeneous equation \eqref{homogenous eq}, we denote
$\pkh^{(1)}\eqdefa e^{-i\f{kB}{r^2}t}\pkh.$ Then there exist constants $C_\varepsilon>0, \hc_0\gg1$  depending only on $\varepsilon$, so that for $\hc\geq \hc_0$,
    \begin{equation}\label{S6eq2}
    \begin{split}
     & |kB|^{\f12} |k|^\f12 \Big(\Big\|r^{-1+\varepsilon}\pa_r\pkh^{(1)}\La_k \Big\|_{\LtT (\Ltr)} + |k|\Big\| r^{-2+\varepsilon}\pkh^{(1)}\La_k \Big\|_{\LtT (\Ltr)}\Big) \leq  C_{\varepsilon}M_k(0).
    \end{split}
    \end{equation}}
\end{proposition}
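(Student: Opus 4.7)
The plan is to reduce the proposition to the inviscid damping bound
\begin{equation*}
|kB|^{3/2}|k|^{1/2}\bigl\|t\,r^{-4+\varepsilon}\,\pkh\,\La_k\bigr\|_{\LtT (\Ltr)} \leq C_\varepsilon\, M_k(0),
\end{equation*}
and then establish it via the decomposition $\pkh = \varphi_{k,H}^{(1)} + \varphi_{k,H}^{(2)}$ from \eqref{w-1}--\eqref{w-2}. The reduction is immediate: $|\pkh^{(1)}|=|\pkh|$ controls the second term in the proposition via Proposition \ref{estimate of homogeneous eqs}, while the chain rule $\pa_r\pkh^{(1)} = e^{-ikBt/r^2}\bigl(\pa_r\pkh + \f{2ikBt}{r^3}\pkh\bigr)$ splits $\|r^{-1+\varepsilon}\pa_r\pkh^{(1)}\La_k\|_{\LtT(\Ltr)}$ into the piece handled by Proposition \ref{estimate of homogeneous eqs} plus exactly $2|kB|\|t\,r^{-4+\varepsilon}\pkh\La_k\|_{\LtT(\Ltr)}$.

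For the $\varphi_{k,H}^{(2)}$ contribution to the displayed bound, I would combine Lemma \ref{basic properties of ck} (to pass from the stream function to $\wkh^{(2)}$) with Proposition \ref{the inhomogeneous equation with zero initial data} applied to \eqref{w-2}. Since the forcing there already carries the factor $\nu kBt$, the crucial input is the enhanced $\kappa_k t$-decay
\begin{equation*}
\kappa_k\Bigl\|\f{\kappa_k t}{r^{3-\al}}w_k^{(1)}\La_k\Bigr\|_{\LtT(\Ltr)}^2 \lesssim \|r^\al w_k(0)\La_k(0)\|_\Ltr^2\quad (\al\in[0,4])
\end{equation*}
(and its $\pa_r w_k^{(1)}$-analogue) from Propositions \ref{estimate of homogeneous eqs without log loss}--\ref{prop estimate of pa_r w^(1)}. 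Using $\kappa_k=\nu^{1/3}|kB|^{2/3}$, a direct bookkeeping turns $\nu|kB|t$ into the coefficient $|kB|^{3/2}|k|^{1/2}$ required on the left-hand side.

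For the $\varphi_{k,H}^{(1)}$ contribution, I would exploit the oscillation in
\begin{equation*}
\varphi_{k,H}^{(1)}(r) = -\int_1^{\oo} K_k(r,s)\,e^{-ikBt/s^2}\,w_k^{(1)}(s)\,ds
\end{equation*}
through the identity $e^{-ikBt/s^2} = \f{-s^3}{2ikBt}\,\pa_s(e^{-ikBt/s^2})$ followed by one integration by parts in $s$. The boundary term at $s=1$ vanishes by $K_k(r,1)=0$ (see \eqref{1.17b}) and that at $s=\oo$ by the decay of $K_k(r,s)\,s^3 w_k^{(1)}(s)$ guaranteed by Propositions \ref{estimate of homogeneous eqs without log loss}--\ref{prop estimate of pa_r w^(1)} with $\al$ near the top of its admissible range. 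The prefactor $\f{1}{2ikBt}$ produced in this way precisely cancels the $|kB|t$ growth in the damping bound, reducing matters to three integrals involving $\pa_s K_k\cdot s^3 w_k^{(1)}$, $K_k\cdot s^2 w_k^{(1)}$, and $K_k\cdot s^3\,\pa_s w_k^{(1)}$, each weighted externally by $r^{-4+\varepsilon}\La_k$. These are controlled via the kernel estimates \eqref{estimate of cK}, Lemma \ref{lemma A.4}, and the weighted $\LtT(\Ltr)$-bounds on $w_k^{(1)}$ and $\pa_r w_k^{(1)}$ from Propositions \ref{estimate of homogeneous eqs without log loss}--\ref{prop estimate of pa_r w^(1)}.

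The main obstacle will be the weight bookkeeping in this last step: the strong exterior weight $r^{-4+\varepsilon}$ paired against the $s^2,s^3$ amplifications from the integration by parts pushes the exponent pair $(a,b)$ in Lemma \ref{lemma A.4} close to the boundary of admissibility, which in turn forces a careful split between the near-field ($s\sim r$) and far-field ($s\gg r$ or $s\ll r$) contributions of $K_k$ and $\pa_s K_k$. Verifying the vanishing of the $s=\oo$ boundary term likewise demands the full strength of Propositions \ref{estimate of homogeneous eqs without log loss}--\ref{prop estimate of pa_r w^(1)} at $\al$ near the top of its admissible range.
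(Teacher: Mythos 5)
Your proposal takes the definition $\pkh^{(1)}\eqdefa e^{-ikBt/r^2}\pkh$ at face value, but this is an inconsistency in the paper's statement. From the first display of the paper's proof of this proposition ($-r^{-2+\varepsilon}\La_k\pkh^{(1)}=\int_1^\oo r^{-2+\varepsilon}\La_k K_k(r,s)e^{-ikBt/s^2}w_k^{(1)}\,ds$) and from the way \eqref{S6eq2} is subsequently invoked in the proof of Proposition \ref{estimate of homogeneous eqs} to handle the $\varphi_{k,H}^{(1)}$ piece of the decomposition $\pkh=\varphi_{k,H}^{(1)}+\varphi_{k,H}^{(2)}$, the object actually estimated here is $\pkh^{(1)}=\varphi_{k,H}^{(1)}=-\cK\bigl[e^{-ikBt/s^2}w_k^{(1)}\bigr]$ from \eqref{w-1}, \emph{not} $e^{-ikBt/r^2}\pkh$. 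Once this is recognized, there is no chain-rule decomposition to perform, no $t\,r^{-4+\varepsilon}\pkh$ term to produce, and no need to touch $\varphi_{k,H}^{(2)}$ at all: one works directly with the integral representation of $\varphi_{k,H}^{(1)}$, which is exactly the part of your proposal (the $s$-integration by parts pulling out a $1/(2ikBt)$ factor, kernel estimates via \eqref{estimate of cK}, \eqref{B.10} and Lemma \ref{lemma A.4}, with the $\LoT(\Ltr)$ bounds from Propositions \ref{estimate of homogeneous eqs without log loss}--\ref{prop estimate of pa_r w^(1)}, combined with the short-time bound from \eqref{basic estimate of K with space-time weight} and then integrated using $\int_0^\oo\min(|k|^{-2},|kBt|^{-2})\,dt\lesssim(|k||kB|)^{-1}$) that does match the paper.

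The remaining two steps of your reduction would not survive scrutiny even if the literal reading were intended. First, your claim that the terms $\|r^{-1+\varepsilon}\pa_r\pkh\La_k\|_{\LtT(\Ltr)}$ and $|k|\|r^{-2+\varepsilon}\pkh\La_k\|_{\LtT(\Ltr)}$ are ``handled by Proposition \ref{estimate of homogeneous eqs}'' is circular: in the paper, Proposition \ref{estimate of homogeneous eqs} is \emph{proved using} \eqref{S6eq2}, so you may not invoke it here. Second, the bound $|kB|^{3/2}|k|^{1/2}\|t\,r^{-4+\varepsilon}\varphi_{k,H}^{(2)}\La_k\|_{\LtT(\Ltr)}\lesssim M_k(0)$ that your reduction would require is not available: Proposition \ref{the inhomogeneous equation with zero initial data} controls $|kB|^{1/2}|k|^{3/2}\|r^{-2+\varepsilon}\pkh^{(2)}\La_k\|_{\LtT(\Ltr)}$ and $|kB|^{1/2}|k|^{1/2}\|r^{-1+\varepsilon}\pa_r\pkh^{(2)}\La_k\|_{\LtT(\Ltr)}$, with no extra $|kB|t/r^2$ factor, and the $\kappa_k t$-decay of $w_k^{(1)}$ from Propositions \ref{estimate of homogeneous eqs without log loss}--\ref{prop estimate of pa_r w^(1)} only serves to absorb the $\nu kBt$ present in the forcing of \eqref{w-2}; it does not manufacture an additional $t$-weight on the solution. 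Neither the paper nor the surrounding machinery provides an inviscid-damping (time-decaying) bound on $\varphi_{k,H}^{(2)}$, and it is not clear such a bound holds.
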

\begin{proof}Without loss of generality, we assume that $w_k^{(1)}\in \mathcal{C}^\oo_c([1,\oo))$.
Observing that  $\frac{d}{ds}e^{-i\frac{kB}{s^2}t} = \frac{2ikBt}{s^3}e^{-i\frac{kB}{s^2}t},$
we get, by using integration by parts, that
\begin{equation}
    \begin{split}
-r^{-2+\varepsilon} \La_k(t,r) \pkh^{(1)} &= \int_1^\oo r^{-2+\varepsilon} \La_k(t,r) K_k(r,s) e^{-i\frac{kB}{s^2}t} w_k^{(1)} ds\\
&= \frac{1}{2ikBt} \int_1^\oo r^{-2+\varepsilon} \La_k(t,r) K_k(r,s)  s^3 w_k^{(1)} d\big(e^{-i\frac{kB}{s^2}t}\big)\\
& =-  \frac{1}{2ikBt} \int_1^\oo r^{-2+\varepsilon} \La_k(t,r) \pa_s \bigl(K_k(r,s)  s^3 w_k^{(1)}\bigr) e^{-i\frac{kB}{s^2}t} ds,
    \end{split}
\end{equation}
and by \eqref{1.17} that
\begin{equation}
    \begin{split}
-r^{-1+\varepsilon} \La_k(t,r) \pa_r \pkh^{(1)} &= \int_1^\oo r^{-1+\varepsilon} \La_k(t,r) \pa_r K_k(r,s) e^{-i\frac{kB}{s^2}t} w_k^{(1)} ds\\
&= \frac{1}{2ikBt} \int_1^\oo r^{-1+\varepsilon} \La_k(t,r) \pa_r K_k(r,s)  s^3 w_k^{(1)} d\big(e^{-i\frac{kB}{s^2}t}\big)\\
&= - \frac{1}{2ikBt} r^{-1+\varepsilon} \La_k(t,r)  r^3 w_k^{(1)}(r) e^{-i\frac{kB}{r^2}t}\\
& \quad -  \frac{1}{2ikBt} \int_1^\oo r^{-1+\varepsilon} \La_k(t,r) \widetilde{K}_k(r,s) s^3 w_k^{(1)} e^{-i\frac{kB}{s^2}t} ds\\
&\quad - \frac{1}{2ikBt} \int_1^\oo r^{-1+\varepsilon} \La_k(t,r) \pa_r K_k(r,s) \pa_s\bigl( s^3 w_k^{(1)}\bigr) e^{-i\frac{kB}{s^2}t} ds.
    \end{split}
\end{equation}
Then we deduce from  \eqref{B.10} and  \eqref{estimate of cK}, that
\begin{align*}
&\big|r^{-2+\varepsilon} \La_k(t,r) \pkh^{(1)}\big| + |k|^{-1} \big|r^{-1+\varepsilon} \La_k(t,r) \pa_r \pkh^{(1)}\big| -\frac{1}{2|k||kB|t} r^{2+\varepsilon} \La_k(t,r) \big|w_k^{(1)}\big|\\
&\leq \frac{C}{|kB|t}  \int_1^\oo r^{-2+\varepsilon} \La_k(t,r) \big(\frac{r}{s}\big)^\f12 \min\big(\frac{r}{s},\frac{s}{r}\big)^{|k|} s^3 \Big(\big|w_k^{(1)}\big|+ s\big|\pa_s w_k^{(1)}\big|\Big) ds\\
& = \frac{C}{|kB|t}  \int_1^\oo (rs)^{-\f12} \big(\frac{s}{r}\big)^{1-\varepsilon} \frac{\La_k(t,r)}{\La_k(t,s)} \min\big(\frac{r}{s},\frac{s}{r}\big)^{|k|}  s^{2+\varepsilon}\La_k(t,s)\Big(\big|w_k^{(1)}\big|+ s\big|\pa_s w_k^{(1)}\big|\Big) ds\\
&\leq \frac{C}{|kB|t}  \int_1^\oo (rs)^{-\f12} \big(\frac{s}{r}\big)^{1-\varepsilon} \max\Big(1,\big(\frac{r}{s}\big)^{\frac{2-\varepsilon}{2}}\Big) \min\big(\frac{r}{s},\frac{s}{r}\big)^{|k|}  s^{2+\varepsilon}\La_k(t,s)\Big(\big|w_k^{(1)}\big|+ s\big|\pa_s w_k^{(1)}\big|\Big)ds \\
&\leq \frac{C}{|kB|t} \cQ_{|k|-1+\varepsilon, |k|-\frac{\varepsilon}{2}}\Big[ s^{2+\varepsilon}\La_k(t,s)\Big(\big|w_k^{(1)}\big|+ s\big|\pa_s w_k^{(1)}\big|\Big)\Big],
\end{align*}
from which,  \eqref{ineq estimate of Tab, L2} and Propositions \ref{estimate of homogeneous eqs without log loss}
 and \ref{prop estimate of pa_r w^(1)}, we infer
\begin{align*}%\label{6.36}
   &|k|^{-1}\big\| r^{-1+\varepsilon}  \pa_r \pkh^{(1)}\La_k\big\|_\Ltr +  \big\|r^{-2+\varepsilon}  \pkh^{(1)}\La_k\big\|_\Ltr \\
   &\leq \frac{C_{\varepsilon}}{|kBt|} |k|^{-1} \Bigl(   \big\|r^{2+\varepsilon} w_k^{(1)}\La_k\big\|_{\LoT(\Ltr)}+\big\|r^{3+\varepsilon}  \partial_r w_k^{(1)}\La_k\big\|_{\LoT\Ltr}  \Bigr) \\
   &\leq \frac{C_{\varepsilon}}{|kBt|} |k|^{-1} \Bigl(   \big\|r^{2+\varepsilon}  w_k^{(1)}(0)\La_k(0)\big\|_{ \Ltr}+\big\|r^{3+\varepsilon}  \partial_r w_k^{(1)}(0)\La_k(0)\big\|_{\Ltr}  \Bigr).
\end{align*}
Whereas we deduce from  \eqref{basic estimate of K with space-time weight} and Proposition \ref{estimate of homogeneous eqs without log loss}
that
\begin{align*}%\label{6.37}\begin{split}
&|k|\big\|r^{-1+\varepsilon} \pa_r \pkh^{(1)}\La_k\big\|_\Ltr + |k|^2\big\|r^{-2+\varepsilon} \pkh^{(1)}\La_k\big\|_\Ltr \\
&\leq C_{\varepsilon} \big\|r^{\varepsilon}  w_k^{(1)}\La_k\big\|_{\LoT(\Ltr)} \leq C_{\varepsilon} \big\|r^{\varepsilon} w_k^{(1)}(0)\La_k(0)\big\|_\Ltr .
   % \end{split}
\end{align*}
As a consequence, we arrive at
\begin{equation*}
\begin{split}
    &\Big\|r^{-1+\varepsilon}\pa_r\pkh^{(1)}\La_k(t) \Big\|_{\Ltr}^2 + |k|^2\Big\| r^{-2+\varepsilon}\pkh^{(1)}\La_k(t) \Big\|_{\Ltr}^2 \\
    &\leq C_{\varepsilon} \min\Big(\frac{1}{|k|^2},\frac{1}{|kBt|^2}\Big)\Bigl(   \big\|r^{2+\varepsilon} w_k^{(1)}(0)\La_k(0)\big\|_{ \Ltr}^2+\big\|r^{3+\varepsilon} \partial_r w_k^{(1)}(0)\La_k(0)\big\|_{\Ltr}^2  \Bigr).
    \end{split}
\end{equation*}
By integrating the above inequality over $[0,T]$, we  complete the proof of Proposition \ref{estimate of homogeneous eqs-inviscid damping}.
\end{proof}

\section{Proof of Theorem \ref{thm1} and Theorem \ref{cor 1}.}\label{Sect7}

In this section, we present the proof of our main theorems. We begin by revisiting the fully nonlinear perturbation equations \eqref{nonlinear equation of w_0} and \eqref{nonlinear equation of w_k}
\begin{align}
\label{fully nonlinear system}\left\{
\begin{aligned}
&\partial_t w_0+\mathcal{T}_0w_0=r^{-\f12}\partial_r\Big(\sum_{\ell \in\mathbb{Z}\backslash\{0\}}i\ell r^{-1}\varphi_\ell w_{-\ell}\Big), \\
&\partial_t w_k+\mathcal{T}_kw_k = f_1^{(1)} + f_1^{(2)}+f_1^{(3)}+f_2^{(1)}+ f_2^{(2)},
\end{aligned}
\right.
\end{align}
where
\beq\label{S7eq1}
\begin{split}
    &f_1^{(1)}{\eqdefa} -ikr^{-1}\sum_{\ell \in\Z\setminus \{ 0, k\}}\partial_r\big(r^{-\f12}\varphi_\ell\big)w_{k-\ell},  \quad
    f_1^{(2)}{\eqdefa} -ikr^{-\f32} u^\theta_0 w_k,\\
    &f_1^{(3)}{\eqdefa}  -ikr^{-1}\partial_r\big(r^{-\f12}\varphi_k\big)w_{0},
    \quad f_2^{(1)} {\eqdefa} ir^{-\f12}\sum_{\ell \in\Z\setminus\{0, k\}}\ell \partial_r \big( r^{-1}\varphi_\ell w_{k-\ell}\big),\\
   & f_2^{(2)} {\eqdefa} ir^{-\f12} \partial_r \big( kr^{-1}\varphi_kw_0\big).
\end{split}
\eeq

\begin{lemma}\label{lemma F_0}
   {\sl  Let $w$ be a smooth enough solution to \eqref{1.10}, and $\mathcal{M}(0), E_0(T)$ and $\mathcal{E}(T)$
    be the energy functionals defined respectively by \eqref{S1eq1} and
    \eqref{S1eq2}. Then there exists a constant $C$ independent of $\varepsilon, \nu, B,T$, such that
\begin{equation}\label{S7eq1}
    \begin{split}
 E_0(T) \leq C  \mathcal{M}(0) + \frac{1}{2}  \mathcal{E}(T) +   C|\nu B|^{-\f12} \mathcal{E}^2(T).
    \end{split}
\end{equation}}
\end{lemma}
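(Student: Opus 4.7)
I would bound the three pieces of $E_0(T) = \|rw_0\|_{\LoT(\Ltr)} + \|u^\theta_0\|_{\LoT(\Ltr)} + \nu^{1/2}\|w_0\|_{\LtT(\Ltr)}$ by separate energy identities and then combine the results.

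For $\|u^\theta_0\|_{\LoT(\Ltr)}$ and $\nu^{1/2}\|w_0\|_{\LtT(\Ltr)}$, I would invoke the velocity--energy identity of Lemma \ref{velocity, basic energy conservation equality}. Parseval in the angular variable gives $\|u^\theta_0(t)\|_{\Ltr}^2 \lesssim \|U(t)\|_{\LtO}^2$ and $\|w_0\|_{\LtT(\Ltr)}^2 \lesssim \|W\|_{\LtT(\LtO)}^2$, hence
\begin{equation*}
\|u^\theta_0\|_{\LoT(\Ltr)}^2 + \nu\|w_0\|_{\LtT(\Ltr)}^2 \lesssim \|U(0)\|_{\LtO}^2 + |B|\sum_{k\neq 0}|k|\,\Big\|\tfrac{\varphi_k}{r^2}\Big\|_{\LtT(\Ltr)}\Big\|\tfrac{\partial_r\varphi_{-k}}{r}\Big\|_{\LtT(\Ltr)}.
\end{equation*}
The initial velocity is bounded by $\mathcal{M}(0)$ via standard Biot--Savart. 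Using $r^\varepsilon\La_k\geq 1$, the two elliptic factors are dominated by $E_k(T)/(|kB|^{1/2}|k|^{3/2})$ and $E_{-k}(T)/(|kB|^{1/2}|k|^{1/2})$ from the components of $E_k(T)$ in \eqref{S1eq2}, so the entire nonlinear sum is at most $C\sum_{k\neq 0}E_k(T)E_{-k}(T)/|k|^2 \leq C\mathcal{E}(T)^2/100^2$. Taking square roots and exploiting the factor $100$ in $\mathcal{E}(T) = E_0(T) + 100\sum_k E_k(T)$ yields the $\tfrac12\mathcal{E}(T)$ contribution claimed in the lemma.

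For $\|rw_0\|_{\LoT(\Ltr)}$, I would test \eqref{nonlinear equation of w_0} against $r^2 w_0$. Lemma \ref{Coercive estimates in general polynomial-weighted spaces} with $(k,\alpha)=(0,1)$ produces the identity
\begin{equation*}
\tfrac12\tfrac{d}{dt}\|rw_0\|_{\Ltr}^2 + \nu\|rw_0'\|_{\Ltr}^2 = \tfrac{5\nu}{4}\|w_0\|_{\Ltr}^2 + \mathrm{Re}\,\langle F_0, r^2 w_0\rangle_{\Ltr},
\end{equation*}
with $F_0 = r^{-1/2}\partial_r\bigl(\sum_{\ell\neq 0}i\ell\, r^{-1}\varphi_\ell w_{-\ell}\bigr)$. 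After integration in $t$, the indefinite $\tfrac{5\nu}{4}\|w_0\|_{\LtT(\Ltr)}^2$ term is absorbed by the dissipation bound from the first step. Integration by parts in $r$ (using $w_0|_{r=1}=0$) rewrites the nonlinear inner product as bilinear pairings of $r^{1/2}\varphi_\ell w_{-\ell}$ against $w_0'$ and $r^{-1/2}w_0$. I estimate these by H\"older, placing $\varphi_\ell$ in $\LtT(\Lor)$ at weight $r^{\varepsilon-3/2}\La_\ell$ (controlled by $E_\ell(T)/(|\ell B|^{1/2}|\ell|)$ via \eqref{ineq 8.2}), $w_{-\ell}$ in $\LoT(\Ltr)$ at weight $r^{1+\varepsilon}\La_{-\ell}$ (controlled by $E_{-\ell}(T)$), and pairing the residual viscous factor with $\nu^{1/2}\|rw_0'\|_{\LtT(\Ltr)}$ on the LHS of the energy identity, absorbed via Young's inequality. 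The resulting nonlinear contribution carries exactly the prefactor $|\nu B|^{-1/2}$, from combining the elliptic $|kB|^{-1/2}$ with the viscous $\nu^{-1/2}$.

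The main obstacle I foresee is the bilinear weight-matching in the final step: multiplying the sharpest pointwise weight $r^{\varepsilon-3/2}\La_\ell$ on $\varphi_\ell$ by the $\Ltr$-weight $r^{1+\varepsilon}\La_{-\ell}$ on $w_{-\ell}$ falls short of the $r^{1/2}$ in the integrand by a residual factor $r^{1-2\varepsilon}/(\La_\ell\La_{-\ell})$, which grows at infinity when $\varepsilon<1/2$. Closing the argument therefore requires Properties B and C of $\La_k$ (submultiplicativity $1\leq\La_\ell\La_{-\ell}$ and radial comparability) to trade the polynomial deficit for logarithmic weight, or redistributing weights using the alternative $\LtT(\Ltr)$-bound $\mu_\ell^{1/2}\|r^\varepsilon w_\ell\La_\ell\|$ from $E_\ell(T)$ in the far-field regime.
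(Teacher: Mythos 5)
Your overall strategy matches the paper's: the bounds on $\|u^\theta_0\|_{\LoT(\Ltr)}$ and $\nu^{1/2}\|w_0\|_{\LtT(\Ltr)}$ come from Lemma~\ref{velocity, basic energy conservation equality}, and $\|rw_0\|_{\LoT(\Ltr)}$ comes from testing \eqref{nonlinear equation of w_0} against $r^2 w_0$. The first two pieces are handled correctly.

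The concern you raise in the last paragraph, however, is self-inflicted and should not be left in the writeup, because it reflects an inconsistent choice of where to put the $r$-weight rather than a genuine obstruction. After integration by parts the derivative term is $\int r^{1/2}\varphi_\ell w_{-\ell}\,\bar w_0'\,dr$. If you split this as $\|r^{1/2}\varphi_\ell w_{-\ell}\|_\Ltr\|w_0'\|_\Ltr$ and then try to control $\|r^{1/2}\varphi_\ell w_{-\ell}\|_\Ltr$ by $\|r^{-1/2}\varphi_\ell\|_\Lor\|rw_{-\ell}\|_\Ltr$, you indeed need $\|r^{-1/2}\varphi_\ell\|_\Lor$, which is \emph{not} dominated by $\|r^{\varepsilon-3/2}\varphi_\ell\La_\ell\|_\Lor$ because the weight $-1/2 > \varepsilon-3/2$; that is the source of your spurious factor $r^{1-\varepsilon}/\La_\ell$. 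The paper avoids this by putting the $r$ on the other factor: write the same term as $\big\langle r^{-1/2}\varphi_\ell w_{-\ell},\; r w_0'\big\rangle$, and bound it by $\|r^{-1/2}\varphi_\ell w_{-\ell}\|_\Ltr\,\|rw_0'\|_\Ltr$, where $\|rw_0'\|_\Ltr$ is exactly the quantity produced by the dissipation in the energy identity. Then split $r^{-1/2}|\varphi_\ell w_{-\ell}| = r^{-3/2}|\varphi_\ell|\cdot r|w_{-\ell}|$, so the exponents $-\tfrac32 + 1 = -\tfrac12$ match exactly, and since $r\geq 1$ and $\La_k\geq 1$ one has $r^{-3/2}\le r^{\varepsilon-3/2}\La_\ell$ and $r\le r^{1+\varepsilon}\La_{-\ell}$, which is all that is needed to invoke \eqref{ineq 8.2} and $\|r^{1+\varepsilon}w_{-\ell}\La_{-\ell}\|_{\LoT(\Ltr)}\le E_{-\ell}(T)$. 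In particular neither Property B nor Property C of $\La_k$ is required in this lemma (they are used for the nonzero-mode estimates in Lemma~\ref{f1 f2}, not here), nor is there any need to invoke the alternative $\mu_\ell^{1/2}\|r^\varepsilon w_\ell\La_\ell\|$ norm. Once this is fixed, your proposal reproduces the paper's argument.
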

\begin{proof}
We first deduce from  Lemma \ref{velocity, basic energy conservation equality} that
\begin{equation*}\begin{split}
    &\|u^\theta_0\|_{\LoT(\Ltr)}^2 + 2\nu\|w_0\|_{\LtT(\Ltr)}^2 \\
    &\leq \|U\|_{\LoT(\LtO)}^2 + 2\nu \|W\|_{\LtT(\LtO)}^2 \\
    &\leq 2\Big(\|U(0)\|_\LtO^2 +8\pi |B|\sum_{k\in \Z\backslash\{0\}} |k|\Big\|\frac{\varphi_k}{r^2}\Big\|_{\LtT(\Ltr)} \Big\|\frac{\pa_r\varphi_{-k}}{r}\Big\|_{\LtT (\Ltr)} \Big) \\
&\leq 2\|U(0)\|_\LtO^2 +16\pi \sum_{k\in \Z\backslash\{0\}} E_k(T) E_{-k}(T) ,
    \end{split}
\end{equation*}
which implies
\begin{equation}\label{6.8,a}
    \|u^\theta_0\|_{\LoT(\Ltr)} + \sqrt{2}\nu^{\f12}\|w_0\|_{\LtT(\Ltr)} \leq \sqrt{2}\times \Big( \sqrt{2}\|U(0)\|_\LtO +\sqrt{16\pi} \Big(\sum_{k\neq0} E_k(T) E_{-k}(T)\Big)^\f12\Big).
\end{equation}

On the other hand, we get, by taking $\Ltr$ inner product of \eqref{nonlinear equation of w_0} with $r^2 w_0$ and  using integration by parts, that
\begin{align*}
 &\frac{1}{2}\frac{d}{dt}\big\|rw_0(t)\big\|_\Ltr^2 + \nu\big\|r \pa_r w_0\big\|_\Ltr^2 -\frac{5\nu}{4} \big\| w_0\big\|_\Ltr^2\\
 & \leq \Big|\big\langle\sum_{k\in\mathbb{Z}\backslash\{0\}}ikr^{-\f12}\varphi_kw_{-k}, \ r^{-\f12}\pa_r\big(r^{\f32}w_0\big)\big\rangle_\Ltr\Big|\\
 &\leq   \Big\|r^{-\f12}\sum_{k\in\Z\backslash\{0\}} k\varphi_kw_{-k} \Big\|_{\Ltr} \big(\big\|r \pa_r  w_0\big\|_\Ltr+ 3/2\big\|  w_0\big\|_\Ltr\big)\\
& \leq \nu^{-1}\Big\|r^{-\f12}\sum_{k\in\Z\backslash\{0\}} k\varphi_k w_{-k} \Big\|_{\Ltr}^2 + \frac{\nu}{2} \big\|r \pa_r  w_0\big\|_\Ltr^2 +\frac{9\nu}{8} \big\|  w_0\big\|_\Ltr^2.
\end{align*}
By integrating the above inequality over $[0,T]$, one has
\begin{equation*}
    \begin{split}
 &\big\|rw_0\big\|_{\LoT(\Ltr)}^2 + \nu\big\|r \pa_r w_0\big\|_{\LtT(\Ltr)}^2 \\
& \leq 2\times\Big( \|rw_0(0)\big\|_{\Ltr}^2+ 2\nu^{-1}\Big\|r^{-\f12}\sum_{k\in\Z\backslash\{0\}} k\varphi_k w_{-k} \Big\|_{\LtT(\Ltr)}^2 +\frac{19\nu}{4} \big\|  w_0\big\|_{\LtT(\Ltr)}^2\Big),
 \end{split}
\end{equation*}
from which and the fact:
\begin{equation*}
    x+y \leq \sqrt{2}(a+b+c),\quad \text{if} \ x^2+y^2 \leq a^2 +b^2+c^2, \quad \text{for }\ x,y,a,b,c\geq 0,
\end{equation*}
we infer
\begin{align*}%\label{3.10,a}
&\big\|rw_0\big\|_{\LoT(\Ltr)} + \nu^\f12 \big\|r \pa_r w_0\big\|_{\LtT(\Ltr)} \\
& \leq \sqrt{2}\times\Big( \sqrt{2}\|rw_0(0)\big\|_{\Ltr}+ 2\nu^{-\f12}\Big\|r^{-\f12}\sum_{k\neq0} k\varphi_k w_{-k} \Big\|_{\LtT(\Ltr)} +\sqrt{\frac{19}{2}} \nu^{\f12} \big\|  w_0\big\|_{\LtT(\Ltr)}\Big)\\
& \leq 2 \|rw_0(0)\big\|_{\Ltr}+ 2\sqrt{2}\nu^{-\f12}\sum_{k\neq 0}\big|k|\|r^{-\f32} \varphi_k\big\|_{\LtT(\Lor)} \big\|r w_{-k} \big\|_{\LoT(\Ltr)} +\sqrt{19} \nu^{\f12} \big\|  w_0\big\|_{\LtT(\Ltr)}.
\end{align*}
Then along the same line to the derivation of \eqref{6.8,a}, we deduce that
\begin{equation}\label{6.9,a}
    \begin{split}
    \|rw_0\|_{\LoT( \Ltr)} +&\nu^\f12 \|r \pa_r w_0\|_{\LtT (\Ltr)}
        % \leq& 2 \|rw_0(0)\big\|_{\Ltr}+ 2\sqrt{2}\nu^{-\f12}\sum_{k\neq 0}\big|k|\|r^{-\f32} \varphi_k\big\|_{\LtT(\Lor)}\|r w_{-k} \|_{\LoT(\Ltr)} +\sqrt{19} \nu^{\f12}\|  w_0\|_{\LtT(\Ltr)}\\
       \leq   C\|rw_0(0)\|_\Ltr + C\|U(0)\|_\LtO +\\
       &  C|\nu B|^{-\f12} \sum_{k\neq 0} E_k(T)E_{-k}(T) +\sqrt{19}\times\sqrt{16\pi} \Big(\sum_{k\neq0} E_k(T) E_{-k}(T)\Big)^\f12.
    \end{split}
\end{equation}

In view of \eqref{S1eq1},
 we get, by summing up \eqref{6.8,a} with \eqref{6.9,a}, that
\begin{align*}
E_0(T)  \leq& C\big(\|U(0)\|_\LtO+\|rw_0(0)\|_\Ltr\big)  + C|\nu B|^{-\f12} \sum_{k\neq0} E_k(T)E_{-k}(T)\\
&\quad + (\sqrt{2}+\sqrt{19})\sqrt{16\pi} \Big(\sum_{k\neq0} E_k(T) E_{-k}(T)\Big)^\f12\\
   \leq& C  \mathcal{M}(0) + C|\nu B|^{-\f12} \mathcal{E}^2(T) + \frac{(\sqrt{2}+\sqrt{19})\sqrt{16\pi}}{100}\mathcal{E}(T)\\
   \leq& C  \mathcal{M}(0) + \frac{1}{2}  \mathcal{E}(T) +   C|\nu B|^{-\f12} \mathcal{E}^2(T),
\end{align*}
which leads to \eqref{S7eq1}.
\end{proof}

\begin{lemma}\label{basic property of k}
{\sl For any $k, \ell \in\mathbb{Z}$ and any $ \alpha\in (0, 1]$, there hold
\begin{align}
\label{9.2-1}|k|^{\alpha}\leq|\ell|^{\alpha}+|k-\ell|^{\alpha}, \quad \log(|k|+|\ell|+e) \leq \log(|k|+e)\log(|\ell|+e).
\end{align}
For any $\ell \in\mathbb{Z}$ and $\ell \neq 0,k$, there holds
\begin{align}
\label{9.2-2}|k|\leq 2|\ell| |k-\ell|.
\end{align}
Moreover,  for any $\hat{C}\geq e$,one has
\begin{align}
\label{9.2-3}\La_k \leq \La_\ell\La_{k-\ell}.
\end{align}}
\end{lemma}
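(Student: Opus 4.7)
The lemma bundles three elementary observations; I would treat them in the order stated, with \eqref{9.2-3} as the real payoff and the first two as stepping stones.

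For \eqref{9.2-1}, the polynomial inequality follows from the triangle inequality $|k| \leq |\ell| + |k-\ell|$ combined with the subadditivity $(a+b)^\alpha \leq a^\alpha + b^\alpha$ on $[0,\infty)$, valid for $\alpha \in (0,1]$. The logarithmic inequality is subtler: the naive attempt $\log(|k|+|\ell|+e) \leq \log((|k|+e)(|\ell|+e)) = \log(|k|+e) + \log(|\ell|+e)$ would leave one needing $a+b \leq ab$ for $a\eqdef\log(|k|+e), b\eqdef\log(|\ell|+e)$, and although $a, b \geq 1$, the bound $a+b\leq ab$ requires $(a-1)(b-1) \geq 1$, which fails when $|k|$ or $|\ell|$ is small. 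My plan is to use the sharper algebraic identity
\[
(X+Y)(X+Z) = X(X+Y+Z) + YZ,
\]
valid for all $X > 0$, $Y, Z \geq 0$, which after dividing by $X$ and taking logarithms yields
\[
\log(X+Y+Z) \leq \log(X+Y) + \log(X+Z) - \log X.
\]
Applied with $X = e, Y = |k|, Z = |\ell|$, this gives $\log(|k|+|\ell|+e) \leq a+b-1$, and rearranging $(a-1)(b-1) \geq 0$ (using $a, b \geq 1$) as $a+b-1 \leq ab$ closes the estimate.

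The bound \eqref{9.2-2} is immediate once one notes that $\ell \neq 0, k$ forces $|\ell|, |k-\ell| \geq 1$, so
\[
|\ell|\,|k-\ell| \geq \max(|\ell|,|k-\ell|) \geq \tfrac12(|\ell|+|k-\ell|) \geq \tfrac12|k|.
\]

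For \eqref{9.2-3}, the strategy is to reduce the statement to a single application of the pivot inequality above. Since $\kappa_k = \nu^{1/3}|B|^{2/3}|k|^{2/3}$, the polynomial part of \eqref{9.2-1} with $\alpha = 2/3$ yields $\kappa_k \leq \kappa_\ell + \kappa_{k-\ell}$. Setting $X = \hat{C}r^2$, $Y = \kappa_\ell t$, $Z = \kappa_{k-\ell} t$, monotonicity of $\log$ combined with the identity above gives $\La_k \leq \log(X+Y+Z) \leq \La_\ell + \La_{k-\ell} - \log X$. The hypothesis $\hat{C} \geq e$ together with $r \geq 1$ guarantees $\log X \geq 1$ and $\La_\ell, \La_{k-\ell} \geq 1$, so $(\La_\ell - 1)(\La_{k-\ell} - 1) \geq 0$ rearranges into $\La_\ell + \La_{k-\ell} - 1 \leq \La_\ell \La_{k-\ell}$, closing the bound. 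The only real obstacle is spotting the multiplicative pivot $(X+Y)(X+Z) \geq X(X+Y+Z)$; once it is in hand, the rest is routine, and the role of the assumption $\hat{C}\geq e$ is precisely to keep every relevant logarithm at or above $1$ so the $(a-1)(b-1) \geq 0$ step closes the estimate.
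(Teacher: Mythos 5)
Your proof is correct and runs along the same route as the paper's, reducing everything to the log-multiplicativity bound $\log(a+x+y)\leq\log(a+x)\log(a+y)$ for $a\geq e$, $x,y\geq 0$. The only difference is that the paper asserts this inequality without proof, whereas your pivot identity $(X+Y)(X+Z)=X(X+Y+Z)+YZ$ together with the rearrangement $(\log(X+Y)-1)(\log(X+Z)-1)\geq 0$ supplies the justification the paper omits.
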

\begin{proof}
\eqref{9.2-2} and the first inequality in \eqref{9.2-1} are obvious. Notice that
\begin{equation*}
    \log(a+x+y)\leq \log(a+x)\log(a+y),\quad \text{for} \ x,y\geq 0,\  a\geq e,
\end{equation*}
which implies the second inequality in \eqref{9.2-1} and for $\hc\geq e$,
\begin{align*}
    \Lambda_k=&\log\big(\hat{C}r^2 + \nu^\f13|kB|^\f23 t\big)\leq\log\Big(\hat{C}r^2 +  \nu^\f13|B|^\f23|\ell|^{\f23}t+\nu^\f13|B|^\f23|k-\ell|^{\f23} t\Big)\\
    \leq&\log\big(\hat{C}r^2 +  \nu^\f13|B|^\f23|\ell|^{\f23}t\big)\log\big(\hat{C}r^2 +  \nu^\f13|B|^\f23|k-\ell|^{\f23} t\big)= \La_\ell\La_{k-\ell},
\end{align*}
which leads to \eqref{9.2-3}. We thus complete the proof of Lemma \ref{basic property of k}.
\end{proof}

\begin{lemma}\label{f1 f2} {\sl Let $k\in \mathbb{Z}\backslash\{0 \}$ and $f_i^{(j)}, i=1,2,$ $j=1,2,3,$ be given by \eqref{S7eq1}.
 Then there exists a constant $C_{\varepsilon}$  depending only on $\varepsilon$, such that
 \beq \label{S7eq2}
\begin{split}
&\mu_k^{-\f12} \Big(\big\|r^{2+\varepsilon} f_1^{(1)} \La_k\big\|_{\LtT (\Ltr)} +\big\|r^{2+\varepsilon} f_1^{(2)} \La_k\big\|_{\LtT (\Ltr)}+ \big\|r^{2+\varepsilon} f_1^{(3)} \La_k\big\|_{\LtT (\Ltr)}\Big)\\
&\quad+\nu^{-\f12} \Big(\big\|r^{1+\varepsilon} f_2^{(1)} \La_k\big\|_{\LtT (\hkf)}+\big\|r^{1+\varepsilon} f_2^{(2)} \La_k\big\|_{\LtT(\hkf)} \Big)\\
&\leq C_\varepsilon |\nu B|^{-\f12} \sum_{\ell \in\mathbb{Z} } E_\ell(T) E_{k-\ell}(T).
\end{split}\eeq}
\end{lemma}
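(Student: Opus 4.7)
The plan is to bound each of the five nonlinear terms in \eqref{S7eq2} by a common three-step strategy. First, I use Lemma \ref{basic property of k} to split the factor $|k|$ (when present) as $|k|\leq 2|\ell||k-\ell|$ for $\ell\neq 0,k$, together with the enhanced-dissipation weight $\La_k\leq \La_\ell\La_{k-\ell}$, so that each Fourier mode in the bilinear product carries its own weight. Second, I apply H\"older's inequality in the form $\|PQ\|_{\LtT(\Ltr)}\leq \|P\|_{\LtT(\Lor)}\|Q\|_{\LoT(\Ltr)}$: the vorticity factor $w_{k-\ell}$, $w_k$, or $w_0$ is placed in $\LoT(\Ltr)$, where it is controlled directly by $E_{k-\ell}(T)$ or by the $\|rw_0\|_{\LoT(\Ltr)}$, $\|u^\theta_0\|_{\LoT(\Ltr)}$ pieces of $E_0(T)$; the stream function (or velocity) factor is placed in $\LtT(\Lor)$, where it is controlled by $E_\ell(T)$ through \eqref{ineq 8.2} together with a radial Sobolev interpolation. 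Third, I sum over $\ell\in\mathbb{Z}$ using Young's convolution inequality.

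For the two terms $f_2^{(1)}$ and $f_2^{(2)}$, I exploit their divergence structure $f_2^{(j)}\sim r^{-1/2}\pa_r(r^{-1}\varphi w)$ together with the duality \eqref{equivalent definition of weighted Hkf}. Writing $r^{1+\varepsilon}\La_k f_2^{(j)}=\pa_r\bigl(r^{-1/2+\varepsilon}\La_k(\cdots)\bigr)-\pa_r(r^{1/2+\varepsilon}\La_k)\cdot r^{-1}(\cdots)$, the first piece lands in the $\pa_r g_1$ slot of the $\hkf$ dual norm and reduces to the H\"older bound $\|r^{-1/2+\varepsilon}\La_k\varphi_\ell w_{k-\ell}\|_{\LtT(\Ltr)}\lesssim \|r^{\varepsilon-3/2}\varphi_\ell\La_\ell\|_{\LtT(\Lor)}\|rw_{k-\ell}\La_{k-\ell}\|_{\LoT(\Ltr)}$. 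The commutator piece is placed in the $(k/r)g_2$ slot and absorbed via $|\ell|/|k|\lesssim |k-\ell|$ from Lemma \ref{basic property of k}. Multiplying by the overall $\nu^{-1/2}$ and using \eqref{ineq 8.2} then produces exactly $|\nu B|^{-1/2}\sum_\ell E_\ell(T) E_{k-\ell}(T)$; the treatment of $f_2^{(2)}$ is identical with $\varphi_k$, $w_0$ replacing $\varphi_\ell$, $w_{k-\ell}$.

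For $f_1^{(1)}$ and $f_1^{(3)}$, the key identity $\pa_r(r^{-1/2}\varphi_\ell)=r^{-1/2}u^\theta_\ell$ turns the problem into bounding $\|r^{-1/2}u^\theta_\ell\La_\ell\|_{\LtT(\Lor)}$. Splitting $u^\theta_\ell=\pa_r\varphi_\ell-\varphi_\ell/(2r)$, the $\varphi_\ell/r$ part is covered directly by \eqref{ineq 8.2}, while for $\pa_r\varphi_\ell$ one invokes the radial Sobolev inequality $\|f\|_{\Lor}^2\lesssim \|f\|_{\Ltr}\|\pa_r f\|_{\Ltr}$ of Lemma \ref{Appendix A1-1}. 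The base $L^2_r$ piece is covered by the $|kB|^{1/2}|k|^{1/2}\|r^{\varepsilon-1}\pa_r\varphi_\ell\La_\ell\|_{\LtT(\Ltr)}$ term in $E_\ell(T)$; when the outer derivative lands on $\pa_r\varphi_\ell$, it becomes $\pa_r^2\varphi_\ell=w_\ell+((\ell^2-1/4)/r^2)\varphi_\ell$ by \eqref{equation of varphi_k}, and the $w_\ell$-contribution is absorbed by the $\mu_\ell^{1/2}\|r^\varepsilon w_\ell\La_\ell\|_{\LtT(\Ltr)}$ piece of $E_\ell(T)$. The term $f_1^{(2)}=-ikr^{-3/2}u^\theta_0 w_k$ is handled similarly: $u^\theta_0$ goes into $\LoT(\Ltr)$ via $E_0(T)$, and the same Sobolev interpolation applied to $r^\varepsilon w_k\La_k$ furnishes the $\LtT(\Lor)$ bound on the vorticity side.

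The main obstacle is the quantitative matching of scales. After multiplying by $\mu_k^{-1/2}$ (for the $f_1$ terms) or $\nu^{-1/2}$ (for the $f_2$ terms), and expanding the Sobolev-interpolated factors in terms of $\mu_\ell$, $|\ell B|^{1/2}|\ell|^{1/2}$, and the various powers of $|k|$, $|\ell|$, $|k-\ell|$, one must verify that every resulting coefficient is dominated by $|\nu B|^{-1/2}$ times a sequence in $\ell$ that is summable against $(E_\ell(T))$ via Young's convolution inequality. This relies crucially on $\nu\ll |B|$---which yields the gain $(\nu/|B|)^{1/2}$ needed to absorb extra factors of $|\ell|$---and on the case analysis afforded by $\mu_\ell=\max(\nu\ell^2,\nu^{1/3}|\ell B|^{2/3})$; both regimes must be checked separately to ensure the $|\nu B|^{-1/2}$ scaling is sharp and uniform in $\nu$, $B$, $k$, $\ell$.
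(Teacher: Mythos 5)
Your treatment of $f_2^{(1)}$, $f_2^{(2)}$ agrees with the paper's: you factor out a full derivative into the $\pa_r g_1$ slot of the $\hkf$ dual norm, put $\varphi_\ell$ in $\LtT(\Lor)$ via \eqref{ineq 8.2} and $w_{k-\ell}$ in $\LoT(\Ltr)$, and the pre-factor $\nu^{-1/2}|\ell|\cdot|\ell B|^{-1/2}|\ell|^{-1}$ is exactly $|\nu B|^{-1/2}|\ell|^{-1/2}$. The $f_1^{(3)}$ estimate also closes with your Sobolev interpolation on the $\varphi_k$ side, since $\|\pa_r^2\varphi_k\La_k\|_{\LtT}$ reduces via the stream-function equation to $\|w_k\La_k\|_{\LtT}$, which \emph{is} in $E_k$.

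The blanket rule ``vorticity in $\LoT(\Ltr)$, stream function in $\LtT(\Lor)$'' fails for $f_1^{(1)}$. The paper uses the \emph{opposite} H\"older split: $\pa_r(r^{-1/2}\varphi_\ell)$ in $\LoT(\Lor)$ (controlled via the Biot--Savart bound \eqref{basic estimate of K with space-time weight} by $|\ell|^{-1/2}\|rw_\ell\La_\ell\|_{\LoT(\Ltr)}$), and $w_{k-\ell}$ in $\LtT(\Ltr)$, which generates a crucial second factor $\mu_{k-\ell}^{-1/2}$. Then $\mu_k^{-1/2}\mu_{k-\ell}^{-1/2}\leq |k|^{-1/2}|k-\ell|^{-1/2}|\nu B|^{-1/2}$, and $|k|\cdot|k|^{-1/2}|\ell|^{-1/2}|k-\ell|^{-1/2}\lesssim 1$ by \eqref{9.2-2} closes the estimate cleanly. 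In your version $w_{k-\ell}$ lands in $\LoT(\Ltr)$, which costs $E_{k-\ell}$ directly and produces \emph{no} $\mu_{k-\ell}^{-1/2}$. The Sobolev interpolation on $\varphi_\ell$ yields at best a factor $|\ell|^{-1/4}|\ell B|^{-1/4}\mu_\ell^{-1/4}$ (the geometric mean of the inviscid-damping and enhanced-dissipation scales), and the combined pre-factor $\mu_k^{-1/2}|k||\ell|^{-1/4}|\ell B|^{-1/4}\mu_\ell^{-1/4}$ cannot be dominated by $|\nu B|^{-1/2}$ uniformly: testing, e.g., $|\ell|=1$ and $|k|\approx(|B|/\nu)^{1/2}$, the ratio is of size $(|B|/\nu)^{1/12}$, which diverges as $\nu\to0$, and switching between the two branches of $\mu_k,\mu_\ell$ does not remove the loss.

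Separately, your $f_1^{(2)}$ step is unavailable as written. You want to put $w_k$ in $\LtT(\Lor)$ by Sobolev interpolation applied to $r^\varepsilon w_k\La_k$, but Lemma \ref{Appendix A1-1} requires an $L^2_r$ bound on $\pa_r w_k$, and $E_k(T)$ contains no $\LtT(\Ltr)$ control of $\pa_r w_k$ whatsoever. The paper instead uses the interpolation on the zero-mode side — \eqref{ineq 8.2.a} gives $\|r^{1/2}u^\theta_0\|_{\LoT(\Lor)}\lesssim E_0(T)$ using the $\|rw_0\|_{\LoT(\Ltr)}$ piece of $E_0$ — and keeps $w_k$ in $\LtT(\Ltr)$, picking up the needed $\mu_k^{-1/2}$.
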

\begin{proof} Recall that
$\mu_k=\max\bigl(\nu|k|^2, \nu^{\f13}|kB|^{\f23}\bigr),$ we deduce from Lemma  \ref{basic property of k} that
\begin{align*}
  \La_k \lesssim \La_\ell\La_{k-\ell} \ (\forall k,\ell \in \mathbb{Z}),\quad |k|^\f12\lesssim |\ell|^\f12 |k-\ell|^\f12 \text{ (for any $\ell\neq 0,k$)},\quad  \mu_k^{-\f12} \leq |k|^{-\f12} |\nu B|^{-\f14},
\end{align*}
from which, \eqref{S7eq1} and  \eqref{basic estimate of K with space-time weight}, we infer
\begin{align*}
    \mu_k^{-\f12} \big\|r^{2+\varepsilon} f_1^{(1)} \La_k\big\|_{\LtT (\Ltr)} &\leq C |k|\mu_k^{-\f12} \sum_{\ell\in\Z\setminus\{0,k\}} {\|r\partial_r(r^{-\f12}\varphi_\ell) \La_\ell\|_{\LoT(\Lor)} } \|r^{\varepsilon}w_{k-\ell}\La_{k-\ell}\|_{\LtT (\Ltr)} \\
    & \leq  C |k|\mu_k^{-\f12} \sum_{\ell\in\Z\setminus\{0,k\}}|\ell|^{-\f12} \|r w_\ell\La_\ell\|_{\LoT(\Ltr)} \|r^{\varepsilon}w_{k-\ell}\La_{k-\ell}\|_{\LtT (\Ltr)} \\
    &\leq C|k|\mu_k^{-\f12} \sum_{\ell \neq k,0}|\ell|^{-\f12}\mu^{-\f12}_{k-\ell} E_\ell(T)  E_{k-\ell}(T) \\
    &\leq C |\nu B|^{-\f12} |k|^\f12 \sum_{\ell \neq k,0}|\ell|^{-\f12}|k-\ell|^{-\f12} E_\ell(T)  E_{k-\ell}(T) \\
    &\leq C |\nu B|^{-\f12} \sum_{\ell\in\Z\setminus\{0,k\}}E_\ell(T)  E_{k-\ell}(T).
\end{align*}

While it follows from \eqref{w_0, u_0} and Lemma \ref{Appendix A1-1} that
\begin{equation}\label{ineq 8.2.a}
\big\|r^\f12u^\theta_0\big\|_{\LoT(\Lor)} \lesssim \big\|r \pa_r u^{\theta}_0\big\|_{\LoT(\Ltr)} + \big\|u^\theta_0\big\|_{\LoT(\Ltr)} \lesssim   \big\|r w_0\big\|_{\LoT(\Ltr)} + \big\|u^\theta_0\big\|_{\LoT(\Ltr)}.
\end{equation}
Then we deduce from \eqref{S7eq1} that
\begin{equation*}
    \begin{split}
\mu_k^{-\f12}\big\|r^{2+\varepsilon} f_1^{(2)} \La_k\big\|_{\LtT (\Ltr)} &\leq C|k|\mu_k^{-\f12} \|r^\f12 u^\theta_0\|_{\LoT(\Lor)} \|r^{\varepsilon}w_k  \La_k \|_{\LtT (\Ltr)} \\
&\leq C |k|\mu_k^{-\f12} E_0(T) \mu_k^{-\f12} E_k(T) \leq C |\nu B|^{-\f12} E_0(T) E_k(T).
    \end{split}
\end{equation*}

For $f_1^{(3)}$, we get, by using \eqref{basic estimate of K with space-time weight}, that
\begin{align*}
    \mu_k^{-\f12} \big\|r^{2+\varepsilon} f_1^{(3)} \La_k\big\|_{\LtT (\Ltr) }&\lesssim |k|\mu_k^{-\f12}\|r^{\varepsilon}\partial_r (r^{-\f12}\varphi_k)\La_k\|_{\LtT (\Lor)} \|rw_0\|_{\LoT(\Ltr)}\\
   &\lesssim_\varepsilon |k|\mu_k^{-\f12} {|k|^{-\f12}\|r^{\varepsilon}w_k \La_k \|_{\LtT (\Ltr)} }\|r w_0\|_{\LoT(\Ltr)} \\
   &\lesssim |k|^\f12 \mu_k^{-1}  E_0(T) E_k(T) \lesssim_\varepsilon  |\nu B|^{-\f12} E_0(T) E_k(T).
\end{align*}

Notice that
\begin{align*}
   r^{\f12 +\varepsilon}\partial_r F \La_k = \partial_r(r^{\f12 +\varepsilon} F \La_k) - F\partial_r(r^{\f12 +\varepsilon} \La_k),\quad \partial_r(r^{\f12 +\varepsilon}\La_k) \leq C r^{-\f12+\varepsilon} \La_k,
\end{align*}
where the constant $C$ is independent of $k$. Then we get, by applying \eqref{ineq 8.2}, that
\begin{align*}
\nu^{-\f12} &\Big(\|r^{1+\varepsilon} f_2^{(1)} \La_k\|_{\LtT(\hkf)}+\big\|r^{1+\varepsilon} f_2^{(2)} \La_k\big\|_{\LtT(\hkf)} \Big) \\
\leq& \nu^{-\f12} \sum_{\ell\in\Z\setminus\{0,k\}} |\ell| \Big(\|\partial_r(r^{-\f12+\varepsilon} \varphi_\ell w_{k-\ell}\La_k)\|_{\LtT(\hkf)} + \|\varphi_\ell w_{k-\ell} r^{-1}\partial_r(r^{\f12+\varepsilon}\La_k) \|_{\LtT(\hkf)} \Big)\\
&+ \nu^{-\f12}|k|\Big(\|\partial_r(r^{-\f12+\varepsilon} \varphi_k w_{0}\La_k)\|_{\LtT(\hkf)} + \|\varphi_k w_{0} r^{-1}\partial_r(r^{\f12+\varepsilon}\La_k) \|_{\LtT(\hkf)} \Big)\\
\lesssim& \nu^{-\f12} \sum_{\ell\in\Z\setminus\{0,k\}} |\ell| \|r^{-\f12+\varepsilon} \varphi_\ell w_{k-\ell}\La_k\|_{\LtT(\Ltr)} +\nu^{-\f12}  |k| \|r^{-\f12+\varepsilon} \varphi_k w_{0}\La_k\|_{\LtT(\Ltr)} \\
\lesssim &\nu^{-\f12} \sum_{\ell\in\Z\setminus\{0,k\}} |\ell|\|r^{-\f32}\varphi_\ell \La_\ell\|_{\LtT (\Lor)}  \|r^{1+\varepsilon}w_{k-\ell}\La_{k-\ell}\|_{\LoT(\Ltr)} \\
&+ \nu^{-\f12}|k| \|r^{\varepsilon-\f32}\varphi_k\La_k\|_{\LtT (\Lor)} \|rw_0\|_{\LoT(\Ltr)}\\
\lesssim& |\nu B|^{-\f12} \sum_{\ell \in\mathbb{Z} } E_\ell(T) E_{k-\ell}(T).
\end{align*}
    This completes the proof of Lemma \ref{f1 f2}.
\end{proof}

Now
we are in a position to complete   the proof of Theorem \ref{thm1} and Theorem \ref{cor 1}.

\begin{proof}[\textbf{Proof of Theorem \ref{thm1}}]
 Due to  the local well-posedness  established in Theorem \ref{thm0},
here we focus on the {\it a priori} estimates. We first deduce from Proposition \ref{nonlinear space-time estimates} and Lemma \ref{f1 f2} that for $k\neq 0$,
\begin{align*}
 E_k(T) &\lesssim_\varepsilon      M_k(0)+   \mu_k^{-\f12}\Big(\big\|r^{2+\varepsilon} f_1^{(1)} \La_k\big\|_{\LtT (\Ltr)} +\big\|r^{2+\varepsilon} f_1^{(2)} \La_k\big\|_{\LtT (\Ltr)} + \big\|r^{2+\varepsilon} f_1^{(3)} \La_k\big\|_{\LtT (\Ltr)}\Big) \\
    & \quad+  \nu^{-\f12} \Big(\|r^{1+\varepsilon} f_2^{(1)} \La_k\|_{\LtT(\hkf)}+\big\|r^{1+\varepsilon} f_2^{(2)} \La_k\big\|_{\LtT(\hkf)} \Big) \\
    &\lesssim_\varepsilon   M_k(0)+|\nu B|^{-\f12} \sum_{\ell \in\mathbb{Z} }  E_\ell(T)   E_{k-\ell}(T).
\end{align*}
While it follows from  Lemma \ref{lemma F_0} that
\begin{equation*}
    E_0(T) \leq C  \mathcal{M}(0) + \frac{1}{2}  \mathcal{E}(T) +   C|\nu B|^{-\f12} \mathcal{E}^2(T).
\end{equation*}
By summing up the above inequalities for $k\in\Z$ and applying Young's inequality, we obtain
\begin{equation}\label{S7eq3}
    \mathcal{E}(T)  \leq C_\varepsilon  \mathcal{M}(0)+ \frac{1}{2} \mathcal{E}(T) + C_\varepsilon  |\nu B|^{-\f12} \mathcal{E}^2(T).
\end{equation}
Therefore, as long as
\begin{align*}
 \mathcal{M}(0) \leq c_1(\varepsilon)\nu^{\f12} |B|^{\f12}  \text{ for some sufficiently small $c_1(\varepsilon)$ depending only on $\varepsilon$},
\end{align*}
 we deduce from \eqref{S7eq3} that
\begin{equation*}
    \mathcal{E}(T) \leq C_2(\varepsilon) \mathcal{M}(0) \text{ for any $T>0$ and some $C_2(\varepsilon) $ depending only on $\varepsilon$}.
\end{equation*}
This completes the proof of Theorem \ref{thm1}.
\end{proof}

\begin{proof}[\textbf{Proof of Theorem \ref{cor 1}}]
    By Theorem \ref{thm1}, we have
    \begin{equation*}
\log\big(\hc +\nu^\f13 |kB|^\f23 t\big)
 \|r^{1+\varepsilon} w_k\|_\Ltr \leq \|r^{1+\varepsilon} w_k \La_k\|_\Ltr \lesssim_\varepsilon \mathcal{M}(0),
    \end{equation*}
    which leads to \eqref{ineq 1.19}.

While we observe from Lemma \ref{vorticity conservation of L2}  that $\|W(t)\|_\LtO$ is decreasing with respect to time $t.$    Furthermore, Theorem \ref{thm1} implies $\|W\|_{\LtT(\LtO)}<\oo$, so that we deduce that
\begin{equation*}
    \lim_{t\rightarrow\oo} \bigl(t^\f12 \|W(t)\|_\LtO\bigr)\leq  \sqrt{2}\lim_{t\rightarrow\oo}\Bigl(\int_{t/2}^{t}\|W(s)\|_\LtO^2 \,ds\Bigr)^{\f12}=0.
\end{equation*}
This completes the proof of Theorem \ref{cor 1}.
\end{proof}

\appendix

\section{Local wellposedness of \eqref{velocity pertubation of the Taylor-Couette flow}}\label{Appc}

 In this section, we present the result concerning the local well-posedness of the system \eqref{velocity pertubation of the Taylor-Couette flow}.

\begin{lemma}
    Let $\varepsilon\in(0,2)$ and $T_0\geq 0$. There holds
\begin{equation}\label{H1 estimate}
    \|U\|_{H^1(\Omega)} \lesssim \|U\|_{\mathscr{M}(T_0)}\eqdefa\|rw_0\|_\Ltr + \|u^\theta_0\|_\Ltr+\sum_{k\neq0}  \|r^{1+\varepsilon} w_k \La_k(T_0,\cdot)\|_\Ltr.
\end{equation}
\end{lemma}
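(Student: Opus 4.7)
The plan is to reduce the $H^1(\Omega)$ estimate to mode-by-mode bounds via Fourier decomposition in $\theta$, then apply the Biot-Savart representation \eqref{expression of phi_k} together with the weighted kernel bounds \eqref{estimate of cK}. Since $\dive U=0$ and $U\cdot\mathbf{n}|_{\pa\Omega}=0$, the integration-by-parts argument behind Lemma \eqref{2.4} combined with $W|_{\pa\Omega}=0$ yields the div-curl inequality $\|U\|_{H^1(\Omega)}^2 \lesssim \|U\|_{L^2(\Omega)}^2 + \|W\|_{L^2(\Omega)}^2$, and Parseval in $\theta$ together with the renormalizations \eqref{definition of w_k} gives
\[
\|W\|_{L^2(\Omega)}^2 \approx \sum_{k\in\Z}\|w_k\|_\Ltr^2, \qquad \|U\|_{L^2(\Omega)}^2 \approx \sum_{k\in\Z}\bigl(\|u^r_k\|_\Ltr^2 + \|u^\theta_k\|_\Ltr^2\bigr).
\]
Since $(\sum_k a_k^2)^{1/2} \leq \sum_k a_k$ for $a_k\geq 0$, it suffices to bound each mode separately by the matching term in $\|U\|_{\mathscr{M}(T_0)}$. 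For the zero mode this is immediate: $u^r_0=0$ by \eqref{expression of u}, and $\|w_0\|_\Ltr \leq \|rw_0\|_\Ltr$ since $r\geq 1$.

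For $k\neq 0$, the pointwise lower bound $\La_k(T_0,r) \geq \log \hat{C}>0$ reduces matters to the uniform-in-$k$ estimate $\|w_k\|_\Ltr + \|u^r_k\|_\Ltr + \|u^\theta_k\|_\Ltr \lesssim_\varepsilon \|r^{1+\varepsilon}w_k\|_\Ltr$. The vorticity term is trivial by $r\geq 1$. For the velocity terms, I will use $u^r_k = -ik\varphi_k/r$ and $u^\theta_k = \partial_r\varphi_k - \varphi_k/(2r)$ from \eqref{expression of u}, combined with $\varphi_k=-\cK[w_k]$ from \eqref{expression of phi_k} and the kernel bounds \eqref{estimate of cK}. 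Setting $g(s)\eqdefa s^{1+\varepsilon}|w_k(s)|$ and using the algebraic identity $s^{-\varepsilon}\min(r/s,s/r)^{|k|}=r^{-\varepsilon}\min\bigl((r/s)^{|k|+\varepsilon},(s/r)^{|k|-\varepsilon}\bigr)$, one factors the prefactor $r^{-\varepsilon}\leq 1$ for $r\geq 1$ and sees that the Biot-Savart integrand is majorized by the kernel of $\cQ_{|k|+\varepsilon,|k|-\varepsilon}$ acting on $g$. When $|k|-\varepsilon>0$ (that is, for all $|k|\geq 2$, and for $|k|=1$ when $\varepsilon<1$), Lemma \ref{lemma A.4}\eqref{ineq estimate of Tab, L2} immediately delivers the desired $\Ltr$ bound.

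The main obstacle is the borderline regime $|k|=1$ with $\varepsilon\in[1,2)$, for which Lemma \ref{lemma A.4} fails because the second exponent $|k|-\varepsilon$ is non-positive. Here I will instead verify Schur's test directly on the kernel $\kappa(r,s)\eqdefa(rs)^{-1/2}s^{-\varepsilon}\min(r/s,s/r)$ restricted to $r,s\geq 1$: an elementary splitting into $\{s\geq r\}$ and $\{s\leq r\}$ shows that both $\sup_r\int_1^\infty\kappa(r,s)\,ds$ and $\sup_s\int_1^\infty\kappa(r,s)\,dr$ are uniformly $O(1)$ for $\varepsilon\in(0,2)$, which yields the required $L^2\!\to\! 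L^2$ bound on the relevant Biot-Savart operator. Alternatively, the Hardy-type inequality $\|\varphi_1/r\|_\Ltr\lesssim\|\partial_r\varphi_1\|_\Ltr$, which follows from $\varphi_1(1)=0$ and a boundary integration by parts, reduces the most delicate $\varphi_1/r$ bound to the $\partial_r\varphi_1$ bound handled by the same Schur's test. Summing the resulting per-mode bounds via $(\sum a_k^2)^{1/2}\leq\sum a_k$ then closes the proof.
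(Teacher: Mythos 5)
Your overall strategy is sound and the final bounds do come out right, but there is a genuine gap in the first step, and your treatment of the Biot--Savart bound is more laborious than it needs to be.

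\textbf{The div-curl reduction is not free.} You claim that the argument behind \eqref{2.4}, together with $W|_{\pa\Omega}=0$, ``yields'' $\|U\|_{H^1}^2\lesssim\|U\|_\LtO^2+\|W\|_\LtO^2$. Lemma \eqref{2.4} only says $\|W\|_\LtO^2=-\int_\Omega\lap U\cdot U\,dx$. To pass to $\|\nabla U\|_\LtO^2$ you must integrate by parts a second time, and the resulting boundary term does \emph{not} vanish. Using $U\cdot n|_{\pa\Omega}=0$ (so $U^r(1,\theta)\equiv 0$ and $\pa_\theta U^r(1,\theta)\equiv 0$) and $W|_{\pa\Omega}=0$ (so $\pa_r U^\theta(1,\theta)=-U^\theta(1,\theta)$), one actually finds
\[
\|\nabla U\|_\LtO^2 \;=\; \|W\|_\LtO^2 \;+\; \int_{\pa\Omega}|U^\theta|^2\,dS,
\]
with a \emph{positive} boundary contribution coming from the curvature of the exterior disk. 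So the inequality does not follow by sign alone; you must absorb the trace term, e.g.\ via $|u(1)|^2\leq 2\|u\|_\Ltr\|u'\|_\Ltr$, Cauchy--Schwarz in $k$, and Young, giving $\int_{\pa\Omega}|U^\theta|^2 \leq \tfrac12\|\nabla U\|_\LtO^2 + C\|U\|_\LtO^2$. That step is correct but needs to be written out; it is not an immediate consequence of \eqref{2.4}. The paper avoids the issue entirely by computing $\|\nabla U\|_\LtO^2$ directly in polar coordinates and bounding $\|\pa_r^2\varphi_k\|$, $\|k\pa_r\varphi_k/r\|$, $\|k^2\varphi_k/r^2\|$ termwise through Lemma \ref{basic properties of ck}, with no boundary argument at all.

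\textbf{The Biot--Savart estimate: different and unnecessarily delicate.} You set the operator kernel up to match the target weight $r^{1+\varepsilon}$ immediately, which produces $\cQ_{|k|+\varepsilon,\,|k|-\varepsilon}$ and hence the borderline case $|k|=1,\ \varepsilon\in[1,2)$ where the second exponent is non-positive. Your remedy (direct Schur on $\kappa(r,s)=(rs)^{-1/2}s^{-\varepsilon}\min(r/s,s/r)$, plus a Hardy inequality) is workable — the Schur integrals are indeed $O(1)$ for all $\varepsilon\in(0,2)$ — but it is an extra proof. The paper instead applies its Lemma \ref{basic properties of ck} at the intermediate weight $\varepsilon'=1$ (for which both exponents $|k|+\varepsilon'-1$ and $|k|+1-\varepsilon'$ are strictly positive for every $|k|\geq 1$), obtaining $|k|\|\pa_r\varphi_k\|_\Ltr+|k|^2\|\varphi_k/r\|_\Ltr\lesssim\|rw_k\|_\Ltr$, and only then uses $r\geq 1$ to conclude $\|rw_k\|_\Ltr\leq\|r^{1+\varepsilon}w_k\|_\Ltr\lesssim\|r^{1+\varepsilon}w_k\La_k(T_0,\cdot)\|_\Ltr$. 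This decouples the choice of exponent in the kernel lemma from the choice of $\varepsilon$ in the energy norm and makes the whole borderline analysis unnecessary. If you want to keep your structure, invoking Lemma \ref{basic properties of ck} with $\varepsilon'=1$ rather than reproving Schur from scratch would shorten the argument considerably.
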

\begin{proof}
  In view of   \eqref{expression of phi_k} and $u^r_0=0$, we get, by
   using Lemma \ref{basic properties of ck}, that
\begin{equation*}
    \begin{split}
\|U\|_{\LtO}^2 &\lesssim \|u^\theta_0\|_\Ltr^2 + \sum_{k\neq0} \big(\|u^\theta_k\|_\Ltr^2+\|u^r_k\|_\Ltr^2\big) \\
&\lesssim \|u^\theta_0\|_\Ltr^2 + \sum_{k\neq0} \Big(\|\pa_r \varphi_k\|_\Ltr^2+\Big\|\frac{k}{r}\varphi_k\Big\|_\Ltr^2\Big)\quad
[\mbox{by using}\ \eqref{expression of u}]\\
&\lesssim \|u^\theta_0\|_\Ltr^2 + \sum_{k\neq0} |k|^{-2}\|rw_k\|_\Ltr^2 \lesssim \|U\|_{\mathscr{M}(T_0)}^2.
    \end{split}
\end{equation*}
Along the same line, we deduce that
\begin{equation*}
    \begin{split}
&\|\nabla U\|_{\LtO}^2 \lesssim  \|\pa_r (U^r \mathbf{e}_r + U^\theta \mathbf{e}_\theta) \|_\LtO^2 +\|r^{-1}\pa_\theta(U^r \mathbf{e}_r + U^\theta \mathbf{e}_\theta) \|_\LtO^2\\
    &\lesssim \|\pa_r U^{r} \|_\LtO^2 +\|\pa_r U^{\theta} \|_\LtO^2  \\
    &\quad +\|r^{-1}\pa_\theta U^{r} \|_\LtO^2 +\|r^{-1}\pa_\theta U^{\theta} \|_\LtO^2 +\|r^{-1} U^r \|_\LtO^2+ \|r^{-1} U^{\theta} \|_\LtO^2\\
    &\lesssim \|\pa_r u^\theta_0\|_\Ltr^2 + \|r^{-1}u^\theta_0\|_\Ltr^2 +\sum_{k\neq0}\big(\|\pa_r u^\theta_k\|_\Ltr^2 + \|kr^{-1}u^\theta_k\|_\Ltr^2\big) + \sum_{k\neq0}\big(\|\pa_r u^r_k\|_\Ltr^2 + \|kr^{-1}u^r_k\|_\Ltr^2\big)\\
    &\lesssim \|w_0\|_\Ltr^2 +\|u^\theta_0\|_\Ltr^2 + \sum_{k\neq0} \big( \|\pa_r^2 \varphi_k\|_\Ltr^2 +\|kr^{-1}\pa_r \varphi_k\|_\Ltr^2 +\|k^2r^{-2} \varphi_k\|_\Ltr^2  \big)\\
    &\lesssim \|w_0\|_\Ltr^2 +\|u^\theta_0\|_\Ltr^2 + \sum_{k\neq0} \big( \|w_k\|_\Ltr^2 +\|kr^{-1}\pa_r \varphi_k\|_\Ltr^2 +\|k^2r^{-2} \varphi_k\|_\Ltr^2  \big)\\
    &\lesssim \|w_0\|_\Ltr^2 +\|u^\theta_0\|_\Ltr^2 + \sum_{k\neq0} \big( \|w_k\|_\Ltr^2 +\|k\pa_r \varphi_k\|_\Ltr^2 +\|k^2r^{-1} \varphi_k\|_\Ltr^2  \big)\\
    &\lesssim \|w_0\|_\Ltr^2 +\|u^\theta_0\|_\Ltr^2 + \sum_{k\neq0} \big( \|w_k\|_\Ltr^2 +\|rw_k\|_\Ltr^2  \big)\lesssim \|U\|_{\mathscr{M}(T_0)}^2,
    \end{split}
\end{equation*}
which completes the proof of \eqref{H1 estimate}.
\end{proof}

Now the main result states as follows:

\begin{theorem}\label{thm0}
  {\sl  Let $0<\varepsilon<2$, $\hc>\max\big(e,e^{\frac{4}{2-\varepsilon}}\big)$ and $T_0\geq 0$. Then under the assumption that
    \begin{align}\label{S1eq4}
\|U(0)\|_{\mathscr{M}(T_0)}=\|rw_0(0)\|_\Ltr + \|u^\theta_0(0)\|_\Ltr+\sum_{k\neq0}  \|r^{1+\varepsilon} w_k(0) \La_k(T_0,\cdot)\|_\Ltr <\oo,
    \end{align}
     there exists $T>0$ depending on $\varepsilon$, $A,B$, $\nu$, $T_0$ and $\|U(0)\|_{\mathscr{M}(T_0)}$, so that the
     system \eqref{velocity pertubation of the Taylor-Couette flow} has a unique solution $U\in C\big([0,T];\mathscr{M}(T_0)\big)$.
Moreover, we have
\begin{equation}\label{S1eq5}
    \begin{split}
   \|U\|_{N_{T}}\eqdefa &\|rw_0\|_{\LoT(\Ltr)} + \| u^\theta_0\|_{\LoT(\Ltr)} + \nu^\f12 \|w_0\|_{\LtT (\Ltr)} + \sum_{k\neq0}   \|r^{1+\varepsilon}w_k \La_k(T_0,r)\|_{\LoT(\Ltr)} \\
     & +  \nu^\f12 \sum_{k\neq0}   \Big(\|r^{1+\varepsilon}\pa_r w_k \La_k(T_0,r)\|_{\LtT(\Ltr)}+ |k|  \|r^{\varepsilon}w_k \La_k(T_0,r)\|_{\LtT (\Ltr)}\Big)  <\oo.
    \end{split}
\end{equation}}
\end{theorem}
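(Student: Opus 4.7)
\textbf{Proof Proposal for Theorem \ref{thm0}.} The plan is to solve the system \eqref{fully nonlinear system} via a Picard iteration in the Banach space defined by the norm $\|\cdot\|_{N_T}$, exploiting the fact that here the weight $\La_k(T_0,\cdot)$ is \emph{time-independent}, so the estimates reduce to classical parabolic energy estimates in weighted spaces (no space-time coupling subtleties of Sections \ref{Resolvent estimates without enhanced dissipation weight}--\ref{Space-time estimates of the vorticity in nonzero modes} are needed). Concretely, I would define iterates $(w_0^{(n+1)}, w_k^{(n+1)})$ as solutions to the linear problems
\begin{equation*}
\pa_t w_0^{(n+1)} + \mathcal{T}_0 w_0^{(n+1)} = r^{-1/2}\pa_r\Big(\sum_{\ell\neq 0} i\ell r^{-1}\varphi_\ell^{(n)} w_{-\ell}^{(n)}\Big), \qquad \pa_t w_k^{(n+1)} + \mathcal{T}_k w_k^{(n+1)} = \sum_{j=1}^{3} f_1^{(j),n} + \sum_{j=1}^{2} f_2^{(j),n},
\end{equation*}
starting from $w^{(0)}\equiv 0$, with the source terms $f_i^{(j),n}$ built from $w^{(n)}$ as in \eqref{S7eq1}.

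The first main step is the linear estimate. For the $k\neq 0$ mode, I would test the equation against $r^{2+2\varepsilon}\La_k^2(T_0,r) w_k^{(n+1)}$ and use Lemma \ref{Coercive estimates in general polynomial-weighted spaces} together with the bound $r|\pa_r \La_k(T_0,r)/\La_k(T_0,r)|+r^2|\pa_r^2\La_k(T_0,r)/\La_k(T_0,r)|\lesssim 1/\log\hc$ (from Lemma \ref{Basic calculation for La_k}, with $\hc$ large) to absorb the commutator terms arising from the weight. This yields
\begin{equation*}
\|r^{1+\varepsilon} w_k^{(n+1)} \La_k(T_0,\cdot)\|_{L^\infty_T(\Ltr)}^2 + \nu \|r^{1+\varepsilon}\pa_r w_k^{(n+1)}\La_k(T_0,\cdot)\|_{L^2_T(\Ltr)}^2 + \nu k^2 \|r^\varepsilon w_k^{(n+1)}\La_k(T_0,\cdot)\|_{L^2_T(\Ltr)}^2
\end{equation*}
is controlled by the initial data plus the $L^1_T(\Ltr)$ norm of the sources weighted by $r^{2+\varepsilon}\La_k(T_0,\cdot)$. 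The zero-mode estimate is handled exactly as in Lemma \ref{lemma F_0}, using Lemma \ref{vorticity conservation of L2} and a direct test with $r^2 w_0$.

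Next, for the nonlinear terms, I would follow the scheme of Lemma \ref{f1 f2}, but without the gain from $\mu_k^{-1/2}$ smallness: using the Biot-Savart kernel estimates of Lemma \ref{basic properties of ck} together with submultiplicativity $\La_k(T_0,r)\leq \La_\ell(T_0,r)\La_{k-\ell}(T_0,r)$ (Lemma \ref{basic property of k}, which holds pointwise in $r$ and in particular at the fixed time $T_0$ provided $\hc\geq e$), one obtains bounds of the form
\begin{equation*}
\sum_{j}\mu_k^{-1/2}\|r^{2+\varepsilon}f_1^{(j),n}\La_k(T_0,\cdot)\|_{L^2_T(\Ltr)} + \sum_{j}\nu^{-1/2}\|r^{1+\varepsilon}f_2^{(j),n}\La_k(T_0,\cdot)\|_{L^2_T(\hkf)} \lesssim_\varepsilon T^{1/2}\Psi(\|w^{(n)}\|_{N_T}),
\end{equation*}
where $\Psi$ is a polynomial and the factor $T^{1/2}$ comes from Hölder's inequality in time (trading one $L^2_T$ for $L^\infty_T$), which compensates for the loss of the $|\nu B|^{-1/2}$ smallness. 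Summing over $k$ using Cauchy-Schwarz on the convolution $\sum_\ell E_\ell E_{k-\ell}$ closes the bound in $N_T$.

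The final step is contraction: the difference $w^{(n+1)}-w^{(n)}$ satisfies the same type of linear problem with source bilinear in $(w^{(n)},w^{(n-1)}-w^{(n)})$ and $(w^{(n)}-w^{(n-1)},w^{(n-1)})$, so the same argument gives $\|w^{(n+1)}-w^{(n)}\|_{N_T}\leq C T^{1/2} \|w^{(n)}-w^{(n-1)}\|_{N_T}$ on a ball of $N_T$, and choosing $T$ small enough yields a fixed point. Uniqueness follows by the same energy argument applied to two solutions. The H\"older continuity in time (to deduce $U\in C([0,T];\mathscr{M}(T_0))$) then follows from the equation and the bounds in $N_T$ by standard parabolic regularity. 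The main obstacle will be organizing the nonlinear sum $\sum_\ell \|\cdot\|$ so that the weight $\La_k(T_0,\cdot)$ factors multiplicatively and the $|k|$-powers coming from differentiation of $\varphi_\ell$ and $w_{k-\ell}$ can be absorbed via Cauchy-Schwarz on $\ell\in\Z$; this is exactly parallel to the calculation in Lemma \ref{f1 f2} but with the $\mu_k^{-1/2}$ there replaced by $(\nu k^2)^{-1/2}T^{1/2}$ coming from the linear estimate above, and the accounting has to be done carefully for the mixed-mode terms involving $u^\theta_0$ and $\varphi_k w_0$.
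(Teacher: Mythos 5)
The paper gives no proof of Theorem~\ref{thm0}; it says only that it follows from ``standard energy estimate and iteration scheme'' and omits the details. Your proposal is a sensible and correct execution of exactly that plan, and the key observation you lead with---that because $\La_k(T_0,\cdot)$ is \emph{time-independent} the resolvent machinery of Sections~\ref{Resolvent estimates without enhanced dissipation weight}--\ref{Space-time estimates of the vorticity in nonzero modes} is unnecessary and one can just do weighted parabolic energy estimates and Picard iteration---is precisely why the authors felt comfortable omitting the argument.

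A few small points of bookkeeping you should make explicit if you write this out in full. First, when you test against $r^{2+2\varepsilon}\La_k^2(T_0,\cdot)\,w_k$ you actually want Lemma~\ref{Coercive estimates prior} (general weight $A(r)$) rather than Lemma~\ref{Coercive estimates in general polynomial-weighted spaces} (pure power weight), combined with the commutator bounds of Lemma~\ref{Basic calculation for La_k}; you do cite the latter, so you clearly know where the log enters, but the citation should be to the $A(r)$ version. Second, in imitating Lemma~\ref{f1 f2} you need to check that the $|k|$ factors produced by $\pa_\theta$ are exactly cancelled by the $(\nu k^2)^{-\f12}$ gain from the dissipative term $\nu k^2\|r^\varepsilon w_k\La_k\|_{\LtT(\Ltr)}^2$ in your linear estimate (and similarly by $\nu^{-\f12}$ for the $\hkf$--$\hkz$ pairing), leaving $\nu^{-\f12}$; the Biot--Savart estimates of Lemma~\ref{basic properties of ck} then supply the remaining $|\ell|^{-\f12}$ (or better) per factor, which is enough to make the $\ell$-sum converge after using $r\geq 1$ to absorb lower powers of $r$ into the $N_T$ weights. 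Third, the zero mode requires a local analogue of Lemma~\ref{lemma F_0}: there the bilinear term on the right of \eqref{S2eq1} involves stream-function norms not present in $N_T$, so you must trade $\LtT$ for $T^{1/2}\LoT$ and then use Biot--Savart to convert the $\varphi_k$ norms into $w_k$ norms; this again produces a $T^{1/2}$ that replaces the global smallness. None of these is a gap---they are exactly the routine accounting you already flag at the end of your proposal---but they are the places where the computation has to be carried through.
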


The proof of the above theorem is through standard energy estimate and iteration scheme. We omit the details here.

\section{Basic Calculus}\label{Sappa}

In this section, we present some inequalities which are based on calculus.

\begin{lemma}\label{Coercive estimates prior}
{\sl Let $k \in \R$, $w\in \mathcal{C}^\oo_c([1,\oo))$ with $w(1)=0$,  and  $A(r)\in \mathcal{C}^2\big((1,\oo)\big)$ be any real-valued function. Then, one has
  \begin{align*}
&\Re\big\langle \mathcal{T}_kw,A^2(r)w\big\rangle_\Ltr=\nu\|A(r)w'\|_{\Ltr}^2-\f\nu2 \big\langle\big(A^2(r)\big)'', |w|^2 \big\rangle_\Ltr  +\nu\Big(k^2-\f14\Big)\|A(r)r^{-1}w\|_{\Ltr}^2.
\end{align*}}
\end{lemma}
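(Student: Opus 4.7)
The plan is to unpack $\mathcal{T}_k$ into its three constituent pieces and evaluate each contribution to $\Re\langle \mathcal{T}_k w, A^2 w\rangle_\Ltr$ separately. Since $A^2(r)$ and $|w|^2$ are real, the transport term $ikB/r^2$ contributes $\Re\bigl(ikB\int_1^\infty r^{-2}A^2|w|^2\,dr\bigr)=0$, and the potential term $\nu(k^2-1/4)/r^2$ contributes directly $\nu(k^2-1/4)\|A r^{-1}w\|_\Ltr^2$. All the work is therefore in identifying the real part of $-\nu\langle \partial_r^2 w, A^2 w\rangle_\Ltr$.

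For that term I would integrate by parts once:
\begin{align*}
-\langle \partial_r^2 w, A^2 w\rangle_\Ltr = \langle \partial_r w, (A^2 w)'\rangle_\Ltr - \bigl[\partial_r w\,A^2\,\bar w\bigr]_{r=1}^{r=\infty}.
\end{align*}
The boundary term at $r=\infty$ vanishes by compact support, and the boundary term at $r=1$ vanishes because $w(1)=0$. Expanding $(A^2 w)' = 2AA'\,w + A^2\,w'$ splits the result into two pieces; the piece $\langle w', A^2 w'\rangle_\Ltr = \|A w'\|_\Ltr^2$ is already real.

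The remaining piece is $2\Re\langle w', AA'\,\bar w\rangle_\Ltr$. The key calculus step is to rewrite this as $\int_1^\infty AA'\,\partial_r(|w|^2)\,dr$ (using that $AA'$ is real and $2\Re(w'\bar w)=\partial_r|w|^2$), and then integrate by parts one more time to transfer the derivative onto $AA'=\tfrac12(A^2)'$, producing $-\tfrac12\langle (A^2)'', |w|^2\rangle_\Ltr$ with no boundary contribution (again using $w(1)=0$ and compact support).

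Collecting the three contributions gives exactly the claimed identity. There is no real obstacle here; the only small care needed is to verify that each integration by parts is legitimate, i.e.\ that both endpoint boundary terms vanish under the hypotheses $w\in\mathcal{C}_c^\infty([1,\infty))$ and $w(1)=0$, and to recognize the combination $2AA'=(A^2)'$ so that the answer is expressed purely in terms of $A^2$ as in the statement.
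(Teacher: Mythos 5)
Your proposal is correct and follows essentially the same route as the paper: one integration by parts on the $-\nu\langle \partial_r^2 w, A^2 w\rangle_\Ltr$ term to produce $\nu\|Aw'\|_\Ltr^2 + \nu\Re\langle w',(A^2)'w\rangle_\Ltr$, then recognizing $2\Re(w'\bar w)=\partial_r|w|^2$ and integrating by parts once more to obtain $-\tfrac{\nu}{2}\langle (A^2)'',|w|^2\rangle_\Ltr$, with the $ikB/r^2$ contribution dropping out of the real part. The only (harmless) blemish is a stray conjugation bar in your ``$2\Re\langle w', AA'\,\bar w\rangle_\Ltr$'' — with the paper's convention $\langle f,g\rangle_\Ltr=\int f\bar g\,dr$ the intended quantity is $2\Re\langle w', AA'\,w\rangle_\Ltr$, but since $AA'$ is real this does not affect the computation.
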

\begin{proof} Indeed we get, by using integration by parts, that
 \begin{align*}
\Re\big\langle \mathcal{T}_kw,A^2(r)w\big\rangle_\Ltr%=&\nu\Re\big\langle w',A^2(r)w'+\big(A^2(r)\big)'w\big\rangle_\Ltr+\nu\Big(k^2-\f14\Big)\|A(r)r^{-1}w\|_{\Ltr}^2\\
=&\nu \|A(r)w'\|_{\Ltr}^2+\nu\Re\big\langle w',(A^2(r))'w\big\rangle_\Ltr+\nu\Big(k^2-\f14\Big)\|A(r)r^{-1}w\|_{\Ltr}^2\\
=&\nu\|A(r)w'\|_{\Ltr}^2-\f\nu2 \big\langle\big(A^2(r)\big)'', |w|^2 \big\rangle_\Ltr  +\nu\Big(k^2-\f14\Big)\|A(r)r^{-1}w\|_{\Ltr}^2.
\end{align*}
This completes the proof of Lemma \ref{Coercive estimates prior}.
\end{proof}

\begin{lemma}\label{pair-w' with w}
{\sl For any $\alpha \in \R$, $w\in \mathcal{C}^\oo_c([1,\oo))$ with $w(1)=0$, we have
\begin{gather}
    \|r^{\alpha}w'\|_{\Ltr}^2\geq (\alpha-1/2)^2\|r^{\alpha-1}w\|_{\Ltr}^2,  \label{5.2} %\|r^{\alpha}\log(r+\et)w'\|_{L^2}^2\geq  \Bigl[(\al-\frac{2}{5})^2 \chi_{\al\leq \frac{2}{5}} +  (\al-\frac{3}{5})^2 \chi_{\al\geq \frac{3}{5}}   \Bigr] \|r^{\alpha-1}\log(r+\et)w\|_{L^2}^2. \label{5.3}
\end{gather}}
\end{lemma}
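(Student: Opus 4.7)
The inequality is a standard Hardy-type estimate on $(1,\infty)$ with Dirichlet boundary condition at $r=1$, and the natural approach is a single integration by parts against the weight $r^{2\alpha-1}$. The case $\alpha=1/2$ is trivial (both sides of \eqref{5.2} equal an obvious bound), so I would handle that aside and henceforth assume $\alpha\neq 1/2$.

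The plan is as follows. First I would write
\[
(2\alpha-1)\int_1^\infty r^{2\alpha-2}|w|^2\,dr = \int_1^\infty \frac{d}{dr}\bigl(r^{2\alpha-1}\bigr)\,|w|^2\,dr,
\]
and integrate by parts. Since $w\in\mathcal{C}^\infty_c([1,\infty))$ with $w(1)=0$, both boundary contributions vanish (the one at $r=1$ by the boundary condition, the one at $\infty$ by compact support), producing
\[
(2\alpha-1)\|r^{\alpha-1}w\|_{\Ltr}^2 = -2\,\Re\!\int_1^\infty r^{2\alpha-1}\,w'\,\bar{w}\,dr.
\]
Next I would take absolute values, regroup the weight as $r^{2\alpha-1}=r^\alpha\cdot r^{\alpha-1}$, and apply Cauchy--Schwarz to get
\[
|2\alpha-1|\,\|r^{\alpha-1}w\|_{\Ltr}^2 \leq 2\,\|r^{\alpha}w'\|_{\Ltr}\,\|r^{\alpha-1}w\|_{\Ltr}.
\]
Dividing by $\|r^{\alpha-1}w\|_{\Ltr}$ (or noting the inequality is trivial when this quantity vanishes) and squaring yields \eqref{5.2}.

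There is no real obstacle here: the identity is exact and the constant $(\alpha-1/2)^2$ drops out of Cauchy--Schwarz with equality constant. The only subtlety worth flagging is ensuring that no boundary terms are lost when $2\alpha-1<0$ and $w$ is merely in the natural completion of $\mathcal{C}^\infty_c([1,\infty))$ with $w(1)=0$; since the statement is for test functions $w\in \mathcal{C}^\infty_c([1,\infty))$, this is automatic, and the inequality extends to the full weighted Sobolev setting used elsewhere in the paper by density.
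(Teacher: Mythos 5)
Your proof is correct, and it gives the sharp constant exactly as claimed. The paper's route is different in packaging though closely related in substance: instead of integrating by parts and then invoking Cauchy--Schwarz, the authors start from the nonnegativity of $\|r^{\beta}(r^{\alpha-\beta}w)'\|_{\Ltr}^2$ for a free parameter $\beta$, expand the square (the cross term requires the same integration by parts you perform, with the same boundary terms vanishing), and observe that this yields the exact identity
\[
\|r^{\beta}(r^{\alpha-\beta}w)'\|_{\Ltr}^2 = \|r^{\alpha}w'\|_{\Ltr}^2 - \bigl((\alpha-1/2)^2-(\beta-1/2)^2\bigr)\|r^{\alpha-1}w\|_{\Ltr}^2,
\]
after which the optimal choice $\beta=1/2$ gives \eqref{5.2}. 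The two arguments are of course two faces of the same coin (Cauchy--Schwarz is equivalent to nonnegativity of a suitably completed square), but the paper's version is worth noting because it produces the sharp inequality as a corollary of an exact algebraic identity, making the ``defect'' $\|r^{1/2}(r^{\alpha-1/2}w)'\|_{\Ltr}^2$ explicit; your version reaches the same conclusion more quickly but discards that extra information. One very minor remark: you did not actually need to set aside the case $\alpha=1/2$, since your argument never divides by $2\alpha-1$ and the final squared inequality is trivially true there.
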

\begin{proof} It is easy to observe that for any $\alpha, \beta \in \R$,
\begin{align*}
  0\leq   \|r^{\beta}(r^{\alpha-\beta}w)'\|_{\Ltr}^2=&\big\langle r^{\alpha}w'+(\alpha-\beta)r^{\alpha-1}w,r^{\alpha}w'+(\alpha-\beta)r^{\alpha-1}w\big\rangle_\Ltr\\
    =&\|r^{\alpha}w'\|_{\Ltr}^2+(\alpha-\beta)^2\|r^{\alpha-1}w\|_{\Ltr}^2+(\alpha-\beta)\int_1^{\infty}r^{2\alpha-1}d|w|^2\\
     =&\|r^{\alpha}w'\|_{\Ltr}^2+(\alpha-\beta)^2\|r^{\alpha-1}w\|_{\Ltr}^2-(2\alpha-1)(\alpha-\beta)\int_1^{\infty}r^{2\alpha-2}|w|^2dr\\
     =&\|r^{\alpha}w'\|_{\Ltr}^2-\big((\alpha-1/2)^2-(\beta-1/2)^2\big)\|r^{\alpha-1}w\|_{\Ltr}^2.
\end{align*}
Taking $\beta=\f12$ in the above inequality leads to  \eqref{5.2}.
\end{proof}

\begin{lemma}\label{Appendix A1-1}
{\sl Let  $\al \in \R$, $\hat{C}, a \geq 0$, $w\in \mathcal{C}^1([1,\oo))$, $A(r)=r^\al$ or $r^\al \log(\hat{C}r^2 + a).$ We assume that $\|A(r)r^{-\f12}w\|_{\Ltr} + \|A(r)r^{\f12}w'\|_{\Ltr} <\oo, $ then there exists a constant $C>0$ independent of $\al,\hc,a$, such that
\begin{align}\label{A.3,a}
&\|A(r)w\|_{\Lor}^2 \leq C \|A(r)r^{-\f12}w\|_{\Ltr}\bigl(\|A(r)r^{\f12}w'\|_{\Ltr} +(|\al|+1)\|A(r)r^{-\f12}w\|_{\Ltr}\bigr).
\end{align}
Moreover, there holds
\begin{equation}\label{A.4,a}
 \lim_{r\rightarrow+\oo} |A(r)w(r)| =0.
\end{equation}}
\end{lemma}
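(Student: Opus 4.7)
My plan is to prove both \eqref{A.4,a} and \eqref{A.3,a} simultaneously from the fundamental theorem of calculus applied to $(A(s)w(s))^2$, with the logarithmic-type weight handled via a pointwise bound on $|A'/A|$.

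First I would observe that in both cases for $A$, there is a universal constant $C_0$ (independent of $\al,\hc,a$) such that
\begin{equation*}
r\,|A'(r)| \leq C_0(|\al|+1)\,|A(r)| \quad\text{for all } r\geq 1.
\end{equation*}
For $A(r)=r^\al$ this is immediate with $C_0=1$. For $A(r)=r^\al\log(\hc r^2+a)$, a direct computation gives
\begin{equation*}
\frac{A'(r)}{A(r)}=\frac{\al}{r}+\frac{2\hc r}{(\hc r^2+a)\log(\hc r^2+a)},
\end{equation*}
and the second term is $\leq 2/r$ once $\log(\hc r^2+a)\geq 1$ (which is the implicit normalization used throughout the paper, e.g., $\hc\geq e$). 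Combining, $r|A'/A|\leq |\al|+2$.

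Next I would show the pointwise limit \eqref{A.4,a}. Writing
\begin{equation*}
\frac{d}{ds}\bigl(A(s)w(s)\bigr)^2 = 2A(s)w(s)A'(s)w(s)+2A(s)w(s)A(s)w'(s),
\end{equation*}
Cauchy-Schwarz together with the pointwise bound above yields
\begin{equation*}
\int_1^\infty \Bigl|\tfrac{d}{ds}(Aw)^2\Bigr|\,ds \leq 2C_0(|\al|+1)\|Ar^{-1/2}w\|_\Ltr^2 + 2\|Ar^{-1/2}w\|_\Ltr\|Ar^{1/2}w'\|_\Ltr <\infty,
\end{equation*}
so $\lim_{s\to\infty}(A(s)w(s))^2$ exists. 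If this limit were some $L>0$, then $(A(s)w(s))^2 \geq L/2$ for all large $s$, forcing $\int^\infty s^{-1}|A(s)w(s)|^2\,ds=\infty$ and contradicting $\|Ar^{-1/2}w\|_\Ltr<\infty$. Hence the limit is $0$, proving \eqref{A.4,a}.

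Finally, with the decay at infinity in hand, the fundamental theorem of calculus gives, for every $r\geq 1$,
\begin{equation*}
|A(r)w(r)|^2 = -\int_r^\infty \frac{d}{ds}(Aw)^2\,ds \leq 2C_0(|\al|+1)\|Ar^{-1/2}w\|_\Ltr^2 + 2\|Ar^{-1/2}w\|_\Ltr\|Ar^{1/2}w'\|_\Ltr,
\end{equation*}
which is exactly \eqref{A.3,a}. The only real subtlety is the uniform $r|A'/A|$ estimate in the logarithmic case; everything else is FTC plus Cauchy-Schwarz. I expect no serious obstacle beyond being careful that the constants truly do not depend on $\al$, $\hc$, or $a$.
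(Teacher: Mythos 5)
Your proof is correct and reaches the same two facts the paper does, using the same three ingredients — the pointwise bound $r|A'(r)|\lesssim(|\al|+1)|A(r)|$, the identity $\frac{d}{ds}(Aw)^2=2(Aw)(Aw)'$, and Cauchy--Schwarz — but in the opposite logical order. The paper avoids assuming anything about the behavior of $A w$ at infinity by inserting a cutoff $\chi_R$, proving \eqref{A.3,a} for $\chi_R w$ (where the boundary term at $\infty$ vanishes trivially), taking the supremum over $R$, and only then deducing \eqref{A.4,a} by applying the established bound to the tail $(1-\chi_R)w$ and sending $R\to\infty$. You instead prove the limit \eqref{A.4,a} first: absolute integrability of $\frac{d}{ds}(Aw)^2$ implies the limit of $(A(s)w(s))^2$ exists, and the $\Ltr$ hypothesis forces it to be zero; with the boundary term at $\infty$ thus justified you get \eqref{A.3,a} by direct FTC. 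Your route is a little cleaner — no cutoff bookkeeping — at the price of the short extra argument ruling out a positive limit, which is correct as stated. The one genuine subtlety, the uniformity of $r|A'/A|\leq|\al|+2$ in the logarithmic case, you handle the same way the paper implicitly does, by invoking the normalization $\hc r^2+a\geq e$ (so $\log\geq 1$), and you are right to flag that this is where the independence of the constant from $\hc$ and $a$ comes from.
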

\begin{proof}
Let \( \chi(z) \in \mathcal{C}^\infty(\mathbb{R}_+) \) be a smooth function such that \( \chi(z) = 1 \) for \( z \in [0, 1] \) and \( \chi(z) = 0 \) for \( z \geq 2 \). We denote \( \chi_R(r) {\eqdefa} \chi\left( \frac{r-1}{R} \right) \in \mathcal{C}^\infty_c([1, \infty)) \) for \( R \geq 1 \). Observing that \( A'(r) \lesssim (|\alpha| + 1) \frac{A(r)}{r} \) and \( \partial_r \chi_R = \frac{1}{R} \chi'\left( \frac{r-1}{R} \right) \),
we infer
\begin{align*}
|A(r)\chi_Rw(r)|^2=&-\int_r^\oo\partial_s\big(A^2(s)\chi_R^2|w(s)|^2\big)ds\\
=&-\int_r^\oo A^2(s)\chi_R^2\big(w'(s)\overline{w}(s)+w(s)\overline{w}'(s)\big)ds\\
&- \int_r^\oo 2A(s)A'(s)\chi_R^2|w|^2 ds - \int_{R+1}^{2R+1} 2A(s)^2\chi_R \pa_s\chi_R|w|^2 ds \\
\lesssim &\|A(r)r^{-\f12}w\|_{\Ltr}\|A(r)r^{\f12}w'\|_{\Ltr} +(|\al|+1)\|A(r)r^{-\f12}w\|_{\Ltr}^2,
\end{align*}
which together with the fact
\begin{equation*}
  \|A(r)w\|_{\Lor}^2 =\sup_{R\geq 1} \|A(r)\chi_Rw\|_{\Lor}^2,
\end{equation*}
ensures \eqref{A.3,a}.

While we get, by applying \eqref{A.3,a}, that
\begin{equation*}
\begin{split}
    \|A(r)(1-\chi_R)w\|_{\Lor}^2
    &\lesssim  \|A(r)r^{-\f12}(1-\chi_R)w\|_{\Ltr}\bigl(\big\|A(r)r^{\f12}\big((1-\chi_R)w\big)'\big\|_{\Ltr}  \\ &\quad+(|\al|+1)\|A(r)r^{-\f12}(1-\chi_R)w\|_{\Ltr}\bigr)\\
    &\lesssim  \|A(r)r^{-\f12}(1-\chi_R)w\|_{\Ltr}\bigl(\|A(r)r^{\f12}(1-\chi_R)w'\|_{\Ltr} \\
    &\quad +\|A(r)r^{\f12}\pa_r\chi_Rw\|_{\Ltr}  +(|\al|+1)\|A(r)r^{-\f12}(1-\chi_R)w\|_{\Ltr}\bigr)\\
    &\lesssim  \|A(r)r^{-\f12}(1-\chi_R)w\|_{\Ltr}\bigl(\|A(r)r^{\f12}(1-\chi_R)w'\|_{\Ltr} \\
    &\quad +\|A(r)r^{-\f12}(1-\chi_{R/2})w\|_{\Ltr}  +(|\al|+1)\|A(r)r^{-\f12}(1-\chi_R)w\|_{\Ltr}\bigr).
\end{split}
\end{equation*}
Taking  \( R \to +\infty \) in the above inequality leads to \eqref{A.4,a}. We thus complete the
proof of Lemma \ref{Appendix A1-1}.
\end{proof}

\begin{lemma}\label{Appendix A1-2}
{\sl Let $\lambda>0$, $0<\tilde{\delta}\ll 1$ and $ E{\eqdefa} \big\{r\geq 1: \ |1-\sqrt{\lambda}r|\leq \tilde{\delta}  \big\}$, then there holds
\begin{align*}
\|r^{-\f12}\mathbbm{1}_E\|_\Ltr \lesssim
\tilde{\delta}^{\f12}.
\end{align*}}
\end{lemma}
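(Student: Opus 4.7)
The plan is to compute the integral $\|r^{-1/2}\mathbbm{1}_E\|_{L^2_r}^2 = \int_E r^{-1}\,dr$ directly, since the measure $dr/r$ is log-invariant and the set $E$ is essentially an interval of length $2\tilde\delta/\sqrt\lambda$ centered at $r_0 {\eqdef} 1/\sqrt\lambda$.

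First I would set $r_\pm {\eqdef} (1\pm\tilde\delta)/\sqrt\lambda$, so that $E = [r_-, r_+] \cap [1,\infty)$, and dispose of the trivial case $r_+ < 1$, where $E = \emptyset$. Otherwise let $\tilde r_- {\eqdef} \max(1, r_-)$, so
\[
\int_E \frac{dr}{r} = \log\frac{r_+}{\tilde r_-}.
\]
If $r_- \geq 1$, this equals $\log\bigl((1+\tilde\delta)/(1-\tilde\delta)\bigr) \leq C\tilde\delta$ because $\tilde\delta\ll 1$. If instead $r_- < 1 \leq r_+$, then $\tilde r_- = 1$ and $r_0 \leq (1-\tilde\delta)^{-1}$, giving $r_+ \leq (1+\tilde\delta)/(1-\tilde\delta)$ and again $\log r_+ \leq C\tilde\delta$. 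Taking square roots yields the claimed bound.

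There is no real obstacle here; the argument is a one-line direct computation using the change of variable $u = \log r$ (under which $dr/r = du$ and $E$ has length $\lesssim \tilde\delta$). The only minor point to be careful about is the case distinction coming from the intersection with $[1,\infty)$, which is handled by the monotonicity argument above.
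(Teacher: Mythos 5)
Your proof is correct and takes essentially the same route as the paper: compute $\int_E dr/r$ and observe that the resulting logarithm is $\lesssim\tilde\delta$. The only cosmetic difference is that you handle the intersection with $[1,\infty)$ by a case split, while the paper simply enlarges $E\cap[1,\infty)$ to the full interval $[r_0(1-\tilde\delta),r_0(1+\tilde\delta)]$, so the log ratio is uniformly $\log\bigl((1+\tilde\delta)/(1-\tilde\delta)\bigr)$ with no cases needed.
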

\begin{proof} Let $r_0{\eqdefa} \lambda^{-\f12}$, then we get, by a direct computation, that
\begin{align*}
&\|r^{-\f12}\mathbbm{1}_E\|_\Ltr^2 = \int_{E\cap[1,\oo)} \frac{dr}{r} \leq   \int_{r_0-\tilde{\delta}r_0}^{r_0+\tilde{\delta}r_0} \frac{dr}{r} =\log \f{r_0+\tilde{\delta}r_0}{r_0-\tilde{\delta}r_0}=\log\Big(1+\f{2\tilde{\delta}}{1-\tilde{\delta}}\Big)
\leq\f{2\tilde{\delta}}{1-\tilde{\delta}}\lesssim \tilde{\delta}.
\end{align*}
This finishes the proof of Lemma \ref{Appendix A1-2}.
\end{proof}

\begin{lemma}\label{Appendix A1-3}
{\sl Let $\tilde{\delta} \leq \f14$, $\lambda>0$, $\beta=1$ or $2$, and
  let $E_m \eqdefa \big\{r\geq 1: \ \tilde{\delta}\leq |1-\sqrt{\lambda} r| \leq \f12 \big\}$. Then there exists a constant $C$ independent of $\tilde{\delta}$ and $\lambda$  such that
  \begin{align}
      \Big\|r^{-\frac{1}{2}} \frac{\mathbbm{1}_{E_m}(r)}{(1-\lambda r^2)^{\beta}} \Big\|_\Ltr \leq C \tilde{\delta}^{-\beta+\f12},\label{5.4}
  \end{align}}
\end{lemma}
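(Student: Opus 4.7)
The plan is to reduce the weighted $L^2$ bound to a single real integral in a shifted/scaled coordinate and then evaluate it explicitly in the two cases $\beta=1,2$.

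First, I would set $r_0 {\eqdefa} \lambda^{-1/2}$. On $E_m$ one has $|1-\sqrt{\lambda}\,r|\leq \f12$, which forces $\sqrt{\lambda}\,r\in[\f12,\f32]$, i.e.\ $r\in[r_0/2,\,3r_0/2]$; in particular $r\approx r_0$ uniformly on $E_m$, so $r^{-1}\lesssim r_0^{-1}$. Next I would factor
\[
1-\lambda r^2 = (1-\sqrt{\lambda}\,r)(1+\sqrt{\lambda}\,r),
\]
and observe that on $E_m$ the second factor lies in $[\f32,\f52]$, hence is comparable to $1$. Therefore
\[
|1-\lambda r^2|\,\approx\,|1-\sqrt{\lambda}\,r| \quad \text{on } E_m.
\]

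With these two reductions the proof becomes a one-variable estimate. Performing the change of variables $u=1-\sqrt{\lambda}\,r$, so that $dr=-r_0\,du$, I would get
\[
\Big\|r^{-\f12}\frac{\mathbbm{1}_{E_m}(r)}{(1-\lambda r^2)^{\beta}}\Big\|_{\Ltr}^{2}
\;\lesssim\; \int_{E_m}\frac{dr}{r_0\,|1-\sqrt{\lambda}\,r|^{2\beta}}
\;\lesssim\; \int_{\tilde{\delta}\leq |u|\leq \f12}\frac{du}{|u|^{2\beta}}.
\]
For $\beta=1$ the right-hand integral is $\lesssim \tilde{\delta}^{-1}$, and for $\beta=2$ it is $\lesssim \tilde{\delta}^{-3}$; in both cases this matches $\tilde{\delta}^{-2\beta+1}$, yielding the claim after taking square roots.

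There is no serious obstacle here; the one point requiring a bit of care is the case when $r_0<1$, in which the truncation $r\geq 1$ might chop off the lower half of $E_m$. But this only shrinks the integration region, so the same upper bound continues to hold, and the constant $C$ remains independent of both $\tilde{\delta}$ and $\lambda$ as required.
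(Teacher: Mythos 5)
Your proof is correct and follows essentially the same route as the paper's: the paper substitutes $\tilde{r}=\sqrt{\lambda}\,r$ and notes that on the relevant set $\tilde{r}\approx 1$ and $1+\tilde{r}\approx 1$, so $\tilde{r}^{-1}(1-\tilde{r}^2)^{-2\beta}\lesssim(1-\tilde{r})^{-2\beta}$, then evaluates $\int_{\tilde{\delta}\leq|1-\tilde{r}|\leq\frac12}(1-\tilde{r})^{-2\beta}d\tilde{r}$ directly. Your shift $u=1-\sqrt{\lambda}\,r$ is the same change of variables up to a translation, and the remaining comparisons and the final integral computation coincide.
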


\begin{proof}
By using the changes of variable: $\tilde{r}= \sqrt{\lambda} r$, we find for $\beta=1$ or $2$,
\begin{equation*}
    \begin{split}
\Big\|r^{-\frac{1}{2}} \frac{\mathbbm{1}_{E_m}}{(1-\lambda r^2)^\beta} \Big\|_\Ltr^2
&= \int_1^\oo r^{-1}\frac{\mathbbm{1}_{E_m}}{(1-\lambda r^2)^{2\beta} } dr \leq \int_{\tilde{\delta} \leq |1-\tilde{r}| \leq \f12} \tilde{r}^{-1}\frac{d\tilde{r}}{(1-\tilde{r}^2)^{2\beta}} \\
&\lesssim  \int_{\tilde{\delta} \leq |1-\tilde{r}| \leq \f12} \frac{d\tilde{r}}{(1-\tilde{r})^{2\beta}} \lesssim \tilde{\delta}^{-2\beta+1},
    \end{split}
\end{equation*}
which leads to \eqref{5.4}, and we complete the proof of Lemma \ref{Appendix A1-3}.
\end{proof}

\section{Basic properties of $\mathcal{K}_k$ and $\La_k$}

In this section, we shall present some basic properties of the operator: $\cK,$ which is given by
\eqref{expression of phi_k}, and the enhanced dissipation weight: $\La_k=\log\bigl(\hat{C}r^2+\kappa_kt\bigr).$

\begin{lemma}[Basic properties of $\mathcal{K}_k$]\label{basic properties of ck}
   {\sl  For any $|k| \geq 1$, $a>0$, $0<\varepsilon < 2$ and $\hc \geq e^{\frac{4}{2-\varepsilon}}$, there exists a  constant $C_\varepsilon>0,$ which is  independent of $k,a,\hc$, such that
\begin{align}
         &|k|\Big\|r^{\varepsilon-1}\cK'[w]\Big\|_\Ltr +|k|^2 \Big\|r^{\varepsilon-2}\cK[w]\Big\|_\Ltr \label{basic estimate of K}  + |k|^\f12 \Big\|r^{\varepsilon-\f12}\cK'[w] \Big\|_\Lor \\
         &\quad + |k|^\f32 \Big\|r^{\varepsilon-\f32}\cK[w] \Big\|_\Lor \leq C_\varepsilon \|r^\varepsilon w\|_\Ltr,\nonumber\\
         &|k|\Big\|r^{\varepsilon-1}\cK'[w] \Big\|_\Ltr +|k|^2 \Big\|r^{\varepsilon-2}\cK[w]\Big\|_\Ltr  \leq C_\varepsilon  |k|^\f12 \|r^{\varepsilon-\f12} w\|_\Llr,  \label{basic estimate of K, L1 type} \\
    &|k| \Big\|r^{\varepsilon-1}\log(\hat{C}r^2+a)\cK'[w]\Big\|_\Ltr +|k|^2 \Big\|r^{\varepsilon-2}\log(\hat{C}r^2+a)\cK[w]\Big\|_\Ltr   \label{basic estimate of K with space-time weight}\\
     &\quad + |k|^\f12 \Big\|r^{\varepsilon-\f12}\log(\hat{C}r^2+a)\cK'[w]\Big\|_\Lor + |k|^\f32 \Big\|r^{\varepsilon-\f32}\log(\hat{C}r^2+a) \cK[w]\Big\|_\Lor  \nonumber\\
     & \leq  C_\varepsilon  \|r^\varepsilon \log(\hat{C}r^2+a) w\|_\Ltr,\nonumber\\
    &|k| \Big\|r^{\varepsilon-1}\log(\hat{C}r^2+a)\cK'[w]\Big\|_\Ltr +|k|^2 \Big\|r^{\varepsilon-2}\log(\hat{C}r^2+a)\cK[w]\Big\|_\Ltr \label{basic estimate of K with space-time weight, L1 type} \\
     & \leq  C_\varepsilon  |k|^\f12 \|r^{\varepsilon-\f12} \log(\hat{C}r^2+a) w\|_\Llr \nonumber.
\end{align}}
\end{lemma}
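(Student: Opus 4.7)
All four inequalities collapse, via the pointwise bound \eqref{estimate of cK}, to the $L^2\to L^2$ and $L^1\to L^2$ boundedness of the sublinear operator $\mathcal{Q}_{a,b}$ of \eqref{definition of Tab}, which Lemma \ref{lemma A.4} already supplies. Write $I(r)\eqdef\int_1^{\infty}(rs)^{1/2}\min\bigl((r/s)^{|k|},(s/r)^{|k|}\bigr)|w(s)|\,ds$; then \eqref{estimate of cK} gives
\[
|\mathcal{K}_k[w](r)|+|k|^{-1}r|\partial_r\mathcal{K}_k[w](r)|\lesssim |k|^{-1}I(r),
\]
and matching the powers of $r$ and $s$ on either side of the $\min$ produces two exact identifications
\[
r^{\varepsilon-2}I(r)=\mathcal{Q}_{|k|-1+\varepsilon,\,|k|+1-\varepsilon}[s^{\varepsilon}w](r),\qquad r^{\varepsilon-2}I(r)=r^{1/2}\mathcal{Q}_{|k|-3/2+\varepsilon,\,|k|+3/2-\varepsilon}[s^{\varepsilon-1/2}w](r).
\]
For $|k|\geq 1$ and $\varepsilon\in(0,2)$ all four exponents are strictly positive and comparable to $|k|$, so applying \eqref{ineq estimate of Tab, L2} to the first identity gives $1/a+1/b\leq C_{\varepsilon}/|k|$, which, combined with the prefactor $|k|^{-1}$ in front of $I$, yields the first two summands of \eqref{basic estimate of K}; applying \eqref{ineq estimate of Tab, L1} to the second identity gives \eqref{basic estimate of K, L1 type} in exactly the same way.

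\textbf{The $L^\infty$ terms.} For the pointwise bounds appearing as the last two summands of \eqref{basic estimate of K}, I bypass $\mathcal{Q}_{a,b}$ and Cauchy-Schwarz directly inside $I$:
\[
r^{\varepsilon-3/2}|\mathcal{K}_k[w](r)|\leq\frac{r^{\varepsilon-1}}{|k|}\Bigl(\int_1^{\infty}s^{1-2\varepsilon}\min\bigl((r/s)^{2|k|},(s/r)^{2|k|}\bigr)\,ds\Bigr)^{1/2}\|s^{\varepsilon}w\|_{L^2_s}.
\]
Splitting at $s=r$ and integrating elementarily gives the inner integral $\lesssim_{\varepsilon} r^{2-2\varepsilon}/|k|$; the $r$-powers then cancel exactly, leaving the uniform pointwise bound $|k|^{-3/2}\|r^{\varepsilon}w\|_{L^2}$. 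The $r^{\varepsilon-1/2}\partial_r\mathcal{K}_k[w]$ term is handled in parallel, using $r|\partial_r K_k(r,s)|\lesssim (rs)^{1/2}\min(r/s,s/r)^{|k|}$ from \eqref{estimate of cK} in place of the $K_k$ bound.

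\textbf{The logarithmic weight and main obstacle.} For \eqref{basic estimate of K with space-time weight}--\eqref{basic estimate of K with space-time weight, L1 type} the only new ingredient is the one-sided comparison (valid for $r,s\geq 1$, $\hat{C}\geq e$, $a\geq 0$, with $\gamma\eqdef 2/\log\hat{C}$):
\[
\frac{\log(\hat{C}r^2+a)}{\log(\hat{C}s^2+a)}\leq 1\ \text{ if }\ r\leq s,\qquad \frac{\log(\hat{C}r^2+a)}{\log(\hat{C}s^2+a)}\leq 1+\frac{2\log(r/s)}{\log\hat{C}}\leq (r/s)^{\gamma}\ \text{ if }\ r\geq s,
\]
where the last step uses $1+x\leq e^{x}$ and the bound $\log(\hat{C}s^2+a)\geq\log\hat{C}$ for $s\geq 1$. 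Inserting this comparison into the integrand of $I$ only shifts the right exponent $b$ of $\mathcal{Q}_{a,b}$ (which governs the tail $r\geq s$) down to $b-\gamma$, while leaving the left exponent $a$ untouched. The delicate point, and the main technical obstacle of the lemma, is the bookkeeping around this shift: one must verify that the new denominators $1/(b-\gamma)$ and $1/(2(b-\gamma)-1)$ in Lemma \ref{lemma A.4} stay $\lesssim_{\varepsilon}1/|k|$ uniformly in $k$, $a$ and $\hat{C}$. The threshold $\hat{C}\geq e^{4/(2-\varepsilon)}$ is engineered precisely for this: it forces $\gamma\leq (2-\varepsilon)/2$, so that $b-\gamma\geq |k|-\varepsilon/2$ (respectively $|k|+1/2-\varepsilon/2$) remains $\gtrsim_{\varepsilon}|k|$ for every $|k|\geq 1$, and the weighted estimates \eqref{basic estimate of K with space-time weight}--\eqref{basic estimate of K with space-time weight, L1 type} then follow along the same route as their unweighted counterparts.
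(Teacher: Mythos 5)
Your reduction of the $L^2$ and $L^1$ estimates to $\cQ_{a,b}$ via \eqref{estimate of cK} and Lemma \ref{lemma A.4}, and the one-sided comparison $\frac{\log(\hat C r^2+a)}{\log(\hat C s^2+a)}\leq\max(1,(r/s)^{\gamma})$ with $\gamma=2/\log\hat C\leq(2-\varepsilon)/2$ to absorb the logarithmic weight, is exactly what the paper does (the paper phrases \eqref{B.10} as monotonicity of $\log(\hat C r^2+a)r^{(\varepsilon-2)/2}$, giving the uniform exponent $(2-\varepsilon)/2$; your exponentiation of $1+x\le e^x$ is equivalent). The exponent bookkeeping, including the checks $a>-1/2$, $b>1/2$ and $b-\gamma\gtrsim_\varepsilon|k|$, is right — though the claim that ``all four exponents are strictly positive'' is not literally true, since $|k|-3/2+\varepsilon$ can be negative for $|k|=1$, $\varepsilon<1/2$; what matters, and what you in fact verify, is the weaker constraint $a>-1/2$ that \eqref{ineq estimate of Tab, L1} needs.

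Where you genuinely diverge from the paper is in the $L^\infty$ terms. The paper invokes the weighted Agmon-type inequality of Lemma \ref{Appendix A1-1} to interpolate $\|A(r)\cK'[w]\|_{\Lor}$ between $\|A(r)r^{1/2}\cK''[w]\|_{\Ltr}$ and $\|A(r)r^{-1/2}\cK'[w]\|_{\Ltr}$, then uses the ODE $\cK''[w]=w+(k^2-1/4)r^{-2}\cK[w]$ to fold $\cK''$ back to terms already controlled. You instead apply Cauchy--Schwarz directly inside the kernel integral, compute $\int_1^\infty s^{1-2\varepsilon}\min((r/s)^{2|k|},(s/r)^{2|k|})\,ds\lesssim_\varepsilon r^{2-2\varepsilon}/|k|$, and observe the exact cancellation of the $r$-powers. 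This is correct (and the same computation carries the logarithmic weight after inserting the comparison, shifting only the $s\le r$ branch), and arguably more elementary since it avoids both the interpolation inequality and the reduction through $\cK''$; the paper's route, on the other hand, reuses machinery (Lemma \ref{Appendix A1-1}) that is needed elsewhere anyway, and requires no new integral computation.
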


\begin{proof}
We first handle the estimates of  the second term in \eqref{basic estimate of K} and \eqref{basic estimate of K with space-time weight}. Recall from \eqref{expression of phi_k} that
$$
\cK[w] = \int_1^\oo K_k(r,s)  w(s) ds,\quad
\cK'[w] = \int_1^\oo \partial_r K_k(r,s)   w(s)ds.
$$
Then for the operator $\cQ_{a,b}[f]$ given by \eqref{definition of Tab}, we get,  by using \eqref{estimate of cK}, that
\begin{equation*}
    \begin{split}
\Big|\frac{\cK[w]}{r^{2-\varepsilon}}\Big| &\leq \int_1^\oo r^{\varepsilon-2} s^{-\varepsilon} |K_k(r,s)| s^\varepsilon |w|(s)ds\\
&\lesssim |k|^{-1} \int_1^\oo (rs)^{-\f12} \min\Big(\big(\frac{r}{s}\big)^{|k|+\varepsilon-1}, \big(\frac{s}{r}\big)^{|k|+1-\varepsilon}\Big) s^\varepsilon |w|(s)ds \\
&\lesssim |k|^{-1} \cQ_{|k|+\varepsilon-1,|k|+1-\varepsilon} [s^\varepsilon w],
    \end{split}
\end{equation*}
which together with \eqref{ineq estimate of Tab, L2} ensures that
\begin{equation}\label{B.9}
    \Big\|\frac{\cK[w]}{r^{2-\varepsilon}}\Big\|_\Ltr \leq \frac{C}{|k|} \Big(\frac{1}{|k|+\varepsilon-1}+ \frac{1}{|k|-\varepsilon+1}\Big) \|r^\varepsilon w\|_\Ltr  \leq \frac{C_\varepsilon}{|k|^2} \|r^\varepsilon w\|_\Ltr.
\end{equation}

While for $f(r)= \log(\hc r^2 +a)r^{\frac{\varepsilon-2}{2}}$, it is straightforward to verify that for $\hc \geq e^{\frac{4}{2-\varepsilon}}$,
\begin{align*}
    f'(r)&= \frac{2\hc r}{\hc r^2 +a} r^{\frac{\varepsilon-2}{2}} - \frac{2-\varepsilon}{2}r^{\frac{\varepsilon-4}{2}}\log(\hc r^2+a)\leq r^{\frac{\varepsilon-4}{2}} \Big(2- \frac{2-\varepsilon}{2} \log\hc\Big)\leq 0,
\end{align*}
then by separately considering the cases: \( r \geq s \) and \( r \leq s \), to  conclude  that for any \( a \geq 0 \) and \( \hat{C} \geq e^{\frac{4}{2 - \varepsilon}} \), there holds
\begin{equation}\label{B.10}
\frac{\log(\hc r^2 +a)}{\log(\hc s^2 +a)}
\leq \max\Big(1,\big(\frac{r}{s}\big)^{\frac{2-\varepsilon}{2}}\Big).
\end{equation}
By virtue of (\ref{estimate of cK}) and (\ref{B.10}), one has
\begin{equation*}
    \begin{split}
&\Big|\frac{\log(\hat{C}r^2+a)}{r^{2-\varepsilon}}\cK[w]\Big|\\
 &\leq\frac{C}{|k|}  \int_1^\oo r^{\varepsilon-2} s^{-\varepsilon}(rs)^{\f12}  \frac{\log(\hc r^2 +a)}{\log(\hc s^2 +a)}     \min\big(\frac{r}{s},\frac{s}{r}\big)^{|k|}    s^\varepsilon \log(\hat{C}s^2+a)|w|(s)ds\\
 &\leq \frac{C}{|k|}  \int_1^\oo (rs)^{-\f12} \big(\frac{r}{s}\big)^{\varepsilon-1}   \max\Big(1,\big(\frac{r}{s}\big)^{\frac{2-\varepsilon}{2}}\Big) \min\big(\frac{r}{s},\frac{s}{r}\big)^{|k|}    s^\varepsilon \log(\hat{C}s^2+a)|w|(s)ds\\
 &\leq \frac{C}{|k|} \int_1^\oo (rs)^{-\f12} \min\Big(\big(\frac{r}{s}\big)^{|k|-1+\varepsilon}, \big(\frac{s}{r}\big)^{|k|-\frac{\varepsilon}{2}}\Big)  s^\varepsilon \log(\hat{C}s^2+a)|w|(s)ds \\
& \leq\frac{C}{|k|} \cQ_{|k|-1+\varepsilon,|k|-\frac{\varepsilon}{2}} [r^\varepsilon \log(\hat{C}r^2+a)w],
    \end{split}
\end{equation*}
from which and (\ref{ineq estimate of Tab, L2}), we infer
\begin{equation}\label{B.11}
\begin{split}
    \Big\|\frac{\log(\hat{C}r^2+a)}{r^{2-\varepsilon}}\cK[w]\Big\|_{\Ltr} \leq& \frac{C}{|k|} \Big(\frac{1}{|k|-\frac{\varepsilon}{2}}+ \frac{1}{|k|+\varepsilon-1}\Big) \|r^\varepsilon \log(\hat{C}r^2+a)w\|_\Ltr\\
    \leq& \frac{C_\varepsilon}{|k|^2} \|r^\varepsilon \log(\hat{C}r^2+a)w\|_\Ltr.
\end{split}
\end{equation}

So far, we have completed  the estimates of the second term in \eqref{basic estimate of K} and \eqref{basic estimate of K with space-time weight}. By virtue of the estimates \eqref{estimate of cK}, \eqref{B.10}, and \eqref{ineq estimate of Tab, L2}, the estimates for the first term in \eqref{basic estimate of K} and \eqref{basic estimate of K with space-time weight} follows along the same line. For the third term in \eqref{basic estimate of K}, we get, by applying Lemma \ref{Appendix A1-1} with \( A(r) = r^{\varepsilon - \frac{1}{2}} \) or \( A(r) = r^{\varepsilon - \frac{1}{2}} \log(\hat{C} r^2 + a) \), that
\begin{equation*}
    \begin{split}
&\|A(r) \cK'[w]\|_\Lor  \lesssim \|A(r)r^\f12\cK''[w]\|_\Ltr^\f12 \|A(r)r^{-\f12}\cK'[w]\|_\Ltr^\f12 +  \|A(r)r^{-\f12} \cK'[w]\|_\Ltr \\
& \lesssim  \Big(\|A(r)r^\f12 w\|_\Ltr + |k|^2 \|A(r)r^{-\f32} \cK[w]\|_\Ltr \Big)^\f12 \|A(r)r^{-\f12}\cK'[w]\|_\Ltr^\f12 +  \|A(r)r^{-\f12} \cK'[w]\|_\Ltr \\
&\lesssim_\varepsilon |k|^{-\f12} \|A(r)r^\f12 w\|_\Ltr.
    \end{split}
\end{equation*}
The estimates of fourth term in \eqref{basic estimate of K} and \eqref{basic estimate of K with space-time weight} follow along  the same lines.

Finally, for \eqref{basic estimate of K, L1 type} and \eqref{basic estimate of K with space-time weight, L1 type}, we only present the  estimate to
 typical term: \( |k|^2 \left\| \frac{\mathcal{K}[w]}{r^{2 - \varepsilon}} \right\|_{\Ltr }\). We first observe that
\begin{equation*}
    \begin{split}
   \Big|\frac{\cK[w]}{r^{2-\varepsilon}}\Big| &\leq \int_1^\oo r^{\varepsilon-2} s^{-\varepsilon+\f12} |K_k(r,s)| s^{\varepsilon-\f12} |w|(s)ds\\
      &\lesssim |k|^{-1} r^{\f12} \int_1^\oo r^{\varepsilon-\f52} s^{-\varepsilon+\f12} (rs)^{\f12} \min\big(\frac{r}{s},\frac{s}{r}\big)^{|k|} s^{\varepsilon-\f12} |w|(s)ds\\
& =|k|^{-1}r^{\f12} \int_1^\oo (rs)^{-\f12} \big(\frac{s}{r}\big)^{\f32-\varepsilon }\min\big(\frac{r}{s},\frac{s}{r}\big)^{|k|} s^{\varepsilon-\f12} |w|(s)  ds\\
&= |k|^{-1}r^{\f12} \int_1^\oo (rs)^{-\f12}  \min\Big(\big(\frac{r}{s}\big)^{|k|-\f32+\varepsilon},\big(\frac{s}{r}\big)^{|k|+\frac{3}{2}-\varepsilon}\Big) s^{\varepsilon-\f12}|w(s)|ds \\
&= |k|^{-1}r^{\f12} \cQ_{|k|-\f32+\varepsilon, |k|+\f32-\varepsilon}[r^{\varepsilon-\f12}w],
    \end{split}
\end{equation*}
which together with \eqref{ineq estimate of Tab, L1} ensures that
\begin{equation*}
    \Big\|\frac{\cK[w]}{r^{2-\varepsilon}}\Big\|_\Ltr \leq \frac{C}{|k|} \Big(\frac{1}{2(|k|+\varepsilon-\f32)+1}+ \frac{1}{2(|k|-\varepsilon+\f32)-1}\Big)^\f12 \|r^{\varepsilon-\f12} w\|_\Llr  \leq \frac{C_\varepsilon}{|k|^\f32} \|r^{\varepsilon-\f12} w\|_\Llr.
\end{equation*}
We thus complete the proof of Proposition \ref{basic properties of ck}.
\end{proof}

\begin{lemma}\label{Basic calculation for La_k}
{\sl For $\beta\in\R$, $ r\geq 1$, $\hc \geq e$ and  $\La_k=\log\bigl(\hat{C}r^2+\kappa_kt\bigr),$
 we have
\begin{subequations} \label{Sappeq9}
\begin{gather}\label{A.16}
\Big|\frac{\partial_t\La_k}{\La_k}\Big|= \f{\kappa_k }{(\hc r^2 + \kappa_kt)\log(\hc r^2 + \kappa_kt)}\leq \frac{\kappa_k}{\hc\log\hc}\frac{1}{r^2},
\\\label{A.17}
\Big|\frac{\partial_r \La_k}{\La_k}\Big| \leq \frac{2}{\log\hc}\frac{1}{r},\qquad
\Big|\frac{\partial_r^2 \La_k}{\La_k}\Big|\leq \frac{4}{\log\hc} \frac{1}{r^2},\\
    \label{A.18}
    \frac{\partial_r(r^{2\beta}\La_k^2)}{r^{2\beta-1}\La_k^2}\leq2|\beta| +\frac{4}{\log\hc}, \quad  \frac{\partial_r^2(r^{2\beta}\La_k^2)}{r^{2\beta-2}\La_k^2} \leq2\beta(2\beta-1) + \frac{1}{\log\hc}(16\beta+8) + \frac{8}{\log^2\hc}.
\end{gather}
\end{subequations}}
\end{lemma}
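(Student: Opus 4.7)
The proof is a pure calculus exercise; there is no genuine mathematical obstacle, only the bookkeeping of constants in the Leibniz expansion at the end. My plan is first to record the pointwise derivatives of $\La_k$, and then to assemble the compound bounds in \eqref{A.18} by direct expansion.

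To begin, I would compute
$$\partial_t\La_k = \frac{\kappa_k}{\hat{C}r^2+\kappa_k t},$$
which yields at once the exact identity in \eqref{A.16}. The inequality there follows because for $r\geq 1$ and $\hat{C}\geq e$ one has both $\hat{C}r^2+\kappa_k t\geq \hat{C}r^2$ and $\log(\hat{C}r^2+\kappa_k t)\geq \log\hat{C}\geq 1$. Next,
$$\partial_r\La_k = \frac{2\hat{C}r}{\hat{C}r^2+\kappa_k t}, \qquad \partial_r^2\La_k = \frac{2\hat{C}(\kappa_k t-\hat{C}r^2)}{(\hat{C}r^2+\kappa_k t)^2}.$$
The trivial bound $|\partial_r\La_k|\leq 2/r$, divided by $\La_k\geq \log\hat{C}$, gives the first estimate in \eqref{A.17}. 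For the second, the key observation is the cancellation $|\kappa_k t-\hat{C}r^2|\leq \kappa_k t+\hat{C}r^2$, which collapses $\partial_r^2\La_k$ to a single power of the denominator:
$$|\partial_r^2\La_k|\leq \frac{2\hat{C}}{\hat{C}r^2+\kappa_k t}\leq \frac{2}{r^2},$$
and dividing by $\log\hat{C}$ then produces the stated bound (with room to spare relative to the constant $4$).

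Finally, for \eqref{A.18}, I would simply apply Leibniz's rule. The first identity reduces to
$$\frac{\partial_r(r^{2\beta}\La_k^2)}{r^{2\beta-1}\La_k^2} = 2\beta + 2\,\frac{r\partial_r\La_k}{\La_k},$$
which is controlled by $2|\beta|+4/\log\hat{C}$ via the first bound in \eqref{A.17}. After expanding, the second identity becomes
$$\frac{\partial_r^2(r^{2\beta}\La_k^2)}{r^{2\beta-2}\La_k^2} = 2\beta(2\beta-1) + 8\beta\,\frac{r\partial_r\La_k}{\La_k} + 2\Big(\frac{r\partial_r\La_k}{\La_k}\Big)^2 + 2\,\frac{r^2\partial_r^2\La_k}{\La_k}.$$
Inserting \eqref{A.17} term by term gives $2\beta(2\beta-1)$, a cross-term of size $16\beta/\log\hat{C}$, a purely quadratic contribution $8/\log^2\hat{C}$, and a viscous-type contribution $8/\log\hat{C}$; combining the two $1/\log\hat{C}$ contributions produces the coefficient $16\beta+8$ in the stated bound. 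This completes the proof with no step requiring anything beyond direct differentiation and elementary bounds on $\hat{C}r^2+\kappa_k t$.
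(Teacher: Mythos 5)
Your proof is correct and mirrors the paper's argument line by line: both compute the same explicit formulas for $\partial_t\La_k$, $\partial_r\La_k$, and $\partial_r^2\La_k$, bound them pointwise using $\hat{C}r^2+\kappa_kt\geq\hat{C}r^2$ and $\La_k\geq\log\hat{C}$, and then expand $\partial_r(r^{2\beta}\La_k^2)$ and $\partial_r^2(r^{2\beta}\La_k^2)$ by Leibniz exactly as the paper does, inserting \eqref{A.17} term by term. The only peripheral remark is that both your treatment of the cross-term $8\beta\,r\partial_r\La_k/\La_k\leq 16\beta/\log\hat{C}$ and the paper's implicitly use $\beta\geq 0$ (which is all that is needed in applications, with $\beta=1+\varepsilon$).
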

\begin{proof} We first observe that
\begin{equation*}
    \begin{split}
&\partial_t \La_k = \frac{\kappa_k}{\hc r^2 + \kappa_kt},\quad   \partial_r \La_k = \frac{2\hc r}{\hc r^2 + \kappa_kt}, \quad \partial_r^2 \La_k =\frac{2\hc \kappa_k t-2\hc^2 r^2}{(\hc r^2 + \kappa_kt)^2},
    \end{split}
\end{equation*}
which leads to \eqref{A.16} and \eqref{A.17}.

Whereas observing that
\begin{equation*}
    \begin{split}
\partial_r(r^{2\beta}\La_k^2) = 2\beta r^{2\beta-1}\La_k^2 + 2r^{2\beta}\La_k\pa_r\La_k,
    \end{split}
\end{equation*}
and
\begin{align*}
\partial_r^2(r^{2\beta}\La_k^2)&=\pa_r^2(r^{2\beta}) \La_k^2 + 2\pa_rr^{2\beta} \pa_r\La_k^2 +r^{2\beta} \pa_r^2\La_k^2\\
&=2\beta(2\beta-1) r^{2\beta-2}\La_k^2 +8\beta r^{2\beta-1}\La_k \pa_r\La_k+2r^{2\beta}(\La_k \pa_r^2 \La_k +|\pa_r\La_k|^2 ),
\end{align*}
we deduce \eqref{A.18} from \eqref{A.17}.
\end{proof}

\section*{Acknowledgement}
T. Li is partially supported by  National Natural Science Foundation of China under Grant 12421001.  P. Zhang is partially  supported by National Key R$\&$D Program of China under grant 2021YFA1000800 and by National Natural Science Foundation of China under Grant 12421001, 12494542 and 12288201.

\section*{Declarations}

\subsection*{Conflict of interest} The authors declare that there are no conflicts of interest.

\subsection*{Data availability}
This article has no associated data.

\end{document}